%%%%%%%%%%%%%%%%%%%%%%%%%%%%%%%%%%%%%%%%%%%%%%%%%%%%%%%%%%
%%%%%%    
%%%%%%%%%%%%%%%%%%%%%%%%%%%%%%%%%%%%%%%%%%%%%%%%%%%%%%%%%%
\documentclass[12pt]{amsart}
\usepackage{amssymb}
\usepackage{mathrsfs}
\usepackage{graphicx}
\usepackage{color}
\setlength{\textwidth}{15.0truecm}
\setlength{\textheight}{22.5truecm}
\setlength{\topmargin}{0mm}
\setlength{\oddsidemargin}{0.3cm}
\setlength{\evensidemargin}{0.3cm}

%%%%%%%%%%%%%%%%%%%%
\renewcommand\a{\alpha}
\renewcommand\b{\beta}
\newcommand\g{\gamma}
\renewcommand\d{\delta}
\newcommand\la{\lambda}

\newcommand\s{\sigma}

\newcommand\vf{\varphi}

\renewcommand\t{\tau}

\newcommand\Om{\Omega}

\newcommand\D{\Delta}

\newcommand\vL{\varLambda}

\newcommand\vG{\varGamma}
\newcommand\ve{\varepsilon}

\newcommand\ka{\kappa }

\newcommand{\QQ}{\mathbb Q}

\newcommand{\ZZ}{\mathbb Z}
\newcommand{\NN}{\mathbb N}
\newcommand{\CC}{\mathbb C}

\newcommand\Bp{\mathbf p}

\newcommand\Bi{\mathbf{i}}

\newcommand\CA{\mathcal{A}}
\newcommand\CB{\mathcal{B}}

\newcommand\CI{\mathcal{I}}

\newcommand\CS{\mathcal{S}}

\newcommand\CO{\mathcal{O}}
\newcommand\CK{\mathcal{K}}

\newcommand\CF{\mathcal{F}}

\newcommand\CT{ \mathcal{T}}
\newcommand\CZ{ \mathcal{Z}}

\newcommand\FS{\mathfrak S}

\newcommand\Fg{\mathfrak g}

\newcommand\Ft{\mathfrak t}

\newcommand\Fgl{\mathfrak{gl}}

\newcommand\SD{\mathscr{D}}
%%%%%%%%%%%%%%%%%%%%%%%%%%%%%%%%%%%%%%%%%%%%%%%%

\newcommand\wh{\widehat}
\newcommand\wt{\widetilde}

\newcommand\ol{\overline}

%%%%%%%%%%%%%%%%%%%%%%%%%%%%%%%%%%
\newcommand{\lan}{\langle}
\newcommand{\ran}{\rangle}

\newcommand{\ra}{\rightarrow }

%%%%%%%%%%%%%%%%%%%%%%%%%%%%%%%%%%%5

\newcommand\Ker{\operatorname{Ker}}
\newcommand\Hom{\operatorname{Hom}}
\newcommand\End{\operatorname{End}}

\newcommand\id{\operatorname{id}}

\newcommand\Rad{\operatorname{Rad}}

%%%%%%%%%%%%%%%%%%%%%%%%%%%%%%%%%%%%%%%%%

\newcommand{\rad}{\operatorname{rad}}
\newcommand{\corank}{\operatorname{corank}}
%%%%%%%%%%%%%%%%%%%%%%%%%%%%%%%%%%%%%%%%%%
%\newcommand\bs{\bar s}

%\newcommand\dg{\dot g}

\newcommand{\isom}{\,\raise2pt\hbox{$\underrightarrow{\sim}$}\,}
%%%%%%%%%%%%%%%%%%%%%%%%%%%%%%%%%%%
\newcounter{ichi}
\setcounter{ichi}{1}
\newcommand{\roi}{\roman{ichi}}
\newcounter{ni}
\setcounter{ni}{2}
\newcommand{\roii}{\roman{ni}}
\newcounter{san}
\setcounter{san}{3}
\newcommand{\roiii}{\roman{san}}
\newcounter{yon}
\setcounter{yon}{4}
\newcommand{\roiv}{\roman{yon}}
\newcounter{go}
\setcounter{go}{5}
\newcommand{\rov}{\roman{go}}
\newcounter{roku}
\setcounter{roku}{6}

\newcounter{nana}
\setcounter{nana}{7}

%%%%%%%%%%%%%%%%%%%%%%%%%%%%%%%%%%%
\newcommand{\Sc}{\mathscr{S}}

\newcommand{\He}{\mathscr{H}}

%%%%%%%%%%%%%%%%%%%%%%%%%%%%%%%%

%%%%%%%%%%%%%%%%%%%%%%%%%%%%%%%%%%%%%%%%%%%

\newtheorem{thm}{Theorem}[section]
\newtheorem{lem}[thm]{Lemma}
\newtheorem{cor}[thm]{Corollary}
\newtheorem{prop}[thm]{Proposition}

\def \para{\refstepcounter{thm} \par\medskip\noindent
                \textbf{\thethm .} }

\def \remark{\refstepcounter{thm} \par\medskip\noindent
                \textbf{Remark \thethm .} }

\def \remarks{\refstepcounter{thm} \par\medskip\noindent
                \textbf{Remarks \thethm .} }

%%%%%%%%%%%%%%%%%%%%%%%%%%%%%%%%%%%
\allowdisplaybreaks[4]
%%%%%%%%%%%%%%%%%%%%%%%%%%%%%%%%%%%
\numberwithin{equation}{thm}

%%%%% 
\begin{document}
\setlength{\baselineskip}{4.9mm}
\setlength{\abovedisplayskip}{4.5mm}
\setlength{\belowdisplayskip}{4.5mm}
%%%
%%%
\renewcommand{\theenumi}{\roman{enumi}}
\renewcommand{\labelenumi}{(\theenumi)}
\renewcommand{\thefootnote}{\fnsymbol{footnote}}
%%%
\renewcommand{\thefootnote}{\fnsymbol{footnote}}
%\NoBlackBoxes
\parindent=20pt
%%%%%%%%%%%%%%%%%%%%
\newcommand{\dis}{\displaystyle}
%%%%%%%%%%%%%%%%%%%%%%%%%%%%%%%%%%%

\medskip
\begin{center}
{\large \bf Presenting cyclotomic $q$-Schur algebras} 
\\
\vspace{1cm}
Kentaro Wada\footnote{This research was  supported  by JSPS Research Fellowships for Young Scientists} 
%\\ 
%\vspace{0.5cm}
\address{Graduate School of Mathematics Nagoya University, 
	Furocho, Chikusaku, Nagoya, Japan 464-8602}
\email{kentaro-wada@math.nagoya-u.ac.jp} 

\end{center}
\title{}
\maketitle
\markboth{Kentaro Wada}{Presenting cyclotomic $q$-Schur algebras}

%%%%%%%%%%%%%%%%%%%%%%%%%%%%%%%%%%%%%%%%%%%%%%%%%%%%%%%%%%%%%%%%%%%%%%%%%%%%%%%%%%%%%%%%%%%%%%%%%%%%%%%%%%%%%%%%%%%%%%%%%%%%%%%%%%%%%%%%%%%%%%%%%%%
\begin{abstract}
We give a presentation of cyclotomic $q$-Schur algebras 
by generators and defining relations. 
As an application, 
we give an algorithm 
for computing  decomposition numbers of cyclotomic $q$-Schur algebras. 
\end{abstract}

\setcounter{section}{-1}
\section{Introduction}
Let 
$\He_{n,r}$ 
be an Ariki-Koike algebra 
associated to 
a complex reflection group  
$\FS_n \ltimes (\ZZ / r \ZZ)^n$.  
A cyclotomic $q$-Schur algebra
$\Sc_{n,r}$ 
associated to 
$\He_{n,r}$, 
introduced in \cite{DJM98}, 
is defined as an endomorphism algebra of 
a certain $\He_{n,r}$-module. 
In this paper, 
we give a presentation of cyclotomic $q$-Schur algebras 
by generators and defining relations. 

In the case where 
$r=1$, 
$\He_{n,1}$ 
is  the Iwahori-Hecke algebra of the symmetric group 
$\FS_n$, 
and 
$\Sc_{n,1}$ is the $q$-Schur algbera of type $A$. 
In this case, 
$\Sc_{n,1}$ 
is realized as a quotient algebra of 
the quantum group $U_q=U_q(\Fgl_m)$ 
via the Schur-Weyl duality between $\He_{n,1}$ and $U_q$ in \cite{J}. 
We remark that the Schur-Weyl duality holds not only over $\QQ(q)$ but also over $\ZZ[q,q^{-1}]$ (see \cite{Du}).  
By using the surjection from $U_q$ to $\Sc_{n,1}$, 
Doty and Giaquinto  gave a presentation of $\Sc_{n,1}$ 
by generators and defining relations in \cite{DG}. 
They also gave a presentation of $\Sc_{n,1}$ 
in the way compatible with Lusztig's modified form of $U_q$. 
After that, 
Doty realized in \cite{Do} 
the generalized $q$-Schur algebra (in the sense of Donkin) 
as a quotient algebra of a quantum group (also Lusztig's modified form) 
associated to any Cartan matrix of finite type.  

In the case where 
$r >1$, 
a Schur-Weyl duality 
between 
$\He_{n,r}$ and $U_q(\Fg)$ over $\CK=\QQ(q,\g_1,\cdots,\g_r)$ 
was obtained by Sakamoto and Shoji in \cite{SakS}, 
where $\Fg=\Fgl_{m_1}\oplus \cdots \oplus \Fgl_{m_r}$ is a 
Levi subalgebra of 
a parabolic subalgebra of 
$\Fgl_m$. 
However, 
this Schur-Weyl duality does not hold over $\ZZ[q,q^{-1},\g_1,\cdots,\g_r]$. 
In fact, 
Sakamoto-Shoji's Schur-Weyl duality 
should be understood as a Schur-Weyl duality 
between modified Aliki-Koike algebra $\He_{n,r}^0$ 
introduced in \cite{S1}, and $U_q(\Fg)$  
rather than the duality 
between 
$\He_{n,r}$ and $U_q(\Fg)$. 
The image of $U_q(\Fg)$ in the Schur-Weyl duality 
is isomorphic to 
the modified cyclotomic $q$-Schur algebra 
$\ol{\Sc}_{n,r}^0$ 
associated to $\He_{n,r}^0$ 
introduced in \cite{SawS}. 
$\He_{n,r}^0$ and $\ol{\Sc}_{n,r}^0$ 
are defined over any integral domain $R$ 
with parameters satisfying certain conditions. 
In particular, 
we have $\He_{n,r} \cong \He_{n,r}^0$ over $\CK$ 
though 
$\ol{\Sc}_{n,r}^0 \not\cong \Sc_{n,r}$.
(Note that $\He_{n,r} \not\cong \He_{n,r}^0$ over $R$ in general.) 
Some relations between $\Sc_{n,r}$ and $\ol{\Sc}^0_{n,r}$ were studied in \cite{SawS} and \cite{Saw}.  
They showed that 
$\ol{\Sc}^0_{n,r}$ turns out to be a subquotient algebra of $\Sc_{n,r}$, 
and 
$\ol{\Sc}^0_{n,r} \cong \bigoplus_{(n_1,\cdots,n_r) \atop n_1+\cdots+n_r=n} 
	\Sc_{n_1,1} \otimes \cdots \otimes \Sc_{n_r,1}$, 
where 
each component $\Sc_{n_k,1}$ 
is a $q$-Schur algebra of type A which is a quotient algebra of 
the corresponding Levi component $U_q(\Fgl_{m_k})$ of $U_q(\Fgl_m)$. 

In \cite{SW}, 
we have  generalized the results in \cite{SawS} and \cite{Saw} as follows.  
Let 
$\Bp=(r_1,\cdots,r_g) \in \ZZ_{>0}^g$ 
be such that 
$r_1+ \cdots + r_g=r$. 
We define a subquotient algebra 
$\ol{\Sc}_{n,r}^\Bp$ of $\Sc_{n,r}$ 
with respect  to $\Bp$ 
by using a cellular basis of $\Sc_{n,r}$ given in \cite{DJM98}. 
Then we have 
$\ol{\Sc}_{n,r}^\Bp \cong \bigoplus_{(n_1,\cdots,n_g) \atop n_1+\cdots+n_g=n} 
	\Sc_{n_1,r_1} \otimes \cdots \otimes \Sc_{n_g,r_g}$.
The case of $\Bp=(1,\cdots,1)$ is the one discussed in \cite{SawS}, 
and 
$\ol{\Sc}_{n,r}^{(r)}$ (the case of $\Bp=(r)$) is just $\Sc_{n,r}$. 
These structures suggest us that 
$\ol{\Sc}_{n,r}^{\Bp}$ 
is a quotient algebra of 
a certain algebra $\wt{U}_q(\Fg^\Bp)$ 
with respect to the Levi subalgebra 
$\Fg^\Bp=\Fgl_{m_1+\cdots+m_{r_1}} \oplus \cdots \oplus \Fgl_{m_{r_1+\cdots +r_{g-1}+1} + \cdots +m_r}$ 
of $\Fgl_m$. 
In particular, 
$\Sc_{n,r}$ should be  a quotient algebra of 
a certain algebra $\wt{U}_q(\Fgl_m)$. 
(Note that $\wt{U}_q(\Fgl_m)$ (also $\wt{U}_q(\Fg^\Bp)$) is not a quantum group.)  
This is a motivation in this paper.

On the other hand, 
in \cite{DR},
Du and Rui  
defined (upper and lower) Borel subalgebras $\Sc_{n,r}^{\geq 0}$ and $\Sc_{n,r}^{\leq 0}$ of $\Sc_{n,r}$, 
and 
they showed that 
$\Sc_{n,r}=\Sc_{n,r}^{\leq 0} \cdot \Sc_{n,r}^{\geq 0}$. 
Moreover, 
they showed that 
the Borel subalgebra $\Sc_{n,r}^{\geq 0}$ (resp. $\Sc_{n,r}^{\leq 0}$) 
is isomorphic to the Borel subalgebra $\Sc_{m,1}^{\geq 0}$ (resp. $\Sc_{m,1}^{\leq 0}$) 
of a $q$-Schur algebra  $\Sc_{m,1}$ of type A with an appropriate rank. 
In  fact, 
the Borel subalgebra $\Sc_{m,1}^{\geq 0}$ (resp. $\Sc_{m,1}^{\leq 0}$) of $\Sc_{m,1}$ 
is a quotient algebra of an upper (resp. lower) Borel subalgebra of $U_q(\Fgl_m)$. 
These structures imply that 
$\Sc_{n,r}$ is presented by  generators of $U_q(\Fgl_m)$  
with certain defining relations which are different from the defining relations of $U_q(\Fgl_m)$. 
This is a main idea to find  presentations of $\Sc_{n,r}$ by generators and relations. 

This paper is organized as follows.
In \S 1, 
we introduce a certain  algebra 
$\wt{U}_q=\wt{U}_q(\Fgl_m)$ 
associated to 
the Cartan data of $\Fgl_m$. 
A quantum group $U_q(\Fgl_m)$ turns out to be a quotient algebra of $\wt{U}_q$. 
We also prepare several notions for representations of $\wt{U}_q$ 
similar to  the case of quantum groups, 
e.g. weight modules, highest weight modules and  Verma modules. 
In \S 2, 
we define a (various) finite dimensional quotient algebra $\CS_q$ of $\wt{U}_q$. 
This construction of $\CS_q$ was inspired by the construction of generalized $q$-Schur algebra in \cite{Do}. 
In fact, 
both of 
a $q$-Schur algebra $\Sc_{n,1}$ of type A  
and 
a cyclotomic $q$-Schr algbera $\Sc_{n,r}$ 
are 
examples of these finite dimentional quotient algebras of $\wt{U}_q$.  
We also give a method to study representations of $\CS_q$ 
analogous to 
the theory of cellular algebras in \cite{GL96}. 
In some cases, 
$\CS_q$ turns out to be a quasi-hereditary cellular algebra. 
In \S3, 
we develop an argument of specialization of $\CS_q$ to an arbitrary ring and parameters 
by taking divided powers.  
We remark that 
the arguments in \S1-\S3 
can be applied to any Cartan matrix of finite type. 
(See Remarks \ref{remarks-cartan} (\roii).)

After reviews for known results on 
$q$-Schur algebras and cyclotomic $q$-Schur algebras in \S 4 and \S 5, 
we define a surjective homomorphism $\wt{\rho}$ from $\wt{U}_q$ to $\Sc_{n,r}$ in \S6. 
By using the surjection $\wt{\rho}$ 
combined with the results in \S1-\S3, 
we give two presentations of $\Sc_{n,r}$ in \S7 
(Theorem \ref{main-thm}).

Finally, 
we give an algorithm to compute the decomposition numbers of cyclotomic $q$-Schur algebras 
in \S 8. 
\\

\textbf{ Acknowledgments :}
The part of this work was done during the author's stay in the University of Sydney.  
He is grateful to Professors G. Lehrer and A. Mathas for their hospitality. 
In particular, 
the author thanks A. Mathas for many helpful advices and discussions for this work. 
He is also grateful to Professor T. Shoji for several advices, 
in particular for having taught 
the content of Proposition \ref{prop-rho}. 
Thanks are also due to Professors S. Ariki and H. Miyachi 
for many advices and encouragements. 

\tableofcontents

%%%%%%%%%%%%%%%%%%%%%%%%%%%%%%%%%%%%%%%%%%%%%%%%%%%%%%%%%%%%%%%%%%%%%%%%%%%%%%%%%%%%%%%%%%%%%%%%%%%%%%
%%%%%%%%%%%%%%%%%%%%%%%%%%%%%%%%%%%%%%%%%%%%%%%%%%%%%%%%%%%%%%%%%%%%%%%%%%%%%%%%%%%%%%%%%%%%%%%%%%%%%%%%%%%%%%%%%%
\section{Algebra $\wt{U}_q$}
\para 
Let 
$P=\bigoplus_{i=1}^m \ZZ \ve_i$ 
be a weight lattice of $\Fgl_m$, 
and 
$P^{\vee}=\bigoplus_{i=1}^m \ZZ h_i$  
be the dual weight lattice 
with the natural pairing 
$\lan \, , \, \ran : P \times P^{\vee} \ra \ZZ$ 
such that  
$\lan \ve_i, h_j \ran = \d_{ij}$.
Set $\a_i=\ve_i - \ve_{i+1}$ for $i=1,\cdots,m-1$, 
then 
$\Pi=\{\a_i\,|\, 1\leq i \leq m-1\}$ 
is a set of simple roots, 
and 
$Q=\bigoplus_{i=1}^{m-1} \ZZ\, \a_i$ 
is a root lattice of $\Fgl_m$. 
Put 
$Q^+ = \bigoplus_{i=1}^{m-1} \ZZ_{\geq 0}\, \a_i$. 
We define a partial order 
\lq\lq \,$ \geq$ "
on $P$ 
by 
$\la \geq \mu $ if $\la - \mu \in Q^+$.

\para 
A quantum group 
$U_q=U_q(\Fgl_m)$ 
is the associative algebra over 
$\QQ(q)$, 
where $q$ is an indeterminate,  
with $1$ 
generated by 
$e_i, f_i$ $(1 \leq i \leq m-1)$ 
and 
$K^{\pm}_i$ $(1 \leq i \leq m)$ 
with the following defining relations 
(we denote $K_i^+$ by $K_i$ simply) : 
\begin{align}
&K_iK_j=K_jK_i, \quad K_i K_i^-=K_i^-K_i=1  \label{gl-1} \\
&K_i e_j K_i^- = q^{\lan \a_j , h_i \ran} e_j \label{gl-2}\\
&K_i f_j K_i^- = q^{- \lan \a_j , h_i \ran} f_j  \label{gl-3}\\
&e_i f_j - f_j e_i = \d_{ij}  \frac{K_i K_{i+1}^- - K_i^- K_{i+1}}{q- q^{-1}} \label{gl-4}\\
&e_{i \pm 1}e_i^2 - (q+q^{-1}) e_i e_{i\pm1} e_i + e_i^2 e_{i \pm 1}=0 \label{gl-5}\\
& e_i e_j= e_j e_i \qquad (|i-j| \geq 2) \notag \\
&f_{i \pm 1}f_i^2 - (q+q^{-1}) f_i f_{i \pm 1} f_i + f^2_i f_{i \pm 1}=0  \label{gl-6}\\*
&f_i f_j= f_j f_i \qquad ( |i-j|\geq 2)  \notag 
\end{align}

Let 
$U_q^{+}$ (resp. $U_q^{-}$) 
be the subalgebra of 
$U_q$ 
generated by 
$e_i$ 
(resp. $f_i$) 
for $i=1,\cdots,m-1$,
and 
$U_q^0$ 
be the subalgebra of 
$U_q$ 
generated by 
$K_i^{\pm}$ 
for 
$i=1,\cdots,m$. 
It is well known that 
$U_q$ has the triangular decomposition 
\[ U_q \cong U_q^- \otimes U_q^0 \otimes U_q^+  \text{ as vector spaces}.\]
Let 
$\CB^+$ (resp. $\CB^-$) 
be the subalgebra of 
$U_q$ 
generated by 
$e_i$ (resp. $f_i$) for $ 1 \leq i \leq m-1 $ and $K^{\pm}_i$ for $1 \leq i \leq m$. 
We call 
$\CB^{\pm}$ 
a Borel subalgebra of $U_q$. 
The following lemma is well known. 

\begin{lem}\label{lem-B}\

\begin{enumerate}
\item 
$U_q^+$ (resp. $U_q^-$)  
is isomorphic to 
the algebra defined by 
generators 
$e_i$ (resp. $f_i $) $(1 \leq i \leq m-1)$ 
with a defining relation 
$(\ref{gl-5})$ (resp. $(\ref{gl-6})$). 

\item 
$U_q^0$ is isomorphic to 
$\QQ(q) [K_1^{\pm},\cdots,K_m^{\pm}]$. 

\item 
$\CB^+$ 
is isomorphic to 
the algebra defined by 
generators 
$e_i $ $(1 \leq i \leq m-1)$ and $K^{\pm}_i$ ($1 \leq i \leq m)$   
with defining relations 
$(\ref{gl-1}),(\ref{gl-2})$ and $(\ref{gl-5})$ 
\item 
$\CB^-$ 
is isomorphic to the algebra defined by 
generators 
$f_i $ $(1\leq i \leq m-1)$ and $K^{\pm}_i$ ($1 \leq i \leq m)$ 
with defining relation 
$(\ref{gl-1}), (\ref{gl-3})$ and $(\ref{gl-6})$. 

\end{enumerate}
\end{lem}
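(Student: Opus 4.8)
The plan is to treat all four parts uniformly by comparing each abstractly presented algebra with the corresponding subalgebra of $U_q$, the comparison map always being the canonical surjection supplied by the universal property. Write $\BU^+$, $\BU^-$ for the algebras presented by the $e_i$, resp. the $f_i$ $(1\le i\le m-1)$, subject only to $(\ref{gl-5})$, resp. $(\ref{gl-6})$; write $\BU^0$ for $\QQ(q)[K_1^\pm,\dots,K_m^\pm]$; and write $\bB^+$, $\bB^-$ for the algebras presented as in (iii), (iv). Since the elements $e_i,f_i,K_i^\pm$ of $U_q$ satisfy all of $(\ref{gl-1})$--$(\ref{gl-6})$, they a fortiori satisfy the subset of relations imposed on each presented algebra, so there are canonical homomorphisms $\BU^+\to U_q^+$, $\BU^-\to U_q^-$, $\BU^0\to U_q^0$, $\bB^+\to\CB^+$, $\bB^-\to\CB^-$, each surjective because its target is by definition generated by the images of the generators. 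Everything thus reduces to injectivity, and the tool throughout will be the triangular decomposition $U_q\cong U_q^-\ot U_q^0\ot U_q^+$.

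For (i) and (ii) the spanning side is elementary: in $\BU^0$ the Laurent monomials $K_1^{a_1}\cdots K_m^{a_m}$ span, while in $\BU^+$ one uses $(\ref{gl-5})$ to straighten an arbitrary monomial in the $e_i$ into a linear combination of ordered (PBW-type) monomials, and symmetrically for $\BU^-$ via $(\ref{gl-6})$. The images of these spanning sets in $U_q^0$, $U_q^+$, $U_q^-$ are linearly independent by the standard Poincar\'e--Birkhoff--Witt basis theorem for quantum $\Fgl_m$, which furnishes precisely these bases for the three tensor factors of the triangular decomposition. Hence each of $\BU^0\to U_q^0$, $\BU^+\to U_q^+$, $\BU^-\to U_q^-$ is an isomorphism, giving (i) and (ii).

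For (iii) I would first use $(\ref{gl-2})$, in the form $K_ie_j=q^{\lan\a_j,h_i\ran}e_jK_i$, to move every $K_i^\pm$ to the left of every $e_j$ inside $\bB^+$; combined with $(\ref{gl-1})$ and $(\ref{gl-5})$ this shows $\bB^+$ is spanned by the products $K^{\mathbf{a}}u$, with $K^{\mathbf{a}}$ a Laurent monomial in the $K_i^\pm$ and $u$ an ordered monomial in the $e_i$. Equivalently, multiplication gives a surjective linear map $\BU^0\ot\BU^+\to\bB^+$. Composing with $\bB^+\to\CB^+\hra U_q$ sends $K^{\mathbf{a}}\ot u$ to $K^{\mathbf{a}}u\in U_q^0U_q^+$, so this composite equals $\BU^0\ot\BU^+\xrightarrow{\sim}U_q^0\ot U_q^+\to U_q$, where the first arrow is an isomorphism by (i)--(ii) and the second is injective by the triangular decomposition. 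Since $g\circ f$ injective forces $f$ injective, the map $\BU^0\ot\BU^+\to\bB^+$ is injective, hence bijective; and then $\bB^+\to\CB^+$ is injective, hence an isomorphism. Part (iv) is identical, using $(\ref{gl-3})$ to move the $K_i^\pm$ to the left of the $f_j$.

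The one genuinely nonformal step is the linear independence invoked in (i): I expect this to be the main obstacle. The triangular decomposition as stated only guarantees that injectivity of $\BU^-\ot\BU^0\ot\BU^+\to U_q$ is \emph{equivalent} to injectivity of the three factor maps, and so cannot break this circularity on its own. One therefore imports the PBW basis theorem for $U_q^+$ (or an equivalent direct construction via a free, Verma-type module) as external input; once that linear independence is granted, the remaining content of the lemma is purely straightening of relations and bookkeeping.
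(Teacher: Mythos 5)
Your proposal is correct, but there is nothing in the paper to compare it against: the paper offers no proof of Lemma \ref{lem-B} at all, introducing it with ``The following lemma is well known'' and moving on. Your reconstruction is the standard argument, and its two halves are both sound: the reduction of (iii) and (iv) to (i) and (ii) via the factorization $\BU^0\ot\BU^+\to\bB^+\to\CB^+\hra U_q$ and the injectivity of $U_q^0\ot U_q^+\to U_q$ coming from the triangular decomposition is clean and complete, and your use of $(\ref{gl-1})$--$(\ref{gl-2})$ to straighten monomials so that $\BU^0\ot\BU^+\to\bB^+$ is surjective is exactly right. You are also correct, and appropriately candid, about where the real content lies: the statement that the canonical surjection $\BU^+\to U_q^+$ is injective is essentially the PBW/nondegeneracy theorem for $U_q(\Fgl_m)$, and the triangular decomposition as a statement about the \emph{subalgebras} $U_q^-\ot U_q^0\ot U_q^+\isom U_q$ cannot by itself break that circularity; one must import the linear independence of PBW monomials (or an equivalent faithfulness argument via a Verma-type module) as external input. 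This is precisely why the paper relegates the lemma to ``well known'' status rather than proving it, so your proposal can be regarded as a faithful expansion of the omitted standard proof rather than a departure from it.
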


\para \label{divided}
Put 
$\CZ=\ZZ[q,q^{-1}]$. 
We define the $\CZ$-form of $U_q$ as follows. 
For any integer $k\in \ZZ$, 
put 
\[ [k]=\frac{q^{k}-q^{-k}}{q-q^{-1}}.\] 
For any positive integer $t \in \ZZ_{> 0}$, 
put $[t]!=[t][t-1]\cdots[1]$ 
and set  
$[0]!=1$. 
For any integer $k$ and any positive integer $t$, 
put 
\[ \left[ \begin{matrix} k \\ t \end{matrix} \right] 
	= \frac{[k][k-1] \cdots [k-t+1]}{[t][t-1] \cdots [1]} 
	= \frac{[k]!}{[t]![k-t]!}. \] 
For 
$k\in \ZZ_{\geq 0}$ and $i=1,\cdots,m-1$, 
put 
\[ e_i^{(k)}=\frac{e_i^k}{[k]!}, \quad 
	f_i^{(k)}=\frac{f_i^k}{[k]!}.
\]
For 
$t \in \ZZ_{\geq 0}$, $c \in \ZZ$ and $i=1,\cdots,m$, 
put 
\[
	\left[\begin{matrix} K_i ; c \\ t \end{matrix} \right] 
	=\prod_{s=1}^t \frac{K_iq^{c-s+1} - K_i^{-1}q^{-c+s-1}}{q^{s}-q^{-s}}.
\]
Let $_\CZ U_q$ be the $\CZ$-subalgebra 
of $U_q$ 
generated by all 
$e_i^{(k)},f_i^{(k)},K_i^{\pm}$ and $\left[ \begin{matrix} K_i ;0 \\ t \end{matrix} \right]$.
We also define the 
$\CZ$-subalgebra $_\CZ \CB^+$ (resp. $_\CZ \CB^-$) 
of $U_q$ 
generated by 
all $e_i^{(k)}$ (resp. $f_i^{(k)}$), 
$K_i^{\pm}$ and $\left[ \begin{matrix} K_i ; 0 \\ t \end{matrix} \right]$. 

\para 
Let 
$\CA=\CZ[\g_1,\cdots,\g_r]$ 
be the polynomial ring over $\CZ$ 
with indeterminate elements  
$\g_1,\cdots,\g_r$, 
where 
$r$ is an arbitrary non-negative integer 
(put $\CA=\CZ$ when $r=0$), 
and let 
$\CK=\QQ(q,\g_1,\cdots,\g_r)$ 
be the quotient field of $\CA$. 
We define the associative algebra 
$\wt{U}_q=\wt{U}_q(\Fgl_m)$ 
over 
$\CK$ with the unit element $1$ 
by the following generators and defining relations: 
\begin{description}
\item[generators]
$e_i, f_i$ ($1\leq i \leq m-1$), $K_i^{\pm}$ ($1\leq i \leq m$), $\t_i$ ($1\leq i \leq m-1$). 
\item[defining relations]
\begin{align}
&K_iK_j=K_jK_i, \quad K_i K_i^-=K_i^-K_i=1,  \label{U-2} \\
&K_i e_j K_i^- = q^{\lan \a_j , h_i \ran} e_j, \label{U-3}\\
&K_i f_j K_i^- = q^{- \lan \a_j , h_i \ran} f_j,  \label{U-4}\\
&K_i \t_j K_i^- = \t_j,  \label{U-5}\\
&e_i f_j - f_j e_i = \d_{ij}  \t_i \label{U-6}\\
&e_{i \pm 1}e_i^2 - (q+q^{-1}) e_i e_{i \pm 1} e_i + e_i^2 e_{i \pm 1}=0, \label{U-7}\\
& e_i e_j= e_j e_i \qquad (|i-j| \geq 2), \notag \\
&f_{i \pm 1}f_i^2 - (q+q^{-1}) f_i f_{i \pm 1} f_i + f^2_i f_{i \pm 1}=0,  \label{U-8}\\
&f_i f_j= f_j f_i \qquad ( |i-j|\geq 2).  \notag 
\end{align}
\end{description}

Set 
$\deg e_i= \a_i$, 
$\deg f_i= -\a_i$, 
$\deg K_i^{\pm} = 0$ 
and 
$\deg \t_i=0$.
Since 
all the defining relations of $\wt{U}_q$ 
are homogeneous under this degree,  
$\wt{U}_q$ is a 
$Q$-graded algebra, 
and 
$\wt{U}_q$ has the following root space decomposition 
\[\wt{U}_q = \bigoplus_{\a \in Q} \big( \wt{U}_q\big)_\a \,\,, \]
where 
$\big(\wt{U}_q \big)_\a = 
\big\{ u \in \wt{U}_q \bigm| K_i u K_i^- = q^{\lan \a, h_i \ran} u \text{ for }  1 \leq i \leq m \big\}$. 
For $u \in \wt{U}_q$, 
we denote by $\deg(u)=\a$ 
if $u \in (\wt{U}_q)_\a$. 

The following proposition  is clear from definitions. 
\begin{prop} \label{prop-wtU-Ugl}
Let $\wt{I}$ be the two-sided ideal of $\wt{U}_q$ generated by 
\[\t_i - \frac{K_i K_{i+1}^- - K_i^- K_{i+1}}{q- q^{-1}} 
\quad \text{ for }\,\, i=1,\cdots, m-1.\] 
Then  we have the following isomorphism of algebras.  
\[ \wt{U}_q/\wt{I} \cong \CK \otimes_{\QQ(q)} U_q.\]
\end{prop}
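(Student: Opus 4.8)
The plan is to exhibit the isomorphism directly by comparing the two presentations, since both algebras are given by generators and relations; concretely, I would construct mutually inverse algebra homomorphisms using the universal property of presented algebras. The key observation is that the defining relations $(\ref{U-2})$, $(\ref{U-3})$, $(\ref{U-4})$, $(\ref{U-7})$, $(\ref{U-8})$ of $\wt{U}_q$ coincide, generator for generator, with the relations $(\ref{gl-1})$, $(\ref{gl-2})$, $(\ref{gl-3})$, $(\ref{gl-5})$, $(\ref{gl-6})$ of $U_q$, and that the only relation involving $\t_i$ essentially is the commutator relation $(\ref{U-6})$, namely $e_if_j - f_je_i = \d_{ij}\t_i$. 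Passing to the quotient by $\wt{I}$ replaces $\t_i$ by $\tfrac{K_iK_{i+1}^- - K_i^- K_{i+1}}{q - q^{-1}}$, which turns $(\ref{U-6})$ into exactly the commutator relation $(\ref{gl-4})$ of $U_q$.

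First I would define a $\CK$-algebra homomorphism $\f\colon \wt{U}_q \to \CK \otimes_{\QQ(q)} U_q$ on generators by $e_i \mapsto 1 \ot e_i$, $f_i \mapsto 1 \ot f_i$, $K_i^{\pm} \mapsto 1 \ot K_i^{\pm}$ and $\t_i \mapsto 1 \ot \tfrac{K_iK_{i+1}^- - K_i^- K_{i+1}}{q - q^{-1}}$, and check that it respects each defining relation of $\wt{U}_q$. The relations not involving $\t_i$ are carried to the matching relations of $U_q$; relation $(\ref{U-6})$ is carried to $(\ref{gl-4})$ by the very choice of the image of $\t_i$; and relation $(\ref{U-5})$ becomes the identity $K_i \cdot \tfrac{K_jK_{j+1}^- - K_j^- K_{j+1}}{q-q^{-1}} \cdot K_i^- = \tfrac{K_jK_{j+1}^- - K_j^- K_{j+1}}{q-q^{-1}}$, which holds because the $K$'s commute by $(\ref{gl-1})$. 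Thus $\f$ is well defined, and since it kills each generator $\t_i - \tfrac{K_iK_{i+1}^- - K_i^- K_{i+1}}{q-q^{-1}}$ of $\wt{I}$, it factors through $\ol{\f}\colon \wt{U}_q/\wt{I} \to \CK \otimes_{\QQ(q)} U_q$.

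Next I would construct the inverse. By the universal property of extension of scalars it suffices to give a $\QQ(q)$-algebra homomorphism $U_q \to \wt{U}_q/\wt{I}$ and extend it $\CK$-linearly; I define it by sending $e_i, f_i, K_i^{\pm}$ to their classes modulo $\wt{I}$. This is well defined because each relation $(\ref{gl-1})$--$(\ref{gl-3})$, $(\ref{gl-5})$, $(\ref{gl-6})$ holds in the quotient, being the image of the corresponding relation of $\wt{U}_q$, while $(\ref{gl-4})$ holds since $(\ref{U-6})$ gives $e_if_j - f_je_i = \d_{ij}\t_i$ and $\t_i \equiv \tfrac{K_iK_{i+1}^- - K_i^- K_{i+1}}{q-q^{-1}} \pmod{\wt{I}}$. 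Call the resulting $\CK$-algebra map $\p$. Finally I would verify that $\ol{\f}$ and $\p$ are mutually inverse: both composites act as the identity on the generating sets, the only point to check being that $\p\circ\ol{\f}$ returns the class of $\t_i$, which it does precisely because $\t_i$ is congruent mod $\wt{I}$ to the element $\tfrac{K_iK_{i+1}^- - K_i^- K_{i+1}}{q-q^{-1}}$ that $\p$ produces.

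There is essentially no serious obstacle here, matching the paper's assertion that the statement is clear from the definitions; the only points demanding genuine (if routine) attention are bookkeeping ones. I would make sure that in the quotient $\wt{U}_q/\wt{I}$ the generator $\t_i$ is genuinely redundant, so that the quotient is generated by $e_i, f_i, K_i^{\pm}$ alone and exactly matches the generators of $U_q$, and that the auxiliary relation $(\ref{U-5})$ becomes automatically satisfied rather than imposing any new constraint on the quantum-group side; this is the one small compatibility check in the argument.
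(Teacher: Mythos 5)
Your proposal is correct: the paper offers no written proof (it declares the proposition ``clear from definitions''), and your argument is exactly the routine verification that claim is alluding to — matching the two presentations, defining mutually inverse maps on generators, and checking that relation $(\ref{U-6})$ becomes $(\ref{gl-4})$ in the quotient while $(\ref{U-5})$ becomes vacuous since the $K_i$ commute. Nothing to add.
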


\remark 
We note that 
the parameters $\g_1,\cdots,\g_r$ 
do not appear in the definition of $\wt{U}_q$. 
However, 
we will use these parameters 
later when we consider some representations of $\wt{U}_q$ 
or some quotient algebras of $\wt{U}_q$. 
\para 
Let 
$\wt{U}_q^{+}$ (resp. $\wt{U}_q^{-}$) 
be the subalgebra of 
$\wt{U}_q$ 
generated by 
$e_i$ 
(resp. $f_i$) 
for $i=1,\cdots,m-1$,
and 
let 
$\wt{U}_q^0$ 
be the subalgebra of 
$\wt{U}_q$ 
generated by 
$K_i^{\pm}$ 
for 
$i=1,\cdots,m$. 
We also define a Borel subalgebra of
$\wt{U}_q$ as follows.  
Let 
$\wt{\CB}^+$ (resp. $\wt{\CB}^-$) 
be the subalgebra of 
$\wt{U}_q$ 
generated by 
$\wt{U}_q^+$ 
(resp. $\wt{U}_q^-$) 
and 
$\wt{U}_q^0$. 
Lemma \ref{lem-B} and Proposition  \ref{prop-wtU-Ugl} 
imply the following corollary. 

\begin{cor} \label{cor-wtB-B}
There exist the following isomorphisms of algebras. 
\[ \wt{U}_q^{\pm} \cong \CK \otimes_{\QQ(q)} U_q^{\pm}, \quad 
	\wt{U}_q^0 \cong \CK \otimes_{\QQ(q)} U_q^0, \quad 
\wt{\CB}^{\pm} \cong \CK \otimes_{\QQ(q)} \CB^{\pm} .\] 
\end{cor}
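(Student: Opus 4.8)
The plan is to deduce Corollary \ref{cor-wtB-B} from the two facts already established, namely Lemma \ref{lem-B} and Proposition \ref{prop-wtU-Ugl}, by exhibiting each of the listed subalgebras of $\wt{U}_q$ as an algebra given by exactly the same generators and relations as the corresponding subalgebra of $U_q$. First I would observe that by definition $\wt{U}_q^{\pm}$, $\wt{U}_q^0$ and $\wt{\CB}^{\pm}$ are generated inside $\wt{U}_q$ by precisely the generators $e_i$ (resp.\ $f_i$), the $K_i^{\pm}$, and their unions, and that the defining relations of $\wt{U}_q$ that involve only these generators are literally relations (\ref{U-2}),(\ref{U-3}),(\ref{U-7}) (for $\wt{\CB}^+$), relations (\ref{U-2}),(\ref{U-4}),(\ref{U-8}) (for $\wt{\CB}^-$), relation (\ref{U-7}) alone (for $\wt{U}_q^+$), relation (\ref{U-8}) alone (for $\wt{U}_q^-$), and relations (\ref{U-2}) alone (for $\wt{U}_q^0$). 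These are identical, after extending scalars to $\CK$, to the relations listed in Lemma \ref{lem-B}, since the only relations distinguishing $\wt{U}_q$ from $U_q$ are (\ref{U-5}) and (\ref{U-6}), both of which involve either $\t_i$ or a product $e_if_j$ and so do not appear among the defining relations of any of these one-sided subalgebras.

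The key step is therefore to show that each of these subalgebras is \emph{freely} presented by those generators and relations inside $\wt{U}_q$, i.e.\ that no additional relations are imposed by the ambient algebra $\wt{U}_q$. I would establish this by passing through the quotient map $\pi\colon\wt{U}_q\twoheadrightarrow\CK\otimes_{\QQ(q)}U_q$ of Proposition \ref{prop-wtU-Ugl}. On the one hand, Lemma \ref{lem-B} gives the abstract presentations of $U_q^{\pm}$, $U_q^0$, $\CB^{\pm}$, so there are natural surjections from the abstractly presented algebras onto the subalgebras of $\wt{U}_q$; on the other hand, the composite of such a surjection with $\pi$ lands in the corresponding subalgebra of $\CK\otimes U_q$, which by Lemma \ref{lem-B} is isomorphic to $\CK\otimes$ of the abstract algebra. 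The point is that $\pi$ restricts to an isomorphism on each of these subalgebras: this is because the generator $\t_i$ is sent by $\pi$ into $\wt{U}_q^0$-combinations (via the ideal $\wt{I}$), so the kernel $\wt{I}$ meets $\wt{U}_q^{\pm}$, $\wt{U}_q^0$, $\wt{\CB}^{\pm}$ trivially. Concretely, $\wt{I}$ is generated by the elements $\t_i-(K_iK_{i+1}^- - K_i^-K_{i+1})/(q-q^{-1})$, and because $\t_i$ has no occurrence in these subalgebras, the restriction of $\pi$ to each of them is injective.

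To make the triviality of $\wt{I}\cap(\text{subalgebra})$ rigorous, the cleanest route is to use the $Q$-grading and a basis argument. I would note that by Lemma \ref{lem-B}(i) the algebra presented by $e_i$ and relation (\ref{U-7}) is $U_q^+$; composing the surjection $U_q^+\to\wt{U}_q^+$ with $\pi|_{\wt{U}_q^+}\colon\wt{U}_q^+\to(\CK\otimes U_q)^+\cong\CK\otimes U_q^+$ gives a surjective endomorphism of $\CK\otimes U_q^+$ which on generators is the identity, hence is the identity. This forces both maps in the composite to be isomorphisms, giving $\wt{U}_q^+\cong\CK\otimes_{\QQ(q)}U_q^+$; the cases of $\wt{U}_q^-$, $\wt{U}_q^0$, $\wt{\CB}^{\pm}$ are identical, using parts (ii)--(iv) of Lemma \ref{lem-B} respectively and reading off that the relevant defining relations of $\wt{U}_q$ restrict to exactly those listed there.

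The step I expect to be the genuine obstacle, as opposed to bookkeeping, is verifying that $\pi$ is injective on $\wt{\CB}^{\pm}$ rather than merely on $\wt{U}_q^{\pm}$ and $\wt{U}_q^0$ separately; a priori the ideal $\wt{I}$ could introduce a relation mixing $e_i$'s (or $f_i$'s) with $K_j^{\pm}$'s that is invisible on the one-sided pieces but present on the Borel subalgebra. To handle this I would rely on the fact that the $\t_i$ appearing in the generators of $\wt{I}$ lie in degree $0$ and that the triangular decomposition of $U_q$ (quoted just after Lemma \ref{lem-B}) lifts to a corresponding statement for $\wt{U}_q$, so that a nonzero element of $\wt{\CB}^{\pm}$ cannot lie in $\wt{I}$: its image under $\pi$ retains a nonzero $U_q^{\pm}\otimes U_q^0$ component. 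Once this compatibility with the triangular decomposition is in place, all five isomorphisms follow simultaneously and the corollary is proved.
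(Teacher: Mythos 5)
Your proof is correct and is essentially the paper's own argument: Lemma \ref{lem-B} yields a surjection $\CK\otimes_{\QQ(q)}\CB^{\pm}\ra\wt{\CB}^{\pm}$ (and likewise for the other subalgebras), restricting the surjection of Proposition \ref{prop-wtU-Ugl} gives a surjection back, and the composite is the identity on generators, hence the identity, forcing both maps to be isomorphisms. The worry in your final paragraph is moot: that composite argument applies verbatim to $\wt{\CB}^{\pm}$ using Lemma \ref{lem-B} (iii)--(iv), so injectivity of $\pi$ on the Borel subalgebras is a consequence rather than an input, and no triangular decomposition of $\wt{U}_q$ (which is not established at this point of the paper) is needed to rule out mixed relations.
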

\begin{proof}
We only show  an isomorphism for Borel subalgebras. 
Other isomorphisms can be shown in a similar way. 
By Lemma \ref{lem-B}, 
we have a surjective homomorphism of algebras  
$\CK \otimes_{\QQ(q)} \CB^{\pm} \ra \wt{\CB}^{\pm}$. 
On the other hand, 
by restricting the surjection 
$\wt{U}_q \ra \CK \otimes_{\QQ(q)} U_q$ 
in Proposition \ref{prop-wtU-Ugl} to 
$\wt{\CB}^{\pm}$, 
we have a surjection 
$\wt{\CB}^{\pm} \ra \CK \otimes_{\QQ(q)} \CB^{\pm}$. 
Thus, we have 
$\wt{\CB}^+ \cong \CK \otimes_{\QQ(q)} \CB^+$. 
\end{proof}

%

%

%%%%%%%%%%%%%%%%%%%%%%%%%%%%%%%%%%%%%%%%%%%%%%%%%%%%%%%%%%%%%%%%%%%%%%%%%%%%%%%%%%%%%%%%%%%%%%%%%%%%%%%%%%%%%%%%%%%

\para \label{def O}
For  
$\eta=(\eta_1,\cdots,\eta_{m-1})$ 
such that 
$\eta_i \in \wt{U}_q^- \wt{U}_q^0 \wt{U}_q^+$ 
with  
$\deg (\eta_i)=0$,  
let 
$\wh{\CO}^{\eta}$ 
be the category 
consisting of 
$\wt{U}_q$-modules 
satisfying the following conditions (a) and (b): 
\begin{description}
\item[(a)] 
$M \in \wh{\CO}^{\eta}$ 
has the weight space decomposition 
\[ M= \bigoplus_{\mu \in P} M_\mu, \]
where 
$M_\mu =\big\{ v \in M \bigm| K_i \cdot v =q^{\lan \mu, h_i \ran} v  
\text{ for } 1 \leq  i \leq m \big\}$.  

\item[(b)] 
For 
$M \in \wh{\CO}^{\eta}$ and $i=1,\cdots,m-1$, 
it holds that 
$(\t_i - \eta_i) \cdot M =0.$

\end{description}
Let $\wh{\CO}^{\eta}_{\text{tri}}$ 
be the full subcategory of $\wh{\CO}^{\eta}$ 
satisfying the following additional condition: 
\begin{description}
\item[(c)] 
For each $u \in \wt{U}_q$, 
there exists an element 
$x \in \wt{U}_q^- \wt{U}_q^0 \wt{U}_q^+$ 
such that 
\[ (u-x) \cdot M=0 \quad \text{ for any }M \in \wh{\CO}^{\eta}_{\text{tri}}.\]
\end{description}
By this definitions, 
in $\wh{\CO}^{\eta}_{\text{tri}}$, 
the action of $\wt{U}_q$ has a triangular decomposition. 

Finally, 
let $\CO^\eta$ be the full subcategory of $\wh{\CO}^{\eta}$ 
satisfying the following additional conditions: 
\begin{description}
\item[(d)] 
For any $M \in \CO^\eta$, 
the dimension of $M$ is finite. 

\item[(e)] 
For any $M \in \CO^\eta$, 
we have 
\[ M_\mu = 0 \qquad \text{unless } \mu \in P_{\geq 0},\]
where 
$P_{\geq 0} = \bigoplus_{i=1}^m \ZZ_{\geq 0} \, \ve_i$. 
\end{description}
As is seen later, 
$\CO^\eta$ is a full subcategory of $\wh{\CO}^\eta_{\text{tri}}$. 
Moreover, 
we will construct all simple objects of $\CO^\eta$ 
through some quotient algebras of $\wt{U}_q$ 
(Theorem \ref{thm into Oeta}). 

\remarks

(\roi)  
If 
$\eta_i \in \wt{U}^0_q$ 
for any $i=1,\cdots,m-1$,  
we have 
$\wh{\CO}^{\eta}=\wh{\CO}^{\eta}_{\text{tri}}$. 
 
(\roii) 
Let 
$\wt{I}^{\eta}$ 
be the two-sided ideal of $\wt{U}_q$ generated by 
$(\t_i - \eta_i)$, 
and 
put 
$\wt{U}_q^{\eta} =\wt{U}_q/\wt{I}^{\eta}$. 
Then, 
we can regard a $\wt{U}_q^{\eta}$-module 
as a $\wt{U}_q$-module 
through the natural surjection. 
Clearly, 
any $\wt{U}_q^\eta$-module 
equipped with the weight space decomposition 
is contained in $\wh{\CO}^\eta$. 
On the other hand, 
a $\wt{U}_q$-module $M$ contained in $\wh{\CO}^{\eta}$ 
is regarded as a $\wt{U}_q^\eta$-module 
since we have  that 
$\wt{I}^\eta \cdot M=0$  by the condition (b).  
Thus, 
the category 
$\wh{\CO}^\eta$ 
coincides with the category 
consisting of $\wt{U}_q^\eta$-modules 
which have weight space decompositions.  

(\roiii) 
When $\CK=\QQ(q)$ and $\eta_i=(K_i K_{i+1}^- - K_i^- K_{i+1})/(q- q^{-1}) $ 
for any $i=1,\cdots, m-1$, 
$\wh{\CO}^\eta$  
coincides with 
the category of $U_q$-modules 
having weight space decompositions.

%%%%%%%%%%%%%%%%%%%%%%%%%%%%%%%%%%%%%%%%%%%%%%%%%%%%%%%%%%%%%%%%%%%%%%%%%%%%%%%%%%%%%%%%%%%%%%%%%%%%%%%%%%%%%%%%%%%%

\para \label{def-highest}
Next, 
we introduce a notion of 
highest weight modules. 
Let 
$\eta$ 
be as in 
\ref{def O}. 
We call 
$\wt{U}_q$-module 
$M_\la^{\eta}$ 
a highest weight module of highest weight $\la \in P$ 
associated to $\eta$ 
if 
there exists an element $v_\la \in M_\la^{\eta}$ 
satisfying the following conditions: 
\begin{align}
& u \cdot v_\la=0  &&\text{ for any } u \in \wt{U}_q \text{ such that }\label{hi-1} \\
	&&& \quad \deg(u)=\sum_{i=1}^{m-1}d_i \a_i \text{ with } d_i>0 \text{ for some } i,  \notag \\
& K_i \cdot v_\la =q^{\lan \la,h_i \ran} v_\la  &&\text{ for }i=1,\cdots,m, \label{hi-2}\\
& \wt{U}_q \cdot v_\la = M_\la^{\eta}, \label{hi-3}\\
& (\t_i - \eta_i)  \cdot M_\la^{\eta} =0  &&\text{ for } i=1,\cdots,m-1, \label{hi-4}
\end{align}
We call the above element $v_\la$ 
a highest weight vector of 
$M_\la^{\eta}$. 

\remarks 

(\roi)  
Note that,  
since 
we take 
$\eta_i \in \wt{U}_q^- \wt{U}_q^0 \wt{U}_q^+$ 
such that  
$\deg(\eta_i)=0$, 
$(\ref{hi-1})$, $(\ref{hi-2})$ and $(\ref{hi-4})$ 
imply that 
$\t_i \cdot v_\la \in \CK \cdot v_\la$. 

(\roii) 
A highest weight module 
$M_\la^{\eta}$ 
is contained in 
$\wh{\CO}^{\eta}$. 

(\roiii) 
If a highest weight module $M_\la^{\eta}$ is contained in $\wh{\CO}^{\eta}_{\text{tri}}$, 
we can replace $(\ref{hi-1})$ with  
\begin{align}
e_i \cdot v_\la =0 \quad \text{ for } i=1,\cdots,m-1. 
\end{align}

(\roiv)
For a $\wt{U}_q^{\eta}$-module $M$, 
if there exists an element $v_\la \in M$ for some $\la \in P$ 
satisfying the conditions 
(\ref{hi-1})-(\ref{hi-3}), 
$M$ is a highest weight module of highest weight $\la \in P$ 
associated to $\eta$. 
In particular, 
if $\eta_i=(K_i K_{i+1}^- - K_i^- K_{i+1})/(q- q^{-1}) $ 
for any $i=1,\cdots, m-1$ (namely, $\wt{U}^\eta_q \cong U_q$),  
the definition of a highest weight module in \ref{def-highest} 
coincides with 
the usual definition of a highest weight module of $U_q(\Fgl_m)$.

\begin{lem}
If 
a highest weight module 
$M_\la^{\eta}$ 
is contained in $\wh{\CO}^{\eta}_{\text{tri}}$, 
we have the followings. 
\begin{enumerate}
\item 
The dimension of the weight space $(M_\la^{\eta})_\la$ 
with the highest weight $\la$ 
is equal to $1$. 
\item 
$M_\la^{\eta}$ 
has the unique maximal submodule. 
\end{enumerate}
\end{lem}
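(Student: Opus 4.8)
The plan is to exploit the triangular structure guaranteed by membership in $\wh{\CO}^{\eta}_{\text{tri}}$, which by condition (c) of \ref{def O} ensures that $\wt{U}_q \cdot v_\la = \wt{U}_q^- \wt{U}_q^0 \wt{U}_q^+ \cdot v_\la$. First I would analyze how $\wt{U}_q^0 \wt{U}_q^+$ acts on $v_\la$. By the highest weight condition (\ref{hi-1}) (or its equivalent form (\roiii) of the remarks in \ref{def-highest}, namely $e_i \cdot v_\la = 0$), any element of $\wt{U}_q^+$ of strictly positive degree kills $v_\la$, so $\wt{U}_q^+ \cdot v_\la = \CK \cdot v_\la$. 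Since each $K_i^{\pm}$ acts on $v_\la$ by the scalar $q^{\pm \lan \la, h_i \ran}$ from (\ref{hi-2}), we get $\wt{U}_q^0 \wt{U}_q^+ \cdot v_\la = \CK \cdot v_\la$. Consequently $M_\la^{\eta} = \wt{U}_q^- \cdot v_\la$, and applying $\wt{U}_q^-$ (which is negatively $Q$-graded, with $\deg f_i = -\a_i$) lowers the weight: the weight-$\la$ component can only come from the degree-$0$ part of $\wt{U}_q^-$, which is $\CK$. This gives $(M_\la^{\eta})_\la = \CK \cdot v_\la$, so its dimension is at most $1$; it is exactly $1$ because $v_\la \neq 0$ (otherwise $M_\la^{\eta} = \wt{U}_q \cdot v_\la = 0$). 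This proves (i).

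For (ii), the standard strategy is to show that the sum of all proper (equivalently, all weight-graded) submodules is itself proper. Every submodule $N \subseteq M_\la^{\eta}$ inherits the weight space decomposition from (a) of \ref{def O}, since $K_i$ acts diagonally; thus $N = \bigoplus_{\mu} (N \cap (M_\la^{\eta})_\mu)$. The key observation is that a submodule $N$ is proper if and only if $(N)_\la = 0$: indeed, if $(N)_\la \neq 0$, then by (i) we have $(N)_\la = (M_\la^{\eta})_\la = \CK \cdot v_\la$, whence $v_\la \in N$ and $N = \wt{U}_q \cdot v_\la = M_\la^{\eta}$ by (\ref{hi-3}). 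Therefore I would define $N_{\max}$ to be the sum of all submodules $N$ with $(N)_\la = 0$. Since each summand has trivial $\la$-component, so does the sum (the $\la$-component of a sum of graded submodules is the sum of their $\la$-components), hence $(N_{\max})_\la = 0$ and $N_{\max}$ is proper. By construction $N_{\max}$ contains every proper submodule, so it is the unique maximal submodule.

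The main obstacle, and the step requiring the most care, is justifying that $\wt{U}_q^0 \wt{U}_q^+ \cdot v_\la = \CK \cdot v_\la$ and hence $M_\la^{\eta} = \wt{U}_q^- \cdot v_\la$. Here I must use the hypothesis $M_\la^{\eta} \in \wh{\CO}^{\eta}_{\text{tri}}$ in an essential way: condition (c) lets me rewrite an arbitrary $u \in \wt{U}_q$ acting on $M_\la^{\eta}$ as an element $x \in \wt{U}_q^- \wt{U}_q^0 \wt{U}_q^+$ with the same action, so that $M_\la^{\eta} = \wt{U}_q \cdot v_\la = \wt{U}_q^- \wt{U}_q^0 \wt{U}_q^+ \cdot v_\la$. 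Without the triangular decomposition available, one cannot guarantee that positive-degree generators can be commuted past to annihilate $v_\la$, since in $\wt{U}_q$ the relation (\ref{U-6}) produces $\t_i$ rather than a genuinely Cartan-type element, and $\eta_i$ need only lie in $\wt{U}_q^- \wt{U}_q^0 \wt{U}_q^+$ rather than in $\wt{U}_q^0$. Once this reduction is in place, the grading argument is routine, but I would take care to note explicitly that the degree-$0$ subspace of $\wt{U}_q^-$ is exactly $\CK \cdot 1$, since $\wt{U}_q^-$ is generated by the $f_i$ of strictly negative degree.
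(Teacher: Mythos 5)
Your proposal is correct and follows essentially the same route as the paper: part (ii) is word-for-word the paper's argument (a proper submodule cannot meet the weight-$\la$ line, so the sum of all proper submodules is still proper), and for part (i) you supply in full the triangular-decomposition-plus-grading argument that the paper dismisses as ``clear from definitions.'' Your explicit use of condition (c) to reduce to $M_\la^{\eta}=\wt{U}_q^-\cdot v_\la$ is exactly the point the paper's subsequent remark identifies as the reason the hypothesis $M_\la^\eta\in\wh{\CO}^{\eta}_{\text{tri}}$ is needed, so nothing is missing or superfluous.
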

\begin{proof}
(\roi) is clear from definitions. 
By (\roi) and $(\ref{hi-3})$, 
a proper $\wt{U}_q$-submodule of 
$M_\la^{\eta}$ 
does not have a weight $\la$. 
Thus, 
the sum of all proper $\wt{U}_q$-submodules of 
$M_\la^{\eta}$ 
does not have the weight $\la$, 
and 
this is the unique maximal submodule of $M_\la^{\eta}$. 
\end{proof}

\remark 
When a highest weight module $M_\la^{\eta}$ 
with 
a highest weight vector $v_\la$  
is \textbf{not} contained in $\wh{\CO}_{\text{tri}}^{\eta}$, 
it may occur that 
$u \cdot v_\la \not\in \CK v_\la$ 
and 
$u \cdot v_\la$ has the weight $\la$ 
for some $u \in \wt{U}_q$ such that 
$\deg (u) =0$.

%%%%%%%%%%%%%%%%%%%%%%%%%%%%%%%%%%%%%%%%%%%%%%%%%%%%%%%%%%%%%%%%%%%%%%%%%%%%%%%%%%%%%%%%%%%%%%%%%%%%%%%%%%%%%%%%%%%

\para 
Let 
$J_\la^{\eta}$ 
be the left ideal of $\wt{U}_q$ generated by 
\begin{align*}
&u \in \wt{U}_q && \text{ such that } \deg(u)=\sum_{i=1}^{m-1}d_i \a_i \text{ with } d_i>0 \text{ for some } i,  \\
& K_i-q^{\lan \la,h_i \ran}1 &&\text{ for }i=1,\cdots,m, \\
&(\t_i- \eta_i)\cdot u && \text{ for } i=1,\cdots,m-1 \text{ and }u \in \wt{U}_q, \\
\end{align*}
Put 
$V_\la^{\eta}=\wt{U}_q/ J_{\la}^{\eta}$, 
then 
one sees that 
$V_\la^{\eta}$ 
is a highest weight module of a highest weight $\la$ 
associated to $\eta$ 
with a highest weight vector 
$1 + J_\la^{\eta}$.   
We call $V_\la^{\eta}$ 
a Verma module of $\wt{U}_q$. 
We have the following lemma.

\begin{lem}
Any highest weight module $M_\la^{\eta}$ 
of a highest weight $\la$ 
associated to $\eta$ 
is a homomorphic image of 
$V_\la^{\eta}$. 
\end{lem}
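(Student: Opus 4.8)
The plan is to invoke the universal property of $V_\la^{\eta}$ directly, realizing the desired surjection as the factorization through $J_\la^{\eta}$ of the cyclic map attached to a highest weight vector.

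First I would fix a highest weight vector $v_\la \in M_\la^{\eta}$ and define the left $\wt{U}_q$-module homomorphism $\phi : \wt{U}_q \ra M_\la^{\eta}$, $u \mapsto u \cdot v_\la$, where $\wt{U}_q$ acts on itself by left multiplication. Condition $(\ref{hi-3})$ guarantees that $\phi$ is surjective. The main task is then to verify the inclusion $J_\la^{\eta} \subseteq \Ker \phi$, so that $\phi$ descends to a surjection $\bar\phi : V_\la^{\eta} = \wt{U}_q / J_\la^{\eta} \ra M_\la^{\eta}$.

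Since $J_\la^{\eta}$ is a \emph{left} ideal, it suffices to check that each listed generator annihilates $v_\la$; then any element $x = \sum_k a_k g_k \in J_\la^{\eta}$ satisfies $\phi(x) = \sum_k a_k \cdot (g_k \cdot v_\la) = 0$. For the generators $u$ of positive degree this is precisely $(\ref{hi-1})$, and for $K_i - q^{\lan \la, h_i \ran} 1$ it is $(\ref{hi-2})$. The one point requiring the full strength of the hypotheses is the third family of generators $(\t_i - \eta_i) \cdot u$: here I would write $(\t_i - \eta_i) u \cdot v_\la = (\t_i - \eta_i) \cdot (u v_\la)$ and apply $(\ref{hi-4})$, which asserts that $\t_i - \eta_i$ kills \emph{all} of $M_\la^{\eta}$, not merely $v_\la$. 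This is the crux of the argument, and also explains why the Verma module is defined using $(\t_i - \eta_i) \cdot u$ for every $u \in \wt{U}_q$ rather than $(\t_i - \eta_i)$ alone: because $\eta_i$ lies in $\wt{U}_q^- \wt{U}_q^0 \wt{U}_q^+$ and need not be central, annihilation of a single generator would not propagate across the whole module.

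Finally, I would observe that $\bar\phi$ carries the highest weight vector $1 + J_\la^{\eta}$ of $V_\la^{\eta}$ to $v_\la$, exhibiting $M_\la^{\eta}$ as the required homomorphic image of $V_\la^{\eta}$. No genuine obstacle arises in this argument; the only care needed is the bookkeeping for the third generator type, where the global annihilation condition $(\ref{hi-4})$ rather than a condition on $v_\la$ alone must be used.
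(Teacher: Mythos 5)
Your proposal is correct and follows essentially the same route as the paper: regard $\wt{U}_q$ as a module over itself, map $1 \mapsto v_\la$ to get a surjection onto $M_\la^{\eta}$, verify $J_\la^{\eta} \subseteq \Ker\phi$, and pass to the quotient $V_\la^{\eta}$. The paper compresses the kernel verification into "one can check"; your generator-by-generator check, including the use of $(\ref{hi-4})$ on all of $M_\la^{\eta}$ for the generators $(\t_i-\eta_i)\cdot u$, is exactly the intended argument.
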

\begin{proof}
Let 
$M_\la^{\eta}$ 
be a highest weight module 
of a highest weight $\la$ 
associated to $\eta$ 
with a highest weight vector $v_\la$. 
We regard $\wt{U}_q$ as a $\wt{U}_q$-module 
by left multiplications. 
Then, 
we have a natural surjective homomorphism of $\wt{U}_q$-modules 
$\wt{U}_q \ra M_\la^{\eta}$ 
such that 
$1 \mapsto v_\la$. 
Moreover, 
one can check that 
$J_\la^{\eta}$ 
is included in 
the kernel of this homomorphism. 
Thus, 
this homomorphism induces  
the surjective homomorphism 
from 
$V_\la^{\eta}$ 
to 
$M_\la^{\eta}$. 
\end{proof}

\para 
Finally, 
we consider an $\CA$-form of $\wt{U}_q$ as follows. 
We use the same notations in \ref{divided}. 
Let 
$_\CA \wt{U}_q$ 
be the $\CA$-subalgebra 
of $\wt{U}_q$ 
generated by all 
$e_i^{(k)},f_i^{(k)},K_i^{\pm},  \t_i$ and $\left[ \begin{matrix} K_i; 0 \\ t \end{matrix} \right]$. 
We also define the   
$\CA$-subalgebra $_\CA \wt{\CB}^+$ (resp. $_\CA \wt{\CB}^-$) 
of $\wt{U}_q$ 
generated by 
all $e_i^{(k)}$ (resp. $f_i^{(k)}$), 
$K_i^{\pm}$ and $\left[ \begin{matrix} K_i ; 0 \\ t \end{matrix} \right]$. 
Then, an isomorphism 
$_\CA \wt{\CB}^{\pm} \cong\, \CA \otimes_\CZ \,_\CZ\CB^{\pm}$ 
follows from 
Corollary \ref{cor-wtB-B}.

%%%%%%%%%%%%%%%%%%%%%%%%%%%%%%%%%%%%%%%%%%%%%%%%%%%%%%%%%%%%%%%%%%%%%%%%%%%%%%%%%%%%%%%%%%%%%%%%%%%%%%%%%%%%%%%%%%%%

%%%%%%%%%%%%%%%%%%%%%%%%%%%%%%%%%%%%%%%%%%%%%%%%%%%%%%%%%%%%%%%%%%%%%%%%%%%%%%%%%%%%%%%%%%%%%%%%%%%%%%%%%%%%%%%%%%%%

\section{Algebra $\CS_q$}
\label{Sq} 

Recall that 
$P=\bigoplus_{i=1}^m \ZZ \ve_i$ 
is the weight lattice of $\Fgl_m$. 
We can identify $P$ with 
a set of $m$-tuple of integers 
$\ZZ^m $ 
by the correspondence 
\[P \ni \la = \sum_{i=1}^m \la_i \ve_i \mapsto (\la_1, \cdots, \la_m) \in \ZZ^m.\] 
Under this identification, 
we use the notation $\la=(\la_1,\cdots,\la_m)$ 
for $\la \in P$. 
Let $\vL$ be a finite subset of 
$P_{\geq 0} = \bigoplus_{i=1}^m \ZZ_{\geq 0} \, \ve_i$. 
In this section, 
we consider a certain quotient algebra $S_q= \, S_q(\vL)$ of 
$ \wt{U}_q$ 
with respect to $\vL$. 

\para 
We define the associative algebra 
$ \wt{\CS}_q=\,  \wt{\CS}_q(\vL)$ 
over $\CK$ with 1 
by following generators and defining relations: 
\begin{description}
\item[generators]
$E_i, F_i$ ($1\leq i \leq m-1$), $1_\la$ ($\la \in \vL$), $\t_i^\la $ ($1\leq i \leq m-1, \, \la \in \vL$). 
\item[defining relations]
\begin{align}
&1_\la 1_\mu = \d_{\la \mu} 1_\la, \quad \sum_{\la \in \vL} 1_\la =1, \label{Sq-1} \\
& \t_i^\la 1_\mu = 1_\mu \t_i^\la= \d_{\la\mu} \t_i^\la, \label{Sq-2} \\
&E_i 1_\la= 
	\begin{cases} 
		1_{\la + \a_i} E_i & \text{if } \la+\a_i \in \vL \\ 
		0 & \text{otherwise} 
	\end{cases}, \label{Sq-3}\\
&F_i 1_\la= 
	\begin{cases} 
		1_{\la - \a_i} F_i & \text{if } \la-\a_i \in \vL \\ 
		0 & \text{otherwise} 
	\end{cases}, \label{Sq-4}\\
&1_\la E_i= 
	\begin{cases} 
		E_i 1_{\la - \a_i} & \text{if } \la-\a_i \in \vL \\ 
		0 & \text{otherwise} 
	\end{cases}, \label{Sq-5}\\
&1_\la F_i= 
	\begin{cases} 
		F_i 1_{\la + \a_i} & \text{if } \la+\a_i \in \vL \\ 
		0 & \text{otherwise} 
	\end{cases}, \label{Sq-6}\\
&E_i F_j - F_j E_i = \d_{ij}  
	\Big( 
		\sum_{\la \in \vL} \t_i^\la 
	\Big), \label{Sq-7}\\ 
&E_{i \pm 1}E_i^2 - (q+q^{-1}) E_i E_{i \pm 1} E_i + E_i^2 E_{i \pm 1}=0, \label{Sq-8}\\
& E_i E_j= E_j E_i \qquad (|i-j| \geq 2), \notag \\
&F_{i \pm 1}F_i^2 - (q+q^{-1}) F_i F_{i \pm 1} F_i + F^2_i F_{i \pm 1}=0,  \label{Sq-9}\\
&F_i F_j= F_j F_i \qquad ( |i-j|\geq 2).  \notag 
\end{align}
\end{description}

We can prove the following proposition 
in a similar way as in 
\cite[Proposition 3.4]{Do}. 

\begin{prop} \label{prop-sur-wtUq-wtSq}
There exists a surjective homomorphism of algebras 
\[ \wt{\Psi } : \,  \wt{U}_q \ra \,  \wt{\CS}_q \]
such that 
$\wt{\Psi}(e_i)=E_i,\, 
\wt{\Psi} (f_i)=F_i, \, 
\wt{\Psi} (K_i^{\pm})= \sum_{\la \in \vL}q^{\pm \la_i} 1_\la, \, 
\wt{\Psi} (\t_i)= \sum_{\la \in \vL} \t_i^\la$.
\end{prop}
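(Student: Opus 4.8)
The plan is to invoke the presentation of $\wt{U}_q$ directly: since $\wt{U}_q$ is given by the generators $e_i,f_i,K_i^{\pm},\t_i$ subject only to the relations $(\ref{U-2})$--$(\ref{U-8})$, it suffices to check that the proposed images satisfy these same relations inside $\wt{\CS}_q$; the universal property then yields a unique algebra homomorphism $\wt{\Psi}$, after which surjectivity is checked separately. Throughout I use $\a_j=\ve_j-\ve_{j+1}$, so that $\lan\a_j,h_i\ran=\d_{ij}-\d_{i,j+1}$ is exactly the $i$-th coordinate of $\a_j$ regarded in $\ZZ^m$; this is the bookkeeping device that makes the computations go through.

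The relation check is routine using $(\ref{Sq-1})$--$(\ref{Sq-9})$. Writing $\wt{\Psi}(K_i^{\pm})=\sum_{\mu\in\vL}q^{\pm\mu_i}1_\mu$ and using orthogonality $1_\la 1_\mu=\d_{\la\mu}1_\la$ with completeness $\sum_\la 1_\la=1$, relation $(\ref{U-2})$ is immediate, since $\wt{\Psi}(K_i)\wt{\Psi}(K_j)=\sum_\la q^{\la_i+\la_j}1_\la$ is symmetric in $i,j$ and $\wt{\Psi}(K_i)\wt{\Psi}(K_i^-)=\sum_\la 1_\la=1$. For $(\ref{U-3})$ I push $\wt{\Psi}(K_i)$ past $E_j$ by $(\ref{Sq-5})$: from $1_\mu E_j=E_j 1_{\mu-\a_j}$ and $\bigl(\mu-(\mu-\a_j)\bigr)_i=\lan\a_j,h_i\ran$ one gets $\wt{\Psi}(K_i)E_j\wt{\Psi}(K_i^-)=q^{\lan\a_j,h_i\ran}E_j$, and $(\ref{U-4})$ follows identically via $(\ref{Sq-6})$. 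Relation $(\ref{U-5})$ holds because $1_\mu\t_j^\la=\d_{\la\mu}\t_j^\la$ by $(\ref{Sq-2})$, giving $\wt{\Psi}(K_i)\t_j^\la\wt{\Psi}(K_i^-)=q^{\la_i}q^{-\la_i}\t_j^\la=\t_j^\la$; summing over $\la$ yields $\wt{\Psi}(K_i\t_j K_i^-)=\wt{\Psi}(\t_j)$. Relation $(\ref{U-6})$ is precisely $(\ref{Sq-7})$ combined with $\wt{\Psi}(\t_i)=\sum_\la\t_i^\la$, and the Serre relations $(\ref{U-7})$, $(\ref{U-8})$ match $(\ref{Sq-8})$, $(\ref{Sq-9})$ verbatim. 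This establishes that $\wt{\Psi}$ is well defined.

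For surjectivity, $E_i,F_i\in\Im\wt{\Psi}$ by construction, and once each idempotent $1_\la$ is known to lie in $\Im\wt{\Psi}$ we recover $\t_i^\la=1_\la\,\wt{\Psi}(\t_i)$ from $(\ref{Sq-2})$. Thus everything reduces to showing $1_\la\in\Im\wt{\Psi}$. The image contains the commutative subalgebra $B$ generated by the $\wt{\Psi}(K_i^{\pm})$, which acts on the finite-dimensional space $\bigoplus_{\mu\in\vL}\CK\,1_\mu$ diagonally, with $\wt{\Psi}(K_i)$ having scalar $q^{\mu_i}$ on $1_\mu$. Because $q$ is an indeterminate (so $q^a=q^b$ forces $a=b$), distinct $\mu\in\vL$ yield distinct tuples $(q^{\mu_1},\dots,q^{\mu_m})$; hence $B$ separates the points $\mu\in\vL$. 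Since $\CK$ is infinite and $\vL$ is finite, a point-separating unital subalgebra of $\CK^{\lvert\vL\rvert}$ is the whole algebra (equivalently, multivariate interpolation produces a polynomial $P$ with $P(q^{\mu_1},\dots,q^{\mu_m})=\d_{\la\mu}$, whence $P(\wt{\Psi}(K_1),\dots,\wt{\Psi}(K_m))=1_\la$). Therefore each $1_\la\in B\subseteq\Im\wt{\Psi}$, and $\wt{\Psi}$ is surjective.

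The one genuinely nontrivial point is this last separation-of-weights step: the relation verification is mechanical, but surjectivity hinges on recovering the orthogonal idempotents $1_\la$ from $\wt{\Psi}(\wt{U}_q^0)$ alone, which works exactly because $\vL$ is finite and the values $q^{\mu_i}$ separate the weights. This is precisely the mechanism underlying \cite[Proposition 3.4]{Do}, and the proof is modeled on it.
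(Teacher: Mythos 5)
Your proof is correct, and its skeleton matches the paper's: the well-definedness check is the same routine verification of (\ref{U-2})--(\ref{U-8}) against (\ref{Sq-1})--(\ref{Sq-9}), and you reduce surjectivity to showing $1_\la\in\Im\wt{\Psi}$ via exactly the observation the paper makes, namely $\t_i^\la=\wt{\Psi}(\t_i)1_\la$ from (\ref{Sq-2}). Where you genuinely diverge is in how the idempotents are produced from the image of the $K_i^{\pm}$. The paper does this in Lemma \ref{lem-1-lamda}: it evaluates the Gaussian-binomial elements on weight idempotents, $\left[\begin{smallmatrix} K_i;0 \\ t \end{smallmatrix}\right]1_\la=\left[\begin{smallmatrix} \la_i \\ t \end{smallmatrix}\right]1_\la$ (zero when $t>\la_i$), and then runs a downward induction on the finite poset $(\vL,\succeq)$ using the elements $K_\la$ of (\ref{def Kla}), obtaining $1_\la$ as a unitriangular $\CZ$-linear combination of the $\wt{\Psi}(K_\mu)$. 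You instead use generic separation of weights: since $q$ is an indeterminate, the tuples $(q^{\mu_1},\dots,q^{\mu_m})$ are pairwise distinct, so an interpolation polynomial $P$ with $P(q^{\mu_1},\dots,q^{\mu_m})=\d_{\la\mu}$ exists, and the formal identity $P(\wt{\Psi}(K_1),\dots,\wt{\Psi}(K_m))=\sum_{\mu\in\vL}P(q^{\mu_1},\dots,q^{\mu_m})1_\mu=1_\la$ follows purely from orthogonality and completeness of the $1_\mu$ (so the possible vanishing of some $1_\mu$, which makes your phrase ``subalgebra of $\CK^{\lvert\vL\rvert}$'' slightly imprecise, does no harm). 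Your route is more elementary and more general: it uses no partial order and no positivity of $\vL$, and it is essentially the argument of Doty that the paper itself prescribes for arbitrary finite type in Remarks \ref{remarks-cartan} (ii). What it gives up is integrality: your interpolation polynomial carries denominators $(q^{\la_i}-q^{\mu_i})$, so it exhibits $1_\la$ only in the $\CK$-span of the image, whereas Lemma \ref{lem-1-lamda} produces $1_\la$ with $\CZ$-coefficients in the images of the divided-power generators $\left[\begin{smallmatrix} K_i;0 \\ t \end{smallmatrix}\right]$ of $_\CA\wt{U}_q$. That refinement is irrelevant for Proposition \ref{prop-sur-wtUq-wtSq} as stated over $\CK$, but it is what the paper later relies on to get $\Psi(\,_\CA\wt{U}_q)=\,_\CA\CS_q$ in \S\ref{specialization}; so your argument proves the proposition but could not be substituted for Lemma \ref{lem-1-lamda} wholesale.
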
 

\begin{proof}
In order to show that 
$\wt{\Psi}$ is well-defined, 
we should check the defining relations of $\wt{U}_q$ in the images of $\wt{\Psi}$, 
and we see them in direct calculations. 
Note that 
$\t_i^\la= \big(\sum_{\mu \in \vL} \t_i^\mu \big) 1_\la= \wt{\Psi}(\t_i) 1_\la$ 
by (\ref{Sq-2}). 
Thus, 
in order to prove 
that $\wt{\Psi}$ is surjective, 
it is enough to show that 
$1_\la$ ($\la \in \vL$) 
is generated by the image of $K_i$ ($i=1,\cdots,m$). 
This will be proven in Lemma \ref{lem-1-lamda}.

\end{proof}

We define a partial order \lq\lq \, $ \succeq$ " on $P_{\geq 0}$ by 
$\la \succ \mu$  
if $\la \not= \mu$ and $\la_i \geq \mu_i$ for any $i=1,\cdots,m$. 
For $\la=(\la_1,\cdots, \la_m) \in \vL$, 
put 
\begin{align}
\label{def Kla}
 K_\la = 
\left[ \begin{matrix} K_1;0 \\ \la_1 \end{matrix} \right] 
\left[ \begin{matrix} K_2;0 \\ \la_2 \end{matrix} \right] 
\cdots 
\left[ \begin{matrix} K_m;0 \\ \la_m \end{matrix} \right]. 
\end{align}
Then we have the following lemma.

\begin{lem} \label{lem-1-lamda}\
\begin{enumerate} 
\item 
$\wt{\Psi} \left( \left[ \begin{matrix} K_i;0 \\ t \end{matrix} \right] \right) $ 
$(1 \leq i \leq m,\, t \in \ZZ_{\geq 0})$ 
is written as a linear combination of $\big\{1_\la \bigm| \la \in \vL \big\}$ 
with $\CZ$-coefficients. 

\item 
For $\la \in \vL$, 
we have 
\[ 1_\la = \wt{\Psi}(K_\la) + 
	\sum_{\mu \in \vL \atop \mu \succ \la} r_\mu \wt{\Psi}(K_\mu) \qquad (r_\mu \in \CZ).\] 

\end{enumerate}
\end{lem}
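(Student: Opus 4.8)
The plan is to compute $\wt{\Psi}$ directly on the quantum binomials, exploiting that $\wt{\Psi}(K_i^{\pm})=\sum_{\la\in\vL}q^{\pm\la_i}1_\la$ is a $\CK$-linear combination of the orthogonal idempotents $1_\la$. First I would record the basic interaction with the idempotents: since $1_\la 1_\mu=\d_{\la\mu}1_\la$ and $\sum_{\la}1_\la=1$, the commuting elements $\wt{\Psi}(K_i),\wt{\Psi}(K_i^{-1})$ satisfy $\wt{\Psi}(K_i)\,1_\la=q^{\la_i}1_\la$ and $\wt{\Psi}(K_i^{-1})\,1_\la=q^{-\la_i}1_\la$. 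Consequently any polynomial in these commuting elements is again a $\CK$-combination of the $1_\la$, the coefficient of $1_\la$ being obtained by the substitution $K_i\mapsto q^{\la_i}$, $K_i^{-1}\mapsto q^{-\la_i}$.

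Applying this to the defining product for $\left[\begin{matrix}K_i;0\\t\end{matrix}\right]$, each factor $\frac{K_iq^{-s+1}-K_i^{-1}q^{s-1}}{q^{s}-q^{-s}}$ contributes on $1_\la$ the scalar $\frac{q^{\la_i-s+1}-q^{-(\la_i-s+1)}}{q^{s}-q^{-s}}=\frac{[\la_i-s+1]}{[s]}$. Multiplying over $s=1,\dots,t$ and recognizing the result as a quantum binomial coefficient, I obtain
\[
\wt{\Psi}\!\left(\left[\begin{matrix}K_i;0\\t\end{matrix}\right]\right)=\sum_{\la\in\vL}\left[\begin{matrix}\la_i\\t\end{matrix}\right]1_\la .
\]
Because $\la_i\in\ZZ_{\geq 0}$ for every $\la\in\vL$, each coefficient $\left[\begin{matrix}\la_i\\t\end{matrix}\right]$ lies in $\CZ=\ZZ[q,q^{-1}]$, which is precisely assertion (\roi).

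For (\roii) I would apply $\wt{\Psi}$ to the product $K_\la=\prod_{j=1}^m\left[\begin{matrix}K_j;0\\\la_j\end{matrix}\right]$. Using the formula just obtained together with the orthogonality of the $1_\mu$, the product collapses to
\[
\wt{\Psi}(K_\la)=\sum_{\mu\in\vL}\Big(\prod_{j=1}^m\left[\begin{matrix}\mu_j\\\la_j\end{matrix}\right]\Big)1_\mu .
\]
Now I invoke the standard vanishing and normalization of quantum binomials: $\left[\begin{matrix}\mu_j\\\la_j\end{matrix}\right]=0$ whenever $0\le\mu_j<\la_j$ (a factor $[0]$ occurs), and $=1$ when $\mu_j=\la_j$. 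Hence the coefficient of $1_\mu$ is nonzero only if $\mu_j\geq\la_j$ for all $j$, i.e. $\mu\succeq\la$, and the coefficient of $1_\la$ itself equals $1$. This gives the unitriangular expansion $\wt{\Psi}(K_\la)=1_\la+\sum_{\mu\succ\la}c_{\la\mu}1_\mu$ with $c_{\la\mu}\in\CZ$.

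Finally I would invert this unitriangular system. Since $\vL$ is finite and $\succeq$ is a partial order, I can solve for $1_\la$ by downward induction on $\succeq$: for a $\succeq$-maximal $\la$ one has $\wt{\Psi}(K_\la)=1_\la$, while in general $1_\la=\wt{\Psi}(K_\la)-\sum_{\mu\succ\la}c_{\la\mu}1_\mu$, into which I substitute the inductively obtained expressions for the $1_\mu$ ($\mu\succ\la$) as $\CZ$-combinations of the $\wt{\Psi}(K_\nu)$ with $\nu\succeq\mu$. This yields $1_\la=\wt{\Psi}(K_\la)+\sum_{\mu\succ\la}r_\mu\wt{\Psi}(K_\mu)$ with $r_\mu\in\CZ$, as claimed. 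I do not expect a genuine obstacle here: the only points needing care are the bookkeeping in the idempotent substitution (checking that the cross terms drop out by orthogonality) and citing the correct vanishing range of the quantum binomials; the inversion is automatic once the transition matrix is seen to be unitriangular over $\CZ$.
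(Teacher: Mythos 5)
Your proposal is correct and follows essentially the same route as the paper: compute $\left[\begin{smallmatrix}K_i;0\\ t\end{smallmatrix}\right]1_\la=\left[\begin{smallmatrix}\la_i\\ t\end{smallmatrix}\right]1_\la$ using the eigenvalue $q^{\la_i}$ of $\wt{\Psi}(K_i)$ on $1_\la$, expand $\wt{\Psi}(K_\la)=1_\la+\sum_{\mu\succ\la}(\prod_j\left[\begin{smallmatrix}\mu_j\\ \la_j\end{smallmatrix}\right])1_\mu$ by orthogonality and the vanishing of quantum binomials, and invert the unitriangular system by induction starting from $\succeq$-maximal elements of the finite set $\vL$. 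The only cosmetic difference is that you phrase the last step as solving a unitriangular transition matrix, where the paper phrases it as induction on $\vL$; these are the same argument.
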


\begin{proof}
In this proof, 
we denote $\wt{\Psi}(K_i^{\pm})$ by $K_i^{\pm}$ simply. 
Thus, we have 
$K_i^{\pm}=\sum_{\la \in \vL} q^{\pm \la_i} 1_\la$. 
For $1 \leq i \leq m, t \in \ZZ_{\geq 0}$ and $\la \in \vL$, 
we have 
\begin{align}
\left[ \begin{matrix} K_i ; 0 \\ t \end{matrix} \right] 1_\la 
&= 
\prod_{s=1}^t \frac{K_i q^{-s+1} - K_i^- q^{s-1}}{q^s - q^{-s}} 1_\la 
\label{K-1-lamda} \\
&= 
\prod_{s=1}^t \frac{q^{\la_i - s+1} - q^{-(\la_i -s +1)}}{q^s -q^{-s}} 1_\la \notag \\
&= 
\prod_{s=1}^t \frac{[\la_i - s +1]}{[s]} 1_\la \notag \\
&= 
\frac{[\la_i][\la_i-1]\cdots[\la_i-t+1]}{[1][2]\cdots[t]} 1_\la \notag \\
&= 
\begin{cases}
\left[ \begin{matrix} \la_i \\ t \end{matrix} \right] 1_\la  & \text{ if } t \leq \la_i \\[5mm]
0 & \text{ if } t> \la_i.
\end{cases}\notag
\end{align}
Since $1=\sum_{\la \in \vL} 1_\la$ and $\left[ \begin{matrix} \la_i \\ t \end{matrix} \right] \in \CZ$, 
we have (\roi). 
By the definition of $K_\la$ and (\ref{K-1-lamda}), 
we have 
\begin{align} 
K_\la=K_\la(\sum_{\mu \in \vL} 1_\mu)= 1_\la + \sum_{\mu \in \vL \atop \mu \succ \la} 
\left( \prod_{i=1}^m \left[ \begin{matrix} \mu_i \\ \la_i \end{matrix} \right] 1_\mu \right). 
\label{K expand 1-lamda}
\end{align}
Since $\vL$ is a finite set, 
there exists a maximal element $\la \in \vL$ with respect to the order \lq\lq \,$ \succeq$ ". 
Thus, we have $1_\la = K_\la$ when $\la$ is a maximal element of $\vL$ by (\ref{K expand 1-lamda}).  
By induction on $\vL$ together with (\ref{K expand 1-lamda}), 
we have (\roii). 
\end{proof}

\remark 
For $\la=(\la_1,\cdots,\la_m) \in P_{\geq 0}$, set $|\la|=\sum_{i=1}^m \la_i$. 
If $\vL=\{ \la \in P_{\geq 0} \,|\, |\la|=n \}$ for some $n \in \ZZ_{>0}$, 
we have 
$\mu \not\succ \la$ for any $\la,\mu \in \vL$ 
since $|\mu|>|\la|$ if $\mu \succ \la$. 
Thus, we have  
$1_\la = \wt{\Psi}(K_\la) $ for any $\la \in \vL$ 
by Lemma \ref{lem-1-lamda}. 

%
%%%%%%%%%%%%%%%%%%%%%%%%%%%%%%%%%%%%%%%%%%%%%%%%%%%%%%%%%%%%%%%%%%%%%%%%%%%%%%%%%%%%%%%%%%%%%%%%%%%%%%%%%%%%%%%%%%
%

\para \label{def Sq}
Let 
$ \wt{\CS}_q^+$ (resp. $ \wt{\CS}_q^-$) 
be the subalgebra of $ \wt{\CS}_q$ 
generated by 
$E_i $ (resp. $F_i$) 
for $ 1 \leq i \leq m-1$, 
and let 
$ \wt{\CS}_q^0 $ 
be the subalgebra of $ \wt{\CS}_q$ 
generated by 
$1_\la$ for $\la \in \vL$. 
By Lemma \ref{lem-1-lamda}, 
it is clear that $ \wt{\CS}_q^0$  
(resp. $ \wt{\CS}_q^{\pm}$) 
coincides with 
the image of $ \wt{U}_q^0$ 
(resp. $ \wt{U}_q^{\pm}$) 
under the surjection $\wt{\Psi}$ in Proposition \ref{prop-sur-wtUq-wtSq}. 

We consider the $Q$-grading on $ \wt{\CS}_q$ arising from the grading on $ \wt{U}_q$, 
namely 
we set 
$\deg E_i =\a_i, \, 
\deg F_i = -\a_i, \,
\deg 1_\la =0, \,
\deg \t_i^\la=0$. 

For each $\la \in \vL$ and $i=1,\cdots,m-1$, 
we take an element $\eta_i^\la $ of $ \wt{\CS}_q^- \wt{\CS}_q^+ \cdot 1_\la$ 
such that
$\deg (\eta_i^\la) =0$. 
By the condition $\deg (\eta_i^\la)=0$ 
together with (\ref{Sq-3})-(\ref{Sq-6}), 
we have 
$\eta_i^\la \in 1_\la \cdot  \wt{\CS}_q^- \wt{\CS}_q^+ \cdot 1_\la$. 
Moreover, 
again by (\ref{Sq-3})-(\ref{Sq-6}), 
we have 
$\eta_i^\la \in \wt{\CS}_q^- \wt{\CS}_q^0 \wt{\CS}_q^+$. 
Put 
$\eta_{\vL}=\{\eta_i^\la \,|\, 1 \leq i \leq m-1,\, \la \in \vL \}$. 
Let $\wt{\CI}^{\eta_{\vL}}$ be the two-sided ideal of $ \wt{\CS}_q$ 
generated by all 
$\t_i^\la - \eta_i^\la$ ($1 \leq i \leq m-1,\, \la \in \vL$). 
We define the quotient algebra $ \CS_q$ of $ \wt{\CS}_q$ by 
\[ \CS_q = \CS_q^{\eta_\vL} =  \wt{\CS}_q/ \wt{\CI}^{\eta_\vL} .\] 
Let 
$ \CS_q^0$ (resp. $\CS_q^{\pm}$), 
be the image of 
$\wt{\CS}_q^0$ (resp. $\wt{\CS}_q^{\pm}$) 
under the natural surjection $\wt{\CS}_q \ra \CS_q$.
Under the map $\wt{\CS}_q \ra  \CS_q$, 
we denote the image of $E_i$ (resp. $F_i$, $1_\la$) 
by the same symbol 
$E_i$ (resp. $F_i$, $1_\la$) again, 
and 
the image of $\t_i^\la$ by $\eta_i^\la$. 
We denote the composition of 
$\wt{\Psi}$ and the natural surjection 
$\wt{\CS}_q \ra \CS_q$ 
by 
$\Psi : \wt{U}_q \ra \CS_q$. 
Thus, 
we have 
$\Psi(e_i)=E_i$, 
$\Psi(f_i)=F_i$, 
$\Psi(K^{^\pm}_i)=\sum_{\la \in \vL} q^{\pm \la_i} 1_{\la}$ 
and 
$\Psi(\t_i)=\sum_{\la \in \vL} \eta^\la_i$. 
\begin{prop} 
\label{tri decom}
$\CS_q$ has a triangular decomposition 
\[ \CS_q = \CS_q^- \CS_q^0  \CS_q^+ .\] 
Moreover, 
the dimension of 
$ \CS_q$ is finite. 
\end{prop}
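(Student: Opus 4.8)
The plan is to show that the multiplication map induces a surjection $\CS_q^-\otimes\CS_q^0\otimes\CS_q^+\to\CS_q$, and that the finiteness of $\dim\CS_q$ then follows formally. Since $\CS_q$ is generated by the $E_i,F_i$ and the $1_\la$, and since $1=\sum_{\la\in\vL}1_\la\in\CS_q^0$, the subspace $V:=\CS_q^-\CS_q^0\CS_q^+$ contains the unit and all generators. Hence it suffices to prove that $V$ is a left ideal, i.e.\ that it is stable under left multiplication by each generator. Stability under $F_i$ is immediate from $F_iV\subseteq\CS_q^-V\subseteq V$, and stability under $1_\la$ follows from $(\ref{Sq-3})$–$(\ref{Sq-6})$ together with $1_\la1_\mu=\d_{\la\mu}1_\la$, which let one move $1_\la$ to the right past $\CS_q^-$ and absorb it in $\CS_q^0$. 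Likewise $E_iV\subseteq V$ reduces, using $\CS_q^+\CS_q^0\subseteq\CS_q^0\CS_q^+$ from $(\ref{Sq-3})$, to the single key inclusion
\[ \CS_q^+\CS_q^-\subseteq\CS_q^-\CS_q^0\CS_q^+ ,\]
and for this it is enough to treat $E_i\,\CS_q^-$.

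Before attacking this I would record the finite-dimensionality of $\CS_q^{\pm}$. Applying $(\ref{Sq-3})$ repeatedly, an $E$-monomial satisfies $E_{i_1}\cdots E_{i_k}1_\la=1_{\la+\beta}E_{i_1}\cdots E_{i_k}1_\la$ with $\beta=\a_{i_1}+\cdots+\a_{i_k}$ when every partial weight lies in $\vL$, and the product is $0$ otherwise. As $\vL\subseteq P_{\geq0}$ is finite, a nonzero such product forces $\la,\la+\beta\in\vL$, so $\beta$ ranges over a finite subset of $Q^+$. Since $\CS_q^+$ is the image of $\wt U_q^+\cong\CK\otimes_{\QQ(q)}U_q^+$ (Corollary $\ref{cor-wtB-B}$) and each graded piece $(U_q^+)_\beta$ is finite dimensional, we get $\dim_\CK\CS_q^+<\infty$, and symmetrically $\dim_\CK\CS_q^-<\infty$. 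With $\CS_q^0$ spanned by the $1_\la$ (Lemma $\ref{lem-1-lamda}$) this yields $\dim_\CK V<\infty$, so once $\CS_q=V$ is established the finiteness of $\dim\CS_q$ is automatic.

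For the key inclusion I would run a straightening argument based on the commutation relation $(\ref{Sq-7})$, namely $E_iF_j-F_jE_i=\d_{ij}\sum_{\la\in\vL}\eta_i^\la$, where the correction $\sum_\la\eta_i^\la$ is a degree-zero element that already lies in $V$. Pushing a raising generator to the right past a lowering word $F_{j_1}\cdots F_{j_n}$ produces the fully sorted term $F_{j_1}\cdots F_{j_n}E_i\in\CS_q^-\CS_q^+\subseteq V$ (insert $1=\sum_\mu1_\mu$), together with correction terms of the shape $F_{j_1}\cdots F_{j_{l-1}}\big(\sum_\la\eta_i^\la\big)F_{j_{l+1}}\cdots F_{j_n}$, in which the block $F_{j_{l+1}}\cdots F_{j_n}$ surviving to the right of the correction is strictly shorter. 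One then organizes these cancellations into an induction, re-sorting the factors of each $\eta_i^\la$ (which lie in $\CS_q^-\CS_q^0\CS_q^+$) by means of $(\ref{Sq-3})$–$(\ref{Sq-6})$, so as to land every term in $V$.

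The step I expect to be the main obstacle is precisely the termination of this straightening. Because each $\eta_i^\la$ is only assumed to lie in $\CS_q^-\CS_q^0\CS_q^+$ and may genuinely involve both raising and lowering generators, commuting its raising factors further to the right re-creates new $E$–$F$ inversions, so a crude measure (the number of generators, or the number of inversions) need not decrease under $(\ref{Sq-7})$. The resolution I would pursue is to exploit the finiteness just established: all weights that occur lie in the finite set $\vL$, whence the degrees of the $E$- and $F$-factors that can ever appear are bounded, and this boundedness should support a well-founded induction — for instance a double induction on the height of the lowering part (the sum of its simple-root coefficients) together with an auxiliary parameter controlling the raising factors introduced by the $\eta_i^\la$ — in the spirit of the proof of \cite[Proposition 3.4]{Do} on which this construction is modeled. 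Exhibiting a measure that strictly decreases under the reduction $(\ref{Sq-7})$ is the crux, and is where the argument must be carried out with care.
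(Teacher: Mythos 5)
Your reduction to the single inclusion $E_i\,\CS_q^-\subseteq V:=\CS_q^-\CS_q^0\CS_q^+$ and your finiteness argument for $\CS_q^{\pm}$ are both sound and essentially match the paper; the problem is that the crux you flag — termination of the straightening — is genuinely unresolved in your proposal, and in the form you set it up it does not close. Concretely: a correction term $F_{j_1}\cdots F_{j_{l-1}}\bigl(\sum_\la \eta_i^\la\bigr)F_{j_{l+1}}\cdots F_{j_n}$ forces you to push the $E$-part $y$ of each $\eta_i^\la=\sum x1_\mu y$ past the $n-l$ remaining $F$'s; by your induction this lands in $V$, but an element of $V$ of fixed (even zero) degree is \emph{not} length-controlled — it can be a sum of terms $F\cdots F\,1_\nu\,E\cdots E$ with both blocks long, since the $E$- and $F$-contributions to the degree cancel. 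After multiplying back by the pending prefix, you are then facing $E$-words against $F$-words possibly \emph{longer} than $n$, so induction on the length of the $F$-word faced does not go through. The bounded-weight observation does not rescue this by itself: it bounds all lengths by a constant $N_0$ independent of $n$, but a bounded quantity is not a decreasing one, and you never exhibit the well-founded measure you correctly identify as the crux.

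The paper resolves this with two devices, neither of which appears in your proposal. First, its Claim A proves only the \emph{half-straightened} statement $E_iF_{j_1}\cdots F_{j_l}\in\sum_k\CS_q E_k+\CS_q^-\CS_q^0$, deliberately leaving the coefficients $a_k\in\CS_q$ of the $E_k$ unstraightened. This target form is stable under left multiplication by $\CS_q^-\CS_q^0$, and — this is the point your target $V$ lacks — its $E$-free part lies in $\CS_q^-\CS_q^0$, so homogeneity ($\deg\eta_i^\la=0$ and all relations homogeneous) pins its $F$-length to exactly one less than the number of $F$'s you started with. That yields a strictly decreasing measure (lexicographic in the number of $F$'s faced and the length of the $E$-word being pushed), and the rewriting terminates. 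Second, the full triangular decomposition is then obtained not by further rewriting but by Claim B: one shows $u\cdot 1_\la\in\CS_q^-\CS_q^+ 1_\la$ by \emph{backward induction on the finite poset} $\vL$, writing $u1_\la=\sum_k a_kE_k1_\la+b1_\la$ via Claim A and using $a_kE_k1_\la=a_k1_{\la+\a_k}E_k$: since $\la+\a_k>\la$, the induction hypothesis applies to $a_k1_{\la+\a_k}$, the base case being maximal $\la$, where $1_{\la+\a_k}=0$. The untreated coefficients $a_k$ are thus disposed of by climbing the finite weight poset rather than by any decreasing rewriting measure; supplying this idea (or the equivalent bookkeeping of Claim A) is exactly what your proposal is missing.
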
 

\begin{proof}
First, we show the following claim. 

\textbf{(Claim A)} 
For $1 \leq i, j_1,\cdots, j_l \leq m-1$, we have 
\[ E_i F_{j_1} \cdots F_{j_l}= \sum_{k=1}^{m-1} a_k E_k + b,\]
where 
$a_k \in \CS_q$ and $b \in  \CS_q^- \CS_q^0$.

We prove this claim by induction on $l$. 
When $l=1$, we have 
\[ E_i F_{j_1}=
\begin{cases} 
	F_{j_1} E_i + \sum_{\la \in \vL} \eta_i^\la & \text{ if } i=j_1 \\
	F_{j_1} E_i & \text{ otherwise }.
\end{cases}
\]
Since $\eta_i^\la \in \CS_q^- \CS_q^0 \CS_q^+$, 
we obtain the claim. 
When $l \geq 2$, 
we have 
\[ E_i F_{j_1}\cdots F_{j_l}=
\begin{cases} 
	F_{j_1} E_i F_{j_2} \cdots F_{j_l} 
	+ \Big(\sum_{\la \in \vL} \eta_i^\la\Big) F_{j_2} \cdots F_{j_l} 
		& \text{ if } i=j_1 \\
	F_{j_1} E_i F_{j_2} \cdots F_{j_l}& \text{ otherwise }.
\end{cases}
\]
Note that $\eta_i^\la \in \CS_q^- \CS_q^0 \CS_q^+$ and $\deg(\eta_i^\la)=0$. 
Applying the induction hypothesis to the right hand side of this formula, 
we obtain the claim. 
  
For any $u \in \CS_q$, we have 
$ u = u \cdot 1 =\sum_{\la \in \vL} u \cdot 1_\la.$
Thus, 
in order to prove the first assertion of the proposition, 
we should show that  
\begin{align*} \label{u 1-lamda}
u \cdot 1_\la \in  \CS_q^- \CS_q^+  \cdot 1_\la 
\quad 
\text{ for any } 
u \in \CS_q 
\text{ and }
\la \in \vL. 
\tag{\textbf{Claim B}}
\end{align*} 
This claim 
implies that $u \in \CS_q^- \CS_q^0  \CS_q^+$ 
for any $u \in  \CS_q$
by the relation (\ref{Sq-3}). 
Hence, we show (\ref{u 1-lamda}) by the backword induction on $\vL$ with respect to the order \lq\lq $\geq$". 
By (\textbf{Claim A}) combined with the relations (\ref{Sq-1}) and (\ref{Sq-3})-(\ref{Sq-6}), 
for any $u \in \CS_q $ and $\la \in \vL$, 
we have 
\begin{align}
\label{u1u1u1}
u \cdot 1_\la = \sum_{k=1}^{m-1} a_k E_k 1_\la + b \cdot 1_\la \qquad 
(a_k \in  \CS_q, \,\, b \in \CS_q^-). 
\end{align}
Clearly, 
$b \cdot 1_\la \in \CS_q^-  \CS_q^+  \cdot 1_\la$. 
On the other hand, 
we have $a_k E_k 1_\la= a_k 1_{\la+\a_k} E_k$ by (\ref{Sq-3}), 
where we set $1_{\la + \a_k}=0$ if $\la + \a_k \not\in \vL$.  

First, 
we assume that $\la$ is a maximal element of $\vL$. 
Then, for any $k=1,\cdots, m-1$, 
we have $\la + \a_k \not\in \vL$ 
since $\la + \a_k \geq \la$ in P and $\la$ is maximal in $\vL$. 
Thus, we have 
$1_{\la + \a_k}=0$ 
for $k=1,\cdots,m-1$.
In this case, 
we have 
$u \cdot 1_\la= b \cdot 1_\la \in  \CS_q^-  \CS_q^+  \cdot 1_\la$. 

Next, 
we assume that $\la$ is not maximal in $\vL$, 
and that $\la + \a_k \in \vL$. 
In this case, 
by the induction hypothesis, 
we have $a_k 1_{\la+\a_k} \in \CS_q^-  \CS_q^+  \cdot 1_{\la+\a_k}$. 
Thus we have 
$a_k 1_{\la +\a_k} E_k = a_k E_k 1_{\la} \in \CS_q^-  \CS_q^+  \cdot 1_\la$.
Combined with (\ref{u1u1u1}), 
we obtain (\ref{u 1-lamda}), 
thus the first assertion of the proposition is proven. 

Recall that  
$\CS_q^0$ is the subalgebra of $\CS_q$ generated by $\{1_\la\,|\, \la \in \vL\}$,  
and  
$\{1_\la \not=0 \, |\, \la \in \vL\}$ 
is a set of pairwise orthogonal idempotents. 
Thus,  
$\{1_\la \not=0 \, | \, \la \in \vL\}$ 
gives an $\CK$-basis of $\CS_q^0$.   

On the other hand, 
a set 
$\{ E_{i_1} E_{i_2} \cdots E_{i_l} 
\,|\,1 \leq i_1, \cdots, i_l \leq m-1,  \, l \geq 0 \}$ 
gives a spanning set of $\CS_q^+$ over $\CK$. 
Since 
\begin{align*} 
E_{i_1} \cdots E_{i_l}
&= \sum_{\la \in \vL} \left( E_{i_1} \cdots E_{i_l}  1_\la \right) \\
&= \sum_{\la \in \vL} \left( 1_{\la+ \a_{i_1}+\cdots + \a_{i_l} }E_{i_1} \cdots E_{i_l}  \right), 
\end{align*}
we have 
$E_{i_1} \cdots E_{i_l}=0$ 
if the integer $l$ is sufficient large. 
This implies that 
$\CS_q^+$ is finitely generated over $\CK$. 
Similarly, 
we see that 
$\CS_q^-$ is finitely generated over $\CK$. 
Combined with the triangular decomposition, 
we conclude that $\CS_q$ is finite dimensional. 
\end{proof} 

The following result was proved in the proof of the above proposition. 

\begin{cor}
\label{basis Sq0}
$\{1_\la \not=0 \,|\, \la \in \vL\}$ 
gives a $\CK$-basis of $\CS^0_q$. 
\end{cor}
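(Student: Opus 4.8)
The plan is to establish the two defining properties of a basis---spanning and linear independence---working entirely from the orthogonal idempotent relation (\ref{Sq-1}), which holds in $\wt{\CS}_q$ and therefore in its quotient $\CS_q$. By definition $\CS_q^0$ is the image of $\wt{\CS}_q^0$ under the surjection $\wt{\CS}_q \ra \CS_q$, and $\wt{\CS}_q^0$ is the subalgebra generated by the $1_\la$; so $\CS_q^0$ is the $\CK$-subalgebra of $\CS_q$ generated by the (images of the) $1_\la$ for $\la \in \vL$.

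For spanning, I would argue that any word in the generators $1_\la$ collapses to a single generator. Relation (\ref{Sq-1}) gives $1_\la 1_\mu = \d_{\la\mu} 1_\la$, so an arbitrary product $1_{\la^{(1)}} \cdots 1_{\la^{(k)}}$ equals $1_{\la^{(1)}}$ if all indices coincide and $0$ otherwise. Hence the $\CK$-linear span of $\{1_\la \mid \la \in \vL\}$ is already closed under multiplication and contains the unit $\sum_{\la} 1_\la = 1$; being a subalgebra containing every generator, it is all of $\CS_q^0$. Discarding the $1_\la$ that vanish in $\CS_q$ leaves $\{1_\la \neq 0 \mid \la \in \vL\}$ as a spanning set.

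For linear independence, I would use the standard orthogonality trick. Suppose $\sum_\la c_\la 1_\la = 0$ in $\CS_q$, the sum ranging over those $\la$ with $1_\la \neq 0$ and $c_\la \in \CK$. Multiplying on the right by $1_\mu$ and applying (\ref{Sq-1}) kills every term except $\la = \mu$, leaving $c_\mu 1_\mu = 0$; since $1_\mu \neq 0$ this forces $c_\mu = 0$. As $\mu$ was arbitrary, all coefficients vanish, so the nonzero $1_\la$ are $\CK$-linearly independent, completing the proof.

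The only point that requires care---and it is the crux---is that relation (\ref{Sq-1}) genuinely survives the passage to the quotient $\CS_q = \wt{\CS}_q/\wt{\CI}^{\eta_\vL}$, so that the $1_\la$ remain orthogonal idempotents with sum $1$ and the collapse-and-project argument applies to the images rather than to their lifts. This is immediate, since (\ref{Sq-1}) is among the defining relations of $\wt{\CS}_q$ and the quotient map is an algebra homomorphism; no generator of $\wt{\CI}^{\eta_\vL}$ can disturb the idempotent structure. Consequently the argument is purely formal, and indeed it is exactly the computation already recorded in the proof of Proposition \ref{tri decom}.
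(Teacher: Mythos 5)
Your proof is correct and is essentially the paper's own argument: the paper derives this corollary inside the proof of Proposition \ref{tri decom}, observing that $\CS_q^0$ is generated by the $1_\la$ and that the nonzero $1_\la$ form a set of pairwise orthogonal idempotents, which is exactly your collapse-of-products spanning argument plus the multiply-by-$1_\mu$ independence argument.
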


\para 
For each $\la \in \vL$, 
we define the following subspaces of $\CS_q$ ; 
\begin{align*}
\CS_q(\geq \la) 
	= \big\{ x 1_\mu y \bigm| x \in \CS_q^-,\, y \in \CS_q^+,\, \mu \in \vL \text{ such that }\mu \geq \la \big\}, \\
\CS_q(>\la) 
	= \big\{ x 1_\mu y \bigm| x \in \CS_q^-,\, y \in \CS_q^+,\, \mu \in \vL \text{ such that }\mu > \la \big\}. 
\end{align*}
By using the triangular decomposition 
and the defining relations of $\CS_q$, 
one can easily check the following lemma.

\begin{lem}\label{lem ideal}
For $\la \in \vL$, 
both of 
$\CS_q(\geq \la)$ and $\CS_q(> \la) $ are  two-sided ideals of $\CS_q$. 
\end{lem}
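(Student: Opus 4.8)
The plan is to verify directly that $\CS_q(\geq\la)$, and likewise $\CS_q(>\la)$, is stable under left and right multiplication by each algebra generator $E_k,F_k$ $(1\le k\le m-1)$ and $1_\nu$ $(\nu\in\vL)$. Since the set of elements multiplying $\CS_q(\geq\la)$ into itself on a given side is a subalgebra, and these elements generate $\CS_q$, such stability is exactly the ideal property. By the triangular decomposition of Proposition~\ref{tri decom} and Corollary~\ref{basis Sq0}, it is enough to test each generator against a spanning element $x\,1_\mu\,y$, where $x\in\CS_q^-$ is homogeneous of degree $-\beta$, $y\in\CS_q^+$ is homogeneous of degree $\gamma$, and $\mu\geq\la$ (resp. $\mu>\la$). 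Throughout I would use the identities $1_\nu x = x\,1_{\nu+\beta}$ and $y\,1_\mu = 1_{\mu+\gamma}\,y$, obtained by iterating (\ref{Sq-3})--(\ref{Sq-6}); their upshot is that the middle idempotent $1_\mu$ is the only datum I must control, and the whole point is to show it never drops below $\la$.

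First the easy generators. Left multiplication by $F_k$ gives $(F_k x)\,1_\mu\,y$ with $F_k x\in\CS_q^-$, and right multiplication by $E_k$ gives $x\,1_\mu\,(yE_k)$ with $yE_k\in\CS_q^+$; in both the middle $1_\mu$ is unchanged. Multiplication by $1_\nu$ collapses by orthogonality, $1_\nu\,x\,1_\mu\,y=\delta_{\nu+\beta,\mu}\,x\,1_\mu\,y$ and $x\,1_\mu\,y\,1_\nu=\delta_{\mu,\nu+\gamma}\,x\,1_\mu\,y$, each equal to the original element or to $0$. All of these preserve $\mu\geq\la$ and equally $\mu>\la$.

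The only substantial cases are left multiplication by $E_k$ and, symmetrically, right multiplication by $F_k$. For the former I would push $E_k$ to the right through $x$ using (\ref{Sq-7}): writing $x$ as a combination of monomials $F_{j_1}\cdots F_{j_l}$, Claim~A in the proof of Proposition~\ref{tri decom} rewrites $E_k x=\sum_i a_iE_i+b$ with $a_i\in\CS_q$ and $b\in\CS_q^-\CS_q^0$. The term $b\,1_\mu\,y$ is immediate: since $b=\sum_\rho x_\rho 1_\rho$ with $x_\rho\in\CS_q^-$, we get $b\,1_\mu\,y=x_\mu\,1_\mu\,y$, with middle $\mu$. For each term $a_iE_i\,1_\mu\,y$, I would first move $E_i$ past $1_\mu$ to obtain $a_i\,1_{\mu+\a_i}\,E_i\,y$ (this vanishes if $\mu+\a_i\notin\vL$), and then invoke Claim~B in the same proof, $a_i\,1_{\mu+\a_i}\in\CS_q^-\CS_q^+\cdot 1_{\mu+\a_i}$. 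Using $y_s\,1_{\mu+\a_i}=1_{\mu+\a_i+\gamma_s}\,y_s$, this exhibits $a_i\,1_{\mu+\a_i}$ as a combination of elements $x_s\,1_{\mu+\a_i+\gamma_s}\,y_s$ whose middle weights are $\geq\mu+\a_i>\mu\geq\la$, and right multiplication by $E_i y\in\CS_q^+$ leaves these middle weights untouched. Hence $E_k\cdot x\,1_\mu\,y\in\CS_q(\geq\la)$, with every middle weight in fact $\geq\mu$, so the strict ideal $\CS_q(>\la)$ is settled by the same computation. Right multiplication by $F_k$ is handled identically, after replacing Claim~A and Claim~B by their left--right mirror images (interchanging $E\leftrightarrow F$ and the two sides in (\ref{Sq-3})--(\ref{Sq-7})).

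I expect the bookkeeping in this last case to be the main obstacle. The naive approach of commuting $E_k$ through $x$ one step at a time generates correction terms built from the $\eta_k^\nu$, and the index $\nu$ forced by the idempotent relations can be strictly smaller than $\la$, which at first glance threatens the ideal property. The resolution is that one must read off the middle weight only after rewriting everything in the normal form $x'\,1_{\mu'}\,y'$: the passage to normal form (encoded by Claim~B) raises the middle idempotent to weight $\mu+\a_i$, so the apparent sub-$\la$ weights never survive. Making this precise—equivalently, checking that in any normal-form expansion of $u\,1_\mu\,v$ the middle weight is $\geq\mu$—is the one place where the defining relations must be applied with care, and it is what lets the single computation cover both $\CS_q(\geq\la)$ and $\CS_q(>\la)$.
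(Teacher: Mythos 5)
Your argument is correct, and it is exactly the route the paper intends: the paper dismisses this lemma with ``by using the triangular decomposition and the defining relations of $\CS_q$, one can easily check,'' and your proof supplies precisely those details, reusing Claims A and B from the proof of Proposition \ref{tri decom} (plus their $E\leftrightarrow F$ mirrors, which indeed hold by symmetric arguments since $\eta_i^\la \in \CS_q^-\CS_q^0\CS_q^+$ with $\deg(\eta_i^\la)=0$) to show every product has normal-form middle weights $\geq \mu$. The weight bookkeeping is sound, so both $\CS_q(\geq\la)$ and $\CS_q(>\la)$ are handled by the single computation, as you note.
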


\para 
Thanks to Lemma \ref{lem ideal}, 
for $\la \in \vL$, 
$\CS_q(\geq \la)/\CS_q(>\la)$ 
turns out to be an $(\CS_q,\CS_q)$-bimodule 
by multiplications.  
In general, 
it happens that $\CS_q(\geq \la)=\CS_q(> \la)$. 
So, 
we take a subset 
$\vL^+=\{\la \in \vL \,|\, \CS_q(\geq \la)\not=\CS_q(> \la) \}$ of $\vL$. 
It is clear that 
\begin{align}
\label{vL+}
\la \in \vL^+ \text{ if and only if } 1_\la \not\in \CS_q(>\la).
\end{align}

For $\la \in \vL^+$, 
we define a subspace 
$\D(\la)$ 
of 
$\CS_q (\geq \la)/ \CS_q(>\la)$ 
by 
\[
\D(\la)=\CS_q^- \cdot 1_{\la} + S_q(>\la).
\]
Note that $E_k 1_{\la} = 1_{\la+ \a_k} E_k \in \CS_q(>\la)$ 
for $k=1,\cdots, m-1$, 
together with the triangular decomposition, 
$\D(\la)$ 
turns out to be a left $\CS_q$-submodule of 
$\CS_q (\geq \la)/ \CS_q(>\nobreak\la)$.
Similarly, 
we can define a right $\CS_q$-submodule 
$\D^\sharp (\la)$ of $\CS_q (\geq \la)/ \CS_q(>\la)$ 
by 
\[\D^\sharp (\la) = 1_{\la}\cdot  \CS_q^+ + \CS_q(>\la).\] 
For 
$x \in \CS_q^-, y\in \CS_q^+$, 
we denote 
the coset of $\CS_q (\geq \la) / \CS_q(>\la)$ 
containing 
$x 1_{\la} y$ 
by 
$\ol{x 1_{\la} y}$. 
Then, 
we denote an element of $\D(\la)$ 
\big(resp. $\D^\sharp (\la)$\big) 
by 
$\ol {x 1_{\la}}$ $(x \in \CS_q^-)$ 
\big(resp. $\ol {1_{\la} y}$ $(y \in \CS_q^+)$\big). 
It is clear that 
$\D(\la)=\CS_q \cdot \ol{1_\la}$ and 
$\D^\sharp (\la) =\ol{1_\la} \cdot \CS_q$. 
We can check the following lemma immediately from the definitions.

\begin{lem}\label{lem surjection}
For $\la \in \vL^+$, 
there exists a surjective homomorphism of $(\CS_q,\CS_q)$-bimodules 
\[\D(\la) \otimes_{\CK} \D^\sharp (\la)
	\ra \CS_{q}(\geq \la) / \CS_q(>\la)\]
such that 
$\ol{x 1_{\la}} \otimes \ol{1_{\la} y} \mapsto \ol{x 1_{\la} y}$ \,
for\, $x \in \CS_q^-, \,y\in \CS_q^+$. @  
\end{lem}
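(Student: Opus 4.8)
The plan is to construct the map as the composite of the obvious bilinear multiplication map with the quotient, and then verify that it descends to the tensor product $\D(\la)\otimes_{\CK}\D\shp(\la)$ and is $(\CS_q,\CS_q)$-equivariant. First I would consider the $\CK$-bilinear map
\[
\CS_q^-\cdot 1_\la \;\times\; 1_\la\cdot\CS_q^+ \;\longrightarrow\; \CS_q(\geq\la)/\CS_q(>\la),
\qquad
(x1_\la,\, 1_\la y)\mapsto \ol{x1_\la y},
\]
for $x\in\CS_q^-$ and $y\in\CS_q^+$, where I use $1_\la 1_\la=1_\la$ from (\ref{Sq-1}) to write the product as $x1_\la y$. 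By the universal property of the tensor product this induces a $\CK$-linear map on $\D(\la)\otimes_\CK\D\shp(\la)$ sending $\ol{x1_\la}\otimes\ol{1_\la y}$ to $\ol{x1_\la y}$, provided the assignment is well defined on cosets.

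The first genuine point to check is well-definedness: I must show that if $x1_\la\in\CS_q(>\la)$ then $\ol{x1_\la y}=0$, and symmetrically on the right. This is where the ideal structure from Lemma \ref{lem ideal} does the work. Indeed $\CS_q(>\la)$ is a two-sided ideal, so right multiplication by $y$ keeps $x1_\la y$ inside $\CS_q(>\la)$, forcing its image in the quotient to vanish; the same argument with left multiplication by $x$ handles the other variable. Next I would verify surjectivity: by the triangular decomposition $\CS_q=\CS_q^-\CS_q^0\CS_q^+$ (Proposition \ref{tri decom}) together with Corollary \ref{basis Sq0}, every element of $\CS_q(\geq\la)/\CS_q(>\la)$ is a sum of cosets $\ol{x1_\mu y}$ with $\mu\geq\la$; the summands with $\mu>\la$ are already zero in the quotient, so only the $\mu=\la$ terms survive, and each such coset is visibly the image of $\ol{x1_\la}\otimes\ol{1_\la y}$.

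Finally I would check bimodule equivariance. For the left $\CS_q$-action, given $u\in\CS_q$ I must compare $u\cdot\ol{x1_\la y}$ with the image of $(u\cdot\ol{x1_\la})\otimes\ol{1_\la y}$. Using the triangular decomposition I can rewrite $ux$, modulo $\CS_q(>\la)$, in the form $x'1_\la$ with $x'\in\CS_q^-$ (the same computation that defines the left action on $\D(\la)$, relying on $E_k1_\la=1_{\la+\a_k}E_k\in\CS_q(>\la)$ from (\ref{Sq-3})), and then the equality of the two expressions is immediate since both equal $\ol{x'1_\la y}$. The right action is handled symmetrically. The main obstacle is really a bookkeeping one rather than a conceptual one: one must be careful that every step producing a ``correction'' term lands in $\CS_q(>\la)$ so that it dies in the quotient, and that the rewriting of $ux$ into $\CS_q^-\cdot1_\la$ form is done consistently with the definition of the module structure on $\D(\la)$. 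Since all of these reductions are exactly the ones already established in the proof of Proposition \ref{tri decom}, the verification is routine, and I expect the lemma to follow ``immediately from the definitions'' as claimed.
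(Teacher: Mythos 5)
Your proof is correct and is essentially the argument the paper intends: the paper simply asserts the lemma follows \lq\lq immediately from the definitions,'' and your verification (well-definedness via the two-sided ideal property of $\CS_q(>\la)$ from Lemma \ref{lem ideal}, surjectivity from the spanning set of $\CS_q(\geq\la)$, and bimodule equivariance by rewriting $ux1_\la$ modulo $\CS_q(>\la)$) is exactly the routine check being omitted. Nothing in your proposal deviates from or adds to what the paper's definitions already force.
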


\para
As will be seen later, 
if the surjection in Lemma \ref{lem surjection} gives an isomorphism 
for any $\la \in \vL^+$ 
and 
$\CS_q$ has a certain involution $\iota$,  
$\CS_q$ turns out to be a quasi-hereditary cellular algebra, 
and $\D(\la)$ $(\la \in \vL^+)$ is a left cell (standard) module of $\CS_q$.  
In such a case, 
we can apply a general theory of (quasi-hereditary) cellular algebras. 
However, in general, 
we do not know whether   
$\D(\la) \otimes_{\CK} \D^\sharp (\la)$ 
is isomorphic to 
$ \CS_{q}(\geq \la) / \CS_q(>\la)$ 
or not  
(In fact, 
it happens that $\D(\la) \otimes_{\CK} \D^\sharp (\la)$ 
 is not isomorphic to 
$ \CS_{q}(\geq \la) / \CS_q(>\la)$. 
See Appendix \ref{example eta=0}
),  
and 
do not know whether  
$\CS_q$ has such an involution. 
Nevertheless, 
we develop a certain representation theory of $\CS_q$ 
which is almost similar to the theory of standardly based algebras in the sens of 
\cite{DR98}, 
and also similar to the theory of cellular algebras (see e.g. \cite{GL96}, \cite[ch.2] {M-book}).

\para 
For $y \in \CS_q^+ , \, x \in \CS_q^-$ and $\la \in \vL^+$,  
we have 
$1_{\la} y x  1_{\la} = 1_{\la} 1_{\la + \a} y x$ 
if  $\deg ( y x )=\a$.  
Thus, 
we have 
$1_\la yx 1_\la=0$ 
if 
$\deg (yx) =\a \not=0$. 
On the other hand, 
if $\deg \big( y x \big)=0$, 
we can write 
\begin{align}
\label{xy constant}
1_{\la} y x 1_{\la}
	= r_0 1_{\la} 
	+ \sum_{Y \in \CS_q^+, X \in \CS_q^- \atop \deg(Y)=- \deg(X)\not=0} 
	r_{XY} 1_{\la} XY 1_{\la} 
	\quad (r_0, \, r_{XY} \in \CK) 
\end{align}
by investigating the degrees through the triangular decomposition. 
These imply, 
for $y \in \CS_q^+$, $x\in \CS_q^-$ and $\la \in \vL^+$, 
that 
we have 
\[ 1_{\la} y x 1_{\la} 
	\equiv r_{yx} 1_{\la}  \mod \CS_q(>\la) \qquad (r_{yx} \in \CK).   
\]

By using this formula, 
for $\la \in \vL^+$,
we can define a bilinear form 
$\lan \, , \, \ran : \D^\sharp(\la) \times \D(\la) \ra \CK$ 
such that 
\begin{align}
\lan \ol{1_{\la}y} \,,\, \ol{x 1_{\la}} \ran 1_{\la} 
\equiv 
1_{\la} y x  1_{\la} 
\mod \CS_q(>\la) 
\quad
\text{ for } y \in \CS_q^+, \, x \in \CS_q^-. 
\end{align}
For $\a \in Q^+$, 
put 
\[\Upsilon _\a=\big\{(i_1,i_2,\cdots,i_k) \bigm| 1 \leq i_1,i_2,\cdots, i_k \leq m-1 \,\, 
	\text{ such that }
	\a_{i_1}+\a_{i_2}+\cdots+\a_{i_k}=\a \big\}.\] 
From the definition, 
for $(i_1,\cdots, i_k) \in \Upsilon_\a, \,(j_1,\cdots, j_l) \in \Upsilon _\b$ ($\a,\b \in Q^+$), 
we have 
\begin{align}\label{bilinear zero}
\lan \ol{1_{\la} E_{i_1} \cdots E_{i_k}} \,,\, \ol{F_{j_1}\cdots F_{j_l} 1_\la} \ran =0 
\quad \text{ if } \a \not= \b .
\end{align}
We have the following lemma.

\begin{lem} \label{lem property bilinear}
For $\la \in \vL^+$, we have the following formulas.
\begin{enumerate}
\item 
$\lan \ol{y} \cdot u \, , \, \ol{x} \ran = \lan \ol{y} \, , \, u \cdot \ol{x} \ran $\,
for 
$\ol{x} \in \D(\la), \, \ol{y} \in \D^\sharp (\la), \, u \in \CS_q$.

\item 
$(F_{i_1}\cdots F_{i_k} 1_{\la} E_{j_1}\cdots E_{j_l}) \cdot \ol{x} 
= \lan \ol{1_{\la}E_{j_1} \cdots E_{j_l}}\,,\,\ol{x} \ran 
\ol{F_{i_1}\cdots F_{i_k} 1_{\la}}$  \\
for $\ol{x} \in \D(\la)$ and 
$F_{i_1}\cdots F_{i_k} 1_{\la} E_{j_1}\cdots E_{j_l} \in \CS_q(\geq \la)$. 
\end{enumerate}
\end{lem}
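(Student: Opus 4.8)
The plan is to translate both identities into statements about multiplication in $\CS_q$ taken modulo $\CS_q(>\la)$, which is a two-sided ideal by Lemma \ref{lem ideal}, and then to read them off from associativity of this multiplication together with the defining relation of $\lan\,,\,\ran$. The key preliminary I would establish is a representative description of the pairing: for $\ol{y}\in\D^\sharp(\la)$, $\ol{x}\in\D(\la)$ and \emph{any} lifts $\tilde{y},\tilde{x}\in\CS_q(\geq\la)$ of $\ol{y},\ol{x}$, one has
\[
\lan \ol{y},\ol{x}\ran\,1_\la \equiv \tilde{y}\,\tilde{x} \mod \CS_q(>\la).
\]
To see this, I would first note that $\tilde{y}\tilde{x}$ modulo $\CS_q(>\la)$ depends only on the cosets $\ol{y},\ol{x}$: replacing $\tilde{y}$ by $\tilde{y}'$ with $\tilde{y}-\tilde{y}'\in\CS_q(>\la)$ alters the product by $(\tilde{y}-\tilde{y}')\tilde{x}\in\CS_q(>\la)$ since $\CS_q(>\la)$ is a right ideal, and symmetrically in $\tilde{x}$ using that it is a left ideal. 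Choosing the canonical lifts $\tilde{y}=1_\la y_0\in 1_\la\CS_q^+$ and $\tilde{x}=x_0 1_\la\in\CS_q^- 1_\la$ and invoking the defining relation $\lan\ol{1_\la y_0},\ol{x_0 1_\la}\ran 1_\la\equiv 1_\la y_0 x_0 1_\la$ identifies this common value; the degree analysis preceding the lemma guarantees it is a scalar multiple of $1_\la$.

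For (\roi), take arbitrary $\ol{y}\in\D^\sharp(\la)$, $\ol{x}\in\D(\la)$ with lifts $\tilde{y},\tilde{x}\in\CS_q(\geq\la)$, and $u\in\CS_q$. Since the module structures on the subquotient are induced by multiplication in $\CS_q$, the element $\tilde{y}u$ is a lift of $\ol{y}\cdot u$ and $u\tilde{x}$ is a lift of $u\cdot\ol{x}$ (both lie in $\CS_q(\geq\la)$, again a two-sided ideal). Applying the representative description on each side gives
\[
\lan \ol{y}\cdot u,\ol{x}\ran\,1_\la \equiv (\tilde{y}u)\tilde{x}, \qquad \lan \ol{y},u\cdot\ol{x}\ran\,1_\la \equiv \tilde{y}(u\tilde{x}) \mod \CS_q(>\la),
\]
and associativity in $\CS_q$ forces $(\tilde{y}u)\tilde{x}=\tilde{y}(u\tilde{x})$. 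Comparing coefficients of $1_\la$, which is legitimate because $\la\in\vL^+$ means $1_\la\not\in\CS_q(>\la)$, yields (\roi).

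For (\roii), write $F=F_{i_1}\cdots F_{i_k}\in\CS_q^-$ and $E=E_{j_1}\cdots E_{j_l}\in\CS_q^+$; by linearity in $\ol{x}$ I may assume $\ol{x}=\ol{x_0 1_\la}$ with $x_0\in\CS_q^-$ homogeneous. The left action in the subquotient is $(F 1_\la E)\cdot\ol{x_0 1_\la}=\ol{F 1_\la E x_0 1_\la}$, so everything reduces to analyzing $1_\la E x_0 1_\la$. If $\deg(E x_0)\neq 0$, then $1_\la E x_0 1_\la=0$ by the relations $(\ref{Sq-3})$--$(\ref{Sq-6})$ as observed before the lemma, while the right-hand side vanishes by $(\ref{bilinear zero})$; both sides are $0$. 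If $\deg(E x_0)=0$, the defining relation of the form gives $1_\la E x_0 1_\la\equiv \lan \ol{1_\la E},\ol{x_0 1_\la}\ran 1_\la \mod \CS_q(>\la)$, and multiplying on the left by $F$, using that $\CS_q(>\la)$ is a left ideal, yields $F 1_\la E x_0 1_\la\equiv \lan \ol{1_\la E},\ol{x_0 1_\la}\ran F 1_\la \mod \CS_q(>\la)$; passing to the subquotient gives (\roii).

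The computations are essentially bookkeeping, so I do not expect a deep obstacle. The two points that genuinely require care are: first, the independence of the pairing from the choice of lifts, which is precisely what forces one to use the full two-sided ideal property of $\CS_q(>\la)$ rather than a one-sided version, and which makes the representative description the right tool to set up once and for all; and second, the degree vanishing in (\roii), where one must match the two \emph{a priori} different ways the identity can become trivial, namely the algebraic vanishing $1_\la E x_0 1_\la=0$ against the formal vanishing $(\ref{bilinear zero})$ of the pairing off the zero-degree part.
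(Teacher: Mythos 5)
Your proposal is correct and follows essentially the same route as the paper: both compute the pairing as multiplication in $\CS_q(\geq\la)$ modulo the two-sided ideal $\CS_q(>\la)$, obtain (\roi) from associativity, and obtain (\roii) by substituting the defining congruence $1_\la E_{j_1}\cdots E_{j_l}\, x\, 1_\la \equiv \lan \ol{1_\la E_{j_1}\cdots E_{j_l}}, \ol{x 1_\la}\ran 1_\la$ and multiplying by $F_{i_1}\cdots F_{i_k}$ on the left. Your explicit verification that the pairing is independent of the choice of lifts is exactly the point the paper's terser computation uses implicitly (since $1_\la y u$ need not lie in $1_\la\CS_q^+$), so it is a welcome elaboration rather than a different argument.
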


\begin{proof}

(\roi) For $x\in \CS_q^-$, $y \in \CS_q^+$ and $u \in \CS_q$, we have 
\begin{align*}
\lan \ol{ 1_{\la} y} \cdot u \, , \, \ol{x 1_{\la}} \ran 1_{\la} 
&\equiv  
1_{\la} yux 1_{\la} \\
&\equiv 
\lan \ol{ 1_{\la} y } \, , \, u \cdot \ol{x 1_{\la}} \ran 1_{\la}
\mod \CS_q(>\la).
\end{align*} 

(\roii) 
For $x \in \CS_q^-$ and 
$F_{i_1}\cdots F_{i_k} 1_{\la} E_{j_1}\cdots E_{j_l} \in \CS_q(\geq\la)$, 
we have 
\begin{align*}
(F_{i_1}\cdots F_{i_k} 1_{\la} E_{j_1}\cdots E_{j_l}) \cdot \ol{x 1_{\la}} 
&= 
\ol{F_{i_1}\cdots F_{i_k} (1_{\la} E_{j_1}\cdots E_{j_l} x 1_{\la})} \\*
&=
\ol{ F_{i_1}\cdots F_{i_k} 
	\lan \ol{1_{\la} E_{j_1} \cdots E_{j_l}} \,,\, \ol{x 1_{\la}} \ran 
	1_{\la}}\\*
&=
\lan \ol{1_{\la} E_{j_1} \cdots E_{j_l}} \,,\, \ol{x 1_{\la}} \ran 
\ol{ F_{i_1}\cdots F_{i_k} 1_{\la}}.
\end{align*}
\end{proof}

\para 
For $\la \in \vL^+$, 
let   
\begin{align*}
&\rad \D(\la)= \big\{ \ol{x} \in \D(\la) \bigm| \lan \ol{y} , \ol{x} \ran =0 
\text{ for any }\ol{y} \in \D^\sharp(\la) \big\}, \\
&\rad \D^\sharp (\la)= \big\{ \ol{y} \in \D^\sharp (\la) \bigm| \lan \ol{y} , \ol{x} \ran =0 
\text{ for any }\ol{x} \in \D (\la) \big\}.
\end{align*}
By Lemma \ref{lem property bilinear} (\roi), 
$\rad \D(\la)$ (resp. $\rad \D^\sharp (\la)$) 
is a left (resp. right) $\CS_q$-submodule of $\D(\la)$ (resp. $\D^\sharp (\la)$). 
Put 
$L(\la)= \D(\la) / \rad \D(\la)$ 
and 
$L^\sharp (\la)= \D^\sharp (\la) / \rad \D^\sharp (\la)$ 
We have the following theorem. 
This theorem is proven in a similar way as in the general theory of standardly based algebras 
or cellular algebras  
(see \cite{DR98}, \cite{GL96}, \cite[Ch.2]{M-book} ). 

\begin{thm}\ \label{thm standard simple Sq}
\begin{enumerate}
\item 
For $\la \in \vL^+$, 
$\rad \D(\la)$ (resp. $\rad \D^\sharp (\la)$)  
is the unique proper maximal $\CS_q$-submodule of 
$ \D(\la)$  (resp. $\D^\sharp (\la)$). 
Thus, 
$L(\la)$ (resp. $L^\sharp (\la)$)
is a  left (resp. right) absolutely simple $\CS_q$-module. 

\item 
For $\la, \mu \in \vL^+$,  
if $L(\mu)$ (resp. $L^\sharp (\mu)$)
is a composition factor of 
$\D(\la)$ (resp. $\D^\sharp (\la)$), 
we have 
$\la \geq \mu$. 
Thus, 
$L(\la) \cong L(\mu)$ (resp. $L^\sharp (\la) \cong L^\sharp (\mu)$) 
if and only if 
$\la=\mu$. 
Moreover, 
the multiplicity of $L(\la)$ (resp. $L^\sharp (\la)$) in $\D(\la)$ (resp. $\D^\sharp (\la)$) is equal to one. 

\item 
$\big\{ L(\la) \bigm| \la \in \vL^+ \big\}$ 
(resp. $\big\{ L^\sharp (\la) \bigm| \la \in \vL^+ \big\}$ ) 
gives a complete set of 
non-isomorphic left (resp. right) simple $\CS_q$-modules. 

\item 
$\CS_{q}$ is semisimple if and only if 
$\D(\la) \cong L(\la)$ and $\D^\sharp (\la) \cong L^\sharp(\la)$ for any $\la \in \vL^+$.   
\end{enumerate}
\end{thm}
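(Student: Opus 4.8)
The plan is to follow the standard template for cellular and standardly based algebras, using the bilinear form of Lemma \ref{lem property bilinear} together with the weight-space structure coming from the triangular decomposition of Proposition \ref{tri decom}. The whole argument rests on one initial observation: for $\la \in \vL^+$ we have $1_\la \notin \CS_q(>\la)$ by \eqref{vL+}, and taking $y=x=1$ in the definition of the form gives $\lan \ol{1_\la}, \ol{1_\la}\ran\, 1_\la \equiv 1_\la \bmod \CS_q(>\la)$, whence $\lan \ol{1_\la},\ol{1_\la}\ran = 1 \neq 0$. In particular the form is not identically zero and $\ol{1_\la}$ survives in $L(\la)$.

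For (i) I would first show that any left $\CS_q$-submodule $M \subseteq \D(\la)$ with $M \not\subseteq \rad\D(\la)$ equals $\D(\la)$: choosing $\ol{x} \in M$ and a monomial $\ol{1_\la E_{j_1}\cdots E_{j_l}}$ with $\lan \ol{1_\la E_{j_1}\cdots E_{j_l}}, \ol{x}\ran \neq 0$, the element $F_{i_1}\cdots F_{i_k}1_\la E_{j_1}\cdots E_{j_l}$ lies in $\CS_q(\geq\la)$, so Lemma \ref{lem property bilinear}(ii) shows that $(F_{i_1}\cdots F_{i_k}1_\la E_{j_1}\cdots E_{j_l})\cdot\ol{x}$ is a nonzero scalar multiple of $\ol{F_{i_1}\cdots F_{i_k}1_\la}$, forcing every spanning element of $\D(\la)=\CS_q^-\ol{1_\la}$ into $M$. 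Since the form is nonzero, $\rad\D(\la) \neq \D(\la)$, so $\rad\D(\la)$ is the unique maximal submodule and $L(\la)$ is simple. Absolute simplicity then follows from weights: by the triangular decomposition the weight-$\la$ space $1_\la\D(\la)$ is spanned by $\ol{1_\la}$ alone (the degree-$0$ part of $\CS_q^-$ is $\CK\cdot 1$), hence is one-dimensional and nonzero in $L(\la)$; any $\vf \in \End_{\CS_q}(L(\la))$ commutes with $1_\la$ and so scales $\ol{1_\la}$, and since $\ol{1_\la}$ generates $L(\la)$ we conclude $\End_{\CS_q}(L(\la))=\CK$. The statements for $\D^\sharp(\la)$ are symmetric.

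For (ii) and (iii) the argument is by weights and a cell filtration. Every weight $\mu$ of $\D(\la)=\CS_q^-\ol{1_\la}$ satisfies $\mu \leq \la$, while $L(\mu)$ has $\mu$ as its unique maximal weight; hence a composition factor $L(\mu)$ of $\D(\la)$ forces $\mu \leq \la$, and comparing maximal weights gives $L(\la)\cong L(\mu)\Leftrightarrow \la=\mu$. Counting the one-dimensional weight-$\la$ space along a composition series shows $L(\la)$ occurs exactly once. For completeness I would order $\vL=\{\la_1,\dots,\la_N\}$ so that $\la_i > \la_j \Rightarrow i<j$; then each $\Gamma_k=\{\la_1,\dots,\la_k\}$ is upward closed, so $B_k=\sum_{j\leq k}\CS_q^- 1_{\la_j}\CS_q^+$ is a two-sided ideal by the argument of Lemma \ref{lem ideal}, giving a filtration $0=B_0\subseteq\cdots\subseteq B_N=\CS_q$ whose $k$-th subquotient is $\CS_q(\geq\la_k)/\CS_q(>\la_k)$ (which is zero unless $\la_k\in\vL^+$). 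By Lemma \ref{lem surjection} this subquotient is a quotient of $\D(\la_k)\otimes_\CK\D^\sharp(\la_k)$, hence as a left module a quotient of a direct sum of copies of $\D(\la_k)$. Since $\CS_q$ is finite-dimensional, every composition factor of the regular module, and so every simple $\CS_q$-module, is one of the $L(\la)$; with part (ii) this proves (iii).

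For (iv) the forward direction is immediate: if $\CS_q$ is semisimple then $\D(\la)$ is semisimple, and a semisimple module with a unique maximal submodule (part (i)) must be simple, so $\D(\la)=L(\la)$ and likewise $\D^\sharp(\la)=L^\sharp(\la)$. Conversely, if $\D(\la)=L(\la)$ and $\D^\sharp(\la)=L^\sharp(\la)$ for all $\la$, the induced form on $L^\sharp(\la)\times L(\la)$ is nondegenerate, so $\dim\D(\la)=\dim L(\la)=\dim L^\sharp(\la)=\dim\D^\sharp(\la)$; the filtration above yields $\dim\CS_q\leq\sum_{\la\in\vL^+}\dim\D(\la)\,\dim\D^\sharp(\la)=\sum_{\la\in\vL^+}(\dim L(\la))^2$, while the Artin--Wedderburn decomposition of $\CS_q/\rad\CS_q$ for the complete set of absolutely simple modules $\{L(\la)\}$ gives $\dim\CS_q\geq\sum_{\la\in\vL^+}(\dim L(\la))^2$. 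Equality forces $\rad\CS_q=0$, i.e.\ semisimplicity. The one step requiring genuine care is the construction of the cell filtration and the verification that its subquotients are governed by $\D(\la)\otimes_\CK\D^\sharp(\la)$ even though the map of Lemma \ref{lem surjection} need not be an isomorphism; everything else is the routine transcription of the cellular-algebra formalism.
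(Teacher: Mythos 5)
Parts (i), (ii), and (iv) of your argument are sound and essentially follow the paper's own proof: (i) is the same pairing argument via Lemma \ref{lem property bilinear} (\roii) (your single-monomial version, and your explicit verification that $\End_{\CS_q}(L(\la))=\CK$ using the one-dimensional weight-$\la$ space, are fine additions, since the paper only asserts absolute simplicity); (ii) is the same weight-space counting; and in (iv) you replace the paper's block argument by the observation that a semisimple module with a unique maximal submodule is simple, and replace its exact dimension count by an inequality closed with Artin--Wedderburn --- both variants are valid, though your converse, like the paper's, relies on (iii).

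The genuine gap is in (iii), at the inference ``every composition factor of the regular module, and so every simple $\CS_q$-module, is one of the $L(\la)$.'' Your filtration argument correctly shows that every simple $\CS_q$-module $S$ is a composition factor of $\D(\la_k)$ for some $\la_k\in\vL^+$; but this does not identify $S$ with any $L(\mu)$, because the composition factors of $\rad \D(\la_k)$ have not been shown to be of the form $L(\mu)$ --- that is precisely the nontrivial content of (iii), so the step as written is circular. The paper closes it by induction on the poset: for $\la$ minimal in $\vL^+$ one shows $\rad\D(\la)=0$ directly from Lemma \ref{lem property bilinear} (\roii), and for general $\la$ one shows $\CS_q(\not< \la)\cdot \rad\D(\la)=0$, so that $\rad\D(\la)$ is a module over $\CS_q/\CS_q(\not< \la)$, whose composition factors come (again via Lemma \ref{lem surjection}) from the $\D(\mu)$ with $\mu<\la$, to which the induction hypothesis applies. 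Alternatively, you can repair your own route with one extra idea: choose $k$ minimal such that $B_k\cdot S\neq 0$; then $B_{k-1}\cdot S=0$ and $B_k\cdot S=S$, so the multiplication map $(B_k/B_{k-1})\otimes_{\CK} S\to S$ is a surjection of left modules, and since $B_k/B_{k-1}$ is a quotient of a direct sum of copies of $\D(\la_k)$, there is a nonzero homomorphism $\D(\la_k)\to S$; it is surjective because $S$ is simple, so $S$ is the head of $\D(\la_k)$, i.e. $S\cong L(\la_k)$ by (i). Either repair is short, but without one of them (iii) --- and hence the Wedderburn lower bound in your (iv) --- remains unproven.
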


\begin{proof}
We prove the assertions only for left $\CS_q$-modules. 
The proof is similar for right $\CS_q$-modules. 
(\roi) 
It is clear that 
$\lan \ol{1_{\la}}, \ol{1_{\la}} \ran =1 $. 
Thus, 
we have $\D(\la) \supsetneqq \rad \D(\la)  $. 
For $\ol{x} \in \D(\la) \setminus \rad \D(\la)$, 
there exists an element 
$\ol{y} \in \D^\sharp(\la)$ 
such that 
$\lan \ol{y}, \ol{x} \ran \not=0$. 
Since $\lan \, , \,\ran$ is a bilinear form over a field $\CK$, 
we can suppose that $\lan \ol{y}, \ol{x} \ran =1$. 
Let 
\[
\ol{y}=\sum_{(j_1,\cdots, j_l)\in \Upsilon _\a \atop  \a \in Q^+} 
	r_{(j_1,\cdots,j_l)} \ol{1_{\la} E_{j_1} \cdots E_{j_l}}.
\] 
For $\ol{t}=\ol{F_{i_1}\cdots F_{i_k} 1_{\la}} \in \D(\la)$, 
put 
\[y_{\ol{t}}=F_{i_1}\cdots F_{i_k} 1_{\la} 
	\Big(\sum_{(j_1,\cdots, j_l)\in \Upsilon_\a \atop  \a \in Q^+} 
	r_{(j_1,\cdots, j_l)} E_{j_l}\cdots E_{j_1}\Big) \in \CS_q.
\] 
Then, we have 
\begin{align*}
y_{\ol{t}}\cdot \ol{x} 
&=
\sum r_{(j_1,\cdots,j_l)} (F_{i_1}\cdots F_{i_k} 1_{\la} E_{j_l}\cdots E_{j_1})\cdot \ol{x} \\
&= 
\sum r_{(j_1,\cdots,j_l)} \lan \ol{1_\la E_{j_1}\cdots E_{j_l} } \, , \,\ol{x} \ran 
	\ol{F_{i_1} \cdots F_{i_k} 1_{\la}} 
	&&(\because \text{Lemma \ref{lem property bilinear} (\roii)}) \\
&=
\lan \ol{y}, \ol{x} \ran \ol{F_{i_1} \cdots F_{i_k} 1_{\la}} \\
&=
\ol{F_{i_1} \cdots F_{i_k} 1_{\la}}.
\end{align*} 
This implies that 
$\D(\la)$ 
is generated by $\ol{x}$ as an $\CS_q$-module. 
Since this fact holds for any $\ol{x} \in \D(\la) \setminus \rad \D(\la)$, 
$\rad \D(\la)$ is the proper unique maximal submodule of $\D(\la)$. 

(\roii) 
For $\la \in \vL^+$, 
we have 
$1_\la \cdot L(\la) \not= 0$ 
since 
$\ol{1_\la} \not\in \rad \D(\la)$. 
On the other hand, 
one sees easily that 
$1_\mu \cdot \D(\la)=0$ 
for any $\mu \in \vL$ such that $\mu \not\leq \la$. 
Thus, 
if $L(\mu)$ is a composition factor of $\D(\la)$, 
we have $1_\mu \cdot \D(\la) \not=0$  
and $\mu \leq \la$.  
Moreover, 
one sees that 
$1_\la \cdot \rad \D(\la) = 0$ (note that $\ol{1_\la} \not\in \rad \D(\la)$). 
This implies that 
$L(\la)$ does not appear in $\rad \D(\la)$ as a composition factor. 
Thus we have (\roii). 

(\roiii) 
Let 
$\{\la_{\lan 1 \ran},\la_{\lan 2 \ran},\cdots, \la_{\lan z \ran}\}$ be 
such that $i<j$ if $\la_{\lan i \ran}>\la_{\lan j \ran}$. 
Put $\CS_q(\la_{\lan i \ran})=\sum_{j \leq i} \CS_q^- 1_{\la_{\lan j \ran}} \CS_q^+$, 
then 
$\CS_q(\la_{\lan i \ran})$ 
turns out to be a two-sided ideal of $\CS_q$. 
Thus, 
we have the following filtration of two-sided ideals. 
\begin{align}
\CS_q=\CS_q(\la_{\lan z \ran}) \supset \CS_q(\la_{\lan z-1 \ran}) 
\supset \cdots \supset \CS_q(\la_{\lan 1 \ran}) \supset \CS_q(\la_{\lan 0 \ran})=0.
\end{align}
One sees easily that 
$\CS_q(\la_{\lan i \ran})/\CS_q(\la_{\lan i-1 \ran}) 
\cong \CS_q(\geq \la_{\lan i \ran})/ \CS_q (> \la_{\lan i \ran})$ 
as $(\CS_q,\CS_q)$-bimodules 
for $\la_{\lan i \ran} \in \vL$. 
Moreover, 
one can check that 
\[\CS_q(\la_{\lan i \ran}) \not= \CS_q(\la_{\lan i-1 \ran})  
\, \text{ if and only if } \, 
1_{\la_{\lan i \ran}} \not\in \CS_q(>\la_{\lan i \ran }) 
\, \text{ if and only if }  \,
\la_{\lan i \ran} \in \vL^+. 
\]
Let
$\vL^+=\{\la_{\lan c_1 \ran},\cdots,\la_{ \lan c_{z'} \ran} \}$ 
such that 
$i<j$ if $c_i < c_j$. 
Then, 
we have the following filtration of two-sided ideals.  
\begin{align} \label{filtration Sq}
\CS_q=\CS_q(\la_{\lan c_{z'} \ran}) \supsetneqq \CS_q(\la_{\lan c_{z'-1} \ran}) 
\supsetneqq \cdots \supsetneqq \CS_q(\la_{\lan c_1 \ran}) \supsetneqq \CS_q(\la_{\lan c_0 \ran})=0  
\end{align}
such that  
$\CS_q(\la_{\lan c_{i} \ran})/\CS_q(\la_{\lan c_{i-1} \ran}) 
	\cong \CS_q(\geq \la_{\lan c_{i} \ran})/ \CS_q(> \la_{\lan c_{i} \ran})$ 
as $(\CS_q,\CS_q)$-bimodules. 

By the filtration of $\CS_q$ in (\ref{filtration Sq}) 
and the surjective homomorphism of $(\CS_q,\CS_q)$-bimodules 
$\D(\la) \otimes_{\CK} \D^\sharp (\la) 
	\ra \CS_q(\geq \la)/ \CS_q(> \la)$ 
for $\la\in \vL^+$ 
in Lemma \ref{lem surjection}, 
any composition factor of 
$\CS_q$ is a composition factor of 
$\D(\la)$ for some $\la \in \vL^+$. 
Thus, it is enough to show that any composition factor of 
$\D(\la)$ $(\la \in \vL^+)$ 
is isomorphic to $L(\mu)$ for some $\mu \in \vL^+$. 
We prove it by using the induction on $\vL^+$. 

Let $\la \in \vL^+$ be a minimal element with respect to the order \lq\lq $\geq$ ".  
We take $\ol{x}=\sum r_{(i_1,\cdots,i_k)} \ol{F_{i_1}\cdots F_{i_k} 1_{\la}} 
\in \rad\D(\la)$. 
Put 
$x= \sum r_{(i_1,\cdots,i_k)} F_{i_1}\cdots F_{i_k} 1_{\la} \in \CS_q(\geq\la)$. 
For $\mu \in \vL^+$ such that $\la \not= \mu $,  
we have 
$\CS_q(\geq \mu)\cdot x \in \CS_q(\geq \la) \cap \CS_q(\geq \mu) 
\subset \CS_q(>\la)$  
since 
both of $\CS_q(\geq \la)$ and $\CS_q(\geq \mu )$ 
are two-sided ideals of $\CS_q$ and $\la$ is a minimal element of $\vL^+$. 
This implies that 
$\CS_q(\geq \mu) \cdot \ol{x} =0 $ 
for any 
$\mu \in \vL^+$ 
such that 
$\mu \not= \la$. 
On the other hand, 
for any 
$F_{y_1}\cdots F_{y_b} 1_{\la} E_{x_1}\cdots E_{x_a} \in \CS_q(\geq \la)$, 
we have 
\begin{align*}
(F_{y_1}\cdots F_{y_b} 1_{\la} E_{x_1}\cdots E_{x_a}) 
\cdot \ol{x} 
= \lan \ol{1_{\la} E_{x_1}\cdots E_{x_a}} \, , \, \ol{x} \ran 
	\ol{F_{y_1}\cdots F_{y_b} 1_\la}
=0, 
\end{align*}
where 
the first equation follows Lemma \ref{lem property bilinear} (\roii), 
and 
the second equation follows $\ol{x} \in \rad \D(\la)$. 
This implies that 
$\CS_q(\geq \la) \cdot \ol{x}=0$. 
Together with the above arguments, 
we have $\CS_q \cdot \ol{x}=0$. 
In particular, 
we have 
$\ol{x}=1 \cdot \ol{x}=0$. 
This means that $\rad \D(\la) =0$, 
and we have $\D(\la)=L(\la)$. 

Next, we suppose that $\la \in \vL^+$ is not minimal. 
Put 
\[\CS_q(\not< \la) =\sum_{\mu \in \vL \atop \mu \not< \la} \CS_q^- 1_\mu \CS_q^+ 
\,\, \text{ and }\,\, 
\CS_q(\not\leqq \la) =\sum_{\mu \in \vL \atop \mu \not\leqq \la} \CS_q^- 1_\mu \CS_q^+.\]
One sees that 
$\CS_q(\not< \la)$ and $\CS_q(\not\leqq \la)$ 
are two-sided ideals of $\CS_q$. 
It is clear that 
$\CS_{q}(\not\leqq \la) \cdot \D(\la) =0$. 
Moreover, we see that 
$\CS_q(\geq \la) \cdot \rad \D(\la)=0$ 
in a similar way as in the above arguments. 
Thus, 
we have 
$\CS_{q}(\not< \la) \cdot \rad \D(\la) =0$. 
This implies that 
the action of $\CS_q$ on $\rad \D(\la)$ 
induces the action of 
$\CS_q/\CS_q(\not< \la)$ on $\rad \D(\la)$. 
Thus, any composition factor of $\rad \D(\la)$ 
is a composition factor of $\CS_q/\CS_q(\not< \la)$. 
Moreover, 
we can take a total order of $\vL$ such that 
$\CS_q(\not< \la )=\CS_q(\la_{\lan k \ran})$ for some $k$ 
and that $\la_{\lan j \ran}<\la$ for any $j=k+1,\cdots,z$.
Thus, by Lemma \ref{lem surjection}, 
any composition factor of $\CS_q/\CS_q(\not< \la )$ 
is a composition factor of $\D(\mu)$ 
for some 
$\mu \in \vL^+$ 
such that $\mu<\la$. 
By the induction hypothesis, 
we see that any composition factor of $\D(\mu)$ such that $\mu < \la$ 
is isomorphic to 
$L(\nu)$ for some $\nu \in \vL^+$. 
It follows that 
any composition factor of $\rad \D(\la)$ 
is isomorphic to 
$L(\nu)$ for some $\nu \in \vL^+$. 
Since $\D(\la)/ \rad \D(\la)=L(\la)$, 
we obtain (\roiii). 

(\roiv) 
Suppose that $\CS_q$ is semisimple, 
then $L(\la)$ and $L(\mu)$ ($\la \not= \mu \in \vL^+$) 
belong to different blocks of $\CS_q$. 
On the other hand, 
$\D(\la)$ is indecomposable since $\D(\la)$ has the unique top. 
Thus, all the composition factors of $\D(\la)$ belong to the same block. 
This means that $\D(\la)$ has only $L(\la)$ as composition factors, 
and 
we have  
$\D(\la) = L(\la)$ for any $\la \in \vL^+$ 
by (\roii).  
We have 
$\D^\sharp (\la) = L^\sharp (\la)$ for any $\la \in \vL^+$ in a similar way. 

Next we suppose that 
$\D(\la) \cong L(\la)$ and $\D^\sharp (\la) \cong L^\sharp(\la)$ for any $\la \in \vL^+$. 
Then, 
the surjective homomorphism of $(\CS_q, \CS_q)$-bimodules 
$\D(\la) \otimes_{\CK} \D^\sharp (\la) \ra \CS_q(\geq \la) / \CS_q(>\la)$ in Lemma \ref{lem surjection} 
must be isomorphic.  
Thus, the filtration \eqref{filtration Sq} implies that 
\[
\dim_{\CK} \CS_q = \sum_{\la \in \vL^+} (\dim_{\CK} \D(\la))^2. 
\]
($\dim_\CK L(\la) = \dim_\CK L^\sharp (\la)$ will be prove in Lemma \ref{iso L dual}.)
This implies that 
$\CS_q$ is semisimple. 
\end{proof}

\para  
\label{def Borel Sq +-}
Let 
$\CS_q^{\geq 0}$ 
(resp. $\CS_q^{\leq 0}$) 
be the subalgebra of 
$\CS_q$ 
generated by 
$\CS_q^{+}$ (resp. $\CS_q^-$) 
and $\CS_q^0$. 
Thus, 
$\CS_q^{\geq 0}$ (resp. $\CS_q^{\leq 0}$) 
is generated by $E_i$ (resp. $F_i$) for $i=1,\cdots,m-1$  and 
$1_{\la}$ for $\la \in \vL$. 
For $\la \in \vL$ such that $1_{\la} \not=0$ in $\CS_q$, 
let $\theta_\la = \CK v_{\la}$ be the one dimensional vector space with a basis $v_{\la}$. 
We define a left action of $\CS_q^{\geq 0}$ on $\theta_{\la}$ by 
\[1_\mu \cdot v_{\la}=\d_{\la \mu} v_\la, \quad E_i \cdot v_\la =0 \quad 
\text{for } \mu \in \vL \text{ and } i=1,\cdots,m-1.\]
One can check that this action is well-defined for $\la \in \vL$ such that $1_{\la}\not=0$. 
Similarly, 
we define a right action of $\CS_q^{\leq 0}$ on $\theta_{\la}$ by 
\[ v_{\la}\cdot 1_{\mu} =\d_{\la \mu} v_\la, \quad  v_\la \cdot F_i=0 \quad 
\text{for } \mu \in \vL \text{ and } i=1,\cdots,m-1.\]
We have the following theorem. 
( 
A similar theorem for cyclotomic $q$-Schur algebras 
has been obtained by \cite{DR}. 
The proof given here is similar to the proof given in \cite{DR}. 
)

\begin{thm}\ \label{thm standard induced}
\begin{enumerate}
\item 
$\{1_\la \,|\, \la \in \vL \text{ such that } 1_\la \not=0 \}$ 
is the complete set of primitive idempotents in $\CS_q^{\geq 0}$ and $\CS_q^{\leq 0}$.  

\item 
$\{ \theta_\la \,|\, \la \in \vL \text{ such that } 1_\la \not=0 \}$ 
is a complete set of non-isomorphic simple left $\CS_q^{\geq 0}$-modules,  
and of non-isomorphic simple right $\CS_q^{\leq 0}$-modules.

\item 
For $\la \in \vL$ such that $1_{\la} \not=0$, 
we have the following isomorphism of left $\CS_q$-modules. 
\[
\CS_q \otimes_{\CS_q^{\geq 0}} \theta_\la \cong 
\begin{cases} 
\D(\la) &\text{ if } \la \in \vL^+, \\
0 & \text{otherwise}.
\end{cases}
\]

\item 
For $\la \in \vL$ such that $1_{\la} \not=0$, 
we have the following isomorphism of right $\CS_q$-modules. 
\[
\theta_\la  \otimes_{\CS_q^{\leq 0}} \CS_q \cong
\begin{cases} 
\D^\sharp (\la) &\text{ if } \la \in \vL^+, \\
0 & \text{otherwise}.
\end{cases}
\]
\end{enumerate}
\end{thm}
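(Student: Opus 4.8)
The plan is to dispose of (i)--(ii), which concern the Borel subalgebras on their own, and then to bootstrap to the module identifications (iii)--(iv).

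For (i) and (ii) I would work with the $Q^+$-grading that $\CS_q^{\geq 0}$ inherits from $\CS_q$ (with $\deg E_i=\a_i$ and $\deg 1_\la=0$), so that its degree-zero part is $\CS_q^0$. By (\ref{Sq-1}) the nonzero $1_\la$ are pairwise orthogonal idempotents with $\sum_\la 1_\la=1$, giving a decomposition of the identity. Using (\ref{Sq-3}) one checks that for a homogeneous $u$ of degree $\a$ one has $1_\la u 1_\la=\d_{\a,0}\,1_\la u$, whence $1_\la\CS_q^{\geq 0}1_\la=\CK 1_\la$; since this is a field, each $1_\la$ is primitive, proving (i). Next, the positively graded part $(\CS_q^{\geq 0})_+=\bigoplus_{\a\neq 0}(\CS_q^{\geq 0})_\a$ is a two-sided ideal, and it is nilpotent because $\CS_q^{\geq 0}$ is finite dimensional and $Q^+$-graded (so $((\CS_q^{\geq 0})_+)^N=0$ for $N\gg 0$, exactly as products of the $E_i$ vanish in the proof of Proposition \ref{tri decom}). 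Hence $(\CS_q^{\geq 0})_+=\rad\CS_q^{\geq 0}$ and $\CS_q^{\geq 0}/\rad\CS_q^{\geq 0}\cong\CS_q^0$, which by Corollary \ref{basis Sq0} is $\bigoplus_{1_\la\neq 0}\CK 1_\la$, a product of copies of $\CK$. Its simple modules are precisely the one-dimensional $\theta_\la$ (pairwise non-isomorphic, since $1_\la$ acts as the identity on $\theta_\la$ but as $0$ on $\theta_\mu$ for $\mu\ne\la$), giving (ii); the statements for $\CS_q^{\leq 0}$ follow by the symmetry $E_i\leftrightarrow F_i$.

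For (iii), fix $\la\in\vL$ with $1_\la\ne0$. The induced module $\CS_q\otimes_{\CS_q^{\geq 0}}\theta_\la$ is generated by $1_\la\otimes v_\la$ (as $v_\la=1_\la\cdot v_\la$), and by the triangular decomposition (Proposition \ref{tri decom}) together with $E_i\cdot v_\la=0$ it is in fact spanned by $\{\,x\otimes v_\la\mid x\in\CS_q^-\,\}$. I would first define $\phi\colon\CS_q\otimes_{\CS_q^{\geq 0}}\theta_\la\to\D(\la)$ by $u\otimes v_\la\mapsto u\cdot\ol{1_\la}$ and verify it is a well-defined homomorphism of left $\CS_q$-modules by checking compatibility on the algebra generators $E_i,1_\mu$ of $\CS_q^{\geq 0}$: indeed $E_i\cdot\ol{1_\la}=\ol{1_{\la+\a_i}E_i}=0$ (since $1_{\la+\a_i}E_i\in\CS_q(>\la)$, or $E_i1_\la=0$), matching $E_i\cdot v_\la=0$, while $1_\mu\cdot\ol{1_\la}=\d_{\la\mu}\ol{1_\la}$ matches $1_\mu\cdot v_\la=\d_{\la\mu}v_\la$. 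Since $\D(\la)=\CS_q\cdot\ol{1_\la}$, the map $\phi$ is surjective.

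The heart of the matter---and the step I expect to be the main obstacle---is the injectivity of $\phi$ for $\la\in\vL^+$ and the vanishing for $\la\notin\vL^+$. Rather than attempting a PBW-type freeness of $\CS_q$ over $\CS_q^{\geq 0}$ (which Proposition \ref{tri decom} does \emph{not} supply, as it only gives $\CS_q=\CS_q^-\CS_q^0\CS_q^+$ as a product of subspaces), I would exploit the explicit one-dimensional action on $\theta_\la$. Since $\CS_q^-$ surjects onto the induced module via $x\mapsto x\otimes v_\la$ and onto $\D(\la)$ via $x\mapsto\ol{x1_\la}$, and $\phi$ intertwines these, it suffices to show the two surjections have equal kernels. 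Suppose $x\in\CS_q^-$ with $x1_\la\in\CS_q(>\la)$, say $x1_\la=\sum_k a_k1_{\mu_k}b_k$ with $a_k\in\CS_q^-$, $b_k\in\CS_q^+$ and $\mu_k>\la$. Then
\[
x\otimes v_\la=x1_\la\otimes v_\la=\sum_k a_k1_{\mu_k}\otimes b_k\cdot v_\la ,
\]
and each summand vanishes: if $\deg b_k>0$ then $b_k\cdot v_\la=0$, while if $\deg b_k=0$ then $b_k\in\CK 1$ and $1_{\mu_k}\cdot v_\la=\d_{\mu_k\la}v_\la=0$ since $\mu_k\ne\la$. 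This gives $\ker(\CS_q^-\to\D(\la))\subseteq\ker(\CS_q^-\to\CS_q\otimes_{\CS_q^{\geq 0}}\theta_\la)$; the reverse inclusion is immediate from $\phi$, so $\phi$ is an isomorphism. When $\la\notin\vL^+$ we have $1_\la\in\CS_q(>\la)$ by (\ref{vL+}), and the same computation with $x=1$ yields $1_\la\otimes v_\la=0$, so the induced module, being generated by it, is zero. Part (iv) is proved by the mirror argument, using the right action of $\CS_q^{\leq 0}$ on $\theta_\la$, the right module $\D^\sharp(\la)=\ol{1_\la}\cdot\CS_q$, and the analogous manipulation moving factors of $\CS_q^-$ across the tensor product to annihilate $v_\la$. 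Thus (i)--(ii) should be routine once the grading and the nilpotence of $(\CS_q^{\geq 0})_+$ are in place, and the delicate point is exactly the injectivity/vanishing above, where the lack of a freeness statement is circumvented by the triviality of the $E_i$- (resp. $F_i$-) action on the one-dimensional modules $\theta_\la$.
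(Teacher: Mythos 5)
Your proof is correct and follows essentially the same route as the paper: the same computation $1_\la \CS_q^{\geq 0}1_\la = \CK 1_\la$ for (i)--(ii), the same vanishing computation $1\otimes v_\la = 1_\la \otimes v_\la = \sum a_k \otimes 1_{\mu_k}b_k\cdot v_\la = 0$ for $\la \notin \vL^+$, and your kernel-comparison step for $\la \in \vL^+$ is exactly the verification hidden behind the paper's ``one can check that $f_\la$ is well-defined''. The only cosmetic differences are that you identify $\rad \CS_q^{\geq 0}$ with the positively graded part instead of arguing via the principal indecomposable modules $\CS_q^{\geq 0}1_\la$, and that you prove injectivity of a single map rather than exhibiting the mutually inverse pair $f_\la$, $g_\la$.
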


\begin{proof}
We show the theorem only for $\CS_q^{\geq 0}$. 
The proof is similar for $\CS_q^{\leq 0}$. 
Note that 
\[
1_\la E_{i_1}\cdots E_{i_k} 1_\la = 1_\la 1_{\la + \a_{i_1}+\cdots +\a_{i_k}} E_{i_1} \cdots E_{i_k}=0\] 
for $1\leq i_1, \cdots, i_k\leq m-1,\, k\geq 1$. 
Thus, 
for $\la \in \vL$ such that $1_{\la} \not=0$, 
we have 
$1_\la \CS_q^{\geq 0} 1_\la = \CK 1_\la$. 
This implies that 
$1_\la$ is a primitive idempotent of $\CS_q^{\geq 0}$ 
since 
$1_\la \CS_q^{\geq 0} 1_\la \cong \End_{\CS_q^{\geq 0}}(\CS_q^{\geq 0} 1_\la)$,  
and 
$\dim_{\CK} \End_{\CS_q^{\geq 0}}(\CS_q^{\geq 0} 1_\la) \geq 2$ 
if $1_\la$ is not primitive. 
Moreover, 
we have $1=\sum_{\la \in \vL } 1_\la$, 
and so  
$\{1_\la \,|\, \la \in \vL \text{ such that } 1_\la \not=0\}$ 
is the complete set of primitive idempotents in $\CS_q^{\geq 0}$. 
Thus, 
for $\la \in \vL$ such that $1_\la \not=0$, 
$\Theta_\la=\CS_q^{\geq 0} 1_\la$ 
is a principal indecomposable $\CS_q^{\geq 0}$-module. 
By investigating  the degrees, 
$\CS_q^{\geq 0} \cdot (x 1_\la)$ 
is a proper $\CS_q^{\geq 0}$-submodule of $\Theta_\la$ 
for any $x \in \CS_q^{+}$ such that $x \not=1$.  
This implies that 
$\Theta_\la/ \Rad \Theta_\la \cong \theta_\la$.  
Now, we proved (\roi) and (\roii).

Next, we prove (\roiii). 
If $\la \not\in \vL^+$, 
we can write 
$1_\la = \sum_{x \in \CS_q^-, y \in \CS_q^+, \mu>\la } r_{x,y,\mu}\, x 1_\mu y$ in $\CS_q$.  
Thus, we have 
\begin{align*}
1\otimes \theta_\la 
= 
\sum_{\nu \in \vL} 1_\nu \otimes \theta_\la 
= 
1_\la \otimes \theta_\la 
=\sum_{x \in \CS_q^-, y \in \CS_q^+, \mu>\la } r_{x,y,\mu}\, x 1_\mu y \otimes \theta_\la 
=0.
\end{align*}
This implies that 
$\CS_q \otimes_{\CS_q^{\geq 0}} \theta_\la =\CS_q \cdot (1\otimes \theta_\la)=0$. 
Hence, 
we suppose that $\la \in \vL^+$. 
Note that $\D(\la)$ is generated by an element $\ol{1_\la}$, 
and that $\CS_q \otimes_{\CS_q^{\geq 0}} \theta_\la$ 
is generated by $1\otimes v_\la$ 
as $\CS_q$-modules. 
We define a map 
$f_\la : \D(\la) \ra \CS_q \otimes_{\CS_q^{\geq 0}} \theta_\la$ 
by 
$\ol{u \cdot 1_\la} \mapsto  u \otimes v_\la$ 
for $u \in \CS_q$. 
One can check that $f_\la$ gives a well-defined $\CS_q$-homomorphism.   
On the other hand, 
we define the map 
$\wt{g}_\la: \CS_q \times \theta_\la \ra \D(\la)$ 
by 
$(u, rv_\la) \mapsto r \ol{u\cdot 1_\la}$ 
for $u \in \CS_q, r\in \CK$.  
One can check that 
$\wt{g}_\la$ 
gives a well-defined $\CS_q^{\geq 0}$-balanced map. 
Thus, 
$\wt{g}_\la$ 
induces 
an $\CS_q$-homomorphism 
$g_\la : \CS_q\otimes_{\CS_q^{\geq 0}} \theta_\la \ra \D(\la)$ 
such that $u \otimes v_\la \mapsto \ol{u\cdot 1_\la}$. 
Thus, (\roiii) is proved. 
\end{proof}

\para
For given $\eta_\vL=\{\eta_i^\la \,|\, 1 \leq i \leq m-1, \, \la \in \vL\}$, 
where 
$\eta_i^\la \in \wt{\CS}_q^- \wt{\CS}_q^+ 1_\la$ 
such that 
$\deg(\eta_i^\la)=0$, 
we take 
$\eta_i \in \wt{U}_q^- \wt{U}_q^0 \wt{U}_q^+$ $(1 \leq i \leq m-1)$ 
such that 
$\wt{\Psi}(\eta_i)=\sum_{\la \in \vL} \eta_i^\la$, 
and put $\eta=(\eta_1,\cdots,\eta_{m-1})$.  

On the other hand, 
for given $\eta=(\eta_1,\cdots,\eta_{m-1})$, 
where 
$\eta_i \in \wt{U}_q^- \wt{U}_q^0 \wt{U}_q^+$ 
such that 
$\deg (\eta_i)=0$, 
and for given 
$\vL\subset P$,  
set  
$\eta_i^\la = \wt{\Psi}(\eta_i) 1_\la$ 
$(1 \leq i \leq m-1,\, \la \in \vL)$, 
and put 
$\eta_\vL=\{\eta_i^\la \,|\, 1 \leq i \leq m-1,\,\la \in \vL \}$. 

Under this correspondence, 
we have the following theorem. 

\begin{thm}\
\label{thm into Oeta}
\begin{enumerate}
\item 
Let $\CS_q^{\eta_\vL}$-mod be the category 
of finite dimensional left $\CA_q^{\eta_\vL}$-modules.  
Then 
$\CS_q^{\eta_\vL}$-mod is a full subcategory of $\CO^\eta$. 
In particular, 
when we regard a $\CS_q^{\eta_\vL}$-module as a $\wt{U}_q$-module 
through the surjection $\Psi: \wt{U}_q \ra \CS_q^{\eta_\vL}$, 
$\D(\la)$ $(\la \in \vL^+)$ is a highest weight module, 
and 
$L(\la)$ $(\la \in \vL^+)$ is a simple highest weight module 
with a highest weight $\la$ associated to $\eta$. 

\item 
For each $M \in \CO^\eta$, 
if the set of weight $\la$ such that $M_\la \not=0$ 
is contained in $\vL$, 
then  
we have 
$M \in \CS_q^{\eta_\vL}$-mod, 
where we regard the $\CS_q^{\eta_\vL}$-mod as a full subcategory of $\CO^\eta$ by $(\roi)$. 
In particular,  
any simple object of $\CO^\eta$ 
is obtained as in Theorem \ref{thm standard simple Sq}
through the quotient algebra $\CS_q^{\eta_\vL}$ for a suitable $\vL \subset P_{\geq 0}$, 
where the choice of $\vL$ depends on the simple object of $\CO^\eta$.  

\item 
$\CO^\eta$ 
is a full subcategory of 
$\wh{\CO}^\eta_{\text{tri}}$. 
\end{enumerate}
\end{thm}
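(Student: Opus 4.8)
The plan is to regard (\roi) and (\roii) as the two halves of an equivalence between $\CS_q^{\eta_\vL}$-mod and the part of $\CO^\eta$ whose weights are confined to $\vL$, and to deduce (\roiii) from (\roii) together with the triangular decomposition of $\CS_q$ (Proposition \ref{tri decom}). The device used throughout is that whenever the $\wt{U}_q$-action on a module $M$ factors through $\Psi:\wt{U}_q\to\CS_q$, the idempotent $1_\la$ acts as the projection onto the weight space $M_\la$: indeed $\Psi(K_i^\pm)=\sum_{\la\in\vL}q^{\pm\la_i}1_\la$ and $\lan\la,h_i\ran=\la_i$, so $1_\la M=\{v : K_iv=q^{\lan\la,h_i\ran}v\}=M_\la$.

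For (\roi) I would take a finite-dimensional $\CS_q^{\eta_\vL}$-module $M$, view it through $\Psi$, and verify the defining conditions of $\CO^\eta$. Condition (a) is the weight decomposition just described, with weights in $\vL\subseteq P_{\geq0}$, which simultaneously gives (e); (d) is the hypothesis. For (b), note $\Psi(\t_i)=\sum_\la\eta_i^\la=\Psi(\eta_i)$ in $\CS_q$ by the construction of $\eta_i$, so $\t_i-\eta_i$ acts as $\Psi(\t_i-\eta_i)=0$. Fullness is automatic because $\wt{U}_q$ acts through the surjection $\Psi$. For the ``in particular'' part I would check the highest weight axioms (\ref{hi-1})--(\ref{hi-4}) for $\D(\la)=\CS_q^-\cdot\ol{1_\la}$ with highest weight vector $\ol{1_\la}$: (\ref{hi-2}) follows from $1_\mu\ol{1_\la}=\d_{\la\mu}\ol{1_\la}$, (\ref{hi-3}) from $\D(\la)=\CS_q\cdot\ol{1_\la}=\Psi(\wt{U}_q)\cdot\ol{1_\la}$, and (\ref{hi-4}) is condition (b). The only genuine computation is (\ref{hi-1}): since $\D(\la)$ is $Q$-graded with all weights $\leq\la$ and one-dimensional top space $\D(\la)_\la=\CK\ol{1_\la}$, an element $u$ with $\deg(u)=\sum d_i\a_i$ and some $d_i>0$ sends $\ol{1_\la}$ into the weight space $\la+\deg(u)\not\leq\la$, hence kills it. As $L(\la)$ is a quotient of $\D(\la)$ that is simple by Theorem \ref{thm standard simple Sq} (\roi), it is a simple highest weight module.

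The substance is (\roii). Given $M\in\CO^\eta$ with weights inside $\vL$, I would build a $\CS_q^{\eta_\vL}$-module structure on $M$ by letting $E_i,F_i$ act as $e_i,f_i$, letting $1_\la$ act as the projection onto $M_\la$, and letting $\t_i^\la$ act as $\eta_i\,1_\la$ (legitimate since $\t_i$ acts as $\eta_i$ by condition (b)). Then I would check the relations (\ref{Sq-1})--(\ref{Sq-9}) of $\wt{\CS}_q$: (\ref{Sq-1})--(\ref{Sq-2}) are properties of weight projections; (\ref{Sq-3})--(\ref{Sq-6}) hold because $e_i$ and $f_i$ shift the weight by $\pm\a_i$ and must annihilate $M_\la$ whenever $\la\pm\a_i\notin\vL$, the target weight space being zero; (\ref{Sq-7}) is the relation (\ref{U-6}) combined with $\t_i=\eta_i$ on $M$; and (\ref{Sq-8})--(\ref{Sq-9}) are inherited verbatim from (\ref{U-7})--(\ref{U-8}). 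This makes $M$ a $\wt{\CS}_q$-module on which $\t_i^\la$ and $\eta_i^\la=\wt{\Psi}(\eta_i)1_\la$ act identically, so $\wt{\CI}^{\eta_\vL}$ annihilates $M$, and $M$ descends to $\CS_q$; moreover the resulting structure, pulled back along $\Psi$, recovers the original $\wt{U}_q$-action. The ``in particular'' statement is then immediate: a simple object of $\CO^\eta$ has finite weight support, so lies in $\CS_q^{\eta_\vL}$-mod for suitable $\vL$, and Theorem \ref{thm standard simple Sq} (\roiii) exhibits it as some $L(\la)$.

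Finally, (\roiii) follows from (\roii): for $M\in\CO^\eta$ and $u\in\wt{U}_q$, realize $M$ over $\CS_q^{\eta_\vL}$ and expand $\Psi(u)=\sum_j x_jh_jy_j\in\CS_q^-\CS_q^0\CS_q^+$; lifting the factors along the surjections $\Psi(\wt{U}_q^\mp)=\CS_q^\mp$, $\Psi(\wt{U}_q^0)=\CS_q^0$ produces $x\in\wt{U}_q^-\wt{U}_q^0\wt{U}_q^+$ with $(u-x)\cdot M=0$, which is condition (c). The main obstacle I expect lies in (\roii): one must ensure the hand-built action is forced to coincide with the pre-existing $\wt{U}_q$-action — in particular that the posited $\t_i^\la$ is genuinely the restriction of $\t_i$ to $M_\la$ and that the factorization through $\Psi$ is compelled, not merely available — and that every Schur-algebra relation, above all (\ref{Sq-7}) and the vanishing clauses of (\ref{Sq-3})--(\ref{Sq-6}), is an actual consequence of the finiteness of the weight support rather than a separate assumption. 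A subtler point worth flagging is that the triangular replacement $x$ in (\roiii) depends on the finite weight support $\vL$ of $M$, so condition (c) is verified object by object; this is exactly why $\CO^\eta$, unlike all of $\wh{\CO}^\eta$, falls inside $\wh{\CO}^\eta_{\text{tri}}$.
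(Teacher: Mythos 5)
Your proposal is correct and follows essentially the same route as the paper: (\roi) is read off from the surjection $\Psi$, (\roii) is proved by equipping $M$ with an $\CS_q^{\eta_\vL}$-action via $E_i\mapsto e_i$, $F_i\mapsto f_i$, $1_\la\mapsto$ projection onto $M_\la$, checking the defining relations and that pulling back along $\Psi$ recovers the original action, and (\roiii) is deduced from (\roii) together with the triangular decomposition of $\CS_q^{\eta_\vL}$. The only difference is that you spell out the relation-by-relation verifications (and the descent from $\wt{\CS}_q$ through $\wt{\CI}^{\eta_\vL}$) that the paper compresses into ``one can check,'' including the same object-by-object reading of condition (c) that the paper's own proof of (\roiii) implicitly uses.
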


\begin{proof}
(\roi) is clear through the surjection 
$\Phi : \wt{U}_q \ra \CS_q^{\eta_\vL}$,  
and by 
the definitions of $\D(\la)$ and $L(\la)$. 

We prove (\roii). 
For $M \in \CO^\eta$, 
put  
$\vL_M=\{\la \in P_{\geq 0}  \,|\, M_{\la} \not= 0\}$. 
(Note that $M_\la =0 $ unless $\la \in P_{\geq 0}$ by the condition (e) in the definition of $\CO^\eta$.) 
Since the dimension of $M$ is finite, 
$\vL_M$ is a finite set. 
We take a finite subset $\vL$ of $P_{\geq 0}$ such that $\vL_M \subset \vL$.  
Then, we can define an action of $\CS_q^{\eta_\vL}$ on $M$ as follows; 
\begin{align*}
&E_i \cdot m = e_i \cdot m &&\text{ for } 1 \leq i \leq m-1, \, m \in M, \\
&F_i \cdot m = f_i \cdot m &&\text{ for } 1 \leq i \leq m-1, \, m \in M, \\ 
&1_\la \cdot m = \d_{\la\mu} m &&\text{ for } \la \in \vL, \, m \in M_\mu.
\end{align*}
One can check that this action is well-defined 
by using the defining relations of $\wt{U}_q$ and 
the definition of $\CO^\eta$. 
We denote this $\CS_q^{\eta_\vL}$-module by $M^{\vL}$.  
When we regard $M^{\vL}$ as a $\wt{U}_q$-module through the surjection $\Psi$, 
$M^{\vL}$ coincides with $M$. 
This implies that 
$M \in \CS_q^{\eta_\vL}$-mod. 
Now, the last assertion of (\roii) is clear. 

Since $\CS_q^{\eta_\vL}$ has the triangular decomposition 
compatible with that of $\wt{U}_q$,  
(\roiii) follows from (\roii). 
\end{proof}

\para \label{def iota}
We define an algebra anti-automorphism 
$\iota : \wt{\CS}_q \ra \wt{\CS}_q$ 
by 
$\iota(E_i)=F_i$, 
$\iota(F_i)=E_i$, 
$\iota(1_\la)=1_\la$ 
and 
$\iota(\t_i^\la)=\t_i^\la$ 
for $i=1,\cdots,m-1$ and $\la \in \vL$. 
We can easily check that 
$\iota$ is well-defined. 
We consider the following conditions; 
\begin{align*} \label{condition to be cellular}
&\iota(\eta_i^\la)=\eta_i^\la  \text{ for any } i=1,\cdots, m-1 \text{ and } \la \in \vL. 
\tag{\textbf{C-1}} \\
&\D(\la)\otimes_{\CK}  \D^\sharp (\la) \cong \CS_q(\geq \la)/ \CS_q(> \la) 
 \text{ as } (\CS_q,\CS_q)\text{-bimodules for any } \la \in \vL^+. 
\tag{\textbf{C-2}}
\end{align*}
Thanks to the condition (C-1), 
$\iota$ induces a well-defined algebra anti-automorphism on $\CS_q$. 
In view of the Lemma \ref{lem surjection}, 
the condition (C-2) is equivalent to the following condition; 
\begin{align*}
\sum_{x \in \CS_q^-, y \in \CS_q^+} \hspace{-1em}r_{xy} x 1_\la y \in \CS_q(> \la) 
\Rightarrow 
\sum_{x \in \CS_q^-, y \in \CS_q^+}\hspace{-1em} r_{xy} \ol{x 1_\la} \otimes  \ol{ 1_\la y} =0  
	\in  \D(\la)\otimes_{\CK} \D^\sharp (\la). 
\tag{\textbf{C'-2}}
\end{align*}

It is clear that 
\begin{align*}
&u \in \CS_q(\geq \la) \text{ if and only if } \iota(u) \in \CS_q(\geq \la), \\
&u \in \CS_q(> \la) \text{ if and only if } \iota(u) \in \CS_q(> \la).
\end{align*}
This implies that 
$\D(\la) \ni \ol{x} \mapsto \ol{\iota(x)} \in \D^\sharp (\la)$ 
gives an isomorphism of $\CK$-vector spaces. 
We consider the filtration of $\CS_q$ in (\ref{filtration Sq}). 
Recall that
\[\CS_q(\la_{\lan c_i \ran})/ \CS_q(\la_{\lan c_{i-1} \ran}) 
\cong \CS_q(\geq \la_{\lan c_i \ran})/ \CS_q(> \la_{\lan c_{i} \ran})
\quad \text{ as } (\CS_q,\CS_q)\text{-bimodules}.
\] 
Under the condition (C-1) and (C-2),  
we have the following commutative diagram; 
\[
\begin{matrix}
\CS_q(\la_{\lan c_i \ran})/ \CS_q(\la_{\lan c_{i-1} \ran}) 
	& \cong 
	&\hspace{-2em} \D(\la_{\lan c_i \ran}) \otimes_{\CK} \D^\sharp (\la_{\lan c_i \ran}) \\
\scalebox{1}[2]{$\downarrow$} \,\iota 
	& 
	& \hspace{5em}\scalebox{1}[2]{$\downarrow$}\, 
	  \scalebox{0.8}{$\ol {x}\otimes \ol{y} \mapsto \ol{\iota(y)} \otimes \ol{\iota(x)}$} \\[2mm]
\CS_q(\la_{\lan c_i \ran})/ \CS_q(\la_{\lan c_{i-1} \ran}) 
	& \cong 
	&\hspace{-2em} \D(\la_{\lan c_i \ran}) \otimes_{\CK} \D^\sharp (\la_{\lan c_i \ran}).
\end{matrix}
\] 
This implies that 
$\CS_q(\la_{\lan c_i \ran})/ \CS_q(\la_{\lan c_{i-1} \ran})$ 
is a cell ideal of 
$\CS_q/ \CS_q(\la_{\lan c_{i-1} \ran})$ in the sense of \cite{KX98}. 
Thus, 
$\CS_q$ turns out to be a cellular algebra (see \cite[Definition 3.2]{KX98}), 
and 
$\D(\la)$ ($\la \in \vL^+$) 
gives a cell module of $\CS_q$. 
Moreover, 
we already know that 
$\{L(\la) \,|\,\la \in \vL^+\}$ 
gives a complete set of non-isomorphic simple $\CS_q$-modules. 
Thus, we have the following theorem. 

\begin{thm} \label{thm Sq cellular}
If $\CS_q$ satisfies the conditions (C-1) and (C-2), 
$\CS_q$ is a quasi-hereditary cellular algebra. 
\end{thm}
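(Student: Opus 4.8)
The plan is to observe that the cellular structure has effectively been assembled in the discussion immediately preceding the statement, so that the only genuinely new content is the quasi-hereditary conclusion; for that I would invoke the König--Xi heredity criterion applied to the cell chain already exhibited.

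First I would record the cellularity. Under (C-1) the anti-automorphism $\iota$ of $\wt{\CS}_q$ descends to an anti-automorphism of $\CS_q$ fixing each $\eta_i^\la$, and the observation that $u \in \CS_q(\geq\la)$ (resp. $\CS_q(>\la)$) if and only if $\iota(u)$ lies in the same ideal shows that $\ol{x}\mapsto \ol{\iota(x)}$ identifies $\D(\la)$ with $\D^\sharp(\la)$. Combining this with (C-2) and Lemma \ref{lem surjection}, each layer $\CS_q(\la_{\lan c_i \ran})/\CS_q(\la_{\lan c_{i-1} \ran})$ of the filtration \eqref{filtration Sq} is isomorphic as an $(\CS_q,\CS_q)$-bimodule to $\D(\la_{\lan c_i \ran})\otimes_\CK \D^\sharp(\la_{\lan c_i \ran})$ compatibly with $\iota$, via the commutative diagram already displayed. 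By \cite[Definition 3.2]{KX98} this is precisely the assertion that each such layer is a cell ideal of $\CS_q/\CS_q(\la_{\lan c_{i-1}\ran})$, so \eqref{filtration Sq} is a cell chain and $\CS_q$ is a cellular algebra with cell modules $\D(\la)$ ($\la\in\vL^+$).

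Next I would upgrade this cell chain to a heredity chain. By \cite{KX98}, a cell ideal with defining bilinear form $\phi$ is a heredity ideal exactly when $\phi\neq 0$; consequently a cellular algebra whose cell chain has all defining forms non-zero is quasi-hereditary. In our situation the defining form of the layer at $\la$ is the pairing $\lan\,,\,\ran:\D^\sharp(\la)\times\D(\la)\to\CK$ introduced before Lemma \ref{lem property bilinear}, transported through the identification $\D(\la)\cong\D^\sharp(\la)$ supplied by $\iota$. Since $\lan\ol{1_\la},\ol{1_\la}\ran=1\neq 0$ --- as already noted in the proof of Theorem \ref{thm standard simple Sq}(\roi) --- this form is non-zero for every $\la\in\vL^+$, so each layer is in fact a heredity ideal and \eqref{filtration Sq} is a heredity chain. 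Therefore $\CS_q$ is quasi-hereditary, completing the proof.

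The main (and essentially only) obstacle is bookkeeping rather than mathematics: I must verify that the bilinear form exhibiting the cell-ideal structure in the König--Xi formalism genuinely coincides with the pairing $\lan\,,\,\ran$ we have defined, so that its non-vanishing --- guaranteed by $\lan\ol{1_\la},\ol{1_\la}\ran=1$ --- translates into the heredity condition $\phi\neq0$. Should this identification prove awkward, I would instead bypass \cite{KX98} and argue via the Graham--Lehrer criterion: Theorem \ref{thm standard simple Sq}(\roiii) already shows that $\{L(\la)\mid\la\in\vL^+\}$ is a complete irredundant set of simple modules indexed by the same set $\vL^+$ that labels the cell modules $\D(\la)$, whence the number of cell modules equals the number of simples, which is the standard characterization of a quasi-hereditary cellular algebra.
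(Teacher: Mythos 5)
Your proposal is correct and takes essentially the same route as the paper: cellularity is established exactly as in the discussion preceding the theorem, namely the involution $\iota$ supplied by (C-1) together with (C-2) makes each layer of the filtration \eqref{filtration Sq} a cell ideal in the sense of \cite{KX98}, with cell modules $\D(\la)$, $\la \in \vL^+$. The only difference is in phrasing the quasi-heredity step: you primarily invoke the K\"onig--Xi heredity criterion (each cell ideal is a heredity ideal because its defining form is non-zero, as $\lan \ol{1_\la}, \ol{1_\la} \ran = 1$), while the paper instead observes that the simple modules $L(\la)$ are indexed by the same set $\vL^+$ as the cell modules (Theorem \ref{thm standard simple Sq} (\roiii)) --- which is precisely your fallback argument --- and these are equivalent criteria resting on the same non-vanishing $\lan \ol{1_\la}, \ol{1_\la} \ran = 1$.
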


%%%%%%%%%%%%%%%%%%%%%%%%%%%%%%%%%%%%%%%%%%%%%%%%%%%%%%%%%%%%%%%%%%%%%%%%%%%%%%%%%%%%%%%%%%%%%%%%%%%%%%%%%%%%%%%%

%%%%%%%%%%%%%%%%%%%%%%%%%%%%%%%%%%%%%%%%%%%%%%%%%%%%%%%%%%%%%%%%%%%%%%%%%%%%%%%%%%%%%%%%%%%%%%%%%%%%%%%%%%%%%%%%

\section{Specialization to an arbitrary ring} 
\label{specialization}
In this section, 
we define an $\CA$-form $_\CA \CS_q$ of $\CS_q$, 
and we consider a specialization $_R \CS_q$ of $_\CA \CS_q$ to an arbitrary ring $R$. 
We will assume some conditions on the choice of 
$\{ \eta_i^{\la} \,|\, 1 \leq i \leq m-1,\,\la \in \vL\}$ 
so that, 
in the case where $R$ is a field, 
we obtain the properties of 
$_R \CS_q$ 
which are similar to those obtained in the previous section, 
and are compatible with the case where $R=\CK$.

\para 
Put $E_i^{(k)}=E_i^k/[k]!$, $F_i^{(k)}=F_i^k/[k]!$. 
Let $_\CA \CS_q$ be the $\CA$-subalgebra of $\CS_q$ 
generated by $E_i^{(k)}$, $F_i^{(k)}$ $(1 \leq i \leq m-1,\, k\geq 1)$ 
and $1_\la$ $(\la \in \vL)$. 
Note that, 
by Lemma \ref{lem-1-lamda}, 
we have $\Psi(\,_\CA \wt{U}_q)=\,_\CA \CS_q$. 

Let $_\CA \CS_q^+$ (resp. $_\CA \CS_q^-$) 
be the $\CA$-subalgebra of $_\CA \CS_q$ generated by 
$E_i^{(k)}$ (resp. $F_i^{(k)}$) for 
$1 \leq i \leq m-1$, $k \geq 0$, 
and $_\CA \CS_q^0$ be the $\CA$-subalgebra of $_\CA \CS_q$ 
generated by $1_\la$ for $\la \in \vL$. 
As we have seen in section \ref{Sq}, 
$\CS_q$ has the triangular decomposition $\CS_q=\CS_q^- \CS_q^0 \CS_q^+$ over $\CK$. 
However, 
it may happen that  
such relations break over $\CA$. 
Hence, 
the triangular decomposition will hold over $\CA$ 
so that  
we consider the following condition  
\begin{align*}
E_i^{(k)} F_i^{(l)} \in \,_\CA \CS_q^- \, _\CA \CS_q^0 \, _\CA \CS_q^+ 
\quad \text{ for } 1 \leq i \leq m-1, \,\, k,l \geq 1. 
\tag{\textbf{A-1}} \label{A-1}
\end{align*}
Under this assumption, 
we can prove the following proposition 
by replacing $E_i$, $F_j$ ($1 \leq i,j \leq m-1$) 
with divided powers $E_i^{(k)}$, $F_j^{(l)}$ ($1 \leq i,j \leq m-1,\,\, k,l \geq 1$) 
in the proof of Proposition \ref{tri decom}.

\begin{prop}\label{prop tri A}
Suppose that (A-1) holds. 
Then  
$_\CA \CS_q$ has a triangular decomposition 
\[ _\CA \CS_q = \, _\CA \CS_q^- \, _\CA \CS_q^0 \,  _\CA \CS_q^+ .\] 
Moreover, 
$ _\CA \CS_q$ is finitely generated over $\CA$. 
\end{prop}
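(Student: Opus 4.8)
The plan is to reproduce the proof of Proposition~\ref{tri decom} line by line, replacing every $E_i$ by a divided power $E_i^{(k)}$ and every $F_j$ by $F_j^{(l)}$, so that the only genuinely new ingredient is the standing hypothesis (A-1). I would first record the divided-power forms of the commutation relations (\ref{Sq-3})--(\ref{Sq-6}): iterating (\ref{Sq-3}) yields
\[
E_i^{(k)} 1_\la = 1_{\la + k\a_i} E_i^{(k)},
\]
with the convention $1_\mu = 0$ for $\mu \notin \vL$, and symmetrically for $F_i^{(l)}$ and for the left-multiplication versions. These identities live in ${}_\CA\CS_q$ and involve no denominators outside $\CA$, so they transport the reordering machinery of Proposition~\ref{tri decom} to the integral form.

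The core step is the analogue of Claim A. I would prove, by induction on $s$, that for $1 \le i \le m-1$ and $k \ge 1$,
\[
E_i^{(k)} F_{j_1}^{(l_1)} \cdots F_{j_s}^{(l_s)} \in \sum_{h=1}^{m-1} \sum_{p \ge 1} {}_\CA\CS_q\, E_h^{(p)} + {}_\CA\CS_q^-\, {}_\CA\CS_q^0 .
\]
To move $E_i^{(k)}$ to the right past $F_{j_1}^{(l_1)}$ there are two cases. If $i \ne j_1$, then (\ref{Sq-7}) with $\d_{ij_1}=0$ shows $E_i$ and $F_{j_1}$ commute, hence so do their divided powers. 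If $i = j_1$, the hypothesis (A-1) gives $E_i^{(k)} F_i^{(l_1)} \in {}_\CA\CS_q^-\,{}_\CA\CS_q^0\,{}_\CA\CS_q^+$ directly; expanding its ${}_\CA\CS_q^+$-factor into divided-power monomials and feeding each back into the induction produces the asserted form. This $i=j$ commutation is exactly the step that can fail integrally and is therefore the main obstacle: over $\CK$ one may always commute $E_i$ past $F_i$ via (\ref{Sq-7}), but the correction term need not lie in the lattice ${}_\CA\CS_q$, which is precisely why (A-1) is imposed as a hypothesis. Everything else is formal bookkeeping parallel to Proposition~\ref{tri decom}.

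With the divided-power Claim A in hand, I would run the backward induction on $\vL$ with respect to ``$\geq$'' exactly as in Proposition~\ref{tri decom} to obtain the analogue of Claim B, namely
\[
u \cdot 1_\la \in {}_\CA\CS_q^-\, {}_\CA\CS_q^+ \cdot 1_\la \quad \text{for all } u \in {}_\CA\CS_q,\ \la \in \vL,
\]
the base case using $1_{\la + \a_h} = 0$ when $\la$ is maximal and the inductive step using the divided-power form of (\ref{Sq-3}). Summing over $\la$ through $1 = \sum_{\la} 1_\la$ and absorbing the idempotents via (\ref{Sq-3}) then yields the triangular decomposition ${}_\CA\CS_q = {}_\CA\CS_q^-\, {}_\CA\CS_q^0\, {}_\CA\CS_q^+$.

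For the finiteness I would show that only finitely many divided-power monomials are nonzero. A product $E_{i_1}^{(k_1)} \cdots E_{i_s}^{(k_s)}$ of degree $\a = \sum_t k_t \a_{i_t} \in Q^+$ satisfies
\[
E_{i_1}^{(k_1)} \cdots E_{i_s}^{(k_s)} = \sum_{\la \in \vL} 1_{\la + \a}\, E_{i_1}^{(k_1)} \cdots E_{i_s}^{(k_s)}\, 1_\la ,
\]
which vanishes unless $\la \in \vL$ and $\la + \a \in \vL$ for some $\la$. As $\vL$ is finite, $\a$ then ranges over the finite set $\{\mu - \la \mid \la, \mu \in \vL,\ \mu - \la \in Q^+\}$, and writing $\a = \sum_i c_i \a_i$ forces $\sum_{t:\, i_t = i} k_t = c_i$, bounding both $s$ and each $k_t$; hence there are finitely many such monomials. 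Therefore ${}_\CA\CS_q^+$, and symmetrically ${}_\CA\CS_q^-$, are finitely generated $\CA$-modules, while ${}_\CA\CS_q^0$ is spanned by the finite set $\{1_\la\}$; the triangular decomposition then shows ${}_\CA\CS_q$ is a finitely generated $\CA$-module.
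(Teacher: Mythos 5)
Your proposal is correct and takes essentially the same route as the paper: the paper's entire proof is the one-line instruction to repeat the proof of Proposition \ref{tri decom} with $E_i$, $F_j$ replaced by divided powers $E_i^{(k)}$, $F_j^{(l)}$, with (A-1) supplying exactly the integral substitute for the $i=j$ commutation step — which is precisely the structure you lay out (Claim A with divided powers, then Claim B by backward induction on $\vL$, then the degree-counting finiteness argument). You in fact supply more detail than the paper does, correctly identifying (A-1) as the only point where the argument over $\CK$ could fail over $\CA$.
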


In the rest of  this section, 
we always assume the condition (A-1).

\para 
Let $R$ be an arbitrary ring, 
and we take $\xi_0, \xi_1,\cdots,\xi_{r} \in R$, 
where $\xi_0$ is invertible in $R$. 
We regard $R$ as an $\CA$-module 
by the homomorphism of rings $\pi : \CA \ra R$ such that 
$q \mapsto \xi_0, \g_i \mapsto \xi_i$ ($1 \leq i \leq r$). 
Then, we obtain the specialized algebra 
$R \otimes_{\CA} \,_\CA \CS_q$ 
of $_\CA \CS_q$ 
through the homomorphism $\pi$. 
We denote it by $_R \CS_q$, 
and 
denote 
$1 \otimes x \in R \otimes_{\CA} \, _\CA \CS_q$ 
simply 
by 
$x$ 
if it does not cause any confusion. 
Let $_R \CS_q^+$ (resp. $_R \CS_q^-$) 
be the $R$-subalgebra of $_R \CS_q$ generated by 
$1 \otimes E_i^{(k)}$ (resp. $1 \otimes F_i^{(k)}$) for 
$1 \leq i \leq m-1$, $k \geq 0$, 
and $_R \CS_q^0$ be the $R$-subalgebra of $_R \CS_q$ 
generated by $1 \otimes 1_\la$ for $\la \in \vL$. 
By Proposition \ref{prop tri A}, 
we have the triangular decomposition 
\[
_R \CS_q = \, _R \CS_q^- \, _R \CS_q^0 \,  _R \CS_q^+.
\] 
Thanks to the triangular decomposition, 
we have the following results which are similar to the case over $\CK$.  
For $\la \in \vL$, 
let 
\begin{align*}
_R \CS_q(\geq \la) 
	= \big\{ x 1_\mu y \bigm| x \in \,_R \CS_q^-,\, y \in \,_R \CS_q^+,\, \mu \in \vL 
		\text{ such that }\mu \geq \la \big\}, \\
_R \CS_q(>\la) 
	= \big\{ x 1_\mu y \bigm| x \in \,_R \CS_q^-,\, y \in \,_R \CS_q^+,\, \mu \in \vL 
	\text{ such that }\mu > \la \big\}. \\
\end{align*}
Then, 
$_R \CS_q(\geq \la)$ and $_R \CS_q(> \la)$ are two-sided ideals of $_R \CS_q$. 
Put 
\[_R \vL^+=\{\la \in \vL \,|\, _R \CS_q(\geq \la) \not= \,_R \CS_q(>\la) \} 
	=\{ \la \in \vL \,|\, 1_{\la} \not \in \, _R \CS_q(>\la)\}.\] 
For $\la \in \,_R \vL^+$, 
we define a left (resp. right) $_R \CS_q$-submodule 
$_R \D(\la)$ (resp. $_R \D^\sharp(\la))$) 
of $_R \CS_q(\geq \la)/\,_R \CS_q(> \la)$ by 
\[_R \D(\la)=\,_R \CS_q^- \cdot 1_{\la} + \,_R \CS_q(>\la), \quad 
	\,_R \D^\sharp(\la)=1_{\la} \cdot \,_R \CS_q^+  + \,_R \CS_q(>\la).
\]
Let 
$_R \CS_q^{\geq 0}$ 
(resp. $_R \CS_q^{\leq 0}$) 
be the subalgebra of 
$_R \CS_q$ 
generated by 
$_R \CS_q^{+}$ (resp. $_R \CS_q^-$) 
and $_R \CS_q^0$. 
For $\la \in \vL$ such that $1_{\la} \not=0$ in $_R\CS_q$, 
let $\theta_\la = R v_{\la}$ be the free $R$-module with a basis $v_{\la}$. 
We define the left action of $_R \CS_q^{\geq 0}$ on $\theta_{\la}$ by 
\[1_\mu \cdot v_{\la}=\d_{\la \mu} v_\la, \quad E_i^{(k)} \cdot v_\la =0 \quad 
\text{for } \mu \in \vL,\, i=1,\cdots,m-1 \text{ and } k\geq 1.\]
Similarly, 
we define a right action of $_R \CS_q^{\leq 0}$ on $\theta_{\la}$ by 
\[ v_{\la}\cdot 1_{\mu} =\d_{\la \mu} v_\la, \quad  v_\la \cdot F_i^{(k)}=0 \quad 
\text{for } \mu \in \vL, \,  i=1,\cdots,m-1 \text{ and } k\geq 1.\]

We have the following theorem which is shown in a similar way as in the proof of Theorem \ref{thm standard induced}.

\begin{thm}\ \label{thm standard induced borel R}
\begin{enumerate}
\item 
$\{1_\la \,|\, \la \in \vL \text{ such that } 1_\la \not=0 \}$ 
is the complete set of primitive idempotents in $_R\CS_q^{\geq 0}$ and $_R\CS_q^{\leq 0}$.  

\item 
$\{ \theta_\la \,|\, \la \in \vL \text{ such that } 1_\la \not=0 \}$ 
is a complete set of non-isomorphic simple left $_R \CS_q^{\geq 0}$-modules,  
and of non-isomorphic simple right $_R \CS_q^{\leq 0}$-modules.

\item 
For $\la \in \vL$ such that $1_{\la} \not=0$, 
we have the following isomorphism of left (resp. right) $_R\CS_q$-modules. 
\begin{align*}
_R \CS_q \otimes_{_R \CS_q^{\geq 0}} \theta_\la \cong 
\begin{cases} 
_R \D(\la) &\text{ if } \la \in \,_R \vL^+, \\
0 & \text{otherwise}, 
\end{cases}
\\
\theta_\la  \otimes_{_R \CS_q^{\leq 0}} \,_R \CS_q \cong
\begin{cases} 
_R \D^\sharp (\la) &\text{ if } \la \in \,_R\vL^+, \\
0 & \text{otherwise}.
\end{cases}
\end{align*}
\end{enumerate}
\end{thm}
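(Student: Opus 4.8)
The plan is to transcribe the proof of Theorem~\ref{thm standard induced} almost word for word, replacing the generators $E_i,F_i$ by their divided powers $E_i^{(k)},F_i^{(k)}$ and the field $\CK$ by the ring $R$. The two structural inputs that make this legitimate over $R$ are the triangular decomposition ${}_R\CS_q={}_R\CS_q^-\,{}_R\CS_q^0\,{}_R\CS_q^+$ of Proposition~\ref{prop tri A} (available because (A-1) is assumed throughout the section) and the characterization ${}_R\vL^+=\{\la\in\vL\mid 1_\la\notin{}_R\CS_q(>\la)\}$. The starting computation is the vanishing $1_\la E_{i_1}^{(k_1)}\cdots E_{i_l}^{(k_l)}1_\la=0$ whenever $\sum_j k_j\a_{i_j}\neq 0$, which follows from $E_{i_1}^{(k_1)}\cdots E_{i_l}^{(k_l)}1_\la=1_{\la+\sum_j k_j\a_{i_j}}E_{i_1}^{(k_1)}\cdots E_{i_l}^{(k_l)}$ and the orthogonality $1_\la 1_\mu=\d_{\la\mu}1_\la$. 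Since ${}_R\CS_q^{\geq 0}={}_R\CS_q^0\,{}_R\CS_q^+$ and the degree-$0$ part of ${}_R\CS_q^+$ is $R\cdot 1$, this yields $1_\la\,{}_R\CS_q^{\geq 0}\,1_\la=R\,1_\la$, a free $R$-module of rank one; as over $\CK$, the identification $1_\la\,{}_R\CS_q^{\geq 0}\,1_\la\cong\End_{{}_R\CS_q^{\geq 0}}({}_R\CS_q^{\geq 0}1_\la)\cong R$ shows $1_\la$ is primitive, and because the $1_\la$ are orthogonal with $\sum_{\la\in\vL}1_\la=1$ the nonzero ones form a complete set, giving (i). The computation with $F_i^{(k)}$ treats ${}_R\CS_q^{\leq 0}$.

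For (ii) I would run the same degree inspection: $\Theta_\la={}_R\CS_q^{\geq 0}1_\la$ is a principal indecomposable module, ${}_R\CS_q^{\geq 0}\cdot(x1_\la)$ is a proper submodule for every $x\in{}_R\CS_q^+$ of positive degree, so these span $\Rad\Theta_\la$ and $\Theta_\la/\Rad\Theta_\la\cong\theta_\la$. The $\theta_\la$ are pairwise non-isomorphic because $1_\mu$ acts on $\theta_\la$ as $\d_{\la\mu}\,\id$, and they exhaust the simple tops because the $\Theta_\la$ exhaust the indecomposable projectives. (When $R$ is a field---the case relevant to this section---each $\theta_\la=Rv_\la$ is genuinely simple; in general $\theta_\la$ is read as the simple top of $\Theta_\la$.)

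For (iii) the key computation, used repeatedly, is that for $\mu>\la$, $x\in{}_R\CS_q^-$, $y\in{}_R\CS_q^+$ one has $x1_\mu y\ot v_\la=x\ot 1_\mu(y\cdot v_\la)=0$, since $y\cdot v_\la=0$ unless $y$ is scalar and then $1_\mu v_\la=0$ as $\mu\neq\la$. If $\la\notin{}_R\vL^+$ then $1_\la\in{}_R\CS_q(>\la)$, say $1_\la=\sum r_{x,y,\mu}\,x1_\mu y$ with $\mu>\la$, whence
\[
1\ot v_\la=1_\la\ot v_\la=\sum r_{x,y,\mu}\,x\ot 1_\mu(y\cdot v_\la)=0
\]
and the induced module vanishes. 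If $\la\in{}_R\vL^+$, the same computation shows ${}_R\CS_q(>\la)\ot v_\la=0$, so $f_\la:{}_R\D(\la)\ra{}_R\CS_q\ot_{{}_R\CS_q^{\geq 0}}\theta_\la$, $\ol{u1_\la}\mapsto u\ot v_\la$, is a well-defined ${}_R\CS_q$-homomorphism; conversely the balanced map $(u,rv_\la)\mapsto r\,\ol{u1_\la}$ induces $g_\la$ in the opposite direction, its balancedness resting on $E_i^{(k)}1_\la=1_{\la+k\a_i}E_i^{(k)}\in{}_R\CS_q(>\la)$. As $f_\la$ and $g_\la$ are mutually inverse on the generators $\ol{1_\la}$ and $1\ot v_\la$, they are isomorphisms; the right-handed statement is identical with $F_i^{(k)}$ and $\D^\sharp$.

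The main obstacle is ensuring that each step of the $\CK$-proof genuinely survives specialization. Over $\CK$ the triangular decomposition and all the degree bookkeeping are automatic, whereas over $R$ they rest entirely on (A-1) and Proposition~\ref{prop tri A}; moreover the defect set ${}_R\vL^+$ really depends on $R$, since an idempotent with $1_\la\notin\CS_q(>\la)$ over $\CK$ may satisfy $1_\la\in{}_R\CS_q(>\la)$ after specialization. Consequently the dichotomy in (iii) must be controlled by the membership $1_\la\in{}_R\CS_q(>\la)$ rather than by any $\CK$-dimension count, and the only genuinely technical verification is the ${}_R\CS_q^{\geq 0}$-balancedness of $g_\la$ expressed through divided powers.
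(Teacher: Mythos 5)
Your proposal is correct and follows exactly the route the paper intends: the paper's own proof of Theorem~\ref{thm standard induced borel R} is just the remark that it is ``shown in a similar way as in the proof of Theorem~\ref{thm standard induced}'' by replacing $E_i$, $F_i$ with the divided powers $E_i^{(k)}$, $F_i^{(k)}$, with the triangular decomposition of Proposition~\ref{prop tri A} (via (A-1)) and the membership criterion $1_\la \notin {}_R\CS_q(>\la)$ for ${}_R\vL^+$ as the structural inputs, which is precisely what you carry out. Your added caveats (reading $\theta_\la$ as the simple top when $R$ is not a field, and checking ${}_R\CS_q^{\geq 0}$-balancedness of $g_\la$ through divided powers) are sensible refinements of details the paper leaves implicit, not deviations from its argument.
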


\para 
For $\la \in \,_R \vL^+$, 
we can define a bilinear form 
$\lan \,,\, \ran : \,_R \D^\sharp (\la) \times \,_R \D(\la) \ra R$ such that 
\[ \lan \ol{1_\la y} \,,\,\ol{x 1_\la} \ran 1_\la 
\equiv 1_\la yx 1_\la \mod \,_R \CS_q(>\la) 
\quad \text{ for } x \in \, _R\CS_q^-, y \in \, _R \CS_q^+ .\]
Put 
$\rad \,_R \D(\la)=\{ \ol{x} \in \, _R \D(\la) \,|\, 
	\lan \ol{y} , \ol{x} \ran=0 \text{ for any } \ol{y} \in \,_R \D^\sharp (\la) \}$, 
and 
put 
$_R L(\la)=\,_R \D(\la) / \rad \, _R\D(\la)$. 
Similarly, 
put 
$\rad \,_R \D^\sharp (\la)=\{ \ol{y} \in \, _R \D^\sharp(\la) \,|\, 
	\lan \ol{y} , \ol{x} \ran=0 \text{ for any } \ol{x} \in \,_R \D (\la) \}$, 
and 
put 
$_R L^\sharp (\la)=\,_R \D^\sharp (\la) / \rad \, _R\D^\sharp (\la)$. 
Then, 
one can prove the following theorem by replacing $E_i$, $F_j$ ($1 \leq i,j \leq m-1$) 
with divided powers $E_i^{(k)}$, $F_j^{(l)}$ ($1 \leq i,j \leq m-1,\,\, k,l \geq 1$) 
in the proof of Theorem \ref{thm standard simple Sq}.

\begin{thm} \label{thm standard simple Sq R}
Suppose that $R$ is a field. 
Then 
we have the followimgs.
\begin{enumerate}
\item 
For $\la \in \,_R \vL^+$, 
$\rad \,_R \D(\la)$, 
(resp. $\rad \,_R \D^\sharp(\la)$)  
is a unique proper maximal submodule of 
$ _R \D(\la)$ 
(resp. $_R \D^\sharp (\la)$). 
Thus,  
$_R L(\la)$ 
(resp. $_R L^\sharp(\la)$) 
is an absolutely  simple left (resp. right) $_R \CS_q$-module. 

\item 
For $\la, \mu \in \, _R\vL^+$,  
if $_R L(\mu)$ (resp. $_R L^\sharp(\mu)$) 
is a composition factor of 
$_R \D(\la)$ (resp. $_R \D^\sharp (\la)$),  
we have 
$\la \geq \mu$. 
Thus, 
$_R L(\la) \cong \, _R L(\mu)$ 
if and only if 
$\la=\mu$. 
Moreover, 
the multiplicity of $_R L(\la)$ (resp. $_R L^\sharp(\la)$) 
in $_R \D(\la)$ (resp. $_R \D^\sharp(\la)$) 
is equal to one. 

\item 
$\{ \,_R L(\la) \,|\, \la \in\,  _R\vL^+ \}$ 
(resp. $\{ \,_R L^\sharp (\la) \,|\, \la \in\,  _R\vL^+ \}$ ) 
gives a complete set of 
non-isomorphic left (resp. right) simple $_R \CS_q$-modules. 

\item 
$_R \CS_{q}$ is semisimple if and only if 
$_R \D(\la) \cong \,_R L(\la)$ and $_R \D^\sharp (\la) \cong \,_R L^\sharp(\la)$ for any $\la \in \vL^+$.   

\end{enumerate}
\end{thm}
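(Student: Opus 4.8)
The plan is to transcribe the proof of Theorem \ref{thm standard simple Sq} essentially verbatim, replacing the generators $E_i,F_j$ and the monomials $E_{i_1}\cdots E_{i_k}$, $F_{j_1}\cdots F_{j_l}$ throughout by divided powers $E_i^{(k)},F_j^{(l)}$ and by divided-power monomials. The structural facts about $\CS_q$ actually used in that proof are few: the triangular decomposition, the two-sided ideal property of $\CS_q(\geq\la)$ and $\CS_q(>\la)$, the adjunction formulas of Lemma \ref{lem property bilinear}, the normalization $\lan\ol{1_\la},\ol{1_\la}\ran=1$, and the bimodule surjection of Lemma \ref{lem surjection}. Each has an $R$-analog available: the triangular decomposition $_R\CS_q={}_R\CS_q^-\,{}_R\CS_q^0\,{}_R\CS_q^+$ is Proposition \ref{prop tri A} (valid under (\ref{A-1})), the ideals $_R\CS_q(\geq\la)$, $_R\CS_q(>\la)$ and the form $\lan\,,\,\ran:{}_R\D^\sharp(\la)\times{}_R\D(\la)\to R$ were set up just before the statement, and $1_\la$ remains an idempotent with $\lan\ol{1_\la},\ol{1_\la}\ran=1$.

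First I would record the divided-power versions of the auxiliary computations. The degree argument behind (\ref{xy constant}) — that $1_\la y x 1_\la$ reduces modulo $_R\CS_q(>\la)$ to a scalar multiple of $1_\la$ — survives because products of divided powers decompose triangularly over $\CA$ by (\ref{A-1}) and Proposition \ref{prop tri A}; this is what makes $\lan\,,\,\ran$ well defined over $R$. The two formulas of Lemma \ref{lem property bilinear} then follow exactly as before, now with $_R\D(\la)$, $_R\D^\sharp(\la)$ and divided-power monomials in place of ordinary ones, and Lemma \ref{lem surjection} yields the surjection $_R\D(\la)\otimes_R{}_R\D^\sharp(\la)\to{}_R\CS_q(\geq\la)/{}_R\CS_q(>\la)$.

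With these in hand the four parts run as in Theorem \ref{thm standard simple Sq}. For (i), since $R$ is a field one may rescale some $\ol y\in{}_R\D^\sharp(\la)$ to get $\lan\ol y,\ol x\ran=1$ for any $\ol x\in{}_R\D(\la)\setminus\rad{}_R\D(\la)$, then build the element $y_{\ol t}$ — now a combination of divided-power monomials $F_{i_1}^{(a_1)}\cdots F_{i_k}^{(a_k)}1_\la E^{(b_1)}_{j_1}\cdots$ — carrying $\ol x$ to an arbitrary basis vector of $_R\D(\la)$; hence $_R\D(\la)={}_R\CS_q\,\ol x$, so $\rad{}_R\D(\la)$ is the unique maximal submodule, and absolute simplicity of $_R L(\la)$ follows from $1_\la\cdot{}_R L(\la)$ being one-dimensional, which forces $\End_{{}_R\CS_q}({}_R L(\la))=R$. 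Parts (ii) and (iii) are identical: $1_\mu\cdot{}_R\D(\la)=0$ for $\mu\not\le\la$ and $1_\la\cdot\rad{}_R\D(\la)=0$ give the composition-factor bound and multiplicity one, while the filtration (\ref{filtration Sq}) of $_R\CS_q$ by the ideals $_R\CS_q(\la_{\lan i\ran})$, combined with the surjection of Lemma \ref{lem surjection} and induction on $_R\vL^+$, shows every simple $_R\CS_q$-module is some $_R L(\mu)$. Part (iv) is the same semisimplicity dimension count.

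I expect the main obstacle to be bookkeeping rather than conceptual: one must confirm that replacing ordinary powers by divided powers nowhere disturbs the degree and triangularity computations underlying the bilinear form, i.e. that (\ref{A-1}) really is the only additional hypothesis needed for (\ref{xy constant}) to survive specialization. Since once divided powers are in place the argument is purely linear-algebraic over the field $R$ and never divides by quantum integers, it is characteristic-free and remains valid when $\xi_0$ is a root of unity; the care lies entirely in checking that each monomial manipulation in the original proof has an integral divided-power analog supplied by Proposition \ref{prop tri A}.
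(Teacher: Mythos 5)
Your proposal is correct and is essentially identical to the paper's own proof: the paper proves Theorem \ref{thm standard simple Sq R} precisely by repeating the proof of Theorem \ref{thm standard simple Sq} with $E_i$, $F_j$ replaced by divided powers $E_i^{(k)}$, $F_j^{(l)}$, relying (as you do) on the standing hypothesis (A-1) and Proposition \ref{prop tri A} to carry over the triangular decomposition, the ideals $_R\CS_q(\geq\la)$, $_R\CS_q(>\la)$, the bilinear form, and the analogs of Lemmas \ref{lem property bilinear} and \ref{lem surjection} to the specialized algebra $_R\CS_q$. Your added remarks (rescaling over the field $R$, the $\End$ argument for absolute simplicity, and characteristic-freeness) are consistent fill-ins of details the paper leaves implicit.
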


\para 
Throughout the rest of this section, 
we assume that $R$ is a field. 
Since 
$\rad\, _R \D^\sharp (\la) \times \rad \,_R \D (\la)$ 
is included in the kernel of the bilinear form 
$\lan \,,\, \ran: \,_R \D^\sharp(\la) \times \,_R \D(\la) \ra R$,  
$\lan \, , \,\ran$ 
induces 
a bilinear form on $_R L^\sharp(\la) \times \,_R L(\la)$. 
Clearly, 
this bilinear form is non-degenerate on $_R L^\sharp(\la) \times \,_R L(\la)$.
We regard 
$\Hom_R(\,_R L^\sharp(\la) , R)$ 
as an left $_R \CS_q$-module 
by the standard way.  
Thanks to the associativity of the bilinear form $\lan \,,\, \ran$ (Lemma \ref{lem property bilinear} (\roi)), 
the $R$-homomorphism 
$G:\,_R L(\la) \ra  \Hom_R(\,_R L^\sharp (\la) , R) $ 
given by 
$\ol{x} \mapsto \lan - , \ol{x} \ran$ 
turns out to be 
an $_R \CS_q$-homomorphism. 
Since 
$\lan \,,\, \ran$ 
is  non-degenerate on $_R L^\sharp(\la) \times \,_R L(\la)$, 
the homomorphism  $G$ is not a $0$-map. 
Hence,  
$G$ is an isomorphism of left $_R \CS_q$-modules 
since 
both of $_R L(\la)$ and $\Hom_R(\,_R L^\sharp (\la) , R)$ are simple. 
Thus,  we have the following lemma 
(a similar argument holds for $_R L^\sharp(\la)$). 

\begin{lem} \label{iso L dual}
Suppose that $R$ is a field. 
For $\la \in \,_R \vL^+$,  
we have the following isomorphisms. 
\begin{enumerate}
\item 
$_R L(\la) \cong \Hom_R (\,_R L^\sharp(\la),R)$ as left $_R \CS_q$-modules.
\item 
$_R L^\sharp (\la) \cong \Hom_R (\,_R L(\la),R)$ as right $_R \CS_q$-modules. 
\end{enumerate}
In particular, 
we have 
$\dim_R \,_R L(\la) = \dim_R \, _R L^\sharp(\la)$. 
\end{lem}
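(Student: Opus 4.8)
The plan is to establish Lemma \ref{iso L dual} by exploiting the associativity of the bilinear form, exactly as sketched in the paragraph immediately preceding the statement, and then to extract the dimension equality as a formal consequence. First I would recall that by Theorem \ref{thm standard simple Sq R} (\roi), for $\la \in \,_R \vL^+$ with $R$ a field, both $_R L(\la)$ and $_R L^\sharp(\la)$ are absolutely simple. The bilinear form $\lan \,,\, \ran : \,_R \D^\sharp(\la) \times \,_R \D(\la) \ra R$ has radical in each variable equal precisely to $\rad \,_R \D^\sharp(\la)$ and $\rad \,_R \D(\la)$ by definition, so it descends to a well-defined bilinear form on $_R L^\sharp(\la) \times \,_R L(\la)$, and this induced form is non-degenerate by construction of the radicals.

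Next I would define the $R$-linear map $G : \,_R L(\la) \ra \Hom_R(\,_R L^\sharp(\la), R)$ by $\ol{x} \mapsto \lan - , \ol{x} \ran$, where $\Hom_R(\,_R L^\sharp(\la), R)$ is made into a left $_R \CS_q$-module in the standard contragredient way (so that $(u \cdot \f)(\ol{y}) = \f(\ol{y} \cdot u)$ for $u \in \,_R \CS_q$). The crucial point is that $G$ is a homomorphism of left $_R \CS_q$-modules: this follows directly from the associativity formula of Lemma \ref{lem property bilinear} (\roi), namely $\lan \ol{y} \cdot u, \ol{x} \ran = \lan \ol{y}, u \cdot \ol{x} \ran$, which translates the left action on $_R L(\la)$ into the contragredient right action on $_R L^\sharp(\la)$. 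Here I should note that Lemma \ref{lem property bilinear} was stated over $\CK$, but the identical computation goes through after replacing generators by divided powers and specializing to $R$, so the associativity persists in the $_R \CS_q$ setting.

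Then I would argue that $G$ is an isomorphism. Since the induced form is non-degenerate, $G$ is not the zero map. Both $_R L(\la)$ and $\Hom_R(\,_R L^\sharp(\la), R)$ are simple left $_R \CS_q$-modules (the latter because $_R L^\sharp(\la)$ is simple and $R$-duality of finite-dimensional modules over a field preserves simplicity), so any nonzero homomorphism between them is an isomorphism by Schur's lemma. This proves (\roi). Part (\roii) follows by the symmetric argument, interchanging the roles of $_R \D(\la)$ and $_R \D^\sharp(\la)$ and using the same associativity identity read in the other variable. Finally, the dimension equality $\dim_R \,_R L(\la) = \dim_R \,_R L^\sharp(\la)$ is immediate from (\roi), since for a finite-dimensional vector space over a field one has $\dim_R \Hom_R(V, R) = \dim_R V$.

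The main obstacle I anticipate is not conceptual but bookkeeping: one must verify carefully that the contragredient $_R \CS_q$-module structure on $\Hom_R(\,_R L^\sharp(\la), R)$ is the correct one to make $G$ intertwine the actions, and in particular that the anti-automorphism implicit in turning a right-module dual into a left module matches the associativity $\lan \ol{y} \cdot u, \ol{x} \ran = \lan \ol{y}, u \cdot \ol{x} \ran$ rather than its opposite. A sign or side error here would break the intertwining property. One should also confirm that finite-dimensionality (guaranteed by Proposition \ref{prop tri A}, since $_\CA \CS_q$ is finitely generated over $\CA$ and hence $_R \CS_q$ is finite-dimensional over the field $R$) legitimately gives $\dim_R \Hom_R(V,R) = \dim_R V$ and preserves simplicity under $R$-duality.
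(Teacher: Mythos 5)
Your proposal is correct and follows essentially the same route as the paper: the paper also descends the form to $_R L^\sharp(\la)\times\,_R L(\la)$, defines $G(\ol{x})=\lan -,\ol{x}\ran$ into $\Hom_R(\,_R L^\sharp(\la),R)$ with the standard (contragredient) left action, uses Lemma \ref{lem property bilinear} (\roi) for the intertwining, and concludes by non-degeneracy plus simplicity of both modules. Your extra care about the dual module structure and the persistence of associativity over $R$ only makes explicit what the paper leaves implicit.
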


\para 
For $\la \in \,_R \vL^+$, 
let $_R P(\la)$ be the projective cover of $_R L(\la)$. 
For $\la, \mu \in \,_R \vL^+$, 
we denote  
the multiplicity of 
$_R L(\mu)$ 
in the composition series of 
$_R P(\la)$ 
by 
$[\,_R P(\la) : \, _R L(\mu)]$. 
Similarly, 
we denote  
the multiplicity of 
$_R L(\mu)$ (resp. $_R L^\sharp (\mu)$)  
in the composition series of 
$_R \D(\la)$ (resp. $_R \D^\sharp (\la)$)  
by 
$[\,_R \D(\la) : \, _R L(\mu)]$ 
(resp. $[\, _R \D^\sharp (\la) : \,_R L^\sharp(\la) ]$). 
We have the following relation concerning with these multiplicities.

\begin{lem} \label{rel mul Pla Lmu Dla}
Suppose that $R$ is a field. 
For $\la, \mu \in \,_R \vL^+$, 
we have 
\[ [\,_R P(\la) : \,_R L(\mu)] \leqq \sum_{\nu \in \,_R \vL^+}  
	[\,_R \D(\nu) : \, _R L(\mu)][\, _R \D^\sharp (\nu) : \,_R L^\sharp(\la) ]
.\]
\end{lem}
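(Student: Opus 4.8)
The plan is to reduce the Cartan number $[\,_R P(\la):\,_R L(\mu)]$ to the dimension of an idempotent-truncated piece of $\,_R\CS_q$, and then to run the two-sided ideal filtration of $\,_R\CS_q$ through that truncation. Set $A=\,_R\CS_q$. Since $R$ is a field and every $\,_R L(\la)$ is absolutely simple by Theorem \ref{thm standard simple Sq R}, I may fix a complete set of pairwise orthogonal primitive idempotents $\{e_\la\}_{\la\in\,_R\vL^+}$ with $Ae_\la\cong\,_R P(\la)$ as left modules and, at the same time, $e_\la A$ the projective cover of the right simple $\,_R L^\sharp(\la)$. Such a simultaneous choice is possible because $\Hom_R(-,R)$ is a duality between left and right $A$-modules which, by Lemma \ref{iso L dual}, interchanges $\,_R L(\la)$ and $\,_R L^\sharp(\la)$; hence the primitive idempotent capping $\,_R L(\la)$ on the left caps $\,_R L^\sharp(\la)$ on the right. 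With this choice, the identity $\Hom_A(Ae_\mu,M)\cong e_\mu M$ together with absolute simplicity gives $\dim_R e_\mu M=[M:\,_R L(\mu)]$ for every finite-dimensional left module $M$; taking $M=Ae_\la=\,_R P(\la)$ yields
\[
[\,_R P(\la):\,_R L(\mu)]=\dim_R e_\mu A e_\la .
\]

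Next I would feed into this truncation the $R$-version of the filtration \eqref{filtration Sq}, which is available by the arguments adapting Theorem \ref{thm standard simple Sq} to Theorem \ref{thm standard simple Sq R}. Writing $J_i=\,_R\CS_q(\la_{\langle c_i\rangle})$ and $N_\nu=\,_R\CS_q(\geq\nu)/\,_R\CS_q(>\nu)$, this is a chain of sub-bimodules
\[
A=J_{z'}\supsetneq\cdots\supsetneq J_1\supsetneq J_0=0,\qquad J_i/J_{i-1}\cong N_{\nu_i},
\]
with $\nu_i=\la_{\langle c_i\rangle}$ ranging over $\,_R\vL^+$. The functor $M\mapsto e_\mu M e_\la$ is exact on bimodules (it splits off $R$-direct summands), so
\[
\dim_R e_\mu A e_\la=\sum_{\nu\in\,_R\vL^+}\dim_R e_\mu N_\nu e_\la .
\]

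Finally I would bound each summand using the $R$-version of Lemma \ref{lem surjection}, which gives a surjection of $(\,_R\CS_q,\,_R\CS_q)$-bimodules $\,_R\D(\nu)\otimes_R\,_R\D^\sharp(\nu)\twoheadrightarrow N_\nu$. Applying the exact truncation $e_\mu(-)e_\la$ produces a surjection $(e_\mu\,_R\D(\nu))\otimes_R(\,_R\D^\sharp(\nu)e_\la)\twoheadrightarrow e_\mu N_\nu e_\la$, so that
\[
\dim_R e_\mu N_\nu e_\la\leq\dim_R(e_\mu\,_R\D(\nu))\cdot\dim_R(\,_R\D^\sharp(\nu)e_\la).
\]
By the counting identity of the first paragraph and its right-module mirror (using that $e_\la A$ covers $\,_R L^\sharp(\la)$), the two factors equal $[\,_R\D(\nu):\,_R L(\mu)]$ and $[\,_R\D^\sharp(\nu):\,_R L^\sharp(\la)]$ respectively. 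Summing over $\nu$ gives the asserted inequality. The inequality, as opposed to equality, is forced exactly because Lemma \ref{lem surjection} provides only a surjection and not an isomorphism, i.e.\ condition (C-2) is not assumed here.

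The hard part is the compatibility claimed in the first paragraph: that a single primitive idempotent $e_\la$ can be chosen to cap $\,_R L(\la)$ on the left and $\,_R L^\sharp(\la)$ on the right. The remaining ingredients are formal — exactness of idempotent truncation and the identity $\dim_R e_\mu M=[M:\,_R L(\mu)]$ — but this left-right matching is what correctly couples the left index $\mu$ with the right index $\la$ in the final product, and it is precisely the point where the duality $\,_R L(\la)\cong\Hom_R(\,_R L^\sharp(\la),R)$ of Lemma \ref{iso L dual} is needed.
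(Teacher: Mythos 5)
Your proof is correct and is essentially the paper's argument in different packaging: both run the two-sided ideal filtration \eqref{filtration Sq} (over $R$) through the surjection of Lemma \ref{lem surjection} and use the duality of Lemma \ref{iso L dual} to couple the right-hand factor with $L^\sharp(\la)$. Your idempotent truncation $e_\mu(-)e_\la$ is the same operation as the paper's tensoring with $P(\la)=Ae_\la$ followed by counting $[\,\cdot\,:{}_R L(\mu)]=\dim_R e_\mu(\cdot)$, and your ``hard part'' (that $e_\la A$ caps $L^\sharp(\la)$) is exactly the content of the paper's computation $\dim_R\bigl({}_R\D^\sharp(\nu)\otimes_{{}_R\CS_q}{}_R P(\la)\bigr)=[{}_R\D^\sharp(\nu):{}_R L^\sharp(\la)]$ via $\Hom_R(-,R)$.
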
 
\begin{proof}
In the proof, 
we omit the subscript $R$ 
as we always consider the objects over $R$. 
Let 
$ \vL^+ =\{\la_{\lan 1 \ran} , \cdots, \la_{\lan z \ran}\}$ 
be such that 
$i<j$ if $\la_{\lan i \ran} > \la_{\lan j \ran}$. 
Then we have the following filtrations of two-sided ideals, 
\begin{align} \label{fil R Sq}
\CS_q= \CS_q(\la_{\lan z \ran}) \supsetneqq  \CS_q(\la_{\lan z -1 \ran}) \supsetneqq \cdots \supsetneqq 
	 \CS_q(\la_{\lan 1 \ran}) \supsetneqq  \CS_q(\la_{\lan 0 \ran})=0 
\end{align}
such that 
$ \CS_q(\la_{\lan i \ran})/ \CS_q(\la_{\lan i-1 \ran}) 
	\cong  \CS_q(\geq \la_{\lan i \ran}) / \CS_q(> \la_{\lan i \ran})$ 
as 
$ \CS_q$-bimodules.  
Since $P(\la)$ is a left projective $ \CS_q$-module, 
the filtration (\ref{fil R Sq}) 
gives the following filtration of left $ \CS_q$-modules. 
\begin{align*}
 P(\la) = M_z \supset M_{z-1} \supset \cdots \supset M_{1} \supset M_0=0 
\end{align*}
such that 
$M_{i}/M_{i-1} 
	\cong \big( \CS_q(\geq \la_{\lan i \ran}) /\CS_q(> \la_{\lan i \ran})\big) 
		\otimes_{ \CS_q} P(\la)$. 
This implies that 
\begin{align} \label{Pla Lmu eq}
[ P(\la) : L(\mu)]= 
	\sum_{\nu \in  \vL^+} 
		\big[\big( \CS_q(\geq \nu) /  \CS_q(> \nu) \big) \otimes_{\CS_q}  P(\la) : L(\mu)\big].
\end{align}
Since 
there exists a surjection 
$\D(\nu) \otimes_{R}  \D^\sharp(\nu) \ra  \CS_q(\geq \nu) /  \CS_q(> \nu)$ 
of $ \CS_q$-bimodules, 
(\ref{Pla Lmu eq}) implies that 
\begin{align*}
[ P(\la) :  L(\mu)] \leqq \sum_{\nu \in \vL^+} 
	\big[  \D(\nu) \otimes_{R}  \D^\sharp(\nu) \otimes_{ \CS_q}  P(\la) :  L(\mu)\big].
\end{align*}
Thus, 
we should prove that 
\begin{align*}
\big[  \D(\nu) \otimes_{R} \D^\sharp(\nu) \otimes_{ \CS_q} P(\la) :  L(\mu)\big] 
= 
[\D(\nu) :  L(\mu)][\D^\sharp (\nu) :  L^\sharp(\la) ].
\end{align*} 
Since 
\[\big[  \D(\nu) \otimes_{R}  \D^\sharp(\nu) \otimes_{ \CS_q}  P(\la) : L(\mu)\big] 
= [ \D(\nu) : L(\mu)] \cdot \dim_{R}  \big(\D^\sharp(\nu) \otimes_{\CS_q}  P(\la)\big),
\]
it is enough to show that 
$\dim_{R}  \big(\D^\sharp(\nu) \otimes_{_R \CS_q}  P(\la)\big) = 
[\D^\sharp (\nu) : L^\sharp (\la)]$. 
By a standard theory of finite dimensional algebras over a field, 
we have 
\begin{align*}
\dim_{R}  \big(\D^\sharp(\nu) \otimes_{ \CS_q}  P(\la)\big) 
&=
\dim_{R} \big( \Hom_R \big( \big(\D^\sharp(\nu) \otimes_{\CS_q}  P(\la) , R \big) \big) \\ 
&=
\dim_{R}  \big( \Hom_{\CS_q} \big(  P (\la ) , \Hom_R ( \D^\sharp(\nu), R ) \big) \big) \\
&=
[ \Hom_R ( \D^\sharp(\nu), R ) : L(\la) ] \\
&=
[ \Hom_R ( \D^\sharp(\nu), R ) : \Hom_R ( L^\sharp (\la), R) ] \quad(\text{ Lemma \ref{iso L dual} } ) \\
&=
[\D^\sharp (\nu) : L^\sharp (\la)]. 
\end{align*}
Now the lemma is proven. 
\end{proof} 
 
\para 
For $\la \in \, _R \vL^+$, 
$_R \D(\la)$
is an indecomposable $_R \CS_q$-module 
since 
$_R \D(\la)$ has the unique top.  
Thus, 
all the composition factors of 
$_R \D(\la)$ 
belong to 
the same block of $_R \CS_q$. 
 
For $\la,\mu \in \,_R \vL^+$, 
we denote by  
$\la \sim \mu$ 
if 
there exists a sequence 
$\la=\la_0, \la_1,\cdots, \la_k  = \mu$ ($\la_i \in \,_R \vL^+$) 
such that 
$_R \D(\la_{i-1})$ and $_R \D(\la_i)$ ($1 \leq i \leq k$)  
have a common composition factor. 
Clearly, 
\lq\lq $\sim$" gives an equivalent relation on $_R \vL^+$, 
and 
$_R \D(\la)$ and $_R \D(\mu)$ 
belong to the same block 
if $\la \sim \mu$. 
If $_R \CS_q$ satisfies the condition (C-1), 
one can prove that 
the converse is also true. 
To prove it, 
we prepare the following lemma.

\begin{lem} \label{lem right left mult}
Suppose that $R$ is a field. 
If $_R \CS_q$ satisfies the condition (C-1), 
we have 
\[ [\, _R \D(\la) : \,_R L(\mu)]=[ \, _R \D^\sharp (\la) : \, _R L^\sharp (\mu)] .\]
\end{lem}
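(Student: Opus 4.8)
The plan is to run everything through the anti-automorphism $\iota$. Under condition (C-1) the map $\iota$ descends to a well-defined algebra anti-automorphism of $\CS_q$, and since it merely interchanges the generators $E_i^{(k)}\leftrightarrow F_i^{(k)}$ and fixes each $1_\la$, it preserves the $\CA$-form $_\CA\CS_q$ and hence specializes to an $R$-linear anti-automorphism of ${}_R\CS_q$, which I again denote $\iota$; being an involution, $\iota^2=\id$. First I would record the elementary but crucial formalism: for a left ${}_R\CS_q$-module $M$ let $M^\iota$ be the right module with underlying $R$-space $M$ and action $m\cdot a=\iota(a)m$. Then $M\mapsto M^\iota$ is an exact, covariant equivalence between left and right ${}_R\CS_q$-modules (its quasi-inverse is the same construction, as $\iota^2=\id$), so it carries a composition series of $M$ to one of $M^\iota$, sending each simple factor $S$ to the simple right module $S^\iota$, and therefore preserves composition multiplicities.

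Next I would identify the images of the relevant modules under $(-)^\iota$. Since $\iota$ swaps ${}_R\CS_q^+$ and ${}_R\CS_q^-$, fixes each $1_\la$, and (by the symmetry of their definitions in ${}_R\CS_q^\pm$) preserves both ideals ${}_R\CS_q(\geq\la)$ and ${}_R\CS_q(>\la)$, the $R$-linear map $\ol{x}\mapsto\ol{\iota(x)}$ is well defined on the subquotients. A direct check that $\iota$ reverses products shows it intertwines the twisted left action with the right action: for $\ol{x}=\ol{x_0 1_\la}$ with $x_0\in{}_R\CS_q^-$ one has $\iota(a)\cdot\ol{x_0 1_\la}\mapsto\ol{1_\la\iota(x_0)a}=\ol{1_\la\iota(x_0)}\cdot a$, so the map is an isomorphism of right modules $({}_R\D(\la))^\iota\cong{}_R\D^\sharp(\la)$, with inverse given by the same formula. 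Applying the exact functor $(-)^\iota$ to $0\to\rad{}_R\D(\mu)\to{}_R\D(\mu)\to{}_R L(\mu)\to 0$ and transporting along this isomorphism, the image of $\rad{}_R\D(\mu)$ becomes a maximal submodule of ${}_R\D^\sharp(\mu)$; but by Theorem \ref{thm standard simple Sq R}(i) the module ${}_R\D^\sharp(\mu)$ has a \emph{unique} maximal submodule $\rad{}_R\D^\sharp(\mu)$, so the two coincide and $({}_R L(\mu))^\iota\cong{}_R L^\sharp(\mu)$.

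Combining these facts yields the lemma at once:
\[
[{}_R\D(\la):{}_R L(\mu)]=[({}_R\D(\la))^\iota:({}_R L(\mu))^\iota]=[{}_R\D^\sharp(\la):{}_R L^\sharp(\mu)],
\]
the first equality because $(-)^\iota$ preserves composition multiplicities and the second by the two identifications just established. I expect the only delicate point to be the middle step: one must confirm that, after specialization, $\iota$ genuinely preserves the $\CA$-form and the ideals ${}_R\CS_q(\geq\la),{}_R\CS_q(>\la)$, so that $\ol{x}\mapsto\ol{\iota(x)}$ both descends to the subquotients and intertwines the twisted action correctly. Once the transport isomorphism $({}_R\D(\la))^\iota\cong{}_R\D^\sharp(\la)$ is in hand, the passage to simple quotients (using uniqueness of the maximal submodule) and the final multiplicity count are purely formal.
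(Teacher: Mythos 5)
Your proof is correct, and its skeleton is the same as the paper's: both arguments transport structure through the anti-automorphism $\iota$, which under (C-1) descends to $_R \CS_q$ (it permutes the generators $E_i^{(k)}, F_i^{(k)}, 1_\la$ of $_\CA \CS_q$, hence preserves the $\CA$-form and specializes), exchanges $_R\CS_q^+$ with $_R\CS_q^-$, and stabilizes the ideals $_R\CS_q(\geq \la)$ and $_R\CS_q(>\la)$ because $\iota(x 1_\mu y)=\iota(y) 1_\mu \iota(x)$. Where you genuinely differ is in one ingredient and in the packaging. The paper works element-wise: it first proves the bilinear-form identity $\lan \ol{y},\ol{x}\ran = \lan \ol{\iota(x)},\ol{\iota(y)}\ran$, deduces from it that $\iota$ carries $\rad\,_R\D(\la)$ onto $\rad\,_R\D^\sharp(\la)$ (hence $_RL(\la)\cong\,_RL^\sharp(\la)$), and then pushes a composition series of $_R\D(\la)$ through $\iota$, checking that the factors become the corresponding $_RL^\sharp(\mu_i)$. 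You instead formalize $M\mapsto M^\iota$ as an exact, multiplicity-preserving equivalence between left and right $_R\CS_q$-modules, identify $(_R\D(\la))^\iota\cong\,_R\D^\sharp(\la)$ by the intertwining computation, and match the simples $(_RL(\mu))^\iota\cong\,_RL^\sharp(\mu)$ by invoking the uniqueness of the maximal submodule of $_R\D^\sharp(\mu)$ from Theorem \ref{thm standard simple Sq R}(i), never touching the bilinear form. Your route is purely formal and arguably cleaner (the paper's step ``by investigating the action of $_R\CS_q$'' is exactly what your twist functor makes precise); the paper's route has the side benefit of recording that $\iota$ intertwines the two bilinear forms, which is the same phenomenon exploited again in \S\ref{section algorithm} when the symmetric form $\lan\,,\,\ran_\iota$ is introduced. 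The ``delicate point'' you flag is discharged exactly as you indicate, so there is no gap.
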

\begin{proof}
Thanks to (C-1), 
we can define an isomorphism of $R$-modules 
$\iota : \,_R \D(\la) \ra \,_R \D^\sharp(\la)$ 
via  
$\ol{x} \mapsto \ol{\iota(x)}$.  
For 
$y \in \,_R \CS_q^+$ and  $x \in \, _R \CS_q^-$, 
we have 
\[ \lan \ol{1_\la y} , \ol{x 1_\la} \ran 1_\la 
	\equiv 1_\la y x 1_\la 
	=1_\la \iota (x) \iota(y) 1_\la 
	\equiv \lan \ol{1_\la \iota(x)} \, ,\, \ol{\iota(y) 1_\la} \ran 1_\la 
\mod \,_R \CS_q(>\la).
\]
Thus, 
we have 
$\lan \ol{y} , \ol{x} \ran = \lan \ol{ \iota (x)} , \ol{ \iota (y)} \ran$ 
for any 
$\ol{x} \in \,_R \D(\la)$ and $\ol{y} \in \D^\sharp(\la)$. 
This implies that 
$\rad \,_R\D^\sharp(\la)=\{\ol{\iota(x)} \,|\, \ol{x} \in \rad \,_R \D(\la)\}$. 
Therefore, 
$\iota : \,_R \D(\la) \ra \,_R \D^\sharp(\la)$ 
induces an $R$-isomorphism  
$\, _R L(\la) \ra \, _R L^\sharp (\la)$. 
Let 
$_R \D(\la)=M_0 \supsetneqq M_1 \supsetneqq \cdots \supsetneqq M_k \supsetneqq 0$ 
be a composition series of $_R \D(\la)$ 
such that 
$M_{i-1}/M_i \cong \,_R L(\mu_i)$. 
By investigating the action of $_R \CS_q$, 
we see that 
$\iota(_R \D(\la))=\iota(M_0) \supsetneqq \iota(M_1) \supsetneqq \cdots \supsetneqq \iota(M_k) \supsetneqq 0$
gives a composition series of $_R \D^\sharp(\la)$ 
such that 
$\iota(M_{i-1})/\iota(M_i) \cong \,_R L^\sharp (\mu_i)$. 
This implies the lemma. 
\end{proof}

We have the following theorem.

\begin{thm} \label{thm block class}
Suppose that $R$ is a field. 
If $_R \CS_q$ satisfies the condition (C-1), 
then 
$\la \sim \mu $ 
if and only if 
$_R \D(\la)$ and $_R \D(\mu)$ 
belong to the same block of $_R \CS_q$ 
for $\la,\mu \in \, _R \vL^+$.

\end{thm}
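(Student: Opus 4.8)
The plan is to prove the nontrivial implication, since the converse has already been observed: if $\la\sim\mu$ then a defining $\sim$-chain $\la=\la_0,\dots,\la_k=\mu$ has each consecutive pair $\,_R\D(\la_{i-1}),\,_R\D(\la_i)$ sharing a composition factor, which places them in one block, and as each $\,_R\D(\la_j)$ is indecomposable it lies in a single block, so the block is constant along the chain. Thus assume $\,_R\D(\la)$ and $\,_R\D(\mu)$ lie in the same block. Because $\,_R L(\la)$ (resp. $\,_R L(\mu)$) occurs in $\,_R\D(\la)$ (resp. $\,_R\D(\mu)$) with multiplicity one by Theorem \ref{thm standard simple Sq R}, this is equivalent to saying $\,_R L(\la)$ and $\,_R L(\mu)$ lie in the same block, and I will deduce $\la\sim\mu$ from this.

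The heart of the argument is the following claim: $[\,_R P(\la):\,_R L(\mu)]\neq 0$ implies $\la\sim\mu$. To prove it I would combine Lemma \ref{rel mul Pla Lmu Dla} with Lemma \ref{lem right left mult}. The latter uses condition (C-1) to identify $[\,_R\D^\sharp(\nu):\,_R L^\sharp(\la)]=[\,_R\D(\nu):\,_R L(\la)]$, turning the bound of Lemma \ref{rel mul Pla Lmu Dla} into
\[
[\,_R P(\la):\,_R L(\mu)]\leqq \sum_{\nu\in\,_R\vL^+}[\,_R\D(\nu):\,_R L(\mu)]\,[\,_R\D(\nu):\,_R L(\la)].
\]
If the left-hand side is nonzero, some $\nu$ makes both factors positive, so $\,_R\D(\nu)$ has both $\,_R L(\la)$ and $\,_R L(\mu)$ as composition factors. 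Since $\,_R L(\la)$ (resp. $\,_R L(\mu)$) is also a composition factor of $\,_R\D(\la)$ (resp. $\,_R\D(\mu)$), the module $\,_R\D(\nu)$ shares a composition factor with each of $\,_R\D(\la)$ and $\,_R\D(\mu)$; hence $\la\sim\nu$ and $\mu\sim\nu$, and transitivity of $\sim$ gives $\la\sim\mu$, proving the claim.

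To pass from "same block" to a nonvanishing Cartan number I would invoke the standard linkage-graph description of the blocks of a finite-dimensional algebra over a field: $\,_R L(\la)$ and $\,_R L(\mu)$ lie in the same block if and only if there is a sequence $\la=\nu_0,\nu_1,\dots,\nu_k=\mu$ in $\,_R\vL^+$ such that, for each $i$, the indecomposable projectives $\,_R P(\nu_{i-1})$ and $\,_R P(\nu_i)$ have a common composition factor. Applying the key claim to each link—if $\,_R P(\nu_{i-1})$ and $\,_R P(\nu_i)$ both contain $\,_R L(\rho)$ then $\nu_{i-1}\sim\rho$ and $\nu_i\sim\rho$, so $\nu_{i-1}\sim\nu_i$—and chaining these relations yields $\la\sim\mu$, which completes the proof.

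The main obstacle is precisely the key step: extracting a genuine $\sim$-link from the mere nonvanishing of a Cartan multiplicity. Everything there rests on the bound of Lemma \ref{rel mul Pla Lmu Dla} being expressible through products of \emph{left}-module decomposition numbers, which is exactly what condition (C-1) provides via Lemma \ref{lem right left mult}; without (C-1) the right-hand side would mix left and right decomposition numbers and the symmetry needed to run the argument would fail. The remaining ingredient, the linkage-graph characterization of blocks, is a routine general fact about finite-dimensional algebras and requires no special feature of $\,_R\CS_q$.
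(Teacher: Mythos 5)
Your proposal is correct and follows essentially the same route as the paper: both proofs reduce to the linkage chain of projective covers sharing composition factors, and then use Lemma \ref{rel mul Pla Lmu Dla} together with Lemma \ref{lem right left mult} (which is where (C-1) enters) to replace the right-hand decomposition numbers $[\,_R \D^\sharp(\nu) : \,_R L^\sharp(\la)]$ by $[\,_R \D(\nu) : \,_R L(\la)]$, producing an intermediate $\nu$ whose $\,_R\D(\nu)$ shares $\,_R L(\la_{i-1})$ and $\,_R L(\mu_i)$ with the relevant standard modules. Your isolation of the key claim $[\,_R P(\la) : \,_R L(\mu)] \not= 0 \Rightarrow \la \sim \mu$ is only a cosmetic reorganization of the paper's inline argument $\la_{i-1} \sim \nu_i \sim \mu_i \sim \nu'_i \sim \la_i$.
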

\begin{proof}
As we have already seen the \lq\lq only if " part, 
we prove the \lq\lq if " part.   
Assume that  
$_R \D(\la)$ and $_R \D(\mu)$ belong to the same block. 
Then 
$_R P(\la)$ and $_R P(\mu)$ belong to the same block. 
Thus, 
there exists a sequence 
$\la=\la_0, \la_1,\cdots,\la_k=\mu$ ($\la_i \in \, _R \vL^+$) 
such that 
$_R P(\la_{i-1})$ and $_R P(\la_{i})$ ($1 \leq i \leq k$) 
have a common composition factor $_R L(\mu_i)$.  
By Lemma \ref{rel mul Pla Lmu Dla}, 
there exists 
$\nu_i, \nu'_i \in \,_R \vL^+$ ($1 \leq i \leq k$) 
such that 
$[\,_R \D (\nu_i) : \, _R L (\mu_i)]\not=0 $, 
$[\,_R \D^\sharp (\nu_i) : \, _R L^\sharp (\la_{i-1})] \not=0 $, 
$[\,_R \D (\nu'_i) : \, _R L (\mu_i)] \not=0 $, 
$[\,_R \D^\sharp (\nu'_i) : \, _R L^\sharp (\la_i)] \not=0 $. 
Combined with Lemma \ref{lem right left mult}, 
we have 
\[ \la_{i-1} \sim \nu_i \sim \mu_i \sim \nu'_i \sim \la_i\]
for each $1 \leq i \leq k$. 
Thus we have $\la \sim \mu$.

\end{proof}

%\para 
%In general, 
%it may occur that 
%$\vL^+ \not= \, _R \vL^+$ 
%for some choice of $R$ and parameters $\xi_0,\xi_1,\cdots,\xi_r \in R$. 
%Thus, we may assume the following condition; 
%
%\begin{align*}
%\vL^+=\,_R \vL^+ 
%\text{ for any } 
%R \text{ and any parameters } 
%\xi_0, \xi_1,\cdots, \xi_r \in R.  
%\tag{\textbf{R-1}}
%\end{align*} 
%
%This condition implies that, 
%when $R$ is a field,  
%the number of simple $_R \CS_q$-module 
%is independent of a choice of a field $R$ and parameters in $R$. 
%
%Moreover, 
%we may assume the following condition;
%\begin{align*}
%&\text{For any }\la \in \,_R \vL^+, 
%\tag{\textbf{R-2}}\\
%&_R \D(\la) \otimes_R \,_R \D^\sharp (\la) \cong \,_R \CS_q(\geq \la)/\,_R \CS_q(> \la) 
%\quad \text{as } (_R \CS_q,\,_R \CS_q)\text{-bimodules}. 
%\end{align*}
%This condition together with the condition  (C-1) implies that 
%$_R \CS_q$ is a quasi-hereditary cellular algebra 
%when $R$ is a field. 
%

%\remark 
%By using a theory of standardly based algebras in \cite{DR98} 
%instead of the argument in \ref{def iota}, 
%we see that 
%the condition (R-2) without the condition (C-1) implies that 
%$_R \CS_q$ is a quasi-hereditary algebra when $R$ is a field.  
%

\para 
Finally,  
we consider the following condition; 
\begin{align*}
&\text{For any }  \la \in \,_\CA \vL^+, \, 
_\CA \D(\la) \text{ is a free } \CA \text{-module, and } \tag{\textbf{A-2}}\\  
&_\CA \D(\la) \otimes_{\CA}  \,_\CA \D^\sharp (\la) \cong \, _\CA \CS_q(\geq \la)/ \, _\CA \CS_q(>\la) 
\quad \text{as } (_\CA \CS_q,\,_\CA \CS_q)\text{-bimodules}.
\end{align*}
We have the following theorem. 

\begin{thm} \label{thm cellular R}
Suppose that 
the conditions (A-1), (A-2) and (C-1) hold. 
Then, for an arbitrary ring $R$ and parameters $\xi_0,\xi_1,\cdots,\xi_r \in R$, 
$_R \CS_q$ is a cellular algebra with respect to the poset $\vL^+$. 
In particular, 
when $R$ is a field, 
$_R \CS_q$ is a quasi-hereditary cellular algebra. 
\end{thm}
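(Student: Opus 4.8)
The plan is to first establish that the integral form ${}_\CA\CS_q$ is a cellular algebra over the ground ring $\CA$, and then to obtain the statement over an arbitrary $R$ by the base change $R\otimes_\CA-$. The argument over $\CA$ is a transcription of the reasoning preceding Theorem \ref{thm Sq cellular}, with the condition (A-2) taking over the role that (C-2) played over $\CK$, and with (A-1) guaranteeing, via Proposition \ref{prop tri A}, the triangular decomposition ${}_\CA\CS_q={}_\CA\CS_q^-\,{}_\CA\CS_q^0\,{}_\CA\CS_q^+$ that the whole construction rests on.

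Concretely, under (A-1) one repeats the proof of Theorem \ref{thm standard simple Sq} (iii) over $\CA$ to produce a filtration of ${}_\CA\CS_q$ by two-sided ideals whose successive quotients are the bimodules ${}_\CA\CS_q(\geq\la)/{}_\CA\CS_q(>\la)$ for $\la\in{}_\CA\vL^+$. Condition (C-1) makes the anti-automorphism $\iota$ of \ref{def iota} descend to ${}_\CA\CS_q$, and it carries ${}_\CA\CS_q(\geq\la)$ and ${}_\CA\CS_q(>\la)$ to themselves. Now (A-2) supplies, for each such $\la$, a free $\CA$-module ${}_\CA\D(\la)$ together with a bimodule isomorphism
\[
{}_\CA\D(\la)\otimes_\CA{}_\CA\D^\sharp(\la)\;\cong\;{}_\CA\CS_q(\geq\la)/{}_\CA\CS_q(>\la),
\]
and the commutative diagram displayed before Theorem \ref{thm Sq cellular} (with $\ol{x}\otimes\ol{y}\mapsto\ol{\iota(y)}\otimes\ol{\iota(x)}$) shows that each successive quotient is a cell ideal of the corresponding quotient algebra in the sense of \cite{KX98}. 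The freeness of ${}_\CA\D(\la)$ is exactly what the cell-ideal formalism needs over a ring rather than a field; hence ${}_\CA\CS_q$ is cellular with cell datum indexed by $({}_\CA\vL^+,\geq)$. Transporting an $\CA$-basis of each ${}_\CA\D(\la)$ through $\iota$ and the displayed isomorphism yields an explicit cellular basis $\{C^\la_{s,t}\}$, and since tensor products and extensions of free $\CA$-modules are free, ${}_\CA\CS_q$ is itself free over $\CA$.

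To pass to $R$, I would observe that because ${}_\CA\CS_q$ is free over $\CA$ with a cellular basis, the set $\{1\otimes C^\la_{s,t}\}$ is an $R$-basis of ${}_R\CS_q=R\otimes_\CA{}_\CA\CS_q$, that $\iota$ extends $R$-linearly, and that the straightening relations are preserved because they are $\CA$-linear; thus ${}_R\CS_q$ is cellular. It remains to identify the poset: from $\CS_q=\CK\otimes_\CA{}_\CA\CS_q$ and exactness of localisation one gets $\D(\la)=\CK\otimes_\CA{}_\CA\D(\la)$, whence $\vL^+={}_\CA\vL^+$, while freeness of each ${}_\CA\D(\la)$ and base change of the filtration give $\vL^+={}_R\vL^+$ for every $R$. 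So the cellular poset is $\vL^+$, as asserted.

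Finally, suppose $R$ is a field. For each $\la\in\vL^+$ the cell form on ${}_R\D(\la)$ satisfies $\lan\ol{1_\la},\ol{1_\la}\ran=1\neq0$, exactly as in the proof of Theorem \ref{thm standard simple Sq} (i). By \cite{KX98} a cell ideal whose defining form is nonzero is a heredity ideal, so the cell chain of ${}_R\CS_q$ is a heredity chain and ${}_R\CS_q$ is quasi-hereditary. I expect the main obstacle to be the first step: verifying carefully that (A-2) genuinely yields a cellular basis over $\CA$ — in particular that $\iota$ exchanges the two tensor factors compatibly with the bimodule isomorphism — and confirming that the freeness in (A-2) is enough to run the Koenig–Xi cell-ideal argument over the ring $\CA$ and to keep base change exact along the filtration.
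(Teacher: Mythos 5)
Your proposal is correct and, in its main lines, coincides with the paper's proof: both establish cellularity of ${}_\CA\CS_q$ with respect to ${}_\CA\vL^+$ by combining (C-1) (so that $\iota$ descends to ${}_\CA\CS_q$ and carries ${}_\CA\D(\la)$ isomorphically onto ${}_\CA\D^\sharp(\la)$, making the latter free) with (A-2) (freeness of ${}_\CA\D(\la)$ and the bimodule isomorphism), exactly as in the argument for Theorem \ref{thm Sq cellular}; both then base-change the resulting cell datum to $R$; and both deduce quasi-heredity over a field from the nonvanishing $\lan\ol{1_\la},\ol{1_\la}\ran=1$. The one place where you genuinely diverge is the identification of the cellular poset with $\vL^+$. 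The paper first notes ${}_R\vL^+\subset{}_\CA\vL^+$ and then, for a field $R$, counts simple modules: the cellular theory gives $|{}_\CA\vL^+|$ isomorphism classes (since every cell form is nonzero), while Theorem \ref{thm standard simple Sq R} gives $|{}_R\vL^+|$, forcing ${}_R\vL^+={}_\CA\vL^+$; the equality ${}_\CA\vL^+=\vL^+$ is then read off from the case $R=\CK$. You instead argue directly: $\vL^+={}_\CA\vL^+$ because (A-2) makes ${}_\CA\CS_q(\geq\la)/{}_\CA\CS_q(>\la)$ free, hence torsion-free, so localizing to $\CK$ cannot kill $\ol{1_\la}$; and ${}_R\vL^+={}_\CA\vL^+$ for \emph{every} ring $R$ by base-changing the filtration. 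Your route is slightly more general (it pins down the poset over arbitrary $R$ without invoking Theorem \ref{thm standard simple Sq R}), but, as you yourself flag, it carries the extra bookkeeping the paper's counting argument avoids: one must verify that the base-changed filtration terms are precisely the ideals ${}_R\CS_q(\geq\la)$, and that $1\otimes\ol{1_\la}\neq 0$ in $R\otimes_\CA{}_\CA\D(\la)$ — the clean way being to apply the $\CA$-linear functional $\lan\ol{1_\la},-\ran$, which sends $\ol{1_\la}\mapsto 1$ and survives any base change; with that supplied, both arguments are complete.
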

\begin{proof}
Thanks to (C-1), 
the map 
$_\CA \D(\la) \ni \ol{x} \mapsto \ol{\iota(x)} \in \,_\CA \D^\sharp (\la)$ 
gives an isomorphism of $\CA$-modules. 
Thus, (A-2) implies that $_\CA \D^\sharp(\la)$ is a free $\CA$-module. 
Now, we can prove 
that $_\CA \CS_q$ is a cellular algebra 
with respect to the poset $_\CA \vL^+$ 
in a similar way as in the case over $\CK$ (Theorem \ref{thm Sq cellular}), 
and $_\CA \D(\la)$ ($ \la \in \,_\CA\vL^+ $)  
is a (left) cell module of $_\CA \CS_q$. 
Thus, for any ring $R$, 
$_R \CS_q$ is a cellular algebra with respect to the poset $_\CA \vL^+$, 
and $R \otimes_{\CA} \,_\CA \D(\la)$  ($ \la \in \,_\CA\vL^+ $)  
is a cell module of $_R \CS_q$. 

From now on, 
we assume that $R$ is a field. 
It is clear that 
$ 1 \otimes 1_\la \in \,_R \CS_q(>\la)$ 
if 
$1_\la \in \,_\CA \CS_q(>\la)$. 
This implies that 
$_R \vL^+ \subset \, _\CA \vL^+$. 
Since 
$R \otimes_{\CA} \,_\CA \D(\la)$ 
has an element $1 \otimes \ol{1_\la}$, 
we have that 
$\rad \big( R \otimes_{\CA} \,_\CA \D(\la) \big) \not= R \otimes_{\CA} \,_\CA \D(\la)$ 
for any $\la \in \,_\CA\vL^+$. 
This implies that 
$_R \CS_q$ is quasi-hereditary, 
and that 
the number of isomorphism classes of simple $_R \CS_q$-modules 
is equal to $_\CA \vL^+$ by the general theory of cellular algebras. 
On the other hand, 
we know that 
the number of isomorphism classes of simple $_R \CS_q$-modules 
is equal to $_R \vL^+$ 
by Theorem \ref{thm standard simple Sq R}. 
Thus, 
we have $_R \vL^+ = \, _\CA \vL^+$. 
In particular, 
we have 
$_\CA \vL^+= \vL^+$ 
when $R= \CK$. 
\end{proof}

\remarks\
\label{remarks-cartan}
(\roi) 
Let $_\CA \wt{\CS}_q^\natural=\, _\CA \wt{\CS}_q^\natural(\vL)$ be the 
$\CA$-subalgebra of $\wt{\CS}_q$ generated by 
$E_i, F_i, 1_\la, \tau_i^\la$ for $1\leq i \leq m-1, \la \in \vL$. 
Clearly, 
$_\CA \wt{\CS}_q^\natural$ 
is isomorphic to the associative algebra over $\CA$ 
defined by generators $E_i, F_i, 1_\la,  \tau_i^\la$ and 
defining relations (\ref{Sq-1})-(\ref{Sq-9}). 
Moreover, 
$_\CA \wt{\CS}_q^\natural$ 
is a homomorphic image of $_\CA \wt{U}_q^\natural$, 
where 
$_\CA \wt{U}_q^\natural$ is the $\CA$-subalgebra of $\wt{U}_q$ 
generated  by all 
$e_i,f_i, \tau_i, K_j^{\pm}, \left[ \begin{matrix} K_j ;0 \\ t \end{matrix} \right]$.  
For $_\CA \wt{\CS}_q^\natural$, 
we can take $\eta_\vL$, 
and we can define the quotient algebra 
$_\CA \CS_q^{\natural}=\, _\CA \CS_q^{\natural \eta_\vL}$ 
as the case of $\CS_q^{\eta_\vL}$ 
(in this case, 
the condition (A-1) 
for $_\CA \CS_q^\natural$  to have the triangular decomposition 
is unnecessary 
since we do not take a divided power).   
For an arbitrary ring $R$ and parameters $\xi_0,\xi_1,\cdots,\xi_r$, 
we take the specialized algebra 
$_R \CS_q^\natural = R \otimes_{\CA} \,_\CA \CS_q^\natural$. 
Then, 
for $_R \CS_q^\natural$, 
one can apply  similar arguments as in the case of $_R \CS_q$. 
In particular, 
similar results to Theorem \ref{thm standard induced borel R}, Theorem \ref{thm standard simple Sq R}, 
Theorem \ref{thm block class} and  
Theorem \ref{thm cellular R}
hold for $_R \CS_q^\natural$. 
However, 
$_R \CS_q^\natural$ 
is different from $_R \CS_q$ in general. 

(\roii) 
For any Cartan matrix of finite type, 
one can define the algebra $\wt{U}_q$ 
and its quotient algebra $\CS_q$ 
associated to a given Cartan matrix in a similar way. 
In this case, 
we should take a weight lattice $P$ 
whose  rank  
is equal to the rank of the root lattice, 
and  
we take a finite subset $\vL $ of $P$  
to define the quotient algebra $\wt{\CS}_q$  
without 
taking a subset of  $P$ such as $P_{\geq 0}$. 
We should use a similar arguments as in the proof of \cite[Lemma 3.2]{Do} 
instead of Lemma \ref{lem-1-lamda}
in order to prove a similar statement as in Proposition \ref{prop-sur-wtUq-wtSq}. 
We also remove the condition (e) from the definition of $\CO^\eta$. 
Then, 
we have all statements in \S \ref{Sq} and \S \ref{specialization} 
corresponding to a given Cartan matrix.  
%%%%%%%%%%%%%%%%%%%%%%%%%%%%%%%%%%%%%%%%%%%%%%%%%%%%%%%%%%%%%%%%%%%%%%%%%%%%%%%%%%%%%%%%%%%%%%%%%%%%%%%%%%%%%%%%
  
%%%%%%%%%%%%%%%%%%%%%%%%%%%%%%%%%%%%%%%%%%%%%%%%%%%%%%%%%%%%%%%%%%%%%%%%%%%%%%%%%%%%%%%%%%%%%%%%%%%%%%%%%%%%%%%%

\section{Review on $q$-Schur algebras of type A}

\para 
Let $n,m$ be  positive integers, 
and 
$\vL_{n,1}$ 
be the set of compositions of $n$ with $m$ parts, 
namely 
\[\vL_{n,1}=\big\{ \mu =(\mu_1,\cdots, \mu_m) \in \ZZ^m_{\geq 0} \bigm| \mu_1+\cdots +\mu_m=n \big\}.\] 
We regard $\vL_{n,1}$ as a subset of $P$ 
by the injective map  
from $\vL_{n,1}$ to $ P$ 
given by  
$\mu=(\mu_1,\cdots, \mu_m) \mapsto \sum_{i=1}^m \mu_i \ve_i$.  
Thus, 
for 
$\mu=(\mu_1,\cdots,\mu_m) \in \vL_{n,1}$ and $\a_i$ ($1 \leq i \leq m-1$),  
we have 
\[\mu \pm \a_i=(\mu_1,\cdots,\mu_{i-1},\, \mu_i \pm 1,\, \mu_{i+1} \mp 1 ,\,\mu_{i+2}, \cdots, \mu_m).\]  

For $\mu \in \vL_{n,1}$, 
the diagram of $\mu$ 
is the set $[\mu]=\{(i,j)\in \NN \times \NN \,|\, 1 \leq j \leq \mu_i, \, 1 \leq i \leq m\}$,  
and 
a $\mu$-tableau is a bijection 
$\Ft : [\mu] \ra \{1,2,\cdots,n\}$. 
Let $\Ft^\mu$ be the $\mu$-tableau 
in which the integers 
$1,2,\cdots, n$ are attached in the way from left to right, 
and top to bottom 
in $[\mu]$.
The symmetric group 
$\FS_n$ acts on the set of $\mu$-tableaux from right by permuting the integers attached in $[\mu]$. 
For $\mu,\nu \in \vL_{n,1}$, 
a $\mu$-tableau of type $\nu$ is a map 
$T : [\mu] \ra \{1,\cdots, m\}$ 
such that 
$\nu_i=\sharp \{ x \in [\mu] \,|\, T(x)=i \}$. 
For $\mu,\nu$ and  $\mu$-tableau $\Ft$, 
let $\nu(\Ft)$ be a $\mu$-tableau of type $\nu$ 
obtained by 
replacing each entry $i$ in $\Ft$ 
by $k$ 
if $i$ appear in the $k$-th row  of $\Ft^\nu$.

For $\mu \in \vL_{n,1}$, 
let 
$\FS_\mu$ 
be the Young subgroup of $\FS_n$ 
corresponding to $\mu$, 
and 
$\SD_\mu$ 
be the set of 
distinguished representatives of right $\FS_\mu$-cosets. 
For $\mu,\nu \in \vL_{n,1}$, 
$\SD_{\mu \nu}=\SD_\mu \cap \SD_\nu^{-1}$ 
is the set of distinguished representatives of $\FS_\mu$-$\FS_\nu$  double cosets.  

\para
Let $R$ be an integral domain, 
and $q$ be an invertible element in $R$. 
The Iwahori-Hecke algebra 
$_R \He_{n}$ 
of the symmetric group $\FS_n$  
is the associative algebra over $R$ 
generated by $T_1,\cdots,T_{n-1}$ with the following defining relations; 
\begin{align*}
&(T_i - q)(T_i + q^{-1}) &&(1 \leq i \leq n-1), \\
&T_i T_{i+1} T_i= T_{i+1} T_i T_{i+1} &&(1\leq i \leq n-2), \\
&T_i T_j=T_jT_i &&(|i-j|\geq 2).
\end{align*}
 
For $w \in \FS_n$, 
we denote by $\ell(w)$ the length of $w$, 
and 
by $T_w$ the standard basis of $_R \He_n$ corresponding to $w$. 
We define an anti-automorphism 
$\ast : \,_R \He_n \ni x \mapsto x^\ast \in \,_R \He_{n}$ 
by 
$T_i^\ast = T_i$ for $i=1,\cdots,n-1$. 
Thus, we have 
$T_w^\ast=T_{w^{-1}}$ for $w \in \FS_n$. 
For 
$\mu \in \vL_{n,1}$, 
set 
$x_\mu=\sum_{w \in \FS_\mu} q^{\ell(w)} T_w$,  
and we define the right $_R \He_{n}$-module 
$M^\mu= x_\mu \cdot \,_R \He_{n}$. 
The $q$-Schur algebra $_R\Sc_{n,1}$ associated to $_R \He_n$ 
is defined by 
\[ _R \Sc_{n,1} = \End_{\,_R \He_{n}}\Big( \bigoplus_{\mu \in \vL_{n,1}} M^\mu \Big). \]

The following lemma is well known (see e.g. \cite[4.6]{M-book}). 
\begin{lem} \label{lem-d}
For $\mu,\nu \in \vL_{n,1}$ and $d \in \SD_{\mu\nu}$, 
let $T=\nu (\Ft^\mu \cdot d)$, $S=\mu(\Ft^\nu\cdot d^{-1})$. 
Then we have 
\[ \sum_{y \in \SD_\nu \atop \mu(\Ft^\nu\cdot y)=S } q^{\ell(y)} T_y^\ast x_\nu  
	= \sum_{ w \in \FS_\mu d \FS_\nu} q^{\ell(w)}T_w 
	=\sum_{x \in \SD_\mu \atop \nu(\Ft^\mu \cdot x)=T} q^{\ell(x)} x_{\mu} T_x.
\]
\end{lem}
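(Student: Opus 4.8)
The plan is to establish the left and right equalities separately, reducing each to a single combinatorial statement about the double coset $\FS_\mu d\FS_\nu$, namely the identification
\begin{equation*}
\SD_\mu\cap\FS_\mu d\FS_\nu=\{\,x\in\SD_\mu\mid \nu(\Ft^\mu x)=T\,\}.
\tag{$\ast$}
\end{equation*}
The middle sum $\sum_{w\in\FS_\mu d\FS_\nu}q^{\ell(w)}T_w$ serves as the common target for both outer sums.

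First I would prove the right-hand equality. For $x\in\SD_\mu$ and $w\in\FS_\mu$ the distinguished-representative property gives $\ell(wx)=\ell(w)+\ell(x)$, hence $T_wT_x=T_{wx}$ in $\,_R\He_n$, and therefore $q^{\ell(x)}x_\mu T_x=\sum_{w\in\FS_\mu}q^{\ell(wx)}T_{wx}$. Since every element of $\FS_n$ factors uniquely as $wx$ with $w\in\FS_\mu$ and $x\in\SD_\mu$, summing over those $x$ with $\nu(\Ft^\mu x)=T$ and invoking $(\ast)$ shows that the monomials $T_{wx}$ run exactly once over $\FS_\mu d\FS_\nu$, which is the middle sum. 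For the left-hand equality I would use the anti-automorphism $\ast$: since $T_y^\ast=T_{y^{-1}}$ and, for $y\in\SD_\nu$, inversion of the coset-length identity gives $\ell(y^{-1}v)=\ell(y)+\ell(v)$ for all $v\in\FS_\nu$, one obtains $q^{\ell(y)}T_y^\ast x_\nu=\sum_{v\in\FS_\nu}q^{\ell(y^{-1}v)}T_{y^{-1}v}$. As $y^{-1}v$ ranges over right $\FS_\nu$-cosets inside the double coset, and since $d^{-1}\in\SD_{\nu\mu}$ with $S=\mu(\Ft^\nu d^{-1})$ its associated tableau, inverting reduces this half to $(\ast)$ with $(\mu,\nu,d,T)$ replaced by $(\nu,\mu,d^{-1},S)$ (using $(\FS_\mu d\FS_\nu)^{-1}=\FS_\nu d^{-1}\FS_\mu$).

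The main obstacle is $(\ast)$ itself. I would prove it as follows. For $w\in\FS_n$ form the matrix $A(w)=(a_{ij})$, where $a_{ij}$ counts the integers lying simultaneously in row $i$ of $\Ft^\mu$ and, after applying $w$, in row $j$ of $\Ft^\nu$; equivalently $a_{ij}$ is the number of entries equal to $j$ in row $i$ of $\nu(\Ft^\mu w)$. Because $\FS_\mu$ preserves the row sets of $\Ft^\mu$ and $\FS_\nu$ those of $\Ft^\nu$, the matrix $A(w)$ is constant on the double coset $\FS_\mu w\FS_\nu$, with row sums $\mu$ and column sums $\nu$. On the other hand, if $x\in\SD_\mu$ then $\Ft^\mu x$ increases along rows, and since the row-index map attached to $\Ft^\nu$ is weakly increasing, $\nu(\Ft^\mu x)$ is row-standard and is therefore recovered uniquely from $A(x)$. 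Thus for $x\in\SD_\mu$ the tableau $\nu(\Ft^\mu x)$ depends only on the double coset $\FS_\mu x\FS_\nu$, and equals $T=\nu(\Ft^\mu d)$ precisely when $\FS_\mu x\FS_\nu=\FS_\mu d\FS_\nu$, which is $(\ast)$. Verifying these monotonicity and invariance statements cleanly, with the chosen convention for the action of $\FS_n$ on tableaux, is the delicate point and the part of Lemma \ref{lem-d} demanding the most care.
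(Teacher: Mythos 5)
Your two Hecke-algebra reductions are correct and complete modulo $(\ast)$: the identity $\ell(wx)=\ell(w)+\ell(x)$ for $w\in\FS_\mu$, $x\in\SD_\mu$ gives $q^{\ell(x)}x_\mu T_x=\sum_{w\in\FS_\mu}q^{\ell(wx)}T_{wx}$, the inverted identity $\ell(y^{-1}v)=\ell(y)+\ell(v)$ for $y\in\SD_\nu$, $v\in\FS_\nu$ handles the left-hand sum, and the bookkeeping with $d^{-1}\in\SD_{\nu\mu}$ and $S=\mu(\Ft^\nu d^{-1})$ is right. Note that the paper itself gives no proof of this lemma — it quotes it as well known from \cite[4.6]{M-book} — so the relevant benchmark is the standard argument there, and your overall strategy (reduce both outer sums to the identification $(\ast)$ of $\SD_\mu\cap\FS_\mu d\FS_\nu$) is exactly that standard route.

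The genuine gap is inside your proof of $(\ast)$. What you actually establish is: (a) $A(w)$ is constant on double cosets, and (b) for $x\in\SD_\mu$ the tableau $\nu(\Ft^\mu x)$ is weakly increasing along rows, hence determined by $A(x)$ (and conversely $A(x)$ can be read off from it). Together these yield only one implication: $\FS_\mu x\FS_\nu=\FS_\mu d\FS_\nu\Rightarrow\nu(\Ft^\mu x)=T$. Your phrase ``precisely when'' also asserts the converse, and that converse is the direction $(\ast)$ really needs: without it, the set $\{x\in\SD_\mu\mid\nu(\Ft^\mu x)=T\}$ could a priori be strictly larger than $\SD_\mu\cap\FS_\mu d\FS_\nu$, and then your outer sums would contain monomials $T_w$ with $w\notin\FS_\mu d\FS_\nu$, so the claimed equalities would fail. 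The converse amounts to injectivity of the map from double cosets to matrices with row sums $\mu$ and column sums $\nu$, which is a separate fact, not a formal consequence of (a) and (b). It has a short proof you should supply: put $B_{ij}(w)=\{b\in\text{row}_i(\Ft^\mu)\mid b\cdot w\in\text{row}_j(\Ft^\nu)\}$; if $A(x)=A(d)$, glue arbitrary bijections $B_{ij}(d)\to B_{ij}(x)$ into a permutation $u$, which lies in $\FS_\mu$ since it preserves each row set of $\Ft^\mu$; then $v:=d^{-1}ux$ maps each $\text{row}_j(\Ft^\nu)$ into itself, so $v\in\FS_\nu$ and $x=u^{-1}dv\in\FS_\mu d\FS_\nu$. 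Contrary to your closing remark, the delicate point is not the action conventions or monotonicity checks — those are fine — but this missing injectivity step.
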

Thanks to this lemma, 
for $\mu,\nu \in \vL_{n,1}$ and $d \in \SD_{\mu\nu}$, 
we can define an $_R \He_{n}$-module homomorphism 
$\psi_{\mu,\nu}^d : M^\nu \ra M^\mu$ by 
\begin{align*}
\psi_{\mu,\nu}^d (x_\nu \cdot h) 
&= \Big(\sum_{y \in \SD_\nu \atop \mu(\Ft^\nu \cdot y)=S} q^{\ell(y)} T_y^\ast x_\nu \Big) \cdot h \\
&= \Big( \sum_{x \in \SD_\mu \atop \nu(\Ft^\mu\cdot x)=T} q^{\ell(x)} x_\mu T_x \Big) \cdot h 
	\qquad (h\in \,_R \He_{n}).  
\end{align*}
We extend this homomorphism to an element of $_R \Sc_{n,1}$  by 
$\psi_{\mu,\nu}^d(m_\tau)=0$ 
for 
$m_\tau \in M^\tau$ with 
$\tau \in \vL_{n,1}$ 
such that 
$\tau \not= \nu$.  
It is known that 
$\{ \psi_{\mu,\nu}^d \,|\, \mu,\nu \in \vL_{n,1}, d \in \SD_{\mu\nu}\}$ 
gives a free $R$-basis of $_R \Sc_{n,1}$ (see \cite[Theorem 4.7]{M-book}). 

\para 
Next, we define the Borel subalgebras of $_R\Sc_{n,1}$ following \cite{DR}.
Let 
$I(m,n)=\{\Bi=(i_1,\cdots,i_n)\,|\, 1 \leq i_k \leq m \text{ for }1 \leq k \leq n\}$. 
$\FS_n$ acts on $I(m,n)$ from right by 
$\Bi\cdot w =(i_{w(1)},\cdots, i_{w(n)})$ 
for 
$\Bi=(i_1,\cdots, i_n)\in I(m,n)$ 
and 
$w \in \FS_n$.  
We define a partial order \lq\lq $\succeq $" on $I(m,n)$ by 
\[ (i_1,\cdots,i_n) \succeq (j_1,\cdots,j_n) \text{ if and only if } 
i_k \geq j_k \text{ for all } k=1,\cdots, n.\]
For $\la \in \vL_{n,1}$, 
put  
\[ \Bi_\la=(\underbrace{1,\cdots,1}_{\la_1 \text{terms}}, \underbrace{2,\cdots,2}_{\la_2 \text{terms}}, 
\cdots, \underbrace{m,\cdots,m}_{\la_m \text{terms}} ).\]
For $\mu \in \vL_{n,1}$, 
we set 
\begin{align*}
&\Om_1^{\succeq}(\mu) = \big\{(\la,d) \bigm| \la \in \vL_{n,1}, d \in \SD_{\la\mu} 
	\text{ such that } \Bi_\la \cdot d \succeq \Bi_{\mu} \big\}, \\
&\Om_1^{\preceq }(\mu) = \big\{(\la,d) \bigm| \la \in \vL_{n,1}, d \in \SD_{\la\mu} 
	\text{ such that } \Bi_\mu \cdot d \preceq \Bi_{\la} \big\}.
\end{align*}

Let 
$_R \Sc_{n,1}^{\leq 0} $
be the free $R$-submodule of $_R\Sc_{n,1}$ spanned by 
$\{ \psi_{\la,\mu}^d \,|\, (\la,d)\in \Om_1^{\succeq}(\mu),\, \mu \in \vL_{n,1}\}$, 
and 
$_R\Sc_{n,1}^{\geq 0} $
be the free $R$-submodule of $_R\Sc_{n,1}$ spanned by 
$\{ \psi_{\mu,\la}^d \,|\, (\la,d)\in \Om_1^{\preceq}(\mu),\, \mu \in \vL_{n,1}\}$. 
By \cite[Theorem 2.3]{DR}, 
$_R\Sc_{n,1}^{\leq 0} $ (resp. $_R\Sc_{n,1}^{\geq 0}$) 
becomes a subalgebra of $_R\Sc_{n,1}$. 

\para 
We denote $_{\QQ(q)} \Sc_{n,1}$ (resp. $_{\QQ(q)} \Sc_{n,1}^{\leq 0}, \, _{\QQ(q)} \Sc_{n,1}^{\geq 0}$) 
simply 
by $\Sc$ (resp. $\Sc_{n,1}^{\leq 0}, \, \Sc_{n,1}^{\geq 0}$).
The following theorem is known by several authors.

\begin{thm}[\cite{J}, \cite{Du}, \cite{PW},\cite{DR}, \cite{DP}]  \label{Uqgl-S1}\
\label{Uqgl-S1}
\begin{enumerate}
\item
There exists a surjective homomorphism of algebras 
\[\rho : U_q \ra \Sc_{n,1}.\]

\item 
By restricting $\rho$ to $\CB^{\pm}$, we have the surjective homomorphisms 
\[\rho|_{\CB^+} : \CB^+ \ra \Sc_{n,1}^{\geq 0}, \quad \rho|_{\CB^-} : \CB^- \ra \Sc_{n,1}^{\leq 0}. \]

\item
By restricting $\rho$ to $_\CZ U_q$, we have the surjective homomorphism 
\[\rho|_{_\CZ U_q} : \,_\CZ U_q \ra \,_\CZ \Sc_{n,1}.\]

\item 
By restricting $\rho$ to $_\CZ\CB^{\pm}$, we have the surjective homomorphisms 
\[\rho|_{_\CZ\CB^+} : \,_\CZ\CB^+ \ra \,_\CZ\Sc_{n,1}^{\geq 0}, 
	\quad \rho|_{_\CZ\CB^-} : \,_\CZ\CB^- \ra \,_\CZ\Sc_{n,1}^{\leq 0}. \]
\end{enumerate}
\end{thm}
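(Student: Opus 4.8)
The plan is to obtain $\rho$ from quantum Schur--Weyl duality and then read off the remaining assertions from the integral form and from the Borel description of \cite{DR}. Let $V=\bigoplus_{i=1}^m\QQ(q)v_i$ be the natural $U_q$-module, on which $e_i,f_i$ act as the elementary raising and lowering operators on the basis $\{v_i\}$ and $K_iv_j=q^{\d_{ij}}v_j$. Using the coproduct of $U_q$ one makes the tensor space $V^{\ot n}$ into a left $U_q$-module, and there is a commuting right action of $\He_n$ over $\QQ(q)$ via the $R$-matrix (signed place permutations). As a right $\He_n$-module one has the standard identification $V^{\ot n}\cong\bigoplus_{\mu\in\vL_{n,1}}M^\mu$, matching the weight-$\mu$ subspace of $V^{\ot n}$ with $M^\mu$. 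Hence the representation map furnishes $\rho\colon U_q\ra\End_{\He_n}(V^{\ot n})=\Sc_{n,1}$. Over $\QQ(q)$ the Hecke algebra $\He_n$ is semisimple, and Jimbo's quantum Schur--Weyl duality \cite{J} identifies $\rho(U_q)$ with the full centralizer $\End_{\He_n}(V^{\ot n})=\Sc_{n,1}$; this gives surjectivity and proves (i).

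For (ii), a direct computation on the $\psi$-basis shows that the images $\rho(e_i),\rho(K_i^{\pm})$ lie in $\Sc_{n,1}^{\geq 0}$ and $\rho(f_i),\rho(K_i^{\pm})$ lie in $\Sc_{n,1}^{\leq 0}$, reflecting the triangularity of the raising and lowering operators with respect to the order $\succeq$ on $I(m,n)$; thus $\rho(\CB^+)\subseteq\Sc_{n,1}^{\geq 0}$ and $\rho(\CB^-)\subseteq\Sc_{n,1}^{\leq 0}$. Surjectivity then follows from the Borel description in \cite[Theorem 2.3]{DR}, according to which $\Sc_{n,1}^{\geq 0}$ (resp.\ $\Sc_{n,1}^{\leq 0}$) is generated by the images of the $e_i,K_i^{\pm}$ (resp.\ $f_i,K_i^{\pm}$). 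Hence both restrictions are surjective, proving (ii).

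For the integral statements (iii) and (iv), fix the lattice $V_\CZ=\bigoplus_{i=1}^m\CZ v_i$. On $V$ the divided powers $e_i^{(k)},f_i^{(k)}$ act by $0$ for $k\geq 2$ and by a $0$--$1$ matrix for $k\leq 1$, while $\left[\begin{matrix}K_i;0\\ t\end{matrix}\right]$ acts diagonally on $v_j$ by $\left[\begin{matrix}\d_{ij}\\ t\end{matrix}\right]\in\{0,1\}$; so $V_\CZ$ is a $_\CZ U_q$-stable lattice and $V_\CZ^{\ot n}$ is a $_\CZ U_q$-module. Since the $\psi_{\mu,\nu}^d$ form a free $\CZ$-basis one has $_\CZ\Sc_{n,1}=\End_{_\CZ\He_n}(V_\CZ^{\ot n})$, and $\rho$ restricts to $\rho|_{_\CZ U_q}\colon {}_\CZ U_q\ra {}_\CZ\Sc_{n,1}$. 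Surjectivity over $\CZ$ is the integral Schur--Weyl duality of \cite{Du} (see also \cite{PW},\cite{DP}): one exhibits each $\psi_{\mu,\nu}^d$ inside $\rho(_\CZ U_q)$ by writing it as a product of images of divided powers $e_i^{(k)},f_i^{(k)}$, reconciled with the double-coset sums of Lemma \ref{lem-d}. Combining this with (ii) and the integral form of the Borel presentation of \cite{DR} yields (iv), the surjective restrictions to $_\CZ\CB^{\pm}$.

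The main obstacle is the integral surjectivity underlying (iii) and (iv). Over $\QQ(q)$ surjectivity is automatic from the semisimplicity of $\He_n$ and the double centralizer theorem, but over $\CZ$ semisimplicity fails, so one cannot argue by centralizers. Instead one must produce the entire $\psi$-basis explicitly inside $\rho(_\CZ U_q)$, which is the combinatorial heart of Du's integral duality: the coproduct formulas for the divided powers acting on $V_\CZ^{\ot n}$ have to be matched against the double-coset sums of Lemma \ref{lem-d} defining the $\psi_{\mu,\nu}^d$. This integrality argument, rather than the mere existence of $\rho$, is where the genuine work lies.
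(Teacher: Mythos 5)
Your proposal is correct and takes essentially the same approach as the paper: the theorem is quoted there as a known result of \cite{J}, \cite{Du}, \cite{PW}, \cite{DR}, \cite{DP}, and the paper's own construction of $\rho$ (Appendix \ref{proof-prop-rho}) is exactly your Schur--Weyl argument, using the vector representation, the commuting $\He_n$-action on $V^{\otimes n}$, and the identification $V^{\otimes n}_\mu \cong M^\mu$ to get $\rho'(U_q)=\End_{\He_n}(V^{\otimes n})\cong \Sc_{n,1}$. Like you, the paper defers the Borel statements and the integral surjectivity (where the double-centralizer argument is unavailable) to the cited work of Du, Du--Rui and Du--Parshall rather than reproving them.
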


We can describe precisely the image of generators of $U_q$ 
under the homomorphism $\rho$ in Theorem \ref{Uqgl-S1} 
 as follows. 

\begin{prop}[{\cite{S2}}] \label{prop-rho}\

\begin{enumerate}
\item 
For $e_i$ $(1 \leq i \leq m-1)$, we have 
\[ \rho(e_i)=\sum_{\mu \in \vL_{n,1}} q^{-\mu_{i+1}+1} \psi_{\mu+\a_i,\mu}^1\,\,,\]
where if $\mu+\a_i \not\in \vL_{n,1}$, we define $\psi_{\mu+\a_i, \mu}^1=0$. 

\item 
For $f_i$ $(1 \leq i \leq m-1)$, we have 
\[ \rho (f_i) = \sum_{\mu \in \vL_{n,1}} q^{-\mu_i+1} \psi_{\mu-\a_i, \mu}^1\,\,, \]
where if $\mu-\a_i \not\in \vL_1$, we define $\psi_{\mu-\a_i,\mu}^1=0$. 

\item 
For $K_{i}^{\pm}$ $(1 \leq i \leq m)$, we have 
\[ \rho(K_i^{\pm})=\sum_{\mu \in \vL_{n,1}} q^{\pm \mu_i}\psi_{\mu, \mu}^1 \,\,.\]
Clearly, $\psi_{\mu, \mu}^1$ is an identity map on $M^\mu$. 
\end{enumerate}
\end{prop}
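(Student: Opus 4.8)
The homomorphism $\rho$ of Theorem \ref{Uqgl-S1} is the Schur--Weyl quotient map, arising from the action of $U_q$ on the $n$-fold tensor power $V^{\otimes n}$ of the natural $m$-dimensional module $V$, together with the standard isomorphism of right $\He_n$-modules $V^{\otimes n} \cong \bigoplus_{\mu \in \vL_{n,1}} M^\mu$ under which the $\mu$-weight space $(V^{\otimes n})_\mu$ corresponds to $M^\mu$. Since $\rho$ is already known to be an algebra homomorphism, the plan is simply to evaluate it on the generators $e_i, f_i, K_i^\pm$ and read off the coefficients in the $\psi$-basis; no defining relation of $U_q$ needs to be re-examined.

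I would begin with the diagonal generators, which are the easiest. The operator $K_i$ acts on a pure tensor of weight $\mu$ by the scalar $q^{\mu_i}$, and under the identification above the projection onto $(V^{\otimes n})_\mu \cong M^\mu$ is exactly $\psi_{\mu,\mu}^1$ (the identity on $M^\mu$, zero elsewhere). This gives $\rho(K_i^\pm)=\sum_{\mu} q^{\pm\mu_i}\psi_{\mu,\mu}^1$ at once, proving (iii). For (i) and (ii) I would use the iterated coproduct $\Delta^{(n-1)}$ of $U_q$ to expand the action of $e_i$ (resp. $f_i$) on a pure tensor: $e_i$ converts a single factor into its neighbour, hence raises the weight by $\a_i$ and sends $M^\mu$ into $M^{\mu+\a_i}$, while the $K^{\pm}$-factors occurring in $\Delta^{(n-1)}(e_i)$ contribute a power of $q$ depending on the positions of the entries. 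Because $M^\mu$ is generated over $\He_n$ by $x_\mu$, it suffices to compute $\rho(e_i)(x_\mu)$; I would then recognize the resulting weighted sum, via Lemma \ref{lem-d} specialized to $d=1$, as the double-coset sum $\sum_{w\in\FS_{\mu+\a_i}\FS_\mu}q^{\ell(w)}T_w$ that defines $\psi_{\mu+\a_i,\mu}^1$, and extract the overall scalar. The $f_i$ computation is entirely parallel.

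The main obstacle is the $q$-power bookkeeping. Tracking the contributions of the $K_i^{\pm}$ tensor-factors across all $n$ positions in the iterated coproduct, and checking that after passing to the weight space and normalizing against the standard generator $x_\mu$ the leftover scalar is exactly $q^{-\mu_{i+1}+1}$ (resp. $q^{-\mu_i+1}$), is delicate and convention-dependent. The two ingredients that make this go through are a fixed choice of coproduct and normalization of the $U_q$-action on $V$, and Lemma \ref{lem-d}, which is precisely what lets one rewrite the raw tensor-space action as a single double-coset element and thereby identify it with $\psi_{\mu\pm\a_i,\mu}^1$ up to the claimed power of $q$.
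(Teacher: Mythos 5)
Your proposal follows essentially the same route as the paper's own proof (Appendix \ref{proof-prop-rho}): the paper also works with the vector representation, extends to $V^{\otimes n}$ via the stated coproduct, identifies the weight space $V^{\otimes n}_\mu = v_\mu\cdot\He_n$ with $M^\mu$ via $v_\mu \mapsto x_\mu$, computes $\rho'(e_i)(v_\mu)$ term-by-term to get the scalar $q^{-\mu_{i+1}+1}$ times the sum $\sum_{x\in X^\mu_{\mu+\a_i}}q^{\ell(x)}v_{\mu+\a_i}\cdot T_x$, and then recognizes this (via Lemma \ref{lem-d} and the definition of $\psi^1_{\mu+\a_i,\mu}$) together with cyclicity of $M^\mu$ over $\He_n$. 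The only content you leave implicit, the explicit $q$-power bookkeeping in the iterated coproduct, is exactly the short computation the paper carries out, so the plan is sound and matches the paper's argument.
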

\begin{proof}
See Appendix \ref{proof-prop-rho}. 
\end{proof}

\para 
By Theorem \ref{Uqgl-S1}, 
the 
$q$-Schur algebra $\Sc_{n,1}$ 
is a quotient algebra of 
$U_q$. 
Thus, 
$\Sc_{n,1}$ 
is generated by 
the generators of $U_q$. 
In \cite{DG}, 
Doty and Giaquinto 
described the kernel of the surjection $\rho : U_q \ra\Sc_{n,1}$ precisely. 
Moreover, 
they also gave a presentation of the $q$-Schur algebra $_\CZ \Sc_{n,1}$ over $\CZ$.  

\begin{thm}[{\cite[Theorem 3.1, Theorem 3.3]{DG}}] \label{S1-presen}\

\begin{enumerate}
\item 
The $q$-Schur algebra $\Sc_{n,1}$ is 
isomorphic to 
the associative algebra over $\QQ(q)$ 
generated by 
$e_i,f_i$ $(1\leq i \leq m-1)$ and $K^{\pm}_i$ $(1 \leq i \leq m)$ 
with the defining relations $(\ref{gl-1})$-$(\ref{gl-6})$ together with 
the following two relations.  
\begin{align}
& K_1K_2\cdots K_m=q^n, 
\label{add Sn1-1}\\
& (K_i-1)(K_i-q)(K_i-q^2)\cdots(K_i-q^n)=0. 
\label{add Sn1-2}
\end{align}

\item
$_\CZ \Sc_{n,1}$ is the $\CZ$-subalgebra of $\Sc_{n,r}$ 
generated by all 
$e_i^{(k)},f_i^{(k)},K_j^{\pm}$ and $\left[ \begin{matrix} K_j ; 0\\ t \end{matrix} \right]$ 
for $1 \leq i \leq m-1,\, 1 \leq j \leq m,\, k \geq 1,\, t \geq 1$.
\end{enumerate}
\end{thm}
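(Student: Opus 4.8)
This is the theorem of Doty and Giaquinto, and the plan is to present $\Sc_{n,1}$ as a quotient of $U_q$ and then to match dimensions. Let $A$ denote the associative $\QQ(q)$-algebra defined by the generators $e_i,f_i,K_i^{\pm}$ subject to $(\ref{gl-1})$--$(\ref{gl-6})$ together with $(\ref{add Sn1-1})$ and $(\ref{add Sn1-2})$; equivalently $A$ is the quotient of $U_q$ by the two-sided ideal $I$ generated by $K_1K_2\cdots K_m-q^n$ and $\prod_{j=0}^{n}(K_i-q^j)$ $(1\leq i \leq m)$. The first step is to check $I \subseteq \Ker\rho$, where $\rho:U_q \ra \Sc_{n,1}$ is the surjection of Theorem \ref{Uqgl-S1}. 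By Proposition \ref{prop-rho} (\roiii) the operator $\rho(K_i)=\sum_{\mu\in\vL_{n,1}}q^{\mu_i}\psi_{\mu,\mu}^1$ acts on each $M^\mu$ as the scalar $q^{\mu_i}$; since $0\leq \mu_i \leq n$ this yields $(\ref{add Sn1-2})$, and since $\mu_1+\cdots+\mu_m=n$ for all $\mu\in\vL_{n,1}$ it yields $(\ref{add Sn1-1})$. Hence $\rho$ factors through a surjection $\ol{\rho}:A\ra\Sc_{n,1}$, and everything reduces to proving that $\ol{\rho}$ is injective.

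The heart of the matter is the estimate $\dim_{\QQ(q)}A\leq \dim_{\QQ(q)}\Sc_{n,1}$. I would inherit the triangular decomposition $A=A^-A^0A^+$ from $U_q=U_q^-U_q^0U_q^+$, with $A^{\pm},A^0$ the images of $U_q^{\pm},U_q^0$. For the Cartan part, $U_q^0=\QQ(q)[K_1^{\pm},\cdots,K_m^{\pm}]$ is commutative, and $(\ref{add Sn1-2})$ forces each $K_i$ to satisfy the separable polynomial $\prod_{j=0}^{n}(x-q^j)$; hence $A^0$ is spanned by orthogonal idempotents, and $(\ref{add Sn1-1})$ annihilates every such idempotent $1_\mu$ whose weight has $\mu_1+\cdots+\mu_m\neq n$, leaving the family $\{1_\mu \mid \mu\in\vL_{n,1}\}$ so that $\dim A^0 \leq |\vL_{n,1}|$. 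These idempotents give $A$ a weight structure with $e_i1_\mu=1_{\mu+\a_i}e_i$ and $f_i1_\mu=1_{\mu-\a_i}f_i$, under the convention $1_\nu=0$ for $\nu\notin\vL_{n,1}$. Because $\vL_{n,1}$ is finite, any monomial in the $e_i$ (resp. $f_i$) that shifts a weight outside $\vL_{n,1}$ must vanish: this is precisely the truncation phenomenon exploited in the proof of Proposition \ref{tri decom}.

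Using this truncation I would extract an explicit spanning set of $A$ consisting of elements $F\,1_\mu\,E$, where $F$ is a monomial in the $f_i$, $E$ a monomial in the $e_i$, and $\mu\in\vL_{n,1}$, and then bound the number of independent such monomials. The goal is to show this number is at most the number of $m\times m$ matrices over $\ZZ_{\geq 0}$ with entry sum $n$, which is exactly $\dim_{\QQ(q)}\Sc_{n,1}$ (the cardinality of the basis $\{\psi_{\mu,\nu}^d\}$ recalled after Lemma \ref{lem-d}). Combined with the surjectivity of $\ol{\rho}$, this forces $\ol{\rho}$ to be an isomorphism and proves (\roi). I expect this last matching to be the main obstacle: one must verify that the defining relations cut the truncated monomials down to no more than the matrix count, which in \cite{DG} is achieved by comparison with a PBW-type basis adapted to the weight grading.

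For (\roii) I would pass to the $\CZ$-form. By Theorem \ref{Uqgl-S1} (\roiii) the restriction $\rho|_{{}_\CZ U_q}:{}_\CZ U_q \ra {}_\CZ\Sc_{n,1}$ is surjective; since ${}_\CZ U_q$ is, by its definition in \ref{divided}, generated over $\CZ$ by the divided powers $e_i^{(k)},f_i^{(k)}$ together with $K_j^{\pm}$ and $\left[\begin{matrix} K_j;0\\ t\end{matrix}\right]$, the images of these elements generate ${}_\CZ\Sc_{n,1}$ as a $\CZ$-algebra. This identifies ${}_\CZ\Sc_{n,1}$ with the asserted $\CZ$-subalgebra. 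The only point requiring care is that this subalgebra is the intended integral form, which follows from the integrality of ${}_\CZ U_q$ transported through $\rho$ together with the presentation established in (\roi).
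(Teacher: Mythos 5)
The paper offers no internal proof of this statement: it is quoted verbatim from Doty--Giaquinto (\cite[Theorems 3.1, 3.3]{DG}) as background for \S 4, so your proposal has to stand on its own, and on its own terms it has a genuine gap at its core. The easy half is correct: by Proposition \ref{prop-rho} (\roiii), $\rho(K_i)$ acts on $M^\mu$ by the scalar $q^{\mu_i}$ with $0\leq \mu_i\leq n$ and $\sum_i\mu_i=n$, so \eqref{add Sn1-1} and \eqref{add Sn1-2} lie in $\Ker\rho$ and $\ol{\rho}:A\ra\Sc_{n,1}$ is a well-defined surjection; your analysis of $A^0$ (simultaneous diagonalization of the commuting $K_i$, each annihilated by a separable polynomial that splits over $\QQ(q)$, with \eqref{add Sn1-1} discarding all joint eigenvalue tuples of total degree $\neq n$) and the resulting truncated spanning set $\{F\,1_\mu\,E\}$ and finite-dimensionality, in the spirit of Proposition \ref{tri decom}, are also sound. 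But the injectivity of $\ol{\rho}$ --- which is the entire content of the theorem --- rests on the bound $\dim_{\QQ(q)}A\leq\dim_{\QQ(q)}\Sc_{n,1}$, and at precisely this point you write that the matching with the matrix count ``is achieved in \cite{DG} by comparison with a PBW-type basis.'' That is not a proof step; it is a citation of the theorem you are proving. The spanning set $\{F\,1_\mu\,E\}$ is vastly larger than the matrix count, and cutting it down to size is exactly the straightening/commutation analysis that occupies most of Doty--Giaquinto's paper, so deferring it leaves the argument circular.

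If you want to close the gap without reproducing \cite{DG}, there is a shorter route available over the field $\QQ(q)$, using representation theory rather than monomial counting. The relations \eqref{add Sn1-1}, \eqref{add Sn1-2} force every finite-dimensional $A$-module, viewed as a $U_q$-module, to have all its weights in $\vL_{n,1}$ (in particular to be of type $1$); finite-dimensional $U_q(\Fgl_m)$-modules of this kind are completely reducible, and $A$ itself --- finite-dimensional by your truncation argument, regarded as a left module over itself --- is therefore semisimple as an algebra. Its simple modules are exactly the simple highest weight $U_q$-modules $V(\la)$ whose weights lie in $\vL_{n,1}$, i.e.\ those with $\la$ a partition of $n$ with at most $m$ parts, whence $\dim_{\QQ(q)}A=\sum_\la\bigl(\dim_{\QQ(q)}V(\la)\bigr)^2$. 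Since $\Sc_{n,1}$ is semisimple over $\QQ(q)$ with the same set of simple modules, this equals $\dim_{\QQ(q)}\Sc_{n,1}$, and $\ol{\rho}$ is an isomorphism. Your part (\roii) is fine as written: given Theorem \ref{Uqgl-S1} (\roiii), the images of the algebra generators of $_\CZ U_q$ generate the image $_\CZ\Sc_{n,1}$, and nothing further is required.
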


In \cite{DG}, 
they gave an alternative presentation of $\Sc_{n,1}$ by generators and relations as follows. 

\begin{thm}[{\cite[Theorem 3.4]{DG}}] \label{thm-DG-2}\

\begin{enumerate}
\item 
The $q$-Schur algebra $\Sc_{n,1}$ is 
isomorphic to 
an associative algebra over $\QQ(q)$ 
generated by 
$E_i,F_i$ $(1\leq i \leq m-1)$ and $1_\la$ $(\la \in \vL_{n,1})$ with the following defining relations: 
\begin{align*}
&1_\la 1_\mu=\d_{\la\mu}1_\la, \quad \sum_{\la \in \vL_{n,1}}1_\la =1, \\
&E_i 1_\la=
	\begin{cases}
		1_{\la+\a_i}E_i &\text{ if } \la+\a_i \in \vL_{n,1}, \\
		0 &\text{ otherwise }, 
	\end{cases}\\
&F_i 1_\la=
	\begin{cases}
		1_{\la-\a_i}F_i &\text{ if } \la-\a_i \in \vL_{n,1}, \\
		0 &\text{ otherwise }, 
	\end{cases}\\
&1_\la E_i=
	\begin{cases}
		E_i 1_{\la-\a_i} &\text{ if } \la-\a_i \in \vL_{n,1}, \\
		0 &\text{ otherwise }, 
	\end{cases}\\
&1_\la F_i=
	\begin{cases}
		F_i 1_{\la+\a_i} &\text{ if } \la+\a_i \in \vL_{n,1}, \\
		0 &\text{ otherwise }, 
	\end{cases}\\
&E_iF_j-F_jE_i=\d_{ij} \sum_{\la \in \vL_{n,1}}[\la_i - \la_{i+1}] 1_\la \\
&E_{i \pm 1}E_i^2 - (q+q^{-1}) E_i E_{i\pm1} E_i + E_i^2 E_{i \pm 1}=0 \\
&E_i E_j= E_j E_i \qquad (|i-j| \geq 2) \notag \\
&F_{i \pm 1}F_i^2 - (q+q^{-1}) F_i F_{i \pm 1} F_i + F^2_i F_{i \pm 1}=0  \\
&F_i F_j= F_j F_i \qquad ( |i-j|\geq 2)  \notag 
\end{align*} 

\item
$_\CZ \Sc_{n,1}$ is the $\CZ$-subalgebra of $\Sc_{n,1}$ 
generated by all 
$E_i^{(k)},F_i^{(k)}$ $(1 \leq i \leq m-1,\, k \geq 1)$ and $1_\la$ $(\la \in \vL_{n,1})$. 
\end{enumerate}
\end{thm}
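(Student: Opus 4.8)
The plan is to deduce the second presentation from the first one, which is available as Theorem~\ref{S1-presen}, by making explicit the change of generators between the ``torus'' form and the ``idempotent'' form. Write $A_1$ for the abstract algebra presented as in Theorem~\ref{S1-presen}(i), so that $A_1 \cong \Sc_{n,1}$, and write $A_2$ for the abstract algebra presented as in Theorem~\ref{thm-DG-2}(i). I would construct mutually inverse homomorphisms $\psi : A_1 \to A_2$ and $\phi : A_2 \to A_1$ and conclude $A_2 \cong \Sc_{n,1}$, which is part (i).

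For $\psi$, send $e_i \mapsto E_i$, $f_i \mapsto F_i$, and $K_i^{\pm} \mapsto \sum_{\la \in \vL_{n,1}} q^{\pm \la_i} 1_\la$. The point is to check that the relations $(\ref{gl-1})$--$(\ref{gl-6})$, $(\ref{add Sn1-1})$ and $(\ref{add Sn1-2})$ survive. The Serre relations $(\ref{gl-5})$--$(\ref{gl-6})$ are shared by both presentations; $(\ref{gl-2})$--$(\ref{gl-3})$ follow by commuting $\sum_\la q^{\la_i}1_\la$ past $E_j$ via $1_\la E_j = E_j 1_{\la - \a_j}$ and reindexing by $\mu=\la-\a_j$, using $\la_i = \mu_i + \lan \a_j, h_i\ran$. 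Relation $(\ref{gl-4})$ becomes the $A_2$-relation $E_iF_j - F_jE_i = \delta_{ij}\sum_\la [\la_i-\la_{i+1}]1_\la$, because the image of $(K_iK_{i+1}^- - K_i^-K_{i+1})/(q-q^{-1})$ equals $\sum_\la [\la_i-\la_{i+1}]1_\la$. Finally, since every $\la \in \vL_{n,1}$ has $0 \le \la_i \le n$ and $\sum_i \la_i = n$, orthogonality of the $1_\la$ forces $K_1\cdots K_m \mapsto q^n$ and $\prod_{j=0}^n(K_i - q^j)\mapsto 0$, giving $(\ref{add Sn1-1})$ and $(\ref{add Sn1-2})$.

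For $\phi$, the \emph{subtler} direction, I would define the $1_\la$ inside $A_1 \cong \Sc_{n,1}$ as simultaneous spectral projections of the commuting family $K_1,\ldots,K_m$. Relation $(\ref{add Sn1-2})$ forces the minimal polynomial of each $K_i$ to divide the separable polynomial $\prod_{j=0}^n (X-q^j)$, so Lagrange interpolation yields orthogonal idempotents $1_\la = \prod_i \big(\prod_{j \ne \la_i}(K_i - q^j)/(q^{\la_i} - q^j)\big)$, while $(\ref{add Sn1-1})$ forces $1_\la = 0$ unless $\sum_i \la_i = n$, whence $\sum_{\la \in \vL_{n,1}} 1_\la = 1$. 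I expect the main obstacle to be the boundary bookkeeping in the relations $e_i 1_\la = 1_{\la+\a_i}e_i$: relation $(\ref{gl-2})$ shows $e_i 1_\la$ carries the simultaneous $K$-eigenvalues of $\la+\a_i$, so when $\la+\a_i \in \vL_{n,1}$ this gives $e_i 1_\la = 1_{\la+\a_i}e_i$, but when $\la_{i+1}=0$ the vector $e_i 1_\la$ would carry $K_{i+1}$-eigenvalue $q^{-1}$, which is excluded by $(\ref{add Sn1-2})$; making this vanishing rigorous, with the convention $1_\mu=0$ for $\mu \notin \vL_{n,1}$, is the delicate part. Once $\phi$ and $\psi$ are well defined, checking $\phi\circ\psi$ and $\psi\circ\phi$ are the identity on generators is immediate: the spectral projection of $\sum_\mu q^{\mu_i}1_\mu$ reconstructs $1_\la$, and $\sum_\la q^{\la_i}1_\la$ reconstructs $K_i$.

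For part (ii), I would transport the integral statement of Theorem~\ref{S1-presen}(ii) across this isomorphism. Under $\psi$ one has $e_i^{(k)}\mapsto E_i^{(k)}$ and $f_i^{(k)}\mapsto F_i^{(k)}$, so it remains to see that the $\CZ$-subalgebras generated by $\{K_j^{\pm},[K_j;0;t]\}$ and by $\{1_\la\}$ coincide. This is exactly the computation in Lemma~\ref{lem-1-lamda}: on one hand $[K_j;0;t]\,1_\la = \left[\begin{matrix}\la_j \\ t\end{matrix}\right] 1_\la$ with $\left[\begin{matrix}\la_j \\ t\end{matrix}\right]\in\CZ$, so $K_j^{\pm}$ and $[K_j;0;t]$ are $\CZ$-combinations of the $1_\la$; on the other hand $1_\la = K_\la + \sum_{\mu \succ \la} r_\mu K_\mu$ with $K_\la = \prod_j [K_j;0;\la_j]$ and $r_\mu \in \CZ$, so each $1_\la$ lies in the $\CZ$-span of products of the $[K_j;0;t]$. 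Hence the two integral forms agree, which gives part (ii).
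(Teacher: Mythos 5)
Your proposal is correct, but note that the paper itself offers no proof of this statement: Theorem~\ref{thm-DG-2} is quoted verbatim from \cite[Theorem 3.4]{DG} as background, so there is no internal argument to compare against. What you have written is a self-contained derivation of the idempotent presentation from the torus presentation (Theorem~\ref{S1-presen}), and it is sound. The direction $\psi$ is routine, as you say. For the direction $\phi$, the ``delicate'' boundary step does go through exactly along the lines you sketch: for $\mu,\la$ in the index set one shows $1_\mu e_i 1_\la=0$ unless $\mu=\la+\a_i$ by comparing $K_j$-eigenvalues on the two sides, and when $\la_{i+1}=0$ one has $K_{i+1}(e_i 1_\la)=q^{-1}(e_i1_\la)$, so $0=\prod_{j=0}^n(K_{i+1}-q^j)\,e_i1_\la=\prod_{j=0}^n(q^{-1}-q^j)\,e_i1_\la$ with a nonzero scalar, forcing $e_i1_\la=0$; writing $e_i1_\la=\sum_\mu 1_\mu e_i1_\la$ and $1_{\la+\a_i}e_i=\sum_\nu 1_{\la+\a_i}e_i1_\nu$ then gives $e_i1_\la=1_{\la+\a_i}e_i$ in the surviving case. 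Your part (ii) is also right, and in fact simplifies: by the remark following Lemma~\ref{lem-1-lamda}, for $\vL=\vL_{n,1}$ (all weights of size $n$) one has $1_\la=\wt{\Psi}(K_\la)$ exactly, with no correction terms $r_\mu$, so each $1_\la$ is literally a product $\prod_j\left[\begin{matrix}K_j;0\\ \la_j\end{matrix}\right]$ and the two $\CZ$-forms coincide. It is worth observing that your argument is precisely the template the paper itself follows later, in the cyclotomic setting, when proving Proposition~\ref{prop iso Unr Snr} ($U_{n,r}\cong\CS_{n,r}$): there the map $\Psi^\dagger$ sending $1_\la\mapsto K_\la$ plays the role of your $\phi$, and the verification of the relations analogous to your boundary bookkeeping is delegated to \cite[Lemmas 13.39, 13.40]{DDPW}. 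So your proof is not only correct but recovers, in the $r=1$ case, the mechanism the paper relies on for its main theorem.
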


\remark 
\label{remark condition r=1}
For $\la \in \vL_{n,1}$ and  $i=1,\cdots,m-1$, 
put 
$\eta_i^\la =[\la_{i} - \la_{i+1}] 1_\la$, 
and 
$\eta_{\vL_{n,1}} =\{\eta_i^\la \,|\, 1 \leq i \leq m-1, \, \la \in \vL_{n,1}\}$. 
It is clear that 
$\Sc_{n,1}$ 
is isomorphic to 
$\CS_{q}^{\eta_{\vL_{n,1}}} $ 
defined in \ref{def Sq}. 
Clearly, 
$\CS_{q}^{\eta_{\vL_{n,1}}} $ 
satisfies the condition (C-1). 
It is known that 
the $q$-Schur algebra $_\CZ \Sc_{n,1}$ over $\CZ$
has a triangular decomposition  which coincides with 
the triangular decomposition of $_\CZ\CS_q$ in Proposition \ref{prop tri A}, 
and that $_\CZ \Sc_{n,1}$ is a cellular algebra. 
Moreover, 
$_\CZ\D(\la)$ for $\la \in \vL_{n,1}^+$ 
coincides with a cell module of $_\CZ \Sc_{n,1}$ 
thanks to Theorem \ref{thm standard induced borel R}. 
In particular, 
$\vL_{n,1}^+$ coincides with the set of partitions of size $n$  
(see \cite{DR} and \cite{M-book} for the results on $q$-Schur algebra $_\CZ \Sc_{n,1}$). 
Thus, 
$\Sc_{n,1} \big(\cong \CS_q^\eta(\vL_{n,1})\big)$ 
satisfies the conditions (A-1), (A-2) and (C-1). 
\\

In \cite{DP},  a presentation of Borel subalgebras $\Sc_{n,1}^{\leq 0}$ and $\Sc_{n,1}^{\geq 0}$ 
was given as follows.

\begin{thm}[{\cite[Theorem 8.1]{DP}}] \label{thm gen rel of Borel} 
The Borel subalgebra $\Sc_{n,1}^{\leq 0}$ (resp. $\Sc_{n,1}^{\geq 0}$) 
is isomorphic to the associative algebra generated by  
$f_i$ (resp. $e_i$) $(1\leq i \leq m-1)$ and $K_i^{\pm}$ $(1 \leq i \leq m)$ with the defining relations 
\eqref{gl-1}, \eqref{gl-3}, \eqref{gl-6}, 
\eqref{add Sn1-1} and \eqref{add Sn1-2} 
(resp. \eqref{gl-1}, \eqref{gl-2}, \eqref{gl-5}, \eqref{add Sn1-1} and \eqref{add Sn1-2}). 
\end{thm}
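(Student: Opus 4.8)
The two assertions are interchanged by the anti-automorphism of $\Sc_{n,1}$ induced by $\ast$ on $_R\He_n$, which swaps $e_i\leftrightarrow f_i$ and fixes the $K_i^{\pm}$, so it suffices to treat $\Sc_{n,1}^{\leq 0}$. Write $A$ for the associative $\QQ(q)$-algebra presented by $f_i$ $(1\le i\le m-1)$, $K_i^{\pm}$ $(1\le i\le m)$ and the relations \eqref{gl-1}, \eqref{gl-3}, \eqref{gl-6}, \eqref{add Sn1-1}, \eqref{add Sn1-2}. First I would produce a surjection $\Phi\colon A\to\Sc_{n,1}^{\leq 0}$. The relations \eqref{gl-1}, \eqref{gl-3}, \eqref{gl-6} hold for $\rho(f_i),\rho(K_i^{\pm})$ because $\rho$ is an algebra homomorphism out of $U_q$ (Theorem \ref{Uqgl-S1}), while \eqref{add Sn1-1} and \eqref{add Sn1-2} follow from the explicit formula $\rho(K_i^{\pm})=\sum_{\mu\in\vL_{n,1}}q^{\pm\mu_i}\psi_{\mu,\mu}^1$ of Proposition \ref{prop-rho}: since the $\psi_{\mu,\mu}^1$ are orthogonal idempotents with $\sum_\mu\psi_{\mu,\mu}^1=1$, one gets $\prod_i\rho(K_i)=\sum_\mu q^{|\mu|}\psi_{\mu,\mu}^1=q^n$, and $\prod_{c=0}^n(\rho(K_i)-q^c)=\sum_\mu\big(\prod_{c=0}^n(q^{\mu_i}-q^c)\big)\psi_{\mu,\mu}^1=0$, the factor $c=\mu_i$ vanishing for each $\mu$. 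Surjectivity of $\Phi$ is Theorem \ref{Uqgl-S1}(ii).

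Next I would analyse the structure of $A$ itself. Since $q^0,q^1,\dots,q^n$ are pairwise distinct in $\QQ(q)$ and \eqref{add Sn1-2} holds, each $K_i$ is ``diagonalisable'' inside $A$ with spectrum in $\{q^0,\dots,q^n\}$; Lagrange interpolation yields orthogonal idempotents $1^{(i)}_c=\prod_{c'\ne c}(K_i-q^{c'})(q^c-q^{c'})^{-1}$, and I set $1_\la=\prod_{i=1}^m 1^{(i)}_{\la_i}$ for $\la=(\la_1,\dots,\la_m)$ with $0\le\la_i\le n$. Relation \eqref{add Sn1-1} forces $1_\la=0$ unless $\sum_i\la_i=n$, i.e. unless $\la\in\vL_{n,1}$, and one checks $\{1_\la\mid\la\in\vL_{n,1}\}$ is a complete set of orthogonal idempotents with $K_i^{\pm}=\sum_\la q^{\pm\la_i}1_\la$; thus the Cartan subalgebra of $A$ collapses to $\bigoplus_{\la\in\vL_{n,1}}\QQ(q)1_\la$. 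Feeding $K_i^{\pm}=\sum_\la q^{\pm\la_i}1_\la$ into \eqref{gl-3} then yields the weight relations $f_j 1_\la=1_{\la-\a_j}f_j$ (with $1_{\la-\a_j}:=0$ if $\la-\a_j\notin\vL_{n,1}$). Consequently $A$ is generated by $F_i:=f_i$ and the $1_\la$ subject to exactly the relations \eqref{Sq-1}, \eqref{Sq-4}, \eqref{Sq-6}, \eqref{Sq-9} defining the lower Borel part of $\wt\CS_q(\vL_{n,1})$; in particular $A=\sum_{\la,\mu\in\vL_{n,1}}1_\la A 1_\mu$, each $1_\la A 1_\mu$ is spanned by the monomials $f_{i_1}\cdots f_{i_k}1_\mu$ of degree $\la-\mu$, and since the subalgebra generated by the $f_i$ is a quotient of $U_q^-$, each $1_\la A1_\mu$ is a homomorphic image of the finite-dimensional weight space $(U_q^-)_{\la-\mu}$; so $A$ is finite-dimensional.

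It then remains to show $\Phi$ is injective, for which I would compare dimensions: $\dim_{\QQ(q)}A=\sum_{\la,\mu}\dim 1_\la A1_\mu$, and the task is to bound $\dim 1_\la A 1_\mu$ by $\#\{d\mid(\la,d)\in\Om_1^{\succeq}(\mu)\}$, the number of basis elements $\psi_{\la,\mu}^d$ of $1_\la\Sc_{n,1}^{\leq 0}1_\mu$. I would do this by choosing, for each double coset representative $d$ with $(\la,d)\in\Om_1^{\succeq}(\mu)$, a canonical reduced monomial in the $f_i$ mapping to $\psi_{\la,\mu}^d$ under $\Phi$ (via the action of $\rho(f_i)$ on $M^\mu$ recorded in Proposition \ref{prop-rho}), and then showing, using only the Serre relations \eqref{gl-6} together with the weight truncation $1_\nu=0$ for $\nu\notin\vL_{n,1}$, that these canonical monomials already span $1_\la A1_\mu$. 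This last span statement is the heart of the matter: it asserts that \eqref{gl-6} and the collapse of the Cartan part account for every linear dependence among the $f$-monomials in $\Sc_{n,1}^{\leq 0}$, with none left over. Granting it, $\dim A\le\dim\Sc_{n,1}^{\leq 0}$, so the surjection $\Phi$ is an isomorphism; and under the identification $\Sc_{n,1}\cong\CS_q^{\eta_{\vL_{n,1}}}$ of Remark \ref{remark condition r=1} this simultaneously exhibits $\Sc_{n,1}^{\leq 0}$ as the abstract lower Borel $\CS_q^{\leq 0}$. The main obstacle is precisely this spanning/dimension bound; it is a $q$-analogue of a PBW straightening argument run against the finite weight range $\vL_{n,1}$, which is what makes it delicate.
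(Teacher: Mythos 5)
The paper gives no proof of this statement: it is quoted verbatim from Du--Parshall \cite[Theorem 8.1]{DP}, and the remark following it only notes that the presentation stated here is equivalent to the one in [loc.\ cit.] via \cite[Remarks 4.4]{DP}. So your attempt has to stand on its own as a proof of that external result, and it does not yet do so.

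The gap is the one you name yourself. The reduction to $\Sc_{n,1}^{\leq 0}$ by the anti-automorphism, the check that \eqref{gl-1}, \eqref{gl-3}, \eqref{gl-6}, \eqref{add Sn1-1}, \eqref{add Sn1-2} hold in the image of $\rho$ (hence the surjection $\Phi\colon A \to \Sc_{n,1}^{\leq 0}$), the Lagrange-interpolation idempotents $1_\la$ with $1_\la=0$ unless $\la\in\vL_{n,1}$, the resulting weight relations, and the finite-dimensionality of $A$ are all sound; this part parallels manipulations the paper does carry out elsewhere (compare the proof of Proposition \ref{prop iso Unr Snr}, which leans on \cite[Lemmas 13.39, 13.40]{DDPW} for exactly this kind of Cartan-collapse argument). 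But the entire content of the theorem is the converse: that the listed relations account for \emph{all} relations, i.e. your bound $\dim_{\QQ(q)} 1_\la A 1_\mu \le \#\{d \mid (\la,d)\in \Om_1^{\succeq}(\mu)\}$. At that point you write ``Granting it'' and describe the needed spanning statement as a delicate $q$-PBW straightening against the truncated weight set $\vL_{n,1}$ --- which is precisely what Du and Parshall's monomial-basis machinery exists to prove. Without actually choosing the canonical monomials for each $(\la,d)$, proving that the straightening rules follow from \eqref{gl-6} together with the vanishing $1_\nu=0$ for $\nu\notin\vL_{n,1}$, and showing these monomials span $1_\la A 1_\mu$, you have only shown that $A$ surjects onto $\Sc_{n,1}^{\leq 0}$, not that $\Phi$ is an isomorphism. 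As it stands, the proposal is a correct and workable strategy whose central step --- the only step with real content --- is missing.
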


\remark 
The above presentation of Borel subalgebras is not exactly the same as the one  given in \cite[Theorem 8.1]{DP}. 
However, it is equivalent to the presentation in [loc. cit.] (see \cite[Remarks 4.4]{DP}).
%%%%%%%%%%%%%%%%%%%%%%%%%%%%%%%%%%%%%%%%%%%%%%%%%%%%%%%%%%%%%%%%%%%%%%%%%%%%%%%%%%%%%%%%%%%%%%%%%%%%%%%%%%%%%%%%%%%%

%%%%%%%%%%%%%%%%%%%%%%%%%%%%%%%%%%%%%%%%%%%%%%%%%%%%%%%%%%%%%%%%%%%%%%%%%%%%%%%%%%%%%%%%%%%%%%%%%%%%%%%%%%%%%%%%%%%%

\section{Review on Cyclotomic $q$-Schur algebras} 
\label{cyclotomic q-Schur}

\para 
Let $R$ be an integral domain, 
and 
we take parameters 
$q,Q_1,\cdots, Q_r \in R$, 
where 
$q$ is invertible in $R$. 
The Ariki-Koike algebra $_R \He_{n,r}$ associated to 
$\FS_n \ltimes (\ZZ/r\ZZ)^n$ 
is the associative algebra with 1 over $R$ 
generated by 
$T_0,T_1,\cdots,T_{n-1}$ 
with the following defining relations: 
\begin{align*}
&(T_0-Q_1)(T_0-Q_2)\cdots (T_0-Q_r)=0, \\
&(T_i-q)(T_i+q^{-1})=0 &&(1 \leq i \leq n-1),\\
&T_0 T_1 T_0 T_1=T_1 T_0 T_1 T_0, \\
&T_i T_{i+1} T_i = T_{i+1} T_i T_{i+1} &&(1\leq i \leq n-2),\\
&T_i T_j=T_j T_i &&(|i-j|\geq 2). 
\end{align*}

The subalgebra of 
$_R \He_{n,r}$ 
generated by 
$T_1,\cdots,T_{n-1}$ 
is isomorphic to 
the Iwahori-Hecke algebra 
$_R \He_{n}$. 
We define an algebra anti-automorphism 
$\ast : \,_R \He_{n,r} \ni x \mapsto x^\ast \in \,_R \He_{n,r}$ 
by 
$T_i^\ast = T_i$ for $i=0,\cdots,n-1$. 

\para 
\label{vLnr to vLn1}
Put
\[ \vL_{n,r}=\left\{ \mu=(\mu^{(1)},\cdots,\mu^{(r)}) \Biggm| 
	\begin{matrix}
		\mu^{(k)}=(\mu_1^{(k)},\cdots,\mu_n^{(k)}) \in \ZZ_{\geq 0}^n \\
	    \sum_{k=1}^r \sum_{i=1}^n \mu_i^{(k)}=n 
	\end{matrix}	
	\right\}. 
\]
Thus, 
$\vL_{n,r}$ is a set of $r$-tuples of compositions with $n$ parts 
whose size is equal to $n$.  
Put 
$m=rn$ and 
$p_k=(k-1)n$ for $k=1,\cdots, r$.  
Then, 
there exists a bijection from 
$\vL_{n,r}$ to $\vL_{n,1}$ 
such that  
$\mu \mapsto \ol{\mu}$, 
where 
$\ol{\mu}=(\ol{\mu}_1,\ol{\mu}_2,\cdots,\ol{\mu}_m) \in \vL_{n,1}$ 
obtained by  
$\ol{\mu}_{p_k+i}= \mu^{(k)}_i$. 

\para 
For $i=1,\cdots,n$, 
put 
$L_1=T_0$ 
and  
$L_i=T_{i-1} L_{i-1} T_{i-1}$. 
For $\mu \in \vL_{n,r}$, 
put 
\[u_{\mu}^+=\prod_{k=1}^r \prod_{i=1}^{a_k}(L_i - Q_k), 
\quad m_{\mu}=x_{\ol{\mu}} \, u_{\mu}^+, \quad 
M^\mu = m_\mu \cdot \,_R \He_{n,r}, 
\] 
where $a_k=\sum_{j=1}^{k-1}|\mu^{(j)}|$ 
with 
$a_1=0$.  
Note that 
$(m_\mu)^\ast =m_\mu$, 
and 
we define 
$(M^\mu)^\ast =\,_R \He_{n,r} \cdot m_\mu$. 
The cyclotomic $q$-Schur algebra 
$_R \Sc_{n,r}$ 
associated to 
$_R \He_{n,r}$ 
is defined by 
\[ _R \Sc_{n,r}= \End_{_R \He_{n,r}} \Big( \bigoplus_{\mu \in \vL_{n,r}} M^\mu \Big).\]

The following lemma is well known, 
and one can check them in direct calculations 
by using the defining relations of $_R \He_{n,r}$. 
\begin{lem} \label{lem-L}\

\begin{enumerate}
\item 
$L_i$ and $L_j$ commute with each other for any $1 \leq i,j \leq n$
\item 
$T_i$ and $L_j$ commute with each other if $j\not=i,i+1$. 
\item 
$T_i$ commute with 
both of  
$L_iL_{i+1}$ and $L_i + L_{i+1}$.
\item
For $a \in R$ and $i=1,\cdots,n-1$, 
$T_i$ commutes with 
$\prod_{j=1}^k(L_j-a)$ 
if $k\not=i$.  

\item 
$L_{i+1}T_i=(q-q^{-1})L_{i+1}+ T_i L_i$, 
\quad 
$T_i L_{i+1}=(q-q^{-1})L_{i+1}+ L_i T_i$.

\item 
$L_i T_i=(q^{-1}-q)L_{i+1}+T_i L_{i+1}$, 
\quad 
$T_i L_i=(q^{-1}-q)L_{i+1}+L_{i+1}T_i$. 
\end{enumerate}
\end{lem}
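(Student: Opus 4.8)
The plan is to prove the six identities in a carefully chosen order, using only the quadratic relation (which I rewrite as $T_i^2=(q-q^{-1})T_i+1$), the braid relations, the commutations $T_iT_j=T_jT_i$ for $|i-j|\ge 2$, and the type-$B$ braid relation $T_0T_1T_0T_1=T_1T_0T_1T_0$. First I would dispatch (v) and (vi), which are purely local. Since $L_{i+1}=T_iL_iT_i$, right-multiplying by $T_i$ and substituting $T_i^2=(q-q^{-1})T_i+1$ gives $L_{i+1}T_i=T_iL_iT_i^2=(q-q^{-1})L_{i+1}+T_iL_i$; the companion $T_iL_{i+1}=(q-q^{-1})L_{i+1}+L_iT_i$ follows symmetrically, and the two formulas in (vi) are obtained by solving these for $L_iT_i$ and $T_iL_i$. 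None of this uses anything beyond the quadratic relation.

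Next I would prove (ii) by induction on $j$, \emph{independently of} (i). Writing $L_j=T_{j-1}\cdots T_1T_0T_1\cdots T_{j-1}$, the case $j<i$ is immediate, since every generator occurring in $L_j$ has index $\le j-1\le i-2$ and hence commutes with $T_i$. For $j\ge i+3$ one has $|i-(j-1)|\ge 2$, so $T_i$ commutes with the outer $T_{j-1}$ and, inductively, with $L_{j-1}$, yielding $T_iL_j=L_jT_i$. The one delicate case is $j=i+2$: here I would expand $L_{i+2}=T_{i+1}T_iL_iT_iT_{i+1}$, apply the braid relation $T_iT_{i+1}T_i=T_{i+1}T_iT_{i+1}$ to push $T_i$ inward, use that $T_{i+1}$ commutes with $L_i$ (an already-settled instance of (ii)), and then apply the braid relation a second time to recover exactly $L_{i+2}T_i$. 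This braid manoeuvre is the technical heart of (ii).

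With (ii), (v), (vi) in hand I would turn to (i), which I regard as the main obstacle. I would show $L_a,L_b$ commute ($a<b$) by induction on $b$. The non-adjacent case $a\le b-2$ is routine: $L_a$ commutes with $T_{b-1}$ and, inductively, with $L_{b-1}$, so it commutes with $L_b=T_{b-1}L_{b-1}T_{b-1}$. The adjacent case $L_{b-1}L_b=L_bL_{b-1}$ carries the real content. Its base $b=2$ is precisely the type-$B$ braid relation, $L_1L_2=T_0T_1T_0T_1=T_1T_0T_1T_0=L_2L_1$. For the inductive step I would write $L_{b-1}=T_{b-2}L_{b-2}T_{b-2}$ and commute $T_{b-2}$ past $L_b$ (legitimate by (ii), as $b\ne b-2,b-1$), reducing the claim to the non-adjacent identity $L_{b-2}L_b=L_bL_{b-2}$, which is already available. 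The crucial point is that (ii) was established without reference to (i), so no circularity arises; any attempt to deduce adjacent commutativity from the symmetric-function relation (iii) would instead be circular, since (iii) itself reduces to (i).

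Finally, (iii) and (iv) follow formally. Adding the two identities of (v)--(vi) gives $T_i(L_i+L_{i+1})=(L_i+L_{i+1})T_i$ at once; the same identities yield $T_iL_iL_{i+1}=L_{i+1}L_iT_i$, and invoking $L_{i+1}L_i=L_iL_{i+1}$ from (i) converts this into $T_i(L_iL_{i+1})=(L_iL_{i+1})T_i$. For (iv) with $k\ne i$: if $k<i$ each factor $L_j-a$ commutes with $T_i$ by (ii); if $k>i$ then all the $L_j$ commute with one another by (i), so I may regroup $\prod_{j=1}^k(L_j-a)$ as $(L_i-a)(L_{i+1}-a)$ times the remaining factors, and $T_i$ commutes with the first group by (iii) and with each remaining factor by (ii). The hypothesis $k\ne i$ is exactly what ensures that whenever $L_i$ appears in the product its partner $L_{i+1}$ does as well, so that the $T_i$-invariant symmetric combination of (iii) is the one that occurs.
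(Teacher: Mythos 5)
Your proof is correct. Note that the paper itself offers no argument here: it simply declares the lemma ``well known'' and verifiable ``in direct calculations by using the defining relations of $_R\He_{n,r}$,'' so your write-up supplies exactly the verification the paper leaves implicit, and it does so with a sound ordering --- (v)/(vi) from the quadratic relation alone, then (ii) by induction on $j$ using the braid relations (with the $j=i+2$ case handled by the double braid manoeuvre), then (i) by induction with the type-$B$ relation $T_0T_1T_0T_1=T_1T_0T_1T_0$ as the base of the adjacent case, and finally (iii)/(iv) formally. Your observation that (ii) must be established independently of (i) to avoid circularity (since (iii), and hence any symmetric-function shortcut to adjacent commutativity, itself rests on (i)) is precisely the organizational point that makes the ``direct calculation'' go through; the only unstated convention you rely on is that the commutation relation $T_iT_j=T_jT_i$ for $|i-j|\geq 2$ includes the index $0$, which is the standard reading of the Ariki-Koike presentation.
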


\para \label{def-vf}
For 
$\la,\mu \in \vL_{n,r}$ 
and 
$d \in \SD_{\ol{\la} \ol{\mu}}$ 
such that 
$\Bi_{\ol{\la}} \cdot d \succeq \Bi_{\ol{\mu}}$, 
we define 
$\vf_{\la,\mu}^d \in \,_R \Sc_{n,r}$ by 
\[ \vf_{\la,\mu}^d (m_{\nu}\cdot h)= 
	\d_{\mu\nu} \Big( \sum_{w \in \FS_{\ol{\la}} d \FS_{\ol{\mu}}} q^{\ell(w)} T_w \Big) u_{\mu}^+ \cdot h 
\qquad (\nu \in \vL_{n,r},\, h \in \,_R \He_{n,r}) .
\]
This definition is well-defined by Lemma \ref{lem-d},   
and  
we have 
$\vf_{\la,\mu}^d \in \Hom_{_R \He_{n,r}}(M^\mu, M^\la)$  
by \cite[Lemma 5.6]{DR}. 

For 
$\la,\mu \in \vL_{n,r}$ 
and 
$ d \in \SD_{\ol{\mu}\ol{\la}}$ 
such that 
$\Bi_{\ol{\la}} \succeq \Bi_{\ol{\mu}} \cdot d$, 
we have 
$\Bi_{\ol{\la}} \cdot d^{-1} \succeq \Bi_{\ol{\mu}}$ 
and 
$d^{-1} \in \SD_{\ol{\la}\ol{\mu}}$ 
from definitions immediately.  
Thus, 
we can define 
$\vf_{\la,\mu}^{d^{-1}} \in \Hom_{_R \He_{n,r}}(M^\mu, M^\la)$ 
as above.  
On the other hand, 
by 
\cite[Corollary 5.17]{DJM98}, 
we have 
$\vf_{\la,\mu}^{d^{-1}}(m_\mu) \in (M^\mu)^\ast \cap M^\la$, 
hence 
$\big(\vf_{\la,\mu}^{d^{-1}}(m_\mu)\big)^\ast \in M^\mu \cap (M^\la)^\ast$.
Thus, we define 
$\vf'^{d}_{\mu,\la} \in \,_R \Sc_{n,r}$ 
by 
\[ \vf'^d_{\mu,\la}(m_{\nu} \cdot h)= \d_{\la\nu} 
	\big( \vf_{\la,\mu}^{d^{-1}}(m_{\mu})\big)^\ast \cdot h 
		\qquad (\nu \in \vL_{n,r},\, h \in \,_R \He_{n,r}),\]
and we have 
$\vf'^{d}_{\mu\la} \in \Hom_{_R \He_{n,r}}(M^\la,M^\mu)$. 

Let 
$_R \Sc_{n,r}^{\leq 0}$ 
(resp. $_R \Sc_{n,r}^{\geq 0}$) 
be the free $R$-submodule 
of 
$_R \Sc_{n,r}$ 
spanned by 
$\{\vf_{\la,\mu}^d \,|\, (\ol{\la},d) \in \Om^{\succeq}(\ol{\mu}), \mu \in \vL_{n,r} \}$ 
(resp. $\{\vf'^d_{\mu,\la} \,|\, (\ol{\la},d) \in \Om^{\preceq}(\ol{\mu}), \mu \in \vL_{n,r} \}$ ). 
Then 
$_R \Sc_{n,r}^{\leq 0}$ 
(resp. $_R \Sc_{n,r}^{\geq 0}$) 
is a subalgebra of 
$_R \Sc_{n,r}$, 
and 
$\{\vf_{\la,\mu}^d \,|\, (\ol{\la},d) \in \Om^{\succeq}(\ol{\mu}), \mu \in \vL_{n,r} \}$ 
(resp. $\{\vf'^d_{\mu,\la} \,|\, (\ol{\la},d) \in \Om^{\preceq}(\ol{\mu}), \mu \in \vL_{n,r} \}$ ) 
gives a free $R$-basis of 
$_R \Sc_{n,r}^{\leq 0}$ 
(resp. $_R \Sc_{n,r}^{\geq 0}$)  
by 
\cite[Lemma 5.12, Theorem 5.13]{DR}. 

Moreover, 
in \cite{DR}, 
Du and Rui 
proved the following theorem. 

\begin{thm}[{\cite[Theorem 5.13, 5.16]{DR}}] \label{thm-iso-Borel}\

\begin{enumerate}
\item 
There exists an algebra isomorphism 
$\CF^{\leq 0} : \,_R \Sc_{n,r}^{\leq 0} \ra \,_R \Sc_{n,1}^{\leq 0}$ 
such that 
$\CF^{\leq 0 }(\vf_{\la,\mu}^d)=\psi_{\ol{\la},\ol{\mu}}^d$ 
for 
$\vf_{\la,\mu}^d \in \{\vf_{\la,\mu}^d \,|\, (\ol{\la},d) \in \Om^{\succeq}(\ol{\mu}), \mu \in \vL_{n,r} \}$. 

\item 
There exists an algebra isomorphism 
$\CF^{\geq 0} : \,_R \Sc_{n,r}^{\geq 0} \ra \,_R \Sc_{n,1}^{\geq 0}$ 
such that 
$\CF^{\geq 0}(\vf'^d_{\mu,\la})=\psi_{\ol{\mu},\ol{\la}}^d$ 
for 
$\vf'^d_{\mu,\la} \in \{\vf'^d_{\mu,\la} \,|\, (\ol{\la},d) \in \Om^{\preceq}(\ol{\mu}), \mu \in \vL_{n,r} \}$. 

\item 
$_{R}\Sc_{n,r}$ 
has a triangular decomposition 
\[_R \Sc_{n,r} = \, _R \Sc_{n,r}^{\leq 0} \cdot \,_R \Sc_{n,r}^{\geq 0} 
= \sum_{\la \in \vL_{n,r}} \,_R \Sc_{n,r}^{\leq 0} \cdot \vf_{\la,\la}^1 \cdot \,_R \Sc_{n,r}^{\geq 0}.\]
\end{enumerate}

\end{thm}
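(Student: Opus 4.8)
The plan is to obtain (i) and (ii) by recognizing that the two Borel subalgebras carry $R$-bases indexed by \emph{identical} combinatorial data, so that the stated assignments define $R$-linear bijections, and then to check these bijections are multiplicative by matching structure constants; part (iii) will then follow from a factorization of the spanning family $\{\vf_{\la,\mu}^d\}$ of $_R\Sc_{n,r}$ through the diagonal elements $\vf_{\la,\la}^1$.

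For (i), the set $\{\vf_{\la,\mu}^d\mid(\ol\la,d)\in\Om^{\succeq}(\ol\mu),\,\mu\in\vL_{n,r}\}$ is indexed by the same pairs and double-coset representatives as $\{\psi_{\ol\la,\ol\mu}^d\}$ via $\mu\mapsto\ol\mu$, so $\CF^{\leq0}:\vf_{\la,\mu}^d\mapsto\psi_{\ol\la,\ol\mu}^d$ is a well-defined $R$-linear isomorphism of modules. To see it is an algebra map I would compute the product $\vf_{\la,\mu}^d\,\vf_{\mu',\nu}^{d'}$ directly: it vanishes unless $\mu=\mu'$, and when $\mu=\mu'$ one must rewrite $\vf_{\mu,\nu}^{d'}(m_\nu)=\big(\sum_{w\in\FS_{\ol\mu}d'\FS_{\ol\nu}}q^{\ell(w)}T_w\big)u_\nu^+$ in the form $m_\mu\cdot h$ in order to apply $\vf_{\la,\mu}^d$. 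Since the structure constants of $_R\Sc_{n,1}^{\leq0}$ are dictated solely by the type-$A$ double-coset identity of Lemma \ref{lem-d}, it suffices to show the same constants emerge in the cyclotomic setting.

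The crux, and the step I expect to be hardest, is controlling the cyclotomic factors $u_\mu^+=\prod_{k}\prod_{i\le a_k}(L_i-Q_k)$. The dominance condition $\Bi_{\ol\la}\cdot d\succeq\Bi_{\ol\mu}$ built into the lower Borel subalgebra is exactly what forces the relevant products into $m_\mu\,\He_{n,r}$ with coefficients independent of $Q_1,\dots,Q_r$. Concretely, Lemma \ref{lem-L} shows that each $T_i$ commutes with the symmetric expressions in $L_1,\dots,L_k$ appearing in $u_\mu^+$, while \cite[Corollary 5.17]{DJM98} locates $\vf_{\la,\mu}^{d^{-1}}(m_\mu)$ inside $(M^\mu)^\ast\cap M^\la$; together these let one move the $u^+$-factors past the Hecke part and absorb them against $m_\mu$, leaving precisely the Hecke-algebra coefficients of Lemma \ref{lem-d}. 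This matches the type-$A$ structure constants and proves $\CF^{\leq0}$ is an algebra isomorphism. Statement (ii) follows either by repeating the argument for $\vf'^d_{\mu,\la}$, or more quickly by transporting (i) through the anti-automorphism $\ast$, which exchanges $M^\mu$ with $(M^\mu)^\ast$ and hence the lower with the upper Borel subalgebra.

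For (iii), I would use that the endomorphisms $\vf_{\la,\mu}^d$ span $_R\Sc_{n,r}$ (this underlies the basis theorems of \cite{DJM98} and \cite{DR} already invoked) and that each $\vf_{\la,\la}^1$ is the identity endomorphism of $M^\la$, so that $\{\vf_{\la,\la}^1\mid\la\in\vL_{n,r}\}$ is a complete set of orthogonal idempotents with $\sum_\la\vf_{\la,\la}^1=1$. It then remains to factor each spanning element as a lower-triangular times an upper-triangular element through some $\vf_{\nu,\nu}^1$: given $d\in\SD_{\ol\la\ol\mu}$ one splits the double coset $\FS_{\ol\la}d\FS_{\ol\mu}$ so that the dominance data separates into a lower part with $\Bi_{\ol\la}\cdot d_1\succeq\Bi_{\ol\nu}$ and an upper part with $\Bi_{\ol\nu}\succeq\Bi_{\ol\mu}\cdot d_2$, yielding $\vf_{\la,\mu}^d\in{}_R\Sc_{n,r}^{\leq0}\cdot\vf_{\nu,\nu}^1\cdot{}_R\Sc_{n,r}^{\geq0}$. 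Summing over $\la\in\vL_{n,r}$ gives $_R\Sc_{n,r}={}_R\Sc_{n,r}^{\leq0}\cdot{}_R\Sc_{n,r}^{\geq0}=\sum_{\la}{}_R\Sc_{n,r}^{\leq0}\cdot\vf_{\la,\la}^1\cdot{}_R\Sc_{n,r}^{\geq0}$, as claimed.
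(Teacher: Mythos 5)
First, a remark on the ground truth: this paper does not prove Theorem \ref{thm-iso-Borel} at all --- it is quoted from Du and Rui \cite{DR}, so your proposal can only be compared with that source. For parts (i) and (ii) your outline does follow the same route as \cite{DR}: the two bases are matched by the bijection $\mu\mapsto\ol{\mu}$, so the assignment is an $R$-linear bijection, and everything reduces to multiplicativity, i.e.\ to showing that under the dominance condition the cyclotomic factors $u_\mu^+$ can be moved past the Hecke parts so that the structure constants are exactly the type-$A$ ones coming from Lemma \ref{lem-d}, independent of $Q_1,\dots,Q_r$. You correctly identify this as the crux, though you do not carry it out (and, as a minor point, \cite[Corollary 5.17]{DJM98} is what makes the upper-Borel elements $\vf'^d_{\mu,\la}$ well defined; it plays no role in the lower-Borel computation).

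Part (iii), however, rests on a false premise. You assert that the maps $\vf_{\la,\mu}^d$ span ${}_R\Sc_{n,r}$ and propose to factor each of them through some $\vf_{\nu,\nu}^1$ by splitting the double coset. For $r>1$ this fails: $\vf_{\la,\mu}^d$ is only defined as an element of $\Hom_{{}_R\He_{n,r}}(M^\mu,M^\la)$ when $\Bi_{\ol{\la}}\cdot d\succeq\Bi_{\ol{\mu}}$ (this is \cite[Lemma 5.6]{DR}), and even the full collection of double-coset-indexed maps is too small to span. Already for $n=1$, $r=2$: with $\nu=((0),(1))$ one has $m_\nu=1$, so $M^\nu={}_R\He_{1,2}$ and $\Hom(M^\nu,M^\nu)$ is two-dimensional (the identity and right multiplication by $L_1=T_0$), whereas $\SD_{\ol{\nu}\,\ol{\nu}}=\{1\}$ supplies only one map; summing over all four pairs gives $\dim {}_\CK\Sc_{1,2}=5>4$. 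The ``extra'' endomorphisms involve the Jucys--Murphy elements $L_i$ (compare the elements $\s_{(i,k)}$ defined in \S 6) and are invisible to double-coset data, which is precisely why the cyclotomic case is harder than type $A$. The actual proof of (iii) in \cite{DR} goes through the cellular basis of \cite{DJM98}: one shows that each basis element $\vf_{ST}$, $S,T\in\CT(\la)$, factors as $\vf_{ST^\la}\,\vf_{\la,\la}^1\,\vf_{T^\la T}$ with $\vf_{ST^\la}\in{}_R\Sc_{n,r}^{\leq 0}$ and $\vf_{T^\la T}\in{}_R\Sc_{n,r}^{\geq 0}$ --- exactly the factorization this paper later records as \eqref{cellula basis decom} --- and since the $\vf_{ST}$ form a basis, the triangular decomposition follows. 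Your double-coset splitting argument is essentially correct for $\Sc_{n,1}$, but it does not transport to $r>1$; to repair part (iii) you would need to replace the spanning set by the cellular basis and prove the factorization of $\vf_{ST}$, which is the real content of \cite[Theorem 5.15]{DR}.
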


%%%%%%%%%%%%%%%%%%%%%%%%%%%%%%%%%%%%%%%%%%%%%%%%%%%%%%%%%%%%%%%%%%%%%%%%%%%%%%%%%%%%%%%%%%%%%%%%%%%%%%%%%%%%%%%%%%%%

%%%%%%%%%%%%%%%%%%%%%%%%%%%%%%%%%%%%%%%%%%%%%%%%%%%%%%%%%%%%%%%%%%%%%%%%%%%%%%%%%%%%%%%%%%%%%%%%%%%%%%%%%%%%%%%%%%%%

\section{A cyclotomic $q$-Schur algebra as a quotient algebra of $\wt{U}_q$ }
\para 
As in the previous section, 
let $n,r$ be positive integers,  
and 
put 
$m=nr$.
Let 
$\vG = \{1,\cdots,n\} \times \{1,\cdots,r\}$, 
and 
$\vG'=\vG\setminus \{(n,r)\}$. 
As a convention, 
we set 
$(n+1,k)=(1,k+1)$ and 
$(0,k+1)=(n,k)$ 
for $k=1,\cdots,r-1$. 
For 
$(i,k)\in \vG$, 
put 
$\ve_{(i,k)} =\ve_{p_k +i}$, 
where $p_k=(k-1)n$.   
Thus, 
we can rewrite the weight lattice $P$ 
by 
$P=\bigoplus_{(i,k)\in \vG} \ZZ \ve_{(i,k)}$, 
and 
we regard 
$\vL_{n,r}$ 
as a subset of $P$ 
by 
the injective map 
from 
$\vL_{n,r}$ to $P$ given by 
$\vL_{n,r} \ni \mu \mapsto \sum_{(i,k) \in \vG} \mu_i^{(k)} \ve_{(i,k)} \in P$. 
For $(i,k) \in \vG$, 
put 
$h_{(i,k)}=h_{p_k+i}$, 
then 
the dual weight lattice $P^{\vee}$ 
can be rewritten 
as 
$P^\vee = \bigoplus_{(i,k) \in \vG} \ZZ h_{(i,k)}$.  
Moreover, 
for 
$(i,k) \in \vG'$, 
put 
$\a_{(i,k)}=\a_{p_k+i}=\ve_{(i,k)} - \ve_{(i+1,k)}$. 
Thus, 
for $\mu \in \vL_{n,r}$, 
$\mu \pm \a_{(i,k)}$ makes sense in $P$.

\para 
For $\mu \in \vL_{n,r}$ and $(i,k) \in \vG'$,  
if $\mu + \a_{(i,k)} \in \vL_{n,r}$ 
then 
we have 
$\Bi_{\ol{\mu}} \succeq \Bi_{\ol{\mu +\a_{(i,k)}}}$ 
from definitions.   
On the other hand, 
if $\mu - \a_{(i,k)} \in \vL_{n,r}$ 
then 
we have 
$\Bi_{\ol{\mu - \a_{(i,k)}}} \succeq \Bi_{\ol{\mu}}$.
Then, 
for $(i,k) \in \vG'$, 
we define an element $\vf^{\pm}_{(i,k)} \in \,_R\Sc_{n,r}$ by 
\begin{align*}
&\vf^+_{(i,k)} =\sum_{\mu \in \vL_{n,r}} q^{-\mu_{i+1}^{(k)}+1} \vf'^{1}_{\mu+\a_{(i,k)}, \, \mu}\,\,,\\
&\vf^-_{(i,k)} = \sum_{\mu \in \vL_{n,r}} q^{-\mu_{i}^{(k)}+1} \vf^1_{\mu-\a_{(i,k)},\,\mu}\,\,, 
\end{align*}
where 
we define 
$\vf'^{1}_{\mu+\a_{(i,k)},\,\mu}=0$ 
(resp. $\vf^1_{\mu-\a_{(i,k)},\,\mu}=0$) 
if 
$\mu+\a_{(i,k)} \not\in \vL_{n,r}$ 
(resp. $\mu-\a_{(i,k)} \not\in \vL_{n,r}$ ).

For $(i,k)\in \vG$, 
we define $\ka^{\pm}_{(i,k)} \in \, _R \Sc_{n,r}$ by 
\[ \ka^{\pm}_{(i,k)}=\sum_{\mu \in \vL_{n,r}} q^{\pm \mu_i^{(k)}} \vf_{\mu,\mu}^1,\]
and write $\ka^{+}_{(i,k)}$ by $\ka_{(i,k)}$ for simplicity. 

For $\mu \in \vL_{n,r}$ and $(i,k) \in \vG$, 
put 
$N=\sum_{l=1}^{k-1} |\mu^{(l)}| + \sum_{j=1}^{i-1} \mu^{(k)}_j$. 
By Lemma \ref{lem-L}, 
one sees that 
$(L_{N+1}+L_{N+2}+\cdots +L_{N+\mu_i^{(k)}})$ 
commutes with $m_{\mu}$. 
Thus, 
we can define $\s_{(i,k)}^\mu \in \,_R \Sc_{n,r}$ by 
\[\s_{(i,k)}^\mu (m_{\nu} \cdot h)= 
	\d_{\mu,\nu} \big(m_{\mu}(L_{N+1}+\cdots +L_{N+\mu_i^{(k)}})\big)\cdot h 
	\quad (\nu \in \vL_{n,r} \,\, h \in \,_R \He_{n,r}), 
	\]
where we define 
$\s_{(i,k)}^\mu=0$ if $\mu_i^{(k)}=0$. 
Moreover, we define 
\[ \s_{(i,k)}=\sum_{\mu \in \vL_{n,r}} \s_{(i,k)}^\mu.\]

\para 
Recall that 
$\CA=\CZ[\g_1,\cdots,\g_r]$ 
is a polynomial ring over $\CZ=\ZZ[q,q^{-1}]$ 
with indeterminate elements 
$\g_1,\cdots,\g_r$, 
and that 
$\CK=\QQ(q,\g_1,\cdots,\g_r)$ 
is the quotient field of $\CA$. 
We denote $_\CK \Sc_{n,r}$ simply by $\Sc_{n,r}$, 
where we set $Q_i=\g_i$ ($1 \leq i \leq r$). 
Now, we can define a surjective homomorphism of $\CK$-algebras 
from $\wt{U}_q$ to $\Sc_{n,r}$ as in the following proposition. 

\begin{prop} \label{prop-main-1}
There exists 
a surjective homomorphism 
$\wt{\rho} : \wt{U}_q \ra \Sc_{n,r}$ 
such that, 
for $(i,k) \in \vG'$,  
\begin{align}
&\wt{\rho}(e_{p_k+i})= \vf_{(i,k)}^+, \label{wtrho-1} \\
&\wt{\rho}(f_{p_k+i})= \vf_{(i,k)}^-, \\
&\wt{\rho}(\t_{p_k+i})=
	\begin{cases}
		\displaystyle 
			-\g_{k+1} \frac{\ka_{(n,k)}\ka_{(1,k+1)}^- -\ka_{(n,k)}^- \ka_{(1,k+1)}}{q-q^{-1}} \\[5mm]
			\hspace{3em}+\ka_{(n,k)}\ka^-_{(1,k+1)}(q^{-1} \s_{(n,k)} - q \s_{(1,k+1)}) & (\text{ if } i=n),
\\[5mm]
		\displaystyle 
			\frac{\ka_{(i,k)}\ka_{(i+1,k)}^- -\ka_{(i,k)}^- \ka_{(i+1,k)}}{q-q^{-1}} & (\text{ otherwise }),
	\end{cases}
\end{align}
and that,  
for $(i,k)\in \vG$, 
\begin{align}
&\wt{\rho}(K_{p_k+i}^{\pm})=\ka_{(i,k)}^{\pm}. \label{wtrho-5}\hspace{17em}
\end{align}

Moreover, 
by restricting $\wt{\rho}$ to $_\CA \wt{U}_q$, 
$\wt{\rho}|_{_\CA \wt{U}_q}$ gives a surjective homomorphism 
from $_\CA \wt{U}_q$ to $_\CA \Sc_{n,r}$.  
\end{prop}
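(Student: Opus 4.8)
The plan is to regard $\wt{\rho}$ as the map determined by the stated values on generators, and to prove in turn that (a) these values satisfy the defining relations $(\ref{U-2})$--$(\ref{U-8})$ of $\wt{U}_q$, so that $\wt{\rho}$ is a well-defined algebra homomorphism, and (b) $\wt{\rho}$ is surjective; the integral statement is then obtained by rerunning the same two arguments over the $\CA$-forms. The observation that organizes everything is that the Borel isomorphisms of Theorem \ref{thm-iso-Borel} identify our generators with the images of the homomorphism $\rho$ of Theorem \ref{Uqgl-S1}, described explicitly in Proposition \ref{prop-rho}. Using $\ol{\mu\pm\a_{(i,k)}}=\ol{\mu}\pm\a_{p_k+i}$ and the bijection $\mu\mapsto\ol{\mu}$ between $\vL_{n,r}$ and $\vL_{n,1}$, one reads off directly from the definitions that
\[
\CF^{\geq 0}(\vf^+_{(i,k)})=\rho(e_{p_k+i}),\qquad \CF^{\geq 0}(\ka^{\pm}_{(i,k)})=\rho(K^{\pm}_{p_k+i}),
\]
and symmetrically $\CF^{\leq 0}(\vf^-_{(i,k)})=\rho(f_{p_k+i})$ and $\CF^{\leq 0}(\ka^{\pm}_{(i,k)})=\rho(K^{\pm}_{p_k+i})$. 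Since $\CF^{\geq 0}$ and $\CF^{\leq 0}$ are algebra isomorphisms onto the type $A$ Borel subalgebras and $\rho$ is an algebra homomorphism, every relation of $\wt{U}_q$ living inside a single Borel subalgebra is automatic: $(\ref{U-2})$, $(\ref{U-3})$, $(\ref{U-7})$ hold in $\Sc_{n,r}^{\geq 0}$ because $(\ref{gl-1})$, $(\ref{gl-2})$, $(\ref{gl-5})$ hold for $\rho(e_i),\rho(K_i)$ (Lemma \ref{lem-B}), and likewise $(\ref{U-4})$, $(\ref{U-8})$ hold in $\Sc_{n,r}^{\leq 0}$.

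This leaves the two relations not confined to one Borel half, namely $(\ref{U-5})$ and $(\ref{U-6})$. For $(\ref{U-5})$ I would argue by degrees: each $\wt{\rho}(\t_{p_k+i})$ is built only from the $\ka$'s and the operators $\s_{(i,k)}$, all of which preserve every summand $M^\mu$, whereas each $\ka^{\pm}_{(a,b)}$ acts on $M^\mu$ by the scalar $q^{\pm\mu^{(b)}_a}$; hence $\wt{\rho}(\t_{p_k+i})$ commutes with all $\ka^{\pm}_{(a,b)}$. The relation $(\ref{U-6})$ is the heart of the proof and the main obstacle, since it mixes the two Borel halves and so cannot be reduced through $\CF^{\geq 0}$ or $\CF^{\leq 0}$. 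Here I would compute the commutator $\vf^+_{(i,k)}\vf^-_{(j,l)}-\vf^-_{(j,l)}\vf^+_{(i,k)}$ directly on the modules $M^\mu$. The off-diagonal case $(i,k)\neq(j,l)$ should give $0$, and the diagonal case splits in two. When $i\neq n$ the box moved by $\a_{(i,k)}$ stays inside block $k$, so the cyclotomic factor $u_{\mu}^+$ is unchanged under $\mu\mapsto\mu\pm\a_{(i,k)}$ and the computation collapses to the purely Hecke-algebra one, reproducing the standard expression $(\ka_{(i,k)}\ka^-_{(i+1,k)}-\ka^-_{(i,k)}\ka_{(i+1,k)})/(q-q^{-1})$. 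The genuinely new case is the boundary root $i=n$, where $\a_{(n,k)}$ transfers a box from block $k$ to block $k+1$: now $u_{\mu}^+$ changes by a factor involving $(L-\g_{k+1})$, and commuting the $L$-operators past $m_\mu$ via Lemma \ref{lem-L} is precisely what produces the extra $\g_{k+1}$-term and the operators $\s_{(n,k)},\s_{(1,k+1)}$ in the stated formula for $\wt{\rho}(\t_{p_k+n})$. I expect this boundary computation to be the longest and most delicate step.

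For surjectivity over $\CK$, I would show that the image contains both Borel subalgebras and then invoke the triangular decomposition $\Sc_{n,r}=\Sc_{n,r}^{\leq 0}\cdot\Sc_{n,r}^{\geq 0}$ of Theorem \ref{thm-iso-Borel}(iii). By Theorem \ref{Uqgl-S1}(ii) the elements $\rho(e_i),\rho(K^{\pm}_i)$ generate $\Sc_{n,1}^{\geq 0}$, so transporting through $(\CF^{\geq 0})^{-1}$ shows the $\vf^+_{(i,k)},\ka^{\pm}_{(i,k)}$ generate $\Sc_{n,r}^{\geq 0}$, and symmetrically the $\vf^-_{(i,k)},\ka^{\pm}_{(i,k)}$ generate $\Sc_{n,r}^{\leq 0}$; hence the image of $\wt{\rho}$ is all of $\Sc_{n,r}$.

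Finally, for the integral statement I would repeat both arguments over the $\CA$-forms. The images of the divided powers $e^{(k)}_{p_k+i},f^{(k)}_{p_k+i}$ lie in $_\CA\Sc_{n,r}$ because, under the integral Borel isomorphisms (Theorem \ref{thm-iso-Borel} over $R=\CA$ together with Theorem \ref{Uqgl-S1}(iv)), they correspond to the integral divided powers in $_\CZ\Sc_{n,1}^{\geq 0}$ (resp.\ $_\CZ\Sc_{n,1}^{\leq 0}$); the images of $K^{\pm}_{p_k+i}$ and of $\left[ \begin{matrix} K_{p_k+i};0 \\ t \end{matrix} \right]$ are $\CZ$-combinations of the idempotents $\vf^1_{\mu,\mu}$ as in Lemma \ref{lem-1-lamda}; and each $\wt{\rho}(\t_{p_k+i})$ is integral because the commutator fractions act on $M^\mu$ by scalars in $\CZ$, while $\g_{k+1}\in\CA$ and the operators $\s_{(i,k)}$, being built from the integral elements $L_j$, already lie in $_\CA\Sc_{n,r}$. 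Surjectivity of $\wt{\rho}|_{_\CA\wt{U}_q}$ then follows exactly as over $\CK$, using the integral Borel surjections of Theorem \ref{Uqgl-S1}(iv) and the integral triangular decomposition of Theorem \ref{thm-iso-Borel}(iii).
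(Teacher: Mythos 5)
Your proposal is correct and follows essentially the same route as the paper: transporting the single-Borel relations through $\CF^{\geq 0},\CF^{\leq 0}$ via Proposition \ref{prop-rho}, checking \eqref{U-5} by the diagonal action of the $\ka$'s, proving \eqref{U-6} by direct computation on $m_\mu$ in the three cases (off-diagonal, diagonal with $i\neq n$, and the boundary case $i=n$ where the factor $L-Q_{k+1}$ in $u^+_\mu$ and Lemma \ref{lem-L} produce the $\g_{k+1}$- and $\s$-terms), and deducing surjectivity from Borel generation plus the triangular decomposition of Theorem \ref{thm-iso-Borel}. The one step you leave as an outline, the boundary commutator, is exactly the content of the paper's Proposition \ref{prop-ef-fe} (iii), carried out there by the same $L$-operator manipulations you describe.
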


\para 
The rest of this section 
is devoted to the proof of the proposition. 
The following relations are clear 
from the definitions. 
\begin{align}
\ka_{(i,k)}\ka_{(j,l)}=\ka_{(j,l)}\ka_{(i,k)}, 
\quad 
\ka_{(i,k)}\ka_{(i,k)}^- = \ka_{(i,k)}^- \ka_{(i,k)} =1 \label{UU-2}
\end{align}

Since $\vf_{\nu,\nu}^1$ is the identity map on $M^\nu$ 
and 
$\s_{(i,k)}^\mu \in \Hom_{\He_{n,r}}(M^\mu, M^\mu)$,  
we have 
\begin{align*} 
\s_{(i,k)}^\mu \, \vf_{\nu,\nu}^1 = \vf_{\nu, \nu}^1 \s_{(i,k)}^\mu = \d_{\mu,\nu}\, \s_{(i,k)}^\mu.
\end{align*} 
This relation combined with (\ref{UU-2}) implies that 
\begin{align}
\ka_{(j,l)} \big(\ka_{(n,k)} & \ka^{-}_{(1,k+1)}(q^{-1} \s_{(n,k)} - q \s_{(1,k+1)})\big) \label{UU-5} \\
&= \big(\ka_{(n,k)}\ka^{-}_{(1,k+1)}(q^{-1} \s_{(n,k)} - q \s_{(1,k+1)})\big)  \ka_{(j,l)}. \notag
\end{align}

\para 
By the definitions of $\vf_{(i,k)}^{\pm}, \ka_{(i,k)}^{\pm}$, 
it is clear that 
$\vf_{(i,k)}^+$ (resp. $\vf_{(i,k)}^-$) for $(i,k)\in \vG'$ 
is included in 
$\Sc_{n,r}^{\geq 0}$ 
(resp. $\Sc_{n,r}^{\leq 0}$), 
and that 
$\ka_{(i,k)}^{\pm}$ for $(i,k)\in \vG$ 
is included in 
both of 
$\Sc_{n,r}^{\geq 0}$ and 
$\Sc_{n,r}^{\leq 0}$. 
Recall, 
in the case of type A,  
that 
there exists a surjective homomorphism 
$\rho : U_q \ra \Sc_{n,1}$ (Theorem \ref{Uqgl-S1}). 
Here, we extend this homomorphism to that over $\CK$. 
By using the isomorphism $\CF^{\geq 0} :  \Sc_{n,r}^{\geq 0} \ra \, _\CK \Sc_{n,1}^{\geq 0}$ 
(resp. $\CF^{\leq 0} : \Sc_{n,r}^{\leq 0} \ra\, _\CK \Sc_{n,1}^{\leq 0}$) 
in  Theorem \ref{thm-iso-Borel}, 
we have the following proposition. 

\begin{prop}\ \label{prop-surjective-Borel}

\begin{enumerate}
\item 
$\Sc_{n,r}^{\geq 0 }$ 
is generated by 
$\vf^{+}_{(i,k)}$ $((i,k)\in \vG')$ 
and 
$\ka^{\pm}_{(i,k)}$ $((i,k)\in \vG)$. 

\item 
$\Sc_{n,r}^{\leq 0}$ 
is generated by 
$\vf^{-}_{(i,k)}$ $((i,k)\in \vG')$ 
and 
$\ka^{\pm}_{(i,k)}$ $((i,k)\in \vG)$. 
\end{enumerate}
\end{prop}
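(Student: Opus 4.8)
The plan is to transport the statement to type $A$ via the algebra isomorphisms $\CF^{\geq 0} : \Sc_{n,r}^{\geq 0} \ra \,_\CK \Sc_{n,1}^{\geq 0}$ and $\CF^{\leq 0} : \Sc_{n,r}^{\leq 0} \ra \,_\CK \Sc_{n,1}^{\leq 0}$ of Theorem \ref{thm-iso-Borel}, and then to invoke the surjectivity of $\rho|_{\CB^{\pm}}$ from Theorem \ref{Uqgl-S1}. Since $\CF^{\geq 0}$ is an isomorphism, it carries the subalgebra $A$ generated by the $\vf^+_{(i,k)}$ $((i,k)\in\vG')$ and $\ka^{\pm}_{(i,k)}$ $((i,k)\in\vG)$ onto the subalgebra generated by their images; so it suffices to show that these images generate all of $\,_\CK\Sc_{n,1}^{\geq 0}$, after which $A=(\CF^{\geq 0})^{-1}(\,_\CK\Sc_{n,1}^{\geq 0})=\Sc_{n,r}^{\geq 0}$ follows.

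First I would compute the images explicitly. Note that $\vf^1_{\mu,\mu}=\vf'^{1}_{\mu,\mu}$ is the identity endomorphism of $M^\mu$, so $\ka^{\pm}_{(i,k)}$ lies in both Borel subalgebras. Using $\CF^{\geq 0}(\vf'^{d}_{\mu,\la})=\psi^d_{\ol\mu,\ol\la}$, the bijection $\vL_{n,r}\isom\vL_{n,1}$, $\mu\mapsto\ol\mu$, and the relations $\ol{\mu\pm\a_{(i,k)}}=\ol\mu\pm\a_{p_k+i}$ and $\mu_i^{(k)}=\ol\mu_{p_k+i}$, one obtains
\[
\CF^{\geq 0}(\vf^+_{(i,k)})=\sum_{\nu\in\vL_{n,1}} q^{-\nu_{p_k+i+1}+1}\,\psi^1_{\nu+\a_{p_k+i},\,\nu},
\qquad
\CF^{\geq 0}(\ka^{\pm}_{(i,k)})=\sum_{\nu\in\vL_{n,1}} q^{\pm\nu_{p_k+i}}\,\psi^1_{\nu,\nu}.
\]
Comparing with Proposition \ref{prop-rho} (with $\rho$ taken over $\CK$), these are precisely $\rho(e_{p_k+i})$ and $\rho(K^{\pm}_{p_k+i})$. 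As $(i,k)$ runs over $\vG'$ (resp. $\vG$) the index $p_k+i$ runs over $\{1,\dots,m-1\}$ (resp. $\{1,\dots,m\}$), so that $\{\CF^{\geq 0}(\vf^+_{(i,k)})\}\cup\{\CF^{\geq 0}(\ka^{\pm}_{(i,k)})\}$ coincides with $\{\rho(e_j)\}\cup\{\rho(K_j^{\pm})\}$.

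To conclude, recall that $\CB^+$ is generated by the $e_j$ and $K_j^{\pm}$, and that $\rho|_{\CB^+}:\CB^+\ra\Sc_{n,1}^{\geq 0}$ is surjective (Theorem \ref{Uqgl-S1}, extended over $\CK$ by flatness of the field extension). Hence $\,_\CK\Sc_{n,1}^{\geq 0}$ is generated as a $\CK$-algebra by the $\rho(e_j)$ and $\rho(K_j^{\pm})$, which is exactly what the reduction requires. Part (ii) then goes through identically, using $\CF^{\leq 0}(\vf^{d}_{\la,\mu})=\psi^d_{\ol\la,\ol\mu}$ and Proposition \ref{prop-rho} (ii), (iii) to identify $\CF^{\leq 0}(\vf^-_{(i,k)})=\rho(f_{p_k+i})$ and $\CF^{\leq 0}(\ka^{\pm}_{(i,k)})=\rho(K^{\pm}_{p_k+i})$, together with the surjectivity of $\rho|_{\CB^-}$.

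The only delicate point is the explicit computation in the second step. The hard part will be keeping track of the identification $\vL_{n,r}\isom\vL_{n,1}$ correctly, and in particular verifying the coefficient matching $q^{-\mu_{i+1}^{(k)}+1}=q^{-\ol\mu_{p_k+i+1}+1}$ at the block boundary $i=n$, where the convention $(n+1,k)=(1,k+1)$ (equivalently $\mu^{(k)}_{n+1}=\mu^{(k+1)}_1$) must be used so that $\ol\mu_{p_k+n+1}=\ol\mu_{p_{k+1}+1}=\mu^{(k+1)}_1$. Everything else is a formal consequence of the isomorphisms $\CF^{\pm}$ and the already-established type $A$ results.
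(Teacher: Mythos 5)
Your proposal is correct and follows essentially the same route as the paper's own proof: both transport the problem through the Du--Rui isomorphisms $\CF^{\geq 0},\CF^{\leq 0}$ of Theorem \ref{thm-iso-Borel}, identify $\CF^{\geq 0}(\vf^+_{(i,k)})=\rho(e_{p_k+i})$ and $\CF^{\geq 0}(\ka^{\pm}_{(i,k)})=\rho(K^{\pm}_{p_k+i})$ via Proposition \ref{prop-rho}, and then invoke the surjectivity of $\rho|_{\CB^{\pm}}$ from Theorem \ref{Uqgl-S1} together with the fact that $\CB^{\pm}$ is generated by the Chevalley generators and the $K_i^{\pm}$. The only difference is cosmetic: you carry out the image computation (including the block-boundary convention at $i=n$) explicitly, whereas the paper asserts the identification directly from Proposition \ref{prop-rho} and Theorem \ref{thm-iso-Borel}.
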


\begin{proof}
We show  (\roi) only since (\roii) is shown  in a similar way. 
By the above arguments, 
$\vf_{(i,k)}^+$ and $\ka_{(i,k)}^{\pm}$ are elements of $\Sc_{n,r}^{\geq 0}$. 
On the other hand, 
by Proposition \ref{prop-rho} and Theorem \ref{thm-iso-Borel}, 
we have 
$\big((\CF^{\geq 0})^{-1} \circ \rho\big)\,(e_{p_k+i})=\vf_{(i,k)}^+$ 
and 
$\big((\CF^{\geq 0})^{-1} \circ \rho\big)\,(K_{p_k+i}^{\pm})=\ka_{(i,k)}^{\pm}$. 
Moreover, 
$_\CK \Sc_{n,1}^{\geq 0}$ 
is the image of $\CB^+$ under $\rho$  
by Theorem \ref{Uqgl-S1} (\roii),
and 
$\CB^+$ 
is generated by 
$e_i$ $(1\leq i \leq m-1)$ 
and 
$K_i^{\pm}$ $(1 \leq i \leq m)$. 
This implies (\roi). 
\end{proof}

\para 
In the proof of the above proposition, 
we have a surjection 
$(F^{\geq 0})^{-1} \circ \rho : \CB^+ \ra \Sc_{n,r}^{\geq 0}$. 
Under this surjection,  
the relations $(\ref{gl-2})$ and $(\ref{gl-5})$ 
implies the following relations $(\ref{UU-3})$ and $(\ref{UU-7})$. 
Similarly, 
the following relations $(\ref{UU-4})$ and $(\ref{UU-8})$ 
follows from the relations $(\ref{gl-3})$ and $(\ref{gl-6})$.  
\begin{align}
&\ka_{(i,k)} \vf_{(j,l)}^+ \ka_{(i,k)}^-=q^{\lan \a_{(j,l)} \,,\, h_{(i,k)} \ran} \vf_{(j,l)}^+, \label{UU-3}\\[1mm]
&\ka_{(i,k)} \vf_{(j,l)}^- \ka_{(i,k)}^-=q^{- \lan \a_{(j,l)} \,,\, h_{(i,k)} \ran} \vf_{(j,l)}^-,
	\label{UU-4}\\[1mm]
&\vf_{(i\pm 1,k)}^+ (\vf_{(i,k)}^+)^2 
	-(q+q^{-1}) \vf_{(i,k)}^+ \vf_{(i \pm 1,k)}^+ \vf_{(i,k)}^+ 
	+(\vf_{(i,k)}^{+})^2 \vf_{(i \pm 1,k)}^+ 
	=0, \label{UU-7} \\[1mm]
&\vf_{(i,k)}^+ \vf_{(j,l)}^+ = \vf_{(j,l)}^+ \vf_{(i,k)}^+ \qquad (|(p_k+i)-(p_l-j)| \geq 2), \notag \\[1mm]
&\vf_{(i\pm 1,k)}^- (\vf_{(i,k)}^-)^2 
	-(q+q^{-1}) \vf_{(i,k)}^- \vf_{(i \pm 1,k)}^- \vf_{(i,k)}^- 
	+(\vf_{(i,k)}^{-})^2 \vf_{(i \pm 1,k)}^- 
	=0, \label{UU-8} \\[1mm]
&\vf_{(i,k)}^- \vf_{(j,l)}^- = \vf_{(j,l)}^- \vf_{(i,k)}^- \qquad (|(p_k+i)-(p_l-j)| \geq 2). \notag 
\end{align}

\para 
For $i=1,\cdots,n-1$, 
let $s_i =(i,i+1) \in \FS_n$ 
be the adjacent transposition. 
For 
$\mu,\nu \in \vL_{n,r}$, 
put 
$X^\nu_\mu=\{ x \in \SD_{\ol{\mu}} \,|\, \ol{\nu}(\Ft^{\ol{\mu}}\cdot x)=\ol{\nu}(\Ft^{\ol{\mu}}) \}$.  
One can check that 
\begin{align}
& X_\mu^{\mu-\a_{(i,k)}}
	=\big\{ 1, s_{N}, (s_{N} s_{N+1}), \cdots, (s_N s_{N+1} \cdots s_{N+\mu_{i+1}^{(k)}-1}) \big\}, 
\label{X-1}\\
& X_{\mu-\a_{(i,k)}}^{\mu}
	=\big\{ 1, s_{N-1}, (s_{N-1} s_{N-2}), \cdots, (s_{N-1} s_{N-2} \cdots s_{N-\mu_{i}^{(k)}+1}) \big\}, 
\label{X-2}\\
& X_\mu^{\mu+\a_{(i,k)}}
	=\big\{ 1, s_{N}, (s_{N} s_{N-1}), \cdots, (s_{N} s_{N-1} \cdots s_{N-\mu_{i}^{(k)}+1}) \big\}, 
\label{X-3}\\
& X_{\mu+\a_{(i,k)}}^{\mu}
	=\big\{ 1, s_{N+1}, (s_{N+1} s_{N+2}), \cdots, (s_{N+1} s_{N+2} \cdots s_{N+\mu_{i+1}^{(k)}-1}) \big\}, 
\label{X-4}
\end{align}
where 
$N=\sum_{\l=1}^{k-1}|\mu^{(l)}|+\sum_{j=1}^i \mu_j^{(k)}$, 
and 
put $\mu_{n+1}^{(k)}=\mu_1^{(k+1)}$ if $i=n$. 
Then, we have the following lemma.  

\begin{lem} \label{lem-vf}
For $\mu \in \vL_{n,r}$ and $(i,k) \in \vG'$, we have the followings. 

\begin{enumerate}
\item \quad \\*[-2em]
\begin{align*}
\vf_{(i,k)}^+(m_\mu)
&= q^{-\mu_{i+1}^{(k)} +1} m_{\mu+\a_{(i,k)}} \Big( \sum_{y \in X_{\mu+\a_{(i,k)}}^{\mu}} q^{\ell(y)} T_y \Big) \\
&=q^{-\mu_{i+1}^{(k)} +1}  
	\Big(\sum_{x \in X^{\mu+\a_{(i,k)}}_\mu} q^{\ell(x)} T_x^\ast \Big) h_{+{(i,k)}}^{\mu} m_{\mu} ,
\end{align*}
where 
$h_{+(i,k)}^\mu=
	\begin{cases}
		1 &(i \not=n) \\
		L_{N+1} - Q_{k+1} &(i=n) \quad (N=|\mu^{(1)}|+\cdots +|\mu^{(k)}|). 
	\end{cases}
$\\[2mm]

\item \quad \\*[-2em]
\begin{align*}
\vf_{(i,k)}^-(m_\mu)
&= q^{-\mu_i^{(k)} +1}\Big(\sum_{y \in X_{\mu}^{\mu-\a_{(i,k)}}} q^{\ell(y)} T_y^{\ast} \Big) m_\mu \\
&=q^{-\mu_i^{(k)} +1} m_{\mu-\a_{(i,k)}} h_{-{(i,k)}}^{\mu} 
	\Big(\sum_{x \in X^{\mu}_{\mu-\a_{(i,k)}}} q^{\ell(x)} T_x \Big),
\end{align*}
where 
$h_{-(i,k)}^\mu=
	\begin{cases}
		1 &(i \not=n) \\
		L_N - Q_{k+1} &(i=n) \quad (N=|\mu^{(1)}|+\cdots +|\mu^{(k)}|). 
	\end{cases}
$
\end{enumerate}
\end{lem}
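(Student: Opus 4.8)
The plan is to unwind the definitions of $\vf^{\pm}_{(i,k)}$, $\vf'^{d}_{\mu,\la}$ and $\vf^{d}_{\la,\mu}$, and then to replace the resulting Hecke-algebra double-coset sums by the two expressions furnished by Lemma~\ref{lem-d}. For (\roii) the definition gives $\vf^-_{(i,k)}(m_\mu)=q^{-\mu_i^{(k)}+1}\,\vf^1_{\mu-\a_{(i,k)},\mu}(m_\mu)$, and since $\vf^1_{\la,\mu}$ is built from $u^+_\mu$ with $x_{\ol\mu}u^+_\mu=m_\mu$, this equals $\big(\sum_{w\in\FS_{\ol{\mu-\a_{(i,k)}}}\FS_{\ol\mu}}q^{\ell(w)}T_w\big)u^+_\mu$. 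Applying Lemma~\ref{lem-d} with $d=1$ rewrites the double-coset sum in its two forms. The form $\sum_y q^{\ell(y)}T_y^\ast x_{\ol\mu}$ is indexed by the distinguished representatives $y$ with $\ol{\mu-\a_{(i,k)}}(\Ft^{\ol\mu}\!\cdot y)=\ol{\mu-\a_{(i,k)}}(\Ft^{\ol\mu})$, which is exactly $X^{\mu-\a_{(i,k)}}_\mu$ as listed in $(\ref{X-1})$; combined with $x_{\ol\mu}u^+_\mu=m_\mu$ it yields the first displayed equality at once. Part (\roi) is parallel after noting $\vf^+_{(i,k)}(m_\mu)=q^{-\mu^{(k)}_{i+1}+1}\big(\vf^1_{\mu,\mu+\a_{(i,k)}}(m_{\mu+\a_{(i,k)}})\big)^\ast$: the same computation produces $\sum_y q^{\ell(y)}T_y^\ast m_{\mu+\a_{(i,k)}}$ over $X^{\mu}_{\mu+\a_{(i,k)}}$ of $(\ref{X-4})$, and applying the anti-automorphism $\ast$ (using $T^\ast_w=T_{w^{-1}}$ and $m_\nu^\ast=m_\nu$) gives the stated $m_{\mu+\a_{(i,k)}}\big(\sum_y q^{\ell(y)}T_y\big)$.

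For the second displayed equality in each part I would instead use the form $\sum_x q^{\ell(x)}x_{\ol{\nu}}T_x$ of Lemma~\ref{lem-d}, which for (\roii) gives $\vf^1_{\mu-\a_{(i,k)},\mu}(m_\mu)=\sum_{x\in X^{\mu}_{\mu-\a_{(i,k)}}}q^{\ell(x)}x_{\ol{\mu-\a_{(i,k)}}}T_x\,u^+_\mu$, the representatives $x$ being those of $(\ref{X-2})$. The cyclotomic factor appearing here is $u^+_\mu$, whose blocks have boundaries $a_{k'}=\sum_{j=1}^{k'-1}|\mu^{(j)}|$; I would check that every transposition occurring in $(\ref{X-2})$ (resp.\ $(\ref{X-3})$ for (\roi)) has index strictly between two consecutive boundaries $a_{k'}$, so that $T_x$ commutes with $u^+_\mu$ by Lemma~\ref{lem-L}(\roiv), the $L$-factors themselves commuting by Lemma~\ref{lem-L}(\roi). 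It then remains to rewrite $u^+_\mu$ in terms of $u^+_{\mu-\a_{(i,k)}}$: since the $a_{k'}$ depend only on the block sizes $|\mu^{(j)}|$, moving a box within a block ($i\neq n$) leaves every $a_{k'}$, and hence $u^+$, unchanged, so $h^\mu_{-(i,k)}=1$; when $i=n$ exactly one size changes, so $u^+_\mu=u^+_{\mu-\a_{(n,k)}}\cdot(L_N-Q_{k+1})$ with $N=|\mu^{(1)}|+\cdots+|\mu^{(k)}|$, the extra factor being $h^\mu_{-(n,k)}$. Using $x_{\ol{\mu-\a_{(i,k)}}}u^+_{\mu-\a_{(i,k)}}=m_{\mu-\a_{(i,k)}}$ I then reassemble $m_{\mu-\a_{(i,k)}}h^\mu_{-(i,k)}\big(\sum_x q^{\ell(x)}T_x\big)$, and (\roi) follows symmetrically with $u^+_{\mu+\a_{(i,k)}}$ in place of $u^+_\mu$, $h^\mu_{+(n,k)}=L_{N+1}-Q_{k+1}$, and the identity $u^+_\mu x_{\ol\mu}=m_\mu$ (which holds because $m_\mu^\ast=m_\mu$) used after the $\ast$.

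I expect the boundary case $i=n$ to be the genuine technical point, for two linked reasons. First, the transposition with the \emph{largest} index in $(\ref{X-2})$/$(\ref{X-3})$ lies exactly at the boundary of the \emph{smaller} of $u^+_\mu$ and $u^+_{\mu\pm\a_{(n,k)}}$ (where Lemma~\ref{lem-L}(\roiv) would fail), so one must commute through the larger one, whose block has been enlarged by one step precisely so that this index becomes interior, and only afterwards split off the boundary factor $h^\mu_{\pm(n,k)}$; this is exactly the mechanism that attaches $h^\mu_{\pm(n,k)}$ on the correct side. Second, one must check that the relevant boundary really is avoided, which uses that $\vf^{\pm}_{(i,k)}(m_\mu)$ is nonzero only when $\mu\pm\a_{(i,k)}\in\vL_{n,r}$, forcing the neighbouring part $\mu^{(k+1)}_1$ to be positive. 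Carrying out this index bookkeeping carefully, and confirming that the two expressions of Lemma~\ref{lem-d} really do specialize to the sets $(\ref{X-1})$--$(\ref{X-4})$, is where essentially all the work lies; once this is done, both equalities in (\roi) and (\roii) drop out, with (\roi) obtained from (\roii) through the anti-automorphism $\ast$.
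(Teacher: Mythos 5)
Your proposal is correct and is essentially the paper's own (omitted) argument: the paper's entire proof is the one-line remark that the formulas can be checked from the definitions using Lemma~\ref{lem-d}, and your plan---unwinding $\vf^{\pm}_{(i,k)}(m_\mu)$ through $\vf^1_{\mu-\a_{(i,k)},\mu}$ and $\vf'^{1}_{\mu+\a_{(i,k)},\mu}$, matching the two expressions of Lemma~\ref{lem-d} (with $d=1$) to the sets \eqref{X-1}--\eqref{X-4}, commuting the $T_x$ past the larger of the two cyclotomic factors via Lemma~\ref{lem-L}, and only then splitting off $u^+_\mu$ or $u^+_{\mu\pm\a_{(n,k)}}$ together with $h^\mu_{\pm(n,k)}$ and using $u^+_\mu x_{\ol\mu}=x_{\ol\mu}u^+_\mu=m_\mu$---is precisely that check carried out in detail. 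Your diagnosis of where the real work lies (the $i=n$ boundary bookkeeping, where the top transposition index sits exactly on the boundary of the smaller cyclotomic factor, and the nonvanishing condition $\mu\pm\a_{(i,k)}\in\vL_{n,r}$ keeps all indices off the boundaries of the larger one) and of the mechanism placing $h^\mu_{\pm(n,k)}$ on the correct side is accurate.
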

\begin{proof}
One can check them from definitions by using Lemma \ref{lem-d}.
\end{proof}

This lemma implies the following proposition. 

\begin{prop} \label{prop-ef-fe}
For $(i,k),(j,l) \in \vG'$, we have the following relations. 
\begin{enumerate}
\item 
If $(i,k)\not=(j,l)$ then we have 
\begin{align*}
\vf_{(i,k)}^+ \vf_{(j,l)}^- - \vf_{(j,l)}^- \vf_{(i,k)}^+ = 0 .
\end{align*}

\item 
If $(i,k)=(j,l)$ and $i \not=n$ then we have 
\begin{align*}
\vf_{(i,k)}^+ \vf_{(i,k)}^- - \vf_{(i,k)}^- \vf_{(i,k)}^+ 
	 =\frac{\ka_{(i,k)}\ka_{(i+1,k)}^{-} - \ka_{(i,k)}^{-} \ka_{(i+1,k)}}{q-q^{-1}} .
\end{align*}

\item 
If $(i,k)=(j,l)=(n,k)$ then we have 
\begin{align*}
&\vf_{(n,k)}^+ \vf_{(n,k)}^- - \vf_{(n,k)}^- \vf_{(n,k)}^+ \\
&\qquad = 
	-\g_{k+1}\frac{\ka_{(n,k)}\ka_{(1,k+1)}^{-} - \ka_{(n,k)}^{-} \ka_{(1,k+1)}}{q-q^{-1}} 
	+\ka_{(n,k)}\ka_{(1,k+1)}^- (q^{-1} \s_{(n,k)} - q \s_{(1, k+1)}) . 
\end{align*}
\end{enumerate}
\end{prop}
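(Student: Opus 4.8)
The plan is to verify each of the three identities as an equality of $\He_{n,r}$-module endomorphisms of $\bigoplus_{\nu \in \vL_{n,r}} M^\nu$. Since $M^\mu = m_\mu \cdot \He_{n,r}$, it suffices to check that both sides agree on the generator $m_\mu$ for every $\mu \in \vL_{n,r}$; and because $\vf_{(i,k)}^+$ (resp. $\vf_{(j,l)}^-$) raises (resp. lowers) the weight by $\a_{(i,k)}$ (resp. $\a_{(j,l)}$), both composites $\vf_{(i,k)}^+ \vf_{(j,l)}^-$ and $\vf_{(j,l)}^- \vf_{(i,k)}^+$ carry $M^\mu$ into $M^{\mu + \a_{(i,k)} - \a_{(j,l)}}$, with the convention that a term vanishes whenever an intermediate composition leaves $\vL_{n,r}$. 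The single computational engine is Lemma \ref{lem-vf}, which expresses $\vf_{(i,k)}^{\pm}(m_\mu)$ explicitly as a $q$-weighted sum over the combinatorial sets $X^{\mu \pm \a_{(i,k)}}_\mu$ and $X^\mu_{\mu \pm \a_{(i,k)}}$ of \eqref{X-1}--\eqref{X-4}, decorated at the boundary $i=n$ by the Jucys--Murphy factors $h^\mu_{\pm(i,k)}$.

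First I would dispose of part (\roi). When $(i,k) \ne (j,l)$, applying Lemma \ref{lem-vf} twice gives explicit expressions for $\vf_{(i,k)}^+ \vf_{(j,l)}^-(m_\mu)$ and $\vf_{(j,l)}^- \vf_{(i,k)}^+(m_\mu)$. If the two indices are not $\Fgl_m$-adjacent the supporting adjacent transpositions act on disjoint blocks of $\{1,\dots,n\}$ and the associated $L$-operators commute past the relevant $T_w$ by Lemma \ref{lem-L}(\roi),(\roii),(\roiv), so the two orders agree termwise. The only genuinely interacting subcase is when the indices are adjacent (possibly across a component boundary, so that a factor $L_{N+1}-Q_{k+1}$ enters on one side); there I would track the overlapping transpositions and cancel the two nested sums against one another after commuting the boundary Jucys--Murphy factor through the $T$'s, again via Lemma \ref{lem-L}. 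Carrying the prefactors $q^{-\mu^{(k)}_{i+1}+1}$ and $q^{-\mu^{(l)}_{j}+1}$ through both orders yields the vanishing commutator.

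The substance lies in parts (\roii) and (\roiii), where $(i,k)=(j,l)$. Writing $N=\sum_{l<k}|\mu^{(l)}|+\sum_{j\le i}\mu^{(k)}_j$, I would expand $\vf_{(i,k)}^+\vf_{(i,k)}^-(m_\mu)$ and $\vf_{(i,k)}^-\vf_{(i,k)}^+(m_\mu)$ as double sums of $T$-products against $m_\mu$ using \eqref{X-1}--\eqref{X-4}, then apply the quadratic relation $(T_s-q)(T_s+q^{-1})=0$ and telescope the two nested sums. In the difference the off-diagonal contributions cancel and the surviving diagonal term is, in case (\roii) where $h^\mu_{\pm(i,k)}=1$, the scalar $[\mu^{(k)}_i-\mu^{(k)}_{i+1}]$ times $m_\mu$. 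This is precisely the eigenvalue of $(\ka_{(i,k)}\ka_{(i+1,k)}^- - \ka_{(i,k)}^-\ka_{(i+1,k)})/(q-q^{-1})$ on $m_\mu$ by the definition of $\ka^{\pm}_{(i,k)}$, which gives (\roii).

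The hard part is part (\roiii). Here the boundary identification $(n+1,k)=(1,k+1)$ forces the Jucys--Murphy factors $h^\mu_{+(n,k)}=L_{N+1}-Q_{k+1}$ and $h^\mu_{-(n,k)}=L_N-Q_{k+1}$ into the computation, so the surviving term is no longer a bare scalar but a multiplication operator assembled from $L_N$, $L_{N+1}$ and the parameter $Q_{k+1}=\g_{k+1}$. I would commute these $L$'s through the $T$-sums using Lemma \ref{lem-L}(\rov),(\rovi), use that the block sums $(L_{N+1}+\cdots)$ and $(L_N+\cdots)$ already commute with $m_\mu$ so as to recognise them as the operators $\s_{(n,k)}$ and $\s_{(1,k+1)}$ acting on $m_\mu$, and collect the $Q_{k+1}$-terms. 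The anticipated outcome is that the difference splits into a $-\g_{k+1}$-multiple of the same $[\,\cdot\,]$-type term as in (\roii), now realised across the component boundary as $(\ka_{(n,k)}\ka^-_{(1,k+1)}-\ka^-_{(n,k)}\ka_{(1,k+1)})/(q-q^{-1})$, plus the correction $\ka_{(n,k)}\ka^-_{(1,k+1)}(q^{-1}\s_{(n,k)}-q\s_{(1,k+1)})$. Checking that the Hecke-level bookkeeping produces exactly these coefficients, and in particular the asymmetric $q^{-1}$ versus $q$ weighting of the two $\s$-terms, is the main obstacle and the precise point where the cyclotomic parameters enter.
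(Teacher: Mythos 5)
Your strategy is exactly the one the paper uses: regard both sides as elements of $\End_{\He_{n,r}}\big(\bigoplus_\nu M^\nu\big)$, evaluate on each generator $m_\mu$, expand everything with Lemma \ref{lem-vf}, and split into the same three cases. Your treatment of (\roi) and (\roii) matches the paper's proof in substance: for (\roi) the paper distinguishes the non-adjacent case (where its four $T$-sums satisfy $A=D$, $B=C$ and commute) from the adjacent case (where $A=C$, $B=D$), after noting that the boundary factor $h^{\mu}_{-(j,l)}$ is unchanged by $\mu\mapsto\mu+\a_{(i,k)}$ and commutes past the relevant sums by Lemma \ref{lem-L}; for (\roii) the paper derives the identity $AB-CD=B-D$ from the explicit coset sets \eqref{X-2}, \eqref{X-4}, \eqref{XX-1}, \eqref{XX-2} and then collapses it with $m_\mu T_w = q^{\ell(w)}m_\mu$, which is the same computation you describe via the quadratic relation and telescoping.

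The caveat is (\roiii), which is where the proposition's real content lies and where your proposal stops at a plan. "Commute the $L$'s through the $T$-sums using Lemma \ref{lem-L}" is the right idea but is not yet a proof: the paper has to establish three explicit commutation formulas by induction on the length of the $T$-product, namely \eqref{LB}, \eqref{LC} and \eqref{LN+1}, in order to rewrite $A\cdot L_N\cdot B - L_{N+1}\cdot C\cdot D$ as in \eqref{A LN B LN+1}; only after hitting the result with $m_\mu$ (so that $m_\mu T_w = q^{\ell(w)} m_\mu$ kills the $T$'s) do the sums collapse to $q^{2(\mu_n^{(k)}-1)}(L_N+\cdots+L_{N-\mu_n^{(k)}+1})$ and $q^{2\mu_n^{(k)}}(L_{N+1}+\cdots+L_{N+\mu_1^{(k+1)}})$ as in \eqref{mu LN B} and \eqref{mu LN+1 D}, which is exactly what produces the prefactor $\ka_{(n,k)}\ka^-_{(1,k+1)}$ and the asymmetric $q^{-1}\s_{(n,k)} - q\,\s_{(1,k+1)}$ weighting, while the $-Q_{k+1}(AB-CD)$ piece reduces to case (\roii) and gives the $-\g_{k+1}$ term. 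Since you explicitly defer this coefficient check ("the main obstacle"), your proposal is a correct outline that coincides with the paper's route, but it does not yet constitute a complete proof of part (\roiii).
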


\begin{proof}
By Lemma \ref{lem-vf}, for $\mu \in \vL_{n,r}$ and $(i,k), (j,l) \in \vG'$, we have 
\begin{align*}
\vf_{(i,k)}^+ &\vf_{(j,l)}^- (m_\mu) \\*
&= \vf_{(i,k)}^+ 
	\Big( q^{-\mu_j^{(l)} +1} m_{\mu-\a_{(j,l)}} h_{-{(j,l)}}^{\mu} 
		\Big(\sum_{x \in X^{\mu}_{\mu-\a_{(j,l)}}} q^{\ell(x)} T_x \Big)\Big) \\*
&=q^{-\mu_j^{(l)}+1} q^{-(\mu -\a_{(j,l)})_{i+1}^{(k)}+1} \, 
	m_{\mu} \Big( \sum_{y \in X_{(\mu-\a_{(j,l)})+\a_{(i,k)}}^{(\mu-\a_{(j,l)})}} q^{\ell(y)} T_y \Big)
	h_{-{(j,l)}}^{\mu} \Big(\sum_{x \in X^{\mu}_{\mu-\a_{(j,l)}}} q^{\ell(x)} T_x \Big). 
\end{align*}
On the other hand, 
we have 
\begin{align*}
\vf_{(j,l)}^- &\vf_{(i,k)}^+ (m_\mu) \\
&= \vf_{(i,k)}^- 
	\Big( q^{-\mu_{i+1}^{(k)} +1} m_{\mu+\a_{(i,k)}} 
		\Big(\sum_{x \in X^{\mu}_{\mu+\a_{(i,k)}}} q^{\ell(x)} T_x \Big)\Big) \\
&=q^{-\mu_{i+1}^{(k)}+1} q^{-(\mu +\a_{(i,k)})_{j}^{(l)}+1} \, 
	m_{\mu} h_{-{(j,l)}}^{\mu+\a_{(i,k)}} 
	\Big( \sum_{y \in X_{(\mu+\a_{(i,k)})- \a_{(j,l)}}^{(\mu+\a_{(i,k)})}} q^{\ell(y)} T_y \Big)
	\Big(\sum_{x \in X^{\mu}_{\mu+\a_{(i,k)}}} q^{\ell(x)} T_x \Big). 
\end{align*}
One sees that  
$q^{-\mu_j^{(l)}+1} q^{-(\mu -\a_{(j,l)})_{i+1}^{(k)}+1}
=q^{-\mu_{i+1}^{(k)}+1} q^{-(\mu +\a_{(i,k)})_{j}^{(l)}+1} $ 
for any case.
Put 
\begin{align*}
&A= \Big( \sum_{y \in X_{(\mu-\a_{(j,l)})+\a_{(i,k)}}^{(\mu-\a_{(j,l)})}} q^{\ell(y)} T_y \Big),
&&B= \Big(\sum_{x \in X^{\mu}_{\mu-\a_{(j,l)}}} q^{\ell(x)} T_x \Big), \\
&C= \Big( \sum_{y \in X_{(\mu+\a_{(i,k)})- \a_{(j,l)}}^{(\mu+\a_{(i,k)})}} q^{\ell(y)} T_y \Big), 
&&D= \Big(\sum_{x \in X^{\mu}_{\mu+\a_{(i,k)}}} q^{\ell(x)} T_x \Big).
\end{align*}

(\roi). 
First, 
we assume that $(i,k)\not= (j,l)$. 
Then we have 
$h_{-{(j,l)}}^{\mu} = h_{-{(j,l)}}^{\mu+\a_{(i,k)}} $, 
and 
$h_{-{(j,l)}}^{\mu}$ 
commute with 
$A$. 
If $(p_j+l)- (p_k+i) \not=1$ then 
we have 
$X_{(\mu-\a_{(j,l)})+\a_{(i,k)}}^{(\mu-\a_{(j,l)})} = X_{\mu+\a_{(i,k)}}^\mu$ 
and 
$X_{(\mu+\a_{(i,k)})- \a_{(j,l)}}^{(\mu+\a_{(i,k)})}= X_{\mu -\a_{(j,l)}}^{\mu}$. 
Thus, we have
$A=D$ and $B=C$. 
Moreover, 
one sees that 
$A$  
commute with 
$B$.  
If $(p_j+1)-(p_k+i)=1$ then 
we have 
$X_{(\mu-\a_{(j,l)})+\a_{(i,k)}}^{(\mu-\a_{(j,l)})} 
	=X_{(\mu+\a_{(i,k)})- \a_{(j,l)}}^{(\mu+\a_{(i,k)})}$ 
and 
$X^{\mu}_{\mu-\a_{(j,l)}}= X^{\mu}_{\mu+\a_{(i,k)}}$. 
Hence, we have  
$A=C$ and $B=D$. 
This implies (\roi). 

(\roii). 
Next, 
we assume that 
$(i,k)=(j,l)$ and $i\not=n$. 
Then we have 
$h_{-{(j,l)}}^{\mu} = h_{-{(j,l)}}^{\mu+\a_{(i,k)}}=1 $.
Put 
$N=\sum_{l=1}^{k-1}|\mu^{(l)}| + \sum_{j=1}^i \mu_j^{(k)}$.  
Then, 
by \eqref{X-4} and \eqref{X-2}, 
we have that 
\begin{align}
&X^{(\mu-\a_{(i,k)})}_{(\mu-\a_{(i,k)})+\a_{(i,k)}} 
	= \big\{ 1,s_{N}, (s_N s_{N+1}), \cdots, (s_N s_{N+1} \cdots s_{N+\mu_{i+1}^{(k)}-1}) \big\}, \label{XX-1}\\
&X^{(\mu+\a_{(i,k)})}_{(\mu+\a_{(i,k)})-\a_{(i,k)}} 
	=\big\{ 1, s_N, (s_N s_{N-1}), \cdots, (s_N s_{N-1} \cdots s_{N-\mu_i^{(k)}+1}) \big\}. \label{XX-2}
\end{align}
Combined with $(\ref{X-2})$ and $(\ref{X-4})$, 
we have 
$AB-CD = B- D$. 
Note that  
$m_\mu T_w=q^{\ell(w)} m_\mu$ 
for $w \in \FS_\mu$,  
then we have 
\begin{align*}
(\vf_{(i,k)}^+ \vf_{(i,k)}^- - \vf_{(i,k)}^- \vf_{(i,k)}^+)(m_\mu) 
&=q^{-\mu_i^{(k)} -\mu_{i+1}^{(k)}+1} 
	\left(\left(\sum_{a=0}^{\mu_i^{(k)}-1} (q^{a})^2\right) 
		- \left(\sum_{b=0}^{\mu_{i+1}^{(k)}-1} (q^{b})^2 \right)\right) m_\mu \\
&=\frac{\ka_{(i,k)}\ka_{(i+1,k)}^{-} - \ka_{(i,k)}^{-} \ka_{(i+1,k)}}{q-q^{-1}}(m_\mu).
\end{align*}
This implies (\roii). 

(\roiii). 
Finally, 
we assume that 
$(i,k)=(j,l)=(n,k)$. 
Put 
$N=\sum_{l=1}^{k}|\mu^{(k)}|$, 
then, 
we have 
$h_{-{(n,k)}}^{\mu} =L_N - Q_{k+1}$ 
and 
$h_{-{(n,k)}}^{\mu+\a_{(n,k)}} =L_{N+1} - Q_{k+1}$. 
Hence, we have 
\begin{align}
(\vf_{(n,k)}^+ \vf_{(n,k)}^- - \vf_{(n,k)}^- \vf_{(n,k)}^+) (m_\mu) 
=\, &q^{-\mu_{n}^{(k)} - \mu_{1}^{(k)} +1} m_\mu 
		(A \cdot L_{N} \cdot B - L_{N+1}\cdot  C  \cdot D) 
\label{+- -+ iii} \\*
&\quad -Q_{k+1} \,q^{-\mu_{n}^{(k)} - \mu_{1}^{(k)} +1} m_\mu (AB - CD).\notag
\end{align}
In a similar way as in the case of (\roii), 
we have 
\begin{align}
 q^{-\mu_{n}^{(k)} - \mu_{1}^{(k)} +1} m_\mu (AB - CD) 
	=\frac{\ka_{(n,k)}\ka_{(1,k+1)}^{-} - \ka_{(n,k)}^{-} \ka_{(1,k+1)}}{q-q^{-1}} (m_\mu). 
\label{+- -+ iii kouhan}
\end{align}
By Lemma \ref{lem-L}, 
we can prove 
the following formula by induction on $c$. 
\begin{align}
L_N  (T_{N-1}T_{N-2}  \cdots T_{N-c}) 
=(&q-q^{-1}) \Big( 
	\sum_{\xi=1}^{c} T_{N-1}T_{N-2} \cdots \check{T}_{N-\xi} \cdots T_{N-c} L_{N-\xi+1} \Big)  \\*
& + T_{N-1} T_{N-2}\cdots T_{N-c} L_{N-c} \notag ,
\end{align}
where 
$\check{T}_{N-\xi}$
means 
removing 
$T_{N-\xi}$ 
from the product 
$T_{N-1} T_{N-2} \cdots T_{N-c}$. 
Combined this with 
$(\ref{X-2})$, 
we have 
\begin{align}
L_N \cdot B   
&= L_N + \sum_{c=1}^{\mu_n^{(k)}-1} \big(q^c \, L_N (T_N T_{N-1} \cdots T_{N-c})\big) \label{LB}\\
&= L_N + \sum_{c=1}^{\mu_n^{(k)}-1} \Big\{ q^c(q-q^{-1})
	\Big(\sum_{\xi=1}^c T_{N-1} T_{N-2} \cdots \check{T}_{n-\xi} \cdots T_{N-c} L_{N-\xi+1}\Big) \notag \\
	& \hspace{8em} + q^c\,T_{N-1} T_{N-2} \cdots T_{N-c} L_{N-c} \Big\} \notag \\
&= L_N + \sum_{\xi=1}^{\mu_n^{(k)}-1} 
	\Big( \sum_{c=\xi}^{\mu_n^{(k)}-1} q^c(q-q^{-1}) T_{N-1} T_{N-2} 
		\cdots \check{T}_{n-\xi} \cdots T_{N-c}\Big) L_{N-\xi+1} \notag \\
	& \qquad + \sum_{c=1}^{\mu_n^{(k)}-1} q^c T_{N-1} \cdots T_{N-c}L_{N-c} \notag
\end{align}
Similarly, we have 
\begin{align}
L_{N+1} \cdot C &=L_{N+1} + \sum_{\xi=0}^{\mu_n^{(k)}-1} 
	\Big( \sum_{c=\xi}^{\mu_n^{(k)}-1} q^{c+1}(q-q^{-1}) T_{N} T_{N-1} 
		\cdots \check{T}_{n-\xi} \cdots T_{N-c}\Big) L_{N-\xi+1}  \label{LC}\\
	& \qquad + \sum_{c=0}^{\mu_n^{(k)}-1} q^{c+1} T_{N} T_{N-1} \cdots T_{N-c}L_{N-c}, \notag
\end{align}
by using the formula. 
We also have  
\begin{align}
&L_{N+1} (T_{N+1} T_{N+2} \cdots T_{N+c}) \label{LN+1}\\
&= \Bigg( (q^{-1}-q)^c 
	+\sum_{\xi=1}^c (q^{-1} -q)^{c-\xi} 
	\Big(\sum_{(i_1,\cdots,i_\xi) \text{ s.t. } \atop 1 \leq i_1 < i_2 < \cdots <i_\xi \leq c} 
	T_{N+i_1}T_{N+i_2}\cdots T_{N+i_\xi}\Big)\Bigg) \cdot L_{N+c+1}, \notag
\end{align}
which is proved by induction on $c$ thanks to Lemma \ref{lem-L}. 
$(\ref{LB})$ and $(\ref{LC})$, 
by making use of (\ref{XX-1}),(\ref{XX-2}), 
combined with Lemma \ref{lem-L} 
implies that 
\begin{align} 
&A\cdot  L_N \cdot B - L_{N+1} \cdot C \cdot D 
\label{A LN B LN+1}\\*
&= L_N \cdot B 
	- \Big(1+ q(q-q^{-1}) 
	+ \sum_{c=1}^{\mu_n^{(k)}-1} (q^{c+1}(q-q^{-1})T_{N-1}T_{N-2}\cdots T_{N-c}\Big)\cdot L_{N+1} \cdot D. \notag 
\end{align}
Note that 
$m_\mu T_w= q^{\ell(w)} m_\mu$ for $w \in \FS_\mu$, 
and so 
$(\ref{LB})$ implies that 
\begin{align} 
m_\mu\cdot (L_N \cdot B) =m_\mu\, q^{2(\mu_n^{(k)}-1)} (L_N + L_{N-1} + \cdots + L_{N-\mu_n^{(k)}+1}). 
\label{mu LN B}
\end{align}
Similarly, $(\ref{X-4})$ and $(\ref{LN+1})$ implies that 
\begin{align}
& m_\mu \cdot \Big(1+ q(q-q^{-1}) 
	+ \sum_{c=1}^{\mu_n^{(k)}-1} (q^{c+1}(q-q^{-1})T_{N-1}T_{N-2}\cdots T_{N-c}\Big)\cdot L_{N+1} \cdot D 
\label{mu LN+1 D}\\*
&= m_\mu \,q^{2(\mu_n^{(k)})}\,(L_{N+1}+L_{N+2}+\cdots + L_{N+\mu_1^{(k+1)}}). \notag
\end{align} 
By 
$(\ref{A LN B LN+1})$, 
$(\ref{mu LN B})$ 
and 
$(\ref{mu LN+1 D})$, 
we have 
\begin{align}
&q^{-\mu_{n}^{(k)} - \mu_{1}^{(k)} +1} m_\mu 
		(A \cdot L_{N} \cdot B - L_{N+1}\cdot  C  \cdot D) 
\label{+- -+ iii zenhan}\\
&= m_\mu \, q^{\mu_n^{(k)}-\mu_{1}^{(k+1)}} 
	\Big( q^{-1} \big(L_N+L_{N-1}+\cdots +L_{N-\mu_n^{(k)}+1} \big) \notag \\
&\hspace{10em} - q\big(L_{N+1}+L_{N+2} +\cdots +L_{N+\mu_{1}^{(k+1)}} \big)\Big) \notag\\
&= \ka_{n,k}\ka_{(1,k+1)}^- \big( q^{-1} \s_{(n,k)} - q \s_{(1,k+1)}\big) (m_\mu). \notag 
\end{align}
Now 
$(\ref{+- -+ iii})$, 
$(\ref{+- -+ iii kouhan})$ 
and 
$(\ref{+- -+ iii zenhan})$ 
imply 
(\roiii). 
\end{proof}

We can now prove Proposition \ref{prop-main-1}. 
\begin{proof}[(Proof of Proposition $\ref{prop-main-1}$) ]
By the relations 
$(\ref{UU-2})$, $(\ref{UU-5})$ and 
$(\ref{UU-3})-(\ref{UU-8})$ 
together with 
Proposition \ref{prop-ef-fe}, 
one sees that 
the homomorphism 
$\wt{\rho}$ 
in Proposition \ref{prop-main-1} 
is well-defined. 
On the other hand, 
by 
Proposition \ref{prop-surjective-Borel}, 
we have 
$\wt{\rho}(\wt{\CB}^+) = \Sc_{n,r}^{\geq 0}$ 
and 
$\wt{\rho}(\wt{\CB}^-) = \Sc_{n,r}^{\leq 0}$. 
Moreover, 
we know that 
$\Sc_{n,r}=\Sc_{n,r}^{\leq 0} \Sc_{n,r}^{\geq0}$ 
by Theorem \ref{thm-iso-Borel}. 
Thus, 
we see that 
$\wt{\rho}$ is surjective. 

By Theorem \ref{Uqgl-S1} (\roiii) and (\roiv)  combined with Theorem \ref{thm-iso-Borel}, 
$\wt{\rho}|_{_\CA \wt{U}_q}$ 
gives a surjection from $_\CA \wt{U}_q$ to $_\CA \Sc_{n,r}$. 
The proposition  is now proved. 
\end{proof}

%%%%%%%%%%%%%%%%%%%%%%%%%%%%%%%%%%%%%%%%%%%%%%%%%%%%%%%%%%%%%%%%%%%%%%%%%%%%%%%%%%%%%%%%%%%%%%%%%%%%%%%%%%%%%%%%%

%%%%%%%%%%%%%%%%%%%%%%%%%%%%%%%%%%%%%%%%%%%%%%%%%%%%%%%%%%%%%%%%%%%%%%%%%%%%%%%%%%%%%%%%%%%%%%%%%%%%%%%%%%%%%%%%%

\section{Presentations of cyclotomic $q$-Schur algebras} 
Recall that $\Sc_{n,r}$ is the cyclotomic $q$-Schur algebra over $\CK=\QQ(q,\g_1,\cdots,\g_r)$ 
with parameters $q,\g_1,\cdots,\g_r$. 

\para 
For presenting  cyclotomic $q$-Schur algebras by 
generators and relations, 
we prepare some notations. 
Let 
$\CK \lan x_1,\cdots,x_{m-1}\ran$ 
be the non-commutative polynomial ring over $\CK$ 
with indeterminate elements $x_1,\cdots,x_{m-1}$. 
Note that 
$\CK \lan x_1,\cdots,x_{m-1}\ran$ 
is isomorphic to the free $\CK$-algebra generated by 
$x_1,\cdots,x_{m-1}$. 
Put $\mathbf{x}=\{x_1,\cdots,x_{m-1}\}$. 
For $(i,k) \in \vG'$, 
set $x_{(i,k)}=x_{p_k+i}$, 
where $p_k=(k-1)n$.  
Thus, 
we have 
$\mathbf{x}=\{x_{(i,k)}\,|\, (i,k) \in \vG'\}$ 
and 
$\CK \lan x_1,\cdots, x_{m-1}\ran = \CK \lan \mathbf{x} \ran = \CK \lan x_{(i,k)} \,|\, (i,k) \in \vG' \ran$. 

For 
$g(\mathbf{x}) \in \CK\lan \mathbf{x} \ran$, 
let 
$g(\vf^+)$ \big(resp. $g(\vf^-)$\big)  
be the element of $\Sc_{n,r}$ 
obtained by replacing 
$x_{(i,k)}$ with $\vf^+_{(i,k)}$ \big(resp. $\vf^-_{(i,k)}$\big) 
in $g(\mathbf{x})$. 
Then, 
we have the following lemma. 

\begin{lem} \label{def g-mu-(i,k)}
For  $\la \in \vL_{n,r}$ and $(i,k) \in \vG$, 
there exists an element 
\[g^{\la}_{(i,k)}=\sum_{j} r_j\, g_j^-(\mathbf{x}) \otimes g_j^+(\mathbf{x}) 
\in \CK\lan \mathbf{x} \ran \otimes_{\CK} \CK\lan \mathbf{x} \ran 
\quad \big(r_j \in \CK,\, g_j^-(\mathbf{x}), g_j^+(\mathbf{x}) \in \CK \lan \mathbf{x} \ran \big)\] 
such that 
$\s_{(i,k)}^\la = \sum_j r_j\, g_j^-(\vf^{-})\, g_j^+(\vf^{+})\, \vf_{\la, \la}^1$. 
\end{lem}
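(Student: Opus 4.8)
The plan is to avoid computing $\s_{(i,k)}^\la$ by hand and instead read the statement off the triangular decomposition of $\Sc_{n,r}$ together with the description of its Borel subalgebras. First I would record two structural facts. Since $\s_{(i,k)}^\la \in \Hom_{\He_{n,r}}(M^\la,M^\la)$ and $\vf_{\la,\la}^1$ is the identity endomorphism of $M^\la$, we have $\s_{(i,k)}^\la = \vf_{\la,\la}^1\,\s_{(i,k)}^\la\,\vf_{\la,\la}^1$, so $\vf_{\la,\la}^1$ plays the role of the idempotent $1_\la$; moreover, by the definition $\ka_{(j,l)}^{\pm}=\sum_{\mu}q^{\pm\mu_j^{(l)}}\vf_{\mu,\mu}^1$ and orthogonality of the $\vf_{\mu,\mu}^1$, one gets $\ka_{(j,l)}^{\pm}\vf_{\la,\la}^1=q^{\pm\la_j^{(l)}}\vf_{\la,\la}^1=\vf_{\la,\la}^1\ka_{(j,l)}^{\pm}$. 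Second, $\s_{(i,k)}^\la$ is homogeneous of degree $0$ for the $Q$-grading of $\Sc_{n,r}$ (it preserves the weight-$\la$ space $M^\la$), where $\deg\vf_{(i,k)}^{-}=-\a_{(i,k)}$, $\deg\vf_{(i,k)}^{+}=\a_{(i,k)}$ and $\deg\ka_{(i,k)}^{\pm}=0$.

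Next I would invoke Theorem \ref{thm-iso-Borel}(\roiii), giving $\Sc_{n,r}=\Sc_{n,r}^{\leq 0}\cdot\Sc_{n,r}^{\geq 0}$. As both factors are graded subalgebras, I can write the degree-$0$ element $\s_{(i,k)}^\la$ as a finite sum $\s_{(i,k)}^\la=\sum_a b_a c_a$ with $b_a\in\Sc_{n,r}^{\leq 0}$ and $c_a\in\Sc_{n,r}^{\geq 0}$ homogeneous of opposite degrees $-\beta_a$ and $\beta_a$. By Proposition \ref{prop-surjective-Borel}, $\Sc_{n,r}^{\leq 0}$ is generated by the $\vf_{(i,k)}^{-}$ and $\ka_{(i,k)}^{\pm}$, and $\Sc_{n,r}^{\geq 0}$ by the $\vf_{(i,k)}^{+}$ and $\ka_{(i,k)}^{\pm}$, so each $b_a$ and each $c_a$ is a $\CK$-combination of words in these generators.

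Then I would sweep the toral factors to the outer ends using \eqref{UU-2}--\eqref{UU-4}: each time a $\ka_{(j,l)}^{\pm}$ passes a $\vf^{\mp}$ it only contributes a power of $q$. Thus every $b_a$ becomes $\sum_\beta \ka_\beta\,g_\beta^{-}(\vf^{-})$ and every $c_a$ becomes $\sum_\gamma g_\gamma^{+}(\vf^{+})\,\ka_\gamma$, where $\ka_\beta,\ka_\gamma$ are monomials in the $\ka^{\pm}$ and $g_\beta^{-},g_\gamma^{+}$ are noncommutative words in the $\vf_{(i,k)}^{-}$, resp.\ $\vf_{(i,k)}^{+}$. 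Substituting, multiplying by $\vf_{\la,\la}^1$ on both sides, and collapsing the outer $\ka$'s to scalars via the displayed action on $\vf_{\la,\la}^1$, I obtain $\s_{(i,k)}^\la=\sum_j r_j\,g_j^{-}(\vf^{-})\,g_j^{+}(\vf^{+})\,\vf_{\la,\la}^1$ with $r_j\in\CK$; keeping the decomposition homogeneous guarantees each surviving term has degree $0$ and already lands in $M^\la$, so the left copy of $\vf_{\la,\la}^1$ is redundant. Setting $g_{(i,k)}^\la=\sum_j r_j\,g_j^{-}(\mathbf{x})\otimes g_j^{+}(\mathbf{x})$, where $g_j^{\pm}(\mathbf{x})$ is the word obtained by replacing $\vf_{(i,k)}^{\pm}$ with $x_{(i,k)}$, yields the lemma.

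The only delicate points are bookkeeping rather than substance: I must keep the decomposition $Q$-homogeneous so that the spurious left idempotent can be discarded, and I must be sure the toral part really sweeps to the ends, which is exactly what \eqref{UU-3}--\eqref{UU-4} provide. All the genuine work — that such degree-$0$ elements lie in the product of the two Borel subalgebras at all, and the underlying Hecke-algebra computations — has already been carried out in Proposition \ref{prop-ef-fe} and Theorem \ref{thm-iso-Borel}, so no new calculation with the $L_i$ is needed. I would mention, but not pursue, the constructive alternative of extracting $\s_{(1,k+1)}^\la$ from $\vf_{(n,k)}^{-}\vf_{(n,k)}^{+}\vf_{\la,\la}^1$ through Lemma \ref{lem-vf} and propagating to interior rows by induction along $\vG$; the main obstacle there is precisely the transport of $\s_{(i,k)}$ across non-adjacent rows, which the decomposition argument sidesteps entirely.
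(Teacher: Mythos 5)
Your proposal is correct and follows essentially the same route as the paper's own proof: both rest on the triangular decomposition $\Sc_{n,r}=\Sc_{n,r}^{\leq 0}\cdot\Sc_{n,r}^{\geq 0}$ from Theorem \ref{thm-iso-Borel}, the generation of the Borel subalgebras by $\vf_{(i,k)}^{\mp}$ and $\ka_{(i,k)}^{\pm}$ from Proposition \ref{prop-surjective-Borel}, and the relations \eqref{UU-3}, \eqref{UU-4} together with the orthogonal-idempotent expansion of the $\ka$'s to sweep the toral factors onto $\vf_{\la,\la}^1$ and absorb them as powers of $q$. Your additional bookkeeping (degree-$0$ homogeneity and the redundancy of the left copy of $\vf_{\la,\la}^1$) just spells out what the paper leaves implicit.
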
 

\begin{proof}
By Theorem \ref{thm-iso-Borel} (\roiii), 
we have $\Sc_{n,r}=\Sc_{n,r}^{\leq 0} \cdot \Sc_{n,r}^{\geq 0}$. 
On the other hand, 
By Proposition \ref{prop-surjective-Borel}, 
$\Sc_{n,r}^{\leq 0}$ (resp. $\Sc_{n,r}^{\geq 0}$) 
is generated by $\vf_{(i,k)}^{-}$ 
(resp. $\vf_{(i,k)}^{+}$) for $(i,k) \in \vG'$ 
and 
$\ka_{(i,k)}^{\pm}$ for $(i,k) \in \vG$. 
Recall that 
$\ka_{(i,k)}^{\pm}=\sum_{\mu \in \vL_{n,r}} q^{\pm \mu_i^{(k)}} \vf_{\mu,\mu}^1$, 
and that 
$\vf_{\mu,\mu}^1$ is the identity map on $M^\mu$ and the zero map on $M^\tau$ ($\tau \not=\mu$). 
Moreover, 
$\{\vf_{\mu,\mu}^1\,|\, \mu \in \vL_{n,r}\}$ 
is a set of pairwise orthogonal idempotents. 
Combined with the relation (\ref{UU-3}) and (\ref{UU-4}), 
we obtain the lemma. 
\end{proof}

\para 
In general, 
$g_{(i,k)}^\la \in \CK \lan \mathbf{x} \ran \otimes_{\CK} \CK \lan \mathbf{x} \ran $ 
satisfying the condition in Lemma \ref{def g-mu-(i,k)} 
is not unique. 
Throughout the rest of this paper, 
for $(i,k) \in \vG'$ and  $\la \in \vL_{n,r}$, 
we fix $g_{(i,k)}^\la$'s  
once and for all. 

Let 
$\CK\lan F_1,\cdots, F_{m-1}, E_1,\cdots,E_{m-1} \ran$ 
be the non-commutative polynomial ring over $\CK$ 
with indeterminate elements 
$F_1,\cdots, F_{m-1},E_1,\cdots,E_{m-1}$. 
Put 
$F=\{F_i\,|\, 1 \leq i \leq m-1\}$ 
and 
$E=\{E_i\,|\, 1 \leq i \leq m-1\}$.  
For $(i,k) \in \vG'$, 
set 
$F_{(i,k)}=F_{p_k+i}$ 
and 
$E_{(i,k)}=E_{p_k+i}$. 
For $g(\mathbf{x}) \in \CK\lan \mathbf{x} \ran$, 
let 
$g(F)$ (resp. $g(E)$) 
be the element of $\CK\lan F \ran$ (resp. $\CK \lan E \ran$) 
obtained by replacing $x_{(i,k)}$ with $F_{(i,k)}$ (resp. $E_{(i,k)}$) in $g(\mathbf{x})$. 
For $g_{(i,k)}^\la = \sum_{j} r_j\, g_j^-(\mathbf{x}) \otimes g_j^+(\mathbf{x}) 
	\in \CK \lan \mathbf{x} \ran \otimes_{\CK} \CK \lan \mathbf{x} \ran$ 
	($(i,k) \in \vG,\, \mu \in \vL_{n,r}$) 
in Lemma \ref{def g-mu-(i,k)}, 
put  
\begin{align} 
\label{def g FE}
g_{(i,k)}^\la(F,E)=\sum_{j} r_j\, g_j^-(F) \cdot g_j^+(E) \in \CK\lan F,E \ran.
\end{align}
\para 
\label{def Snr}
Let 
$\CS_{n,r}$ 
be the associative algebra over $\QQ(q,\g_1,\cdots,\g_r)$ with $1$ 
generated by 
$E_{(i,k)}, F_{(i,k)}$ $((i,k)\in \vG')$ 
and  
$1_\la$ $(\la \in \vL_{n,r})$ 
with the following defining relations: 
\begin{align}
&1_\la 1_\mu = \d_{\la,\mu} 1_\la, \quad \sum_{\la \in \vL_{n,r}} 1_\la =1, \label{S-1}\\
&E_{(i,k)} 1_\la = 
	\begin{cases} 
		1_{\la + \a_{(i,k)}} E_{(i,k)} & \text{ if }\la + \a_{(i,k)} \in \vL_{n,r}, \label{S-2}\\
		0 & \text{otherwise},
	\end{cases}	\\
&F_{(i,k)} 1_\la = 
	\begin{cases} 
		1_{\la - \a_{(i,k)}} F_{(i,k)} & \text{ if }\la - \a_{(i,k)} \in \vL_{n,r}, \label{S-3}\\
		0 & \text{otherwise},
	\end{cases}	\\
&1_{\la} E_{(i,k)}  = 
	\begin{cases} 
		E_{(i,k)} 1_{\la - \a_{(i,k)}} & \text{ if }\la - \a_{(i,k)} \in \vL_{n,r}, \label{S-4}\\
		0 & \text{otherwise},
	\end{cases}	\\
&1_\la F_{(i,k)} = 
	\begin{cases} 
		F_{(i,k)} 1_{\la + \a_{(i,k)}} & \text{ if }\la + \a_{(i,k)} \in \vL_{n,r}, \label{S-5}\\
		0 & \text{otherwise},
	\end{cases}	\\
&E_{(i,k)}F_{(j,l)} - F_{(j,l)}E_{(i,k)} \label{S-6}
		=\d_{(i,k),(j,l)} \sum_{\la \in \vL_{n,r}} \eta_{(i,k)}^\la, \\
&E_{(i \pm 1,k)}(E_{(i,k)})^2 - (q+q^{-1}) E_{(i,k)} E_{(i \pm 1,k)} E_{(i,k)} 
	+ (E_{(i,k)})^2 E_{(i \pm 1,k)}=0 , \label{S-7}\\
& E_{(i,k)} E_{(j,l)}= E_{(j,l)} E_{(i,k)} \qquad (|(p_k+i)-(p_l+j)| \geq 2), \notag \\
&F_{(i \pm 1,k)}(F_{(i,k)})^2 - (q+q^{-1}) F_{(i,k)} F_{(i \pm 1,k)} F_{(i,k)} 
	+ (F_{(i,k)})^2 F_{(i \pm 1,k)}=0,  \label{S-8}\\
& F_{(i,k)} F_{(j,l)}= F_{(j,l)} F_{(i,k)} \qquad (|(p_k+i)-(p_l+j)| \geq 2), \notag 
\end{align}
where 
\[\eta_{(i,k)}^\la= \begin{cases} 
\Big( - \g_{k+1} [ \la_{n}^{(k)} - \la_{1}^{(k+1)}]  \\
			\hspace{2em} +q^{\la_n^{(k)}-\la_1^{(k+1)}} 
			\big( q^{-1}\, g_{(n,k)}^\la(F,E) -q \, g_{(1,k+1)}^\la(F,E) \big)\Big) 1_{\la}
			&\text{ if } i=n, \\[2mm]
[\la_{i}^{(k)} - \la_{i+1}^{(k)}] 1_\la & \text{otherwise}.			
\end{cases}
\]

\para 
It is clear that 
$\CS_{n,r}$ 
is a homomorphic image of 
$\wt{\CS}_q (\vL_{n,r})$ defined in Section \ref{Sq}. 
Thus, 
$\CS_{n,r}$ is a homomorphic image of $\wt{U}_q$. 
In fact, 
as the following lemma shows, 
$\CS_{n,r}$ is isomorphic to 
$\CS_q^{\eta_{\vL_{n,r}}}$, where 
$\eta_{\vL_{n,r}}=\{\eta_{(i,k)}^\la \,|\, (i,k) \in \vG', \, \la \in \vL_{n,r}\}$. 

\begin{lem}
For $(i,k) \in \vG'$ and $\la \in \vL_{n,r}$, 
we have 
$\eta_{(i,k)}^\la \in \wt{\CS}_q^- \wt{\CS}_q^+ 1_\la$ 
and  
$\deg (\eta_{(i,k)}^\la)=0$.
Thus, 
$\CS_{n,r}$ is isomorphic to $\CS_q^{\eta_{\vL_{n,r}}}$. 
\end{lem}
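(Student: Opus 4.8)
The plan is to verify the two asserted properties of $\eta_{(i,k)}^\la$ as an element of $\wt{\CS}_q=\wt{\CS}_q(\vL_{n,r})$ and then read off the isomorphism from a comparison of presentations. For the membership $\eta_{(i,k)}^\la\in\wt{\CS}_q^-\wt{\CS}_q^+1_\la$, I would first dispose of the case $i\ne n$: there $\eta_{(i,k)}^\la=[\la_i^{(k)}-\la_{i+1}^{(k)}]1_\la$ is a scalar multiple of $1_\la$, and since the units of $\wt{\CS}_q^\pm$ lie in $\wt{\CS}_q^\pm$ we trivially have $1_\la=1\cdot 1\cdot 1_\la\in\wt{\CS}_q^-\wt{\CS}_q^+1_\la$. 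For $i=n$ the first summand of $\eta_{(n,k)}^\la$ is again a scalar (in $\CK$) times $1_\la$, so it suffices to treat $g_{(n,k)}^\la(F,E)1_\la$ and $g_{(1,k+1)}^\la(F,E)1_\la$. By $(\ref{def g FE})$ one has $g_{(n,k)}^\la(F,E)=\sum_j r_j\,g_j^-(F)g_j^+(E)$, and, evaluating the words $g_j^-(F)$ and $g_j^+(E)$ on the generators of $\wt{\CS}_q$, we have $g_j^-(F)\in\wt{\CS}_q^-$ and $g_j^+(E)\in\wt{\CS}_q^+$; hence $g_{(n,k)}^\la(F,E)\in\wt{\CS}_q^-\wt{\CS}_q^+$, and likewise for $g_{(1,k+1)}^\la(F,E)$, so the membership follows.

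For the degree assertion the case $i\ne n$ is immediate since $\deg 1_\la=0$. The substance is to show $\deg\big(g_{(n,k)}^\la(F,E)1_\la\big)=0$ (and likewise for $(1,k+1)$). Here I would use that $\s_{(i,k)}^\la$ lies in $\Hom_{\He_{n,r}}(M^\la,M^\la)$, so it is homogeneous of degree $0$ for the weight grading of $\Sc_{n,r}$ in which $\vf^+_{(i,k)}$, $\vf^-_{(i,k)}$ have degrees $+\a_{(i,k)}$, $-\a_{(i,k)}$ and $\vf_{\la,\la}^1$ has degree $0$. Lemma \ref{def g-mu-(i,k)} writes $\s_{(i,k)}^\la=\sum_j r_j\,g_j^-(\vf^-)g_j^+(\vf^+)\vf_{\la,\la}^1$ as a sum of homogeneous elements, and the degree-$\b$ part of this sum maps $M^\la$ into the weight-$(\la+\b)$ component; since $\s_{(i,k)}^\la$ is supported on $M^\la$, the terms with $\deg g_j^-+\deg g_j^+\ne 0$ vanish in $\Sc_{n,r}$ and may be discarded. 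Thus we may and do take the fixed representative $g_{(i,k)}^\la$ with $\deg g_j^-(\mathbf{x})+\deg g_j^+(\mathbf{x})=0$ for every $j$, so that $g_{(n,k)}^\la(F,E)$ and $g_{(1,k+1)}^\la(F,E)$ are homogeneous of degree $0$ in $\wt{\CS}_q$, whence $\deg\eta_{(n,k)}^\la=0$.

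Finally, to obtain $\CS_{n,r}\cong\CS_q^{\eta_{\vL_{n,r}}}$ I would compare presentations. Having checked that $\eta_{\vL_{n,r}}$ meets the two requirements imposed in \ref{def Sq}, the quotient $\CS_q^{\eta_{\vL_{n,r}}}=\wt{\CS}_q(\vL_{n,r})/\wt{\CI}^{\eta_{\vL_{n,r}}}$ is defined, and in it each $\t_{(i,k)}^\la$ equals $\eta_{(i,k)}^\la$, a word in the $E$'s, $F$'s and $1_\la$'s. Performing the corresponding Tietze elimination of the redundant generators $\t_{(i,k)}^\la$ turns the defining relations $(\ref{Sq-1})$-$(\ref{Sq-9})$ of $\wt{\CS}_q(\vL_{n,r})$ into the relations $(\ref{S-1})$-$(\ref{S-8})$ of $\CS_{n,r}$: relation $(\ref{Sq-7})$ becomes exactly $(\ref{S-6})$, relation $(\ref{Sq-2})$ becomes derivable from $\eta_{(i,k)}^\la=1_\la\eta_{(i,k)}^\la 1_\la$ (which holds by the degree-$0$ property together with $(\ref{Sq-3})$-$(\ref{Sq-6})$), and the remaining relations match verbatim; the two algebras therefore share a presentation and are isomorphic. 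The one genuinely delicate point is the degree computation for $i=n$: unlike the membership, it is not automatic for an arbitrary representative $g_{(n,k)}^\la$, and one must invoke the degree-$0$ homogeneity of $\s_{(n,k)}^\la$ to justify replacing $g_{(n,k)}^\la$ by its degree-balanced part.
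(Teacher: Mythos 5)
Your proof is correct and takes essentially the same route as the paper's: the membership $\eta_{(i,k)}^\la \in \wt{\CS}_q^-\wt{\CS}_q^+1_\la$ is read off from the definitions, and the degree-zero claim is deduced from the fact that $\s_{(i,k)}^\la \in \Hom_{\He_{n,r}}(M^\la,M^\la)$ is a weight-preserving map combined with Lemma \ref{def g-mu-(i,k)} and the degrees of $\vf_{(j,l)}^{\pm}$. Your explicit remark that the fixed representatives $g_{(i,k)}^\la$ must (and, by the weight argument, always can) be taken degree-balanced — since unbalanced terms vanish in $\Sc_{n,r}$ but not a priori in $\wt{\CS}_q$ — together with the Tietze elimination of the $\t_i^\la$, makes precise two points that the paper's brief proof leaves implicit.
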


\begin{proof} 
From the definitions of 
$g_{(n,k)}^\la(F,E)$ and $g_{(1,k+1)}^\la(F,E)$, 
it is clear that 
$\eta_{(i,k)}^\la \in \wt{\CS}_q^- \wt{\CS}_q^+ 1_\la$. 
Note that 
$\s_{(i,k)}^\la \in \Hom_{\He_{n,r}}(M^\la, M^\la)$,  
Lemma \ref{def g-mu-(i,k)} 
together with 
the definitions of $
\vf_{(j,l)}^{\pm}$ 
imply that 
$\deg (g_{(i,k)}^\la(F,E))=0$. 
Thus, we have 
$\deg (\eta_{(i,k)}^\la)=0$.
\end{proof}

From now on, 
under the isomorphism 
$\CS_{n,r} \cong \CS_q^{\eta_{\vL_{n,r}}}$, 
we apply to $\CS_{n,r}$ the results in Section \ref{Sq} and \ref{specialization} 
for $\CS_q^{\eta_{\vL_{n,r}}}$. 
Recall that 
$\wt{\rho} : \wt{U}_q \ra \Sc_{n,r}$ 
and 
$\Psi : \wt{U}_q \ra \CS_{n,r}$ 
are surjective homomorphisms of algebras 
given in Proposition  \ref{prop-main-1} and the paragraph \ref{def Sq} respectively. 
We have the following proposition.

\begin{prop}
\label{prop surjection Sq Snr}
There exists a surjective homomorphism of algebras 
$\Phi : \CS_{n,r} \ra \Sc_{n,r}$ 
such that 
\begin{align}\label{def Phi}
&\Phi(E_{(i,k)})=\vf_{(i,k)}^+,\,\, 
\Phi(F_{(i,k)})=\vf_{(i,k)}^-, \,\,
\Phi(1_\la)= \vf_{\la,\la}^1.  
\end{align}
%\Psi 
In particular,  
the surjection 
$\wt{\rho} : \wt{U}_q \ra \Sc_{n,r}$ 
factors through the algebra $\CS_{n,r}$, 
namely we have 
$\wt{\rho}=\Phi \circ \Psi$.  
Moreover, 
by restricting $\Phi$ to $_\CA \CS_{n,r}$, 
we have a surjective homomorphism 
$\Phi |_{_\CA \CS_{n,r}} : \,_\CA \CS_{n,r} \ra \,_\CA \Sc_{n,r}$. 
\end{prop}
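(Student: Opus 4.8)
The plan is to construct $\Phi$ directly from the presentation of $\CS_{n,r}$. Since $\CS_{n,r}$ is given by generators $E_{(i,k)},F_{(i,k)},1_\la$ subject to the relations (\ref{S-1})--(\ref{S-8}), to obtain a well-defined algebra homomorphism it suffices to check that the proposed images $\vf^+_{(i,k)},\vf^-_{(i,k)},\vf^1_{\la,\la}$ in $\Sc_{n,r}$ satisfy all of these relations. First I would clear the routine ones: (\ref{S-1}) holds because $\{\vf^1_{\la,\la}\mid \la\in\vL_{n,r}\}$ is a complete set of pairwise orthogonal idempotents in $\Sc_{n,r}$; the weight-shift relations (\ref{S-2})--(\ref{S-5}) follow from the behaviour of $\vf^\pm_{(i,k)}$ recorded in Lemma \ref{lem-vf}, since $\vf^+_{(i,k)}$ (resp. $\vf^-_{(i,k)}$) maps $M^\la$ into $M^{\la+\a_{(i,k)}}$ (resp. $M^{\la-\a_{(i,k)}}$); and the Serre relations (\ref{S-7})--(\ref{S-8}) are literally the already-established relations (\ref{UU-7})--(\ref{UU-8}).

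The substantive relation is (\ref{S-6}), and here I would invoke Proposition \ref{prop-ef-fe}. For $(i,k)$ with $i\neq n$, part (\roii) of that proposition gives the commutator as $(\ka_{(i,k)}\ka^-_{(i+1,k)}-\ka^-_{(i,k)}\ka_{(i+1,k)})/(q-q^{-1})$; expanding $\ka^\pm_{(i,k)}=\sum_\la q^{\pm\la^{(k)}_i}\vf^1_{\la,\la}$ rewrites this as $\sum_\la [\la^{(k)}_i-\la^{(k)}_{i+1}]\,\vf^1_{\la,\la}$, which is exactly $\sum_\la \Phi(\eta^\la_{(i,k)})$. For $i=n$, part (\roiii) produces in addition the term $\ka_{(n,k)}\ka^-_{(1,k+1)}(q^{-1}\s_{(n,k)}-q\s_{(1,k+1)})$, and this is where Lemma \ref{def g-mu-(i,k)} enters: it states precisely that $\s^\la_{(i,k)}=\Phi\big(g^\la_{(i,k)}(F,E)\big)\,\vf^1_{\la,\la}$, so using $\ka_{(n,k)}\ka^-_{(1,k+1)}=\sum_\la q^{\la^{(k)}_n-\la^{(k+1)}_1}\vf^1_{\la,\la}$ and orthogonality I can rewrite the term as $\sum_\la q^{\la^{(k)}_n-\la^{(k+1)}_1}\big(q^{-1}\Phi(g^\la_{(n,k)}(F,E))-q\,\Phi(g^\la_{(1,k+1)}(F,E))\big)\vf^1_{\la,\la}$, matching the definition of $\eta^\la_{(n,k)}$. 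This matching is the main obstacle of the proof, though the genuine computation has already been carried out in Proposition \ref{prop-ef-fe}, so the step is in the end only a bookkeeping identification of two expressions.

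Once $\Phi$ is well-defined, surjectivity is immediate: by Proposition \ref{prop-surjective-Borel} together with the triangular decomposition $\Sc_{n,r}=\Sc^{\leq 0}_{n,r}\Sc^{\geq 0}_{n,r}$ of Theorem \ref{thm-iso-Borel}, the algebra $\Sc_{n,r}$ is generated by the $\vf^\pm_{(i,k)}$ and the $\ka^\pm_{(i,k)}$, and all of these lie in the image of $\Phi$, the latter because $\ka^\pm_{(i,k)}=\sum_\la q^{\pm\la^{(k)}_i}\Phi(1_\la)$. For the factorization, I would note that $\wt U_q$ is generated by $e_i,f_i,K^\pm_i$ (the $\t_i$ being recovered from $\t_i=e_if_i-f_ie_i$), and then check that $\wt\rho$ and $\Phi\circ\Psi$ agree on these generators using $\Psi(e_i)=E_i$, $\Psi(f_i)=F_i$, $\Psi(K^\pm_i)=\sum_\la q^{\pm\la_i}1_\la$ and the formulas (\ref{wtrho-1})--(\ref{wtrho-5}); both being algebra homomorphisms, they then coincide on all of $\wt U_q$. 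Finally, for the $\CA$-form I would avoid any fresh computation by exploiting $\wt\rho=\Phi\circ\Psi$: since $\Psi(\,_\CA\wt U_q)=\,_\CA\CS_{n,r}$ (recorded after the definition of $_\CA\CS_q$) and $\wt\rho(\,_\CA\wt U_q)=\,_\CA\Sc_{n,r}$ (Proposition \ref{prop-main-1}), applying $\Phi$ yields $\Phi(\,_\CA\CS_{n,r})=\Phi\Psi(\,_\CA\wt U_q)=\wt\rho(\,_\CA\wt U_q)=\,_\CA\Sc_{n,r}$, so $\Phi$ restricts to the desired surjection.
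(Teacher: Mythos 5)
Your proof is correct and follows essentially the same route as the paper: well-definedness by verifying the relations \eqref{S-1}--\eqref{S-8} in the images (with Proposition \ref{prop-ef-fe} plus the definition of $\eta_{(i,k)}^\la$ handling the key commutator relation \eqref{S-6}), then surjectivity, the factorization $\wt{\rho}=\Phi\circ\Psi$ by comparison on generators, and the $\CA$-form statement from Proposition \ref{prop-main-1}. Your minor variations — deducing surjectivity directly from Proposition \ref{prop-surjective-Borel} and Theorem \ref{thm-iso-Borel} rather than from surjectivity of $\wt{\rho}$, and eliminating $\t_i$ via $\t_i=e_if_i-f_ie_i$ to shorten the generator check — are harmless reorganizations of the same argument.
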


\begin{proof}
First, 
we prove that 
$\Phi$  
gives a well-defined algebra homomorphism 
from 
$\CS_{n,r}$ to $\Sc_{n,r}$. 
One can easily check that 
the relations 
$(\ref{S-1})-(\ref{S-5})$ hold 
in the images of $\Phi$ for corresponding generators. 
By $(\ref{UU-7})$ and $(\ref{UU-8})$, 
the relations 
$(\ref{S-7})$ and $(\ref{S-8})$ 
hold in the image of  $\Phi$. 
Proposition \ref{prop-ef-fe} 
together with the definition of $\eta_{(i,k)}^\la$ 
implies that 
$(\ref{S-6})$ 
holds in the image of  $\Phi$. 
Thus, 
$\Phi$ is well-defined.  
By investigating the images of generators under each map, 
we have 
$\wt{\rho} = \Phi \circ \Psi$, 
and  
$\Phi$ is  surjective. 
The last assertion follows from 
the restriction of $\wt{\rho}=\Phi \circ \Psi$ to $_\CA \wt{U}_q$ 
together with  
Proposition \ref{prop-main-1}. 
\end{proof}

Since $\vf_{\la,\la}^1 \not=0 $ in $\Sc_{n,r}$ for $\la \in \vL_{n,r}$, 
and since 
$\Phi$ 
is surjective, 
we have  the following corollary. 

\begin{cor}
For $\la \in \vL_{n,r}$, 
$1_\la \not=0$ in $\CS_{n,r}$. 
\end{cor}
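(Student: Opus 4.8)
The plan is to read off the corollary directly from the surjective algebra homomorphism $\Phi : \CS_{n,r} \ra \Sc_{n,r}$ constructed in Proposition \ref{prop surjection Sq Snr}, exploiting the fact that its value on each of the distinguished idempotents $1_\la$ is a manifestly nonzero element of the cyclotomic $q$-Schur algebra. Since an algebra homomorphism sends $0$ to $0$, if the target $\Phi(1_\la)$ is nonzero then $1_\la$ itself cannot vanish; so the whole argument reduces to exhibiting $\Phi(1_\la)$ as a nonzero endomorphism.

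First I would recall from the formula \eqref{def Phi} that $\Phi(1_\la)=\vf_{\la,\la}^1$. By the definition of these generators in \ref{def-vf}, the element $\vf_{\la,\la}^1$ is precisely the identity endomorphism of the $\He_{n,r}$-module $M^\la=m_\la\cdot \,_\CK\He_{n,r}$ (and the zero map on every $M^\tau$ with $\tau\not=\la$). Next I would note that $M^\la\not=0$, equivalently $m_\la\not=0$: this is standard in the theory (cf.\ \cite{DJM98}), the modules $M^\la$ being exactly the ``permutation''-type modules whose associated homomorphisms supply the cellular basis of $\Sc_{n,r}$, so they are nonzero free modules for every $\la\in\vL_{n,r}$. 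Consequently $\vf_{\la,\la}^1$, being the identity map on the nonzero space $M^\la$, is a nonzero element of $\Sc_{n,r}$. Finally, assuming $1_\la=0$ in $\CS_{n,r}$ would force $\vf_{\la,\la}^1=\Phi(1_\la)=\Phi(0)=0$, a contradiction; hence $1_\la\not=0$.

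There is essentially no obstacle here, the entire content being already packaged into the existence of $\Phi$ (Proposition \ref{prop surjection Sq Snr}) together with the nonvanishing of the $M^\la$. I would only flag two small points of care. One is that the relevant property of $\Phi$ is that it is a homomorphism, not that it is surjective; surjectivity plays no role in the argument. The other is the verification that $\vf_{\la,\la}^1\not=0$, which is the sole step needing justification and which I would dispatch by appealing to $M^\la\not=0$ as above, rather than by any explicit computation in $\Sc_{n,r}$.
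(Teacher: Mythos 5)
Your proof is correct and is essentially the paper's own argument: the paper likewise deduces the corollary from $\Phi(1_\la)=\vf_{\la,\la}^1\not=0$ in $\Sc_{n,r}$ (the paper phrases this as following from the nonvanishing of $\vf_{\la,\la}^1$ together with the surjection $\Phi$, though as you correctly observe only the homomorphism property is actually used). Your additional justification that $\vf_{\la,\la}^1$ is the identity on the nonzero module $M^\la$ just makes explicit what the paper treats as known.
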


\para 
For $\la =(\la^{(1)}, \cdots, \la^{(r)}) \in \vL_{n,r}$, 
we say that 
$\la$ is an $r$-partition of size $n$ 
if all $\la^{(k)}$ ($1 \leq k \leq r$) 
are partitions, namely all $\la^{(k)}$ are weakly decreasing sequences.  
On the other hand, 
we have 
$\vL_{n,r}^+=\big\{ \la \in \vL_{n,r} \,|\, 1_\la \not \in \CS_{n,r}(>\la) \big\}$ 
by \eqref{vL+}.
Then, 
we obtain the parametrization of the isomorphism classes of 
simple $\CS_{n,r}$-modules as follows. 

\begin{lem}
\label{lem vLnr r-partition}
For $\CS_{n,r} (\cong \CS_q^{\eta_{\vL_{n,r}}})$, we have 
\[\vL_{n,r}^+
= \big\{ \la \in \vL_{n,r} \,|\, \la \,: r \text{-partition} \big\}.
\]
In particular, 
the isomorphism classes of simple $\CS_{n,r}$-modules 
are parametrized by $\vL_{n,r}^+$.  
\end{lem}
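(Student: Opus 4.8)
The plan is to prove the two inclusions of $\vL_{n,r}^+$ and the set of $r$-partitions separately, using the criterion \eqref{vL+}: a weight $\la\in\vL_{n,r}$ lies in $\vL_{n,r}^+$ if and only if $1_\la\notin\CS_{n,r}(>\la)$, equivalently $\ol{1_\la}\neq 0$ in $\CS_{n,r}(\geq\la)/\CS_{n,r}(>\la)$, in which case $\D(\la)=\CS_{n,r}^-\cdot\ol{1_\la}$. The final assertion on simple modules then follows at once from Theorem~\ref{thm standard simple Sq}~(\roiii).

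First I would show $\vL_{n,r}^+\subseteq\{\la:\la\text{ an }r\text{-partition}\}$. If $\la$ is not an $r$-partition, some component $\la^{(k)}$ is not weakly decreasing, so there are $k$ and $i\in\{1,\dots,n-1\}$ with $\la_i^{(k)}<\la_{i+1}^{(k)}$; crucially $i\neq n$, so by \eqref{S-6} the relevant structure constant is the simple one $\eta_{(i,k)}^\la=[\la_i^{(k)}-\la_{i+1}^{(k)}]\,1_\la$. Assuming for contradiction $\la\in\vL_{n,r}^+$, we have $\ol{1_\la}\neq0$ in $\D(\la)$. Since $E_{(i,k)}1_\la=1_{\la+\a_{(i,k)}}E_{(i,k)}\in\CS_{n,r}(>\la)$ we get $E_{(i,k)}\cdot\ol{1_\la}=0$, while \eqref{S-6} gives $E_{(i,k)}F_{(i,k)}\cdot\ol{1_\la}=[\la_i^{(k)}-\la_{i+1}^{(k)}]\,\ol{1_\la}$. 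Thus $\ol{1_\la}$ is a nonzero highest weight vector, of $\mathfrak{sl}_2$-weight $\la_i^{(k)}-\la_{i+1}^{(k)}<0$, for the rank-one quantum group generated by $E_{(i,k)},F_{(i,k)}$ and the idempotents $1_\mu$ (whose commutator \eqref{S-6} is exactly the $U_q(\mathfrak{sl}_2)$ relation with $\ka_{(i,k)}\ka_{(i+1,k)}^{-}$). As $\D(\la)$ is finite dimensional by Proposition~\ref{tri decom} and this action is of type $1$, choosing the least $s\geq1$ with $F_{(i,k)}^{(s)}\cdot\ol{1_\la}=0$ and evaluating $E_{(i,k)}F_{(i,k)}^{(s)}\cdot\ol{1_\la}=[\la_i^{(k)}-\la_{i+1}^{(k)}-s+1]F_{(i,k)}^{(s-1)}\cdot\ol{1_\la}$ forces $\la_i^{(k)}-\la_{i+1}^{(k)}=s-1\geq0$, a contradiction. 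Hence $\la\notin\vL_{n,r}^+$.

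Next I would show every $r$-partition $\la$ lies in $\vL_{n,r}^+$. Here I would use the Weyl (cell) module $W^\la$ of the cyclotomic $q$-Schur algebra $\Sc_{n,r}$ from \cite{DJM98}, regarded as an $\CS_{n,r}$-module through the surjection $\Phi$ of Proposition~\ref{prop surjection Sq Snr}. Its weight space $\vf_{\mu,\mu}^1 W^\la=\Phi(1_\mu)W^\la$ has dimension equal to the number of semistandard $\la$-tableaux of type $\mu$, so it is nonzero only for $\mu\trleq\la$ and is one dimensional for $\mu=\la$; moreover $\mu\trleq\la$ is precisely $\la-\mu\in Q^+$, i.e. $\mu\leq\la$ in $P$. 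Now if we had $1_\la\in\CS_{n,r}(>\la)$, then since $\CS_{n,r}(>\la)$ is a two-sided ideal and $1_\la=1_\la 1_\la$ we could write $1_\la=\sum_{\mu>\la}x_\mu 1_\mu y_\mu$ with $x_\mu\in\CS_{n,r}^-$, $y_\mu\in\CS_{n,r}^+$ of degrees $\la-\mu$ and $\mu-\la$ respectively (by grading and idempotent bookkeeping). Applying $\Phi$ and acting on a nonzero $w\in\vf_{\la,\la}^1 W^\la$, each summand sends $w$ into $\vf_{\mu,\mu}^1 W^\la=0$ (as $\deg y_\mu=\mu-\la>0$), whence $w=\vf_{\la,\la}^1w=0$, a contradiction. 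Therefore $1_\la\notin\CS_{n,r}(>\la)$ and $\la\in\vL_{n,r}^+$.

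The main obstacle is this second inclusion, where the cyclotomic input genuinely enters. The first inclusion is local: it invokes only the harmless within-block relations ($i\neq n$), for which $\eta_{(i,k)}^\la$ is a scalar multiple of $1_\la$ and a one-variable $U_q(\mathfrak{sl}_2)$ string argument applies directly. To place every $r$-partition in $\vL_{n,r}^+$, however, one must exhibit an honest highest weight $\CS_{n,r}$-module with top weight $\la$, and since $\Phi$ is so far known only to be surjective, the natural source is the Weyl modules of $\Sc_{n,r}$. The delicate point to verify carefully is that the Dipper--James--Mathas weight combinatorics — dominance of the content of a semistandard tableau by its shape — matches the order $\geq$ on $P$ induced by $Q^+$, so that $\la$ is indeed the top weight of $W^\la$ and no expression of $1_\la$ through strictly higher idempotents $1_\mu$ ($\mu>\la$) can survive.
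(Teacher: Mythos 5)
Your proposal is correct, and it diverges from the paper's own proof in an interesting way on one of the two inclusions. For $\vL_{n,r}^+\subseteq\{r\text{-partitions}\}$ you and the paper do essentially the same thing: the paper proves, by induction on $a$ using \eqref{S-6}, the closed formula \eqref{formula EF FE} for $E_{(i,k)}^a F_{(i,k)}^a 1_\la$ modulo $\CS_{n,r}(>\la)$ and evaluates it at $a=\la_i^{(k)}+1$, where the left side vanishes by \eqref{S-3} while the quantum-integer coefficient does not, concluding $1_\la\in\CS_{n,r}(>\la)$ directly; your minimal-$s$ string argument is the same computation packaged as an $\mathfrak{sl}_2$-type contradiction, and it is sound since $F_{(i,k)}^{(s)}1_\la=0$ for $s>\la_i^{(k)}$ and $[d-s+1]=0$ forces $d=s-1\geq 0$ over $\CK$. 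For the reverse inclusion the routes genuinely differ: the paper never verifies membership of individual $r$-partitions in $\vL_{n,r}^+$; instead it pulls back the \cite{DJM98} classification of simple $\Sc_{n,r}$-modules through the surjection $\Phi$ of Proposition \ref{prop surjection Sq Snr}, compares with the parametrization of simple $\CS_{n,r}$-modules by $\vL_{n,r}^+$ from Theorem \ref{thm standard simple Sq} (iii), and concludes by a cardinality (pigeonhole) argument from the already-proved inclusion $\vL_{n,r}^+\subseteq\{r\text{-partitions}\}$. You instead argue directly, weight by weight, on the Weyl module $W^\la$ pulled back along $\Phi$: the degree/idempotent bookkeeping in any expression $1_\la=\sum_{\mu>\la}x_\mu 1_\mu y_\mu$ is correct, and the key compatibility you flag---that the Dipper--James--Mathas dominance order governing the semistandard-tableau weight spaces of $W^\la$ agrees with the order $\geq$ on $P$ induced by $Q^+$---does hold, precisely via the flattening $\mu\mapsto\ol{\mu}$ of \ref{vLnr to vLn1} (both orders amount to comparing partial sums of $\ol{\la}$ and $\ol{\mu}$, which have equal total size $n$). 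The trade-off: the paper's counting argument is shorter and needs only the number of simple $\Sc_{n,r}$-modules, but is indirect; your argument needs finer DJM input (weight-space dimensions of Weyl modules) but is constructive, exhibiting for each $r$-partition $\la$ a concrete module whose $\la$-weight space certifies $1_\la\notin\CS_{n,r}(>\la)$, which moreover anticipates the comparison $\D(\la)\twoheadrightarrow W(\la)$ exploited later in the proof of Theorem \ref{main-thm}.
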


\begin{proof}
Let $(i,k) \in \vG'$ be such that $i \not= n$.  
For 
$a \in \ZZ_{>0}$ and $ \la \in \vL_{n,r}$,  
we can prove, 
by induction on $a \in \ZZ_{>0} $ together with (\ref{S-6}), that 
\begin{align}
\label{formula EF FE}
E_{(i,k)}^a F_{(i,k)}^a 1_\la 
	\equiv [a]! \Big(\prod_{j=1}^a [\la_{i}^{(k)} - \la_{i+1}^{(k)} -a +j] \Big)1_\la 
	\mod \CS_{n,r}(> \la).
\end{align}

Assume that $\la \in \vL_{n,r}$ is not an $r$-partition. 
Then, 
there exists $i,k $ 
such that 
$\la_i^{(k)}<\la_{i+1}^{(k)}$, where 
$1 \leq i \leq n-1$ and $1 \leq k \leq r$. 
Thus, by (\ref{formula EF FE}), 
we have 
\begin{align}
\label{formula EF FE 2}
E_{(i,k)}^{\la_i^{(k)}+1} F_{(i,k)}^{\la_i^{(k)}+1} 1_\la 
\equiv 
[\la_i^{(k)}+1]! \Big(\prod_{j=1}^{\la_i^{(k)}+1} [j - \la_{i+1}^{(k)} - 1] \Big)1_\la 
	\mod \CS_{n,r}(> \la).
\end{align}
Since 
$\la - (\la_i^{(k)}+1) \a_{(i,k)} \not \in \vL_{n,r}$, 
the left-hand side of (\ref{formula EF FE 2}) is equal to $0$ by (\ref{S-3}).
On the other hand,  
since $\la_i^{(k)}<\la_{i+1}^{(k)}$, 
we have 
$[\la_i^{(k)}+1]! \Big(\prod_{j=1}^{\la_i^{(k)}+1} [j - \la_{i+1}^{(k)} - 1] \Big) \not=0$. 
Thus, 
(\ref{formula EF FE 2}) 
implies that 
$1_\la \in \CS_{n,r}(>\la)$ 
if 
$\la$ is not an $r$-partition. 
By Theorem \ref{thm standard simple Sq} (\roiii), 
the isomorphism classes of simple $\CS_{n,r}$-modules 
are parametrized  
by the set  $\{\la \in \vL_{n,r} \,|\, 1_\la \not\in \CS_{n,r}(>\la)\}$. 
On the other hand, 
through the surjection $\Phi : \CS_{n,r} \ra \Sc_{n,r}$ in Proposition \ref{prop surjection Sq Snr}, 
one can regard a simple $\Sc_{n,r}$-module as a simple $\CS_{n,r}$-module. 
Moreover, 
it is known that 
the isomorphism classes of simple $\Sc_{n,r}$-modules 
are parametrized  
by the set of $r$-partitions of size $n$ 
by \cite{DJM98}. 
Thus, we obtain the lemma. 

\end{proof}

%%%%%%% 
\para  \label{def Unr}
Since $\CS_{n,r}$ is a quotient algebra of $\wt{U}_q$, 
one can describe $\CS_{n,r}$ 
by generators and relations of $\wt{U}_q$ 
together with some additional relations. 
Here, 
we give such additional relations precisely. 
For 
$(i,k) \in \vG'$ and $\la \in \vL_{n,r}$, 
we define 
$g_{(i,k)}^\la(f,e) \in \wt{U}_q$ 
in a similar way 
as in (\ref{def g FE}). 
Recall 
the bijection 
from $\vL_{n,r}$ to $\vL_{n,1}$ 
such that 
$\mu \mapsto \ol{\mu}$ 
in \ref{vLnr to vLn1}. 
For $\la \in \vL_{n,r}$, 
put 
$K_\la =K_{\ol{\la}} \in \wt{U}_q$, 
where $K_{\ol{\la}}$ is defined in (\ref{def Kla}).  
For $(i,k) \in \vG'$, 
put 
\[g_{(i,k)}(f,e)=\sum_{\la \in \vL_{n,r}} \Big( g_{(i,k)}^\la(f,e) K_{\la} \Big),\]
and put 
\[\eta_{(i,k)}= \begin{cases} \displaystyle 
\Big( - \g_{k+1} \frac{K_{(n,k)}K^-_{(1,k+1)} - K_{(n,k)}^- K_{(1,k+1)}}{q-q^{-1}} \\ 
			\hspace{2em} +K_{(n,k)}K^{-1}_{(1,k+1)} 
			\big( q^{-1}\, g_{(n,k)}(f,e) -q \, g_{(1,k+1)}(f,e) \big)\Big)
			&\text{ if } i=n, \\[3mm]
\displaystyle  
\frac{K_{(i,k)}K^-_{(i+1,k)} - K_{(i,k)}^- K_{(i+1,k)}}{q-q^{-1}} & \text{otherwise}.			
\end{cases}
\]

Let 
$\wt{I}_{n,r}$ 
be the two-sided ideal of $\wt{U}_q$ 
generated by 
$\t_{p_k+i} - \eta_{(i,k)}$ ($(i,k) \in \vG'$), 
$K_1K_2 \cdots K_m - q^n$ 
and 
$(K_i-1)(K_i-q)(K_i-q^2)\cdots (K_i-q^n)$ ($1 \leq i \leq m$).  
Let 
$U_{n,r}=\wt{U}_q / \wt{I}_{n,r}$ 
be 
a quotient algebra of $\wt{U}_q$. 
One sees that 
$U_{n,r}$ 
is isomorphic to the algebra 
generated by 
$E_i, F_i$ ($1 \leq i \leq m-1$) 
and $K^{\pm}_i$ ($1 \leq i \leq m$) 
with 
defining relations 
(\ref{U-2})-(\ref{U-4}), (\ref{U-7}) and (\ref{U-8}) 
together with the following relations;  
\begin{align}
&e_{(i,k)} f_{(j,l)} - f_{(j,l)} e_{(i,k)}= \d_{(i,k),(j,l)} \eta_{(i,k)}, \label{U-a-1}\\
&K_1K_2 \cdots K_m = q^n, \label{U-a-2}\\
&(K_i-1)(K_i-q)(K_i-q^2)\cdots (K_i-q^n)=0, \label{U-a-3}
\end{align}
where we identify 
$e_{(i,k)} \leftrightarrow e_{p_k+i}$, 
$f_{(i,k)} \leftrightarrow f_{p_k+i}$ 
and 
$K^{\pm}_{(i,k)} \leftrightarrow K^{\pm}_{p_k+i}$.

\begin{prop}
\label{prop iso Unr Snr}
$\wt{I}_{n,r}$ contains the kernel of the surjection 
$\Psi : \wt{U}_q \ra \CS_{n,r}$. 
Thus, 
$\Psi$ induces the surjection 
$\Psi' : U_{n,r} \ra \CS_{n,r}$. 
Moreover, 
$\Psi'$ 
gives an isomorphism of algebras. 
\end{prop}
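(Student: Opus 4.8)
The statement amounts to the equality $\ker\Psi=\wt{I}_{n,r}$: the inclusion $\wt{I}_{n,r}\subseteq\ker\Psi$ produces the induced surjection $\Psi'$, and the reverse inclusion is its injectivity. For $\wt{I}_{n,r}\subseteq\ker\Psi$ I would check that $\Psi$ kills each of the three families of generators of $\wt{I}_{n,r}$. Recall from \ref{def Sq} that $\Psi(K_i^{\pm})=\sum_{\la\in\vL_{n,r}}q^{\pm\la_i}1_\la$ and $\Psi(\t_i)=\sum_{\la\in\vL_{n,r}}\eta_i^\la$, and that $\Psi(K_\la)=1_\la$ for $\la\in\vL_{n,r}$ by the remark following Lemma \ref{lem-1-lamda}. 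Since every $\la\in\vL_{n,r}$ has $\la_i\in\{0,1,\dots,n\}$ and $|\la|=n$, orthogonality of the $1_\la$ gives at once $\Psi(K_1K_2\cdots K_m-q^n)=\sum_\la(q^{|\la|}-q^n)1_\la=0$ and $\Psi\big((K_i-1)(K_i-q)\cdots(K_i-q^n)\big)=\sum_\la\prod_{j=0}^n(q^{\la_i}-q^j)1_\la=0$. For $\t_{p_k+i}-\eta_{(i,k)}$ I would substitute $\Psi(K_{(i,k)}^{\pm})$ and $\Psi(K_\la)=1_\la$ into the definition of $\eta_{(i,k)}$ from \ref{def Unr}; using $g_{(n,k)}(f,e)=\sum_\la g_{(n,k)}^\la(f,e)K_\la$ and the $Q$-grading to collapse $K_{(n,k)}K_{(1,k+1)}^{-1}g_{(n,k)}(f,e)$ onto weight spaces, one recovers $\Psi(\eta_{(i,k)})=\sum_\la\eta_{(i,k)}^\la=\Psi(\t_{p_k+i})$, which is the same computation that underlies Proposition \ref{prop-ef-fe}.

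It remains to show that $\Psi'$ is injective, for which I would construct a two-sided inverse. Using $\CS_{n,r}\cong\CS_q^{\eta_{\vL_{n,r}}}$ and the presentation (\ref{S-1})--(\ref{S-8}) from \ref{def Snr}, define $\Theta:\CS_{n,r}\to U_{n,r}$ on generators by $E_{(i,k)}\mapsto e_{p_k+i}$, $F_{(i,k)}\mapsto f_{p_k+i}$, $1_\la\mapsto K_\la$. The essential preliminary is to pin down the torus $U_{n,r}^0$ generated by the $K_i^{\pm}$. The relations (\ref{U-a-2}) and (\ref{U-a-3}) make each $K_i$ satisfy $\prod_{j=0}^n(K_i-q^j)=0$ and impose $\prod_iK_i=q^n$, so $U_{n,r}^0$ is a commutative algebra of dimension at most $|\vL_{n,r}|$ spanned by orthogonal idempotents $K_\la$ ($\la\in\vL_{n,r}$); this is exactly the torus part common to the two Doty--Giaquinto presentations of the type $A$ $q$-Schur algebra (Theorems \ref{S1-presen} and \ref{thm-DG-2}). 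Because $\Psi'(K_\la)=1_\la$ are nonzero and linearly independent in $\CS_{n,r}$, equality of dimensions holds, so $\{K_\la\mid\la\in\vL_{n,r}\}$ is a complete set of orthogonal idempotents of $U_{n,r}^0$ and $K_\mu=0$ for $\mu\notin\vL_{n,r}$.

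With $U_{n,r}^0$ so identified, checking that $\Theta$ respects (\ref{S-1})--(\ref{S-8}) is then routine: (\ref{S-1}) is the idempotent structure just obtained; (\ref{S-2})--(\ref{S-5}) follow from the commutation relations (\ref{U-3}) and (\ref{U-4}), which make $e_{p_k+i},f_{p_k+i}$ shift weights by $\pm\a_{(i,k)}$, together with $K_\mu=0$ off $\vL_{n,r}$; (\ref{S-7}) and (\ref{S-8}) are the Serre relations (\ref{U-7}) and (\ref{U-8}); and (\ref{S-6}) is precisely (\ref{U-a-1}) once one verifies $\sum_\la\Theta(\eta_{(i,k)}^\la)=\eta_{(i,k)}$ in $U_{n,r}$ by the same weight-space collapse used above. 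Hence $\Theta$ is a well-defined homomorphism, and comparing it with $\Psi'$ on generators yields $\Psi'\circ\Theta=\id_{\CS_{n,r}}$ and $\Theta\circ\Psi'=\id_{U_{n,r}}$, so $\Psi'$ is an isomorphism. I expect the main obstacle to be the bookkeeping in (\ref{S-6}) for $i=n$: matching the element $\eta_{(n,k)}$ of $U_{n,r}$, which carries the parameter $\g_{k+1}$ and the noncommutative words $g_{(n,k)}(f,e)$, against the family $\{\eta_{(n,k)}^\la\}$ is clean only after the spectral decomposition of $U_{n,r}^0$ is in hand, so that $K_{(n,k)}K_{(1,k+1)}^{-1}$ acts on the weight-$\la$ space by the scalar $q^{\la_n^{(k)}-\la_1^{(k+1)}}$.
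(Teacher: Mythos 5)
Your proposal is correct and follows essentially the same route as the paper: check that $\Psi$ kills the three families of generators of $\wt{I}_{n,r}$, then build the inverse homomorphism $\CS_{n,r}\to U_{n,r}$ on generators ($E_{(i,k)}\mapsto e_{p_k+i}$, $F_{(i,k)}\mapsto f_{p_k+i}$, $1_\la\mapsto K_\la$), the crux being the identification $U_{n,r}^0\cong\CS_{n,r}^0$ so that the $K_\la$ become a complete set of orthogonal idempotents and $K_i=\sum_{\la}q^{\ol{\la}_i}K_\la$. The only difference is cosmetic: where you argue the torus identification directly (spectral decomposition of the commutative quotient plus a dimension count against the nonzero independent idempotents $1_\la$), the paper delegates exactly this step, and the verification of the relations (\ref{S-2})--(\ref{S-5}), to \cite[Lemmas 13.39, 13.40]{DDPW}.
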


\begin{proof}
From the definition, 
we have 
$\Psi(\eta_{(i,k)})=\sum_{\la \in \vL_{n,r}} \eta_{(i,k)}^\la$, 
thus we have 
$\Psi(\t_{p_k+i} - \eta_{(i,k)})=0$.   
Note that 
$\Psi(K_i)=\sum_{\la \in \vL_{n,r}} q^{\ol{\la}_i} 1_\la$, 
we see easily that 
$\Psi(K_1 \cdots K_m)=q^n$ 
and 
$\Psi\big((K_i-1)(K_i-q)\cdots (K_i-q^n)\big)=0$. 
Thus, we have $\wt{I}_{n,r} \subset \Ker \Psi$, 
and 
$\Psi$ induces 
the surjection 
$\Psi' : U_{n,r} \ra \CS_{n,r}$.  

Let $U_{n,r}^0$ be the subalgebra of $U_{n,r}$ 
generated by $K_i$ ($1 \leq i \leq m$). 
In a similar way as the proof of \cite[Lemma 13.39]{DDPW},  
the restriction of $\Psi'$ to $U_{n,r}^0$ 
gives an isomorphism 
$U_{n,r}^0 \cong \CS_{n,r}^0$ 
(Note that, in the proof of \cite[Lemma 13.39]{DDPW}, 
they only use the relations of $K_i$'s which coincide with the relations in $U_{n,r}^0$). 
Through the isomorphism $U_{n,r}^0 \cong \CS_{n,r}^0$, 
we have 
\begin{align} 
\label{Unr-1}
K_\la K_\mu =\d_{\la,\mu}K_\la, 
\quad 
\sum_{\la \in \vL_{n,r}} K_\la =1
\end{align} 
in $U_{n,r}$.  
Moreover, 
for $1 \leq i \leq m$ and $\la \in \vL_{n,r}$, 
we have 
$K_i K_{\la}=q^{\ol{\la}_i} K_\la$, 
thus 
we have 
\begin{align} 
\label{gen Ki} 
K_i=K_i\big( \sum_{\la \in \vL_{n,r}} K_\la \big)=\sum_{\la \in \vL_{n,r}}q^{\ol{\la}_i} K_\la. 
\end{align} 
Let 
$\Psi^\dagger : \CS_{n,r} \ra U_{n,r}$ 
be a homomorphism of algebras 
given by 
$\Psi^\dagger (E_{(i,k)})=e_{(i,k)}$, 
$\Psi^\dagger (F_{(i,k)})=f_{(i,k)}$ 
and 
$\Psi^\dagger (1_\la)=K_\la$. 
In order to see that 
$\Psi^\dagger$ 
is well-defined, 
we may check the relations 
(\ref{S-1})-(\ref{S-8}) 
in the image of $\Psi^\dagger$ 
for corresponding generators. 
The relation (\ref{S-1}) follows from (\ref{Unr-1}). 
We can check the relations (\ref{S-2})-(\ref{S-5}) 
in a similar way as in the proof of \cite[Lemma 13.40]{DDPW}. 
The relation (\ref{S-6}) 
follows from the definition of $\eta_{(i,k)}$. 
The relation (\ref{S-7}) and (\ref{S-8}) 
are just (\ref{U-7}) and (\ref{U-8}) respectively. 
Thus, 
$\Psi^\dagger$ is well-defined. 
Moreover, 
by (\ref{gen Ki}), 
we see that 
$\Psi^\dagger$ is surjective  
and gives the inverse map of $\Phi'$, 
thus we have 
$U_{n,r} \cong \CS_{n,r}$. 
\end{proof}

\para 
Our goal is to show 
that the surjection $\Phi : \CS_{n,r} \ra \Sc_{n,r}$ in Proposition \ref{prop surjection Sq Snr} 
is actually an isomorphism. 
Let 
\[ \big\{ \vf_{ST} \bigm| S,T \in \CT(\la) \text{ for some }\la \in \vL_{n,r}^+ \big\} \] 
be a cellular basis of $\Sc_{n,r}$ constructed in \cite{DJM98}, 
where $\CT(\la)$ is the set of semi-standard tableaux of shape $\la$ (see \cite{DJM98} for the definition).     
For $\la \in \vL_{n,r}^+$, 
let 
$\Sc_{n,r}(\geq \la)$ (resp. $\Sc_{n,r}(>\la)$) 
be a subspace of $\Sc_{n,r}$ 
spanned by 
$\{ \vf_{ST} \,|\, S,T \in \CT(\mu) \text{ for some } \mu \in \vL_{n,r}^+ \text{ such that } \mu \geq \la\}$  
(resp. $\{ \vf_{ST} \,|\, S,T \in \CT(\mu) \text{ for some } \mu \in \vL_{n,r}^+ \text{ such that } \mu > \la\}$), 
then 
both of $\Sc_{n,r}(\geq \la)$ and $\Sc_{n,r}(>\la)$ 
are two-sided ideals of $\Sc_{n,r}$. 

It is known that 
$\vf_{\la,\la}^1 \in \Sc_{n,r}(\geq \la) \setminus \Sc_{n,r}(>\la)$ for $\la \in \vL_{n,r}^+$ 
($\vf_{\la,\la}^1$ is denoted by $\vf_{T^\la T^\la}$ in \cite{DJM98}). 
For $\la \in \vL_{n,r}^+$, 
a left $\Sc_{n,r}$-module $W(\la)$ of $\Sc_{n,r}$ (so called Weyl module) is defined by 
\[ W(\la) = \big(\Sc_{n,r} \cdot \vf_{\la,\la}^1 + \Sc_{n,r}(> \la) \big)\big/  \Sc_{n,r}(>\la). \]
Note that 
$W(\la)$ is an $\Sc_{n,r}$-submodule of $\Sc_{n,r}(\geq \la)/ \Sc_{n,r}(>\la)$. 
By \cite[Theorem 5.15]{DR} (and its proof), 
for $S,T \in \CT(\mu)$, 
we have 
\begin{align}
\label{cellula basis decom}
\vf_{ST}=\vf_{ST^\mu} \vf_{\mu,\mu}^1 \vf_{T^\mu T}, \text{ where }
\vf_{ST^\mu} \in \Sc_{n,r}^{\leq 0}  
\text{ and } 
\vf_{T^\mu T} \in \Sc_{n,r}^{\geq 0}.
\end{align}
One sees from this that 
\[ W(\la) \cong \Sc^{\leq 0} \cdot \vf_{\la,\la}^1 
	\big/ \big( \Sc^{\leq 0} \cdot \vf_{\la,\la}^1 \cap \Sc_{n,r}(>\la) \big) \text{ as } \CK\text{-vector spaces}.\]
It is known that 
$\{W(\la) \, | \, \la \in \vL_{n,r}^+\}$ 
gives a complete set of isomorphism classes of (left) simple $\Sc_{n,r}$-modules. 
Similarly, 
we have a complete set of isomorphism classes of (right) simple $\Sc_{n,r}$-modules 
$\{W^\sharp(\la) \,|\, \la \in \vL_{n,r}^+\}$ 
such that 
\[ W^\sharp(\la) = \vf_{\la,\la}^1 \cdot \Sc^{\geq 0} 
	\big/ \big(\vf_{\la,\la}^1 \cdot \Sc_{n,r}^{\geq 0} \cap \Sc_{n,r}(>\la) \big) 
	\text{ as } \CK \text{-vector spaces}.
\]

Recall that 
$\CS_{n,r}^{\leq 0}$ (resp. $\CS_{n,r}^{\geq 0}$)
is an subalgebra of $\CS_{n,r}$ 
defined in \ref{def Borel Sq +-}. 
Then we have the following lemma. 
\begin{lem}
\label{lem restriciton Phi to Borel}
The restriction of the surjection $\Phi$ (in Proposition \ref {prop surjection Sq Snr}) 
to $\CS_{n,r}^{\leq 0}$ (resp. $\CS_{n,r}^{\geq 0}$) 
gives an isomorphism 
$\Phi|_{\CS_{n,r}^{\leq 0}} : \CS_{n,r}^{\leq 0} \ra \Sc_{n,r}^{\leq 0}$ 
(resp. $\Phi|_{\CS_{n,r}^{\geq 0}} : \CS_{n,r}^{\geq 0} \ra \Sc_{n,r}^{\geq 0}$) 
as algebras. 
\end{lem}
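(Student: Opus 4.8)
The plan is to treat the upper Borel $\CS_{n,r}^{\geq 0}$; the lower case $\CS_{n,r}^{\leq 0}$ is entirely symmetric, using $\CF^{\leq 0}$, Proposition \ref{prop-rho} (\roii) and the anti-automorphism $\iota$ of \ref{def iota} in place of their $\geq 0$ counterparts. Surjectivity of $\Phi|_{\CS_{n,r}^{\geq 0}}$ is immediate: by Proposition \ref{prop-surjective-Borel} (\roi), $\Sc_{n,r}^{\geq 0}$ is generated by the elements $\vf_{(i,k)}^+ = \Phi(E_{(i,k)})$ together with the $\ka_{(i,k)}^{\pm}$, and since $\ka_{(i,k)}^{\pm}=\sum_{\la \in \vL_{n,r}} q^{\pm \la_i^{(k)}} \vf_{\la,\la}^1 = \Phi\big(\sum_{\la}q^{\pm\la_i^{(k)}}1_\la\big)$ already lies in $\Phi(\CS_{n,r}^0)$, every generator of $\Sc_{n,r}^{\geq 0}$ is hit. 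So the whole content of the lemma is injectivity.

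For injectivity I would compose with the isomorphism $\CF^{\geq 0}:\Sc_{n,r}^{\geq 0}\ra {}_\CK\Sc_{n,1}^{\geq 0}$ of Theorem \ref{thm-iso-Borel} (\roii): it suffices to prove that $\CF^{\geq 0}\circ\Phi|_{\CS_{n,r}^{\geq 0}}$ is injective. I would first compute this composite on the generators. Using the definition of $\vf_{(i,k)}^+$, the rule $\CF^{\geq 0}(\vf'^{d}_{\mu,\la})=\psi_{\ol\mu,\ol\la}^d$, the identities $\ol{\mu+\a_{(i,k)}}=\ol\mu+\a_{p_k+i}$ and $\mu_{i+1}^{(k)}=\ol\mu_{p_k+i+1}$, and the description of $\rho$ in Proposition \ref{prop-rho} (\roi), one gets $\CF^{\geq 0}(\vf_{(i,k)}^+)=\rho(e_{p_k+i})$ and $\CF^{\geq 0}(\vf_{\la,\la}^1)=\psi_{\ol\la,\ol\la}^1$, the identity endomorphism of $M^{\ol\la}$. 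Hence the composite sends $E_{(i,k)}\mapsto\rho(e_{p_k+i})$ and $1_\la\mapsto\psi_{\ol\la,\ol\la}^1$.

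The key point is then that the upper Borel $\CS_q^{\geq 0}(\vL)$ of the general algebra of \S\ref{def Sq} depends only on $\vL$ (and $m$), not on the choice of $\eta_\vL$: it is generated by the $E_i$ and the $1_\la$, and every relation among these, namely $(\ref{Sq-1})$, $(\ref{Sq-3})$, $(\ref{Sq-5})$, $(\ref{Sq-8})$, is free of $\eta$, the parameters $\eta_i^\la$ entering $\CS_q$ only through $(\ref{Sq-7})$, which also involves the $F_i$. Via the bijection $\vL_{n,r}\ni\la\mapsto\ol\la\in\vL_{n,1}$ of \ref{vLnr to vLn1} (with $m=nr$) this identifies $\CS_{n,r}^{\geq 0}$ with the upper Borel of $\CS_q^{\eta_{\vL_{n,1}}}\cong\Sc_{n,1}$ (Remark \ref{remark condition r=1}), that is, with ${}_\CK\Sc_{n,1}^{\geq 0}=\rho(\CB^+)$ (Theorem \ref{Uqgl-S1} (\roii)); under this identification the generator $E_{p_k+i}$ corresponds precisely to $\rho(e_{p_k+i})$ and $1_{\ol\la}$ to $\psi_{\ol\la,\ol\la}^1$. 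Thus $\CF^{\geq 0}\circ\Phi|_{\CS_{n,r}^{\geq 0}}$ agrees on generators with this canonical isomorphism, hence is itself an isomorphism, which gives injectivity and so the lemma.

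I expect the genuine obstacle to be making the $\eta$-independence rigorous, i.e. checking that the surjection from the abstract $\eta$-free Borel algebra (generators $E_i,1_\la$, relations $(\ref{Sq-1})$, $(\ref{Sq-3})$, $(\ref{Sq-5})$, $(\ref{Sq-8})$) onto $\CS_{n,r}^{\geq 0}$ is injective, equivalently that $\wt{\CI}^{\eta_{\vL_{n,r}}}$ meets $\wt{\CS}_q^{\geq 0}$ in no more than these relations. I would settle this with the triangular decomposition of $\wt{\CS}_q$ (as in the proof of Proposition \ref{tri decom}): expanding a general element of $\wt{\CI}^{\eta_{\vL_{n,r}}}$ through $\wt{\CS}_q^-\,\wt{\CS}_q^0\,\wt{\CS}_q^+$ and projecting onto the summand carrying no $F$-part and no $\t$-part shows that the induced relations on the $E_i,1_\la$ are $\eta$-independent. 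Alternatively one can bypass this altogether by a dimension count: surjectivity together with $\dim_\CK\CS_{n,r}^{\geq 0}\leq\dim_\CK{}_\CK\Sc_{n,1}^{\geq 0}=\dim_\CK\Sc_{n,r}^{\geq 0}$ (the last equality by Theorem \ref{thm-iso-Borel} (\roii)) forces $\Phi|_{\CS_{n,r}^{\geq 0}}$ to be an isomorphism, and the inequality again reduces to the $\eta$-freeness of the positive Borel relations.
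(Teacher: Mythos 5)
Your surjectivity argument is fine and is the same as the paper's (via Proposition \ref{prop-surjective-Borel} and $\wt{\rho}=\Phi\circ\Psi$). The gap is in the injectivity step: your ``key point'' --- that the Borel subalgebra of $\CS_q^{\eta_\vL}(\vL)$ depends only on $\vL$ and not on $\eta_\vL$, because the relations \eqref{Sq-1}, \eqref{Sq-3}, \eqref{Sq-5}, \eqref{Sq-8} are $\eta$-free --- is false in general. Relation \eqref{Sq-7} does force relations among the $E_i$ and $1_\la$. Concretely, take $m=2$, $\vL=\{(0,1),(1,0)\}$, $\eta_1^{(0,1)}=1_{(0,1)}$, $\eta_1^{(1,0)}=0$ (legitimate degree-$0$ elements of $\wt{\CS}_q^-\wt{\CS}_q^+\cdot 1_\la$; the paper itself uses scalar multiples of $1_\la$ in the type-A case). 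Multiplying \eqref{Sq-7} on both sides by the orthogonal idempotents gives $E_1F_1=0$ and $F_1E_1=-1_{(0,1)}$ in $\CS_q$, whence $E_1=E_11_{(0,1)}=-(E_1F_1)E_1=0$ and then $1_{(0,1)}=-F_1E_1=0$: this $\CS_q$ collapses to $\CK$ and its Borel is $1$-dimensional, while for the type-A choice of $\eta$ the Borel is $3$-dimensional. So no general $\eta$-independence principle exists; the agreement of the Borels for the two particular data $\eta_{\vL_{n,r}}$ and $\eta_{\vL_{n,1}}$ is exactly the content of the lemma and requires a specific argument. Both of your proposed repairs inherit this defect: the ``projection'' argument is unavailable because $\wt{\CS}_q$ has no triangular decomposition at all (there are no straightening relations between the $\t_i^\la$ and $E_j,F_j$), and in $\CS_q$ Proposition \ref{tri decom} gives only a spanning statement $\CS_q=\CS_q^-\CS_q^0\CS_q^+$, not a direct sum with a projection onto the ``no $F$, no $\t$'' part (the example above shows any such projection argument must fail); and the dimension count is, as you admit, circular.

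What the paper does instead is go in the opposite direction using a known presentation: by Theorem \ref{thm gen rel of Borel} (Doty--Parshall), $\Sc_{n,1}^{\leq 0}$ is presented by $f_i,K_i^{\pm}$ with relations \eqref{gl-1}, \eqref{gl-3}, \eqref{gl-6}, \eqref{add Sn1-1}, \eqref{add Sn1-2}, and all of these hold among the corresponding elements of $U_{n,r}$ (they occur among \eqref{U-2}, \eqref{U-4}, \eqref{U-8}, \eqref{U-a-2}, \eqref{U-a-3}). Hence one gets a well-defined homomorphism $\Sc_{n,1}^{\leq 0}\ra U_{n,r}$, and composing with $\Sc_{n,r}^{\leq 0}\cong\Sc_{n,1}^{\leq 0}$ (Theorem \ref{thm-iso-Borel}) and $U_{n,r}\cong\CS_{n,r}$ (Proposition \ref{prop iso Unr Snr}) produces a map $\Sc_{n,r}^{\leq 0}\ra\CS_{n,r}^{\leq 0}$ inverse to $\Phi|_{\CS_{n,r}^{\leq 0}}$ on generators; similarly for $\geq 0$. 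If you want to keep your reduction to the type-A Borel via $\CF^{\geq 0}$, you must import this Doty--Parshall input (or an equivalent idempotent-form presentation of $\Sc_{n,1}^{\geq 0}$); it cannot be replaced by an $\eta$-free general principle.
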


\begin{proof}
By Proposition \ref{prop-surjective-Borel}, 
the restriction of $\wt{\rho}$ (in Proposition \ref{prop-main-1}) to $\wt{\CB}^-$ 
gives a surjective homomorphism  $\wt{\rho}|_{\wt{\CB}^-} : \wt{\CB}^- \ra \Sc_{n,r}^{\leq 0}$.  
Since $\Phi \circ \Psi= \wt{\rho}$ (see Proposition \ref {prop surjection Sq Snr}) 
and 
$\Psi (\wt{\CB}^-)= \Sc_{n,r}^{\leq 0}$,  
we have a surjective homomorphism 
$\Phi|_{\CS_{n,r}^{\leq 0}} : \CS_{n,r}^{\leq 0} \ra \Sc_{n,r}^{\leq 0}$. 

On the other hand, 
thanks to Theorem \ref{thm gen rel of Borel}, 
we can define the homomorphism $\Phi'^{\leq 0}$ of algebras from 
$\Sc_{n,1}^{\leq 0}$ to $U_{n,r}$ 
by sending the elements $f_i$ ($1 \leq i \leq m-1$) and $K_i^{\pm}$ ($1\leq i \leq m$) of $\Sc_{n,1}^{\leq 0}$ 
to the corresponding elements of $U_{n,r}$. 
Combining with isomorphisms $\Sc_{n,1}^{\leq 0} \cong \Sc_{n,r}^{\leq 0}$ and $U_{n,r} \cong \CS_{n,r}$, 
$\Phi'^{\leq 0}$ induces a surjective homomorphism from $\Sc_{n,r}^{\leq 0}$ to $\CS_{n,r}^{\leq 0}$. 
Thus, 
$\Phi|_{\CS_{n,r}^{\leq 0}}$ is an isomorphism. 
The case of $\CS_{n,r}^{\geq 0}$ is similar. 
\end{proof}

\begin{lem}
\label{surj Snr > la Csnr > la}
For $\la \in \vL_{n,r}^+$, 
the restriction of $\Phi$ to $\CS_{n,r}(\geq \la)$ (resp. $\CS_{n,r}(>\la)$) 
gives a surjective homomorphism of $(\CS_{n,r},\CS_{n,r})$-bimodules 
$\Phi|_{\CS_{n,r}(\geq \la)} : \CS_{n,r}(\geq \la) \ra \Sc_{n,r}(\geq \la)$ 
(resp. $\Phi|_{\CS_{n,r}(> \la)} : \CS_{n,r}(> \la) \ra \Sc_{n,r}(> \la)$).

\end{lem}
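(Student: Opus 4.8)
The plan is to prove the sharper statement $\Phi\big(\CS_{n,r}(\geq\la)\big)=\Sc_{n,r}(\geq\la)$ (and similarly with $>\la$) as subspaces of $\Sc_{n,r}$. Since $\Phi$ is an algebra homomorphism (Proposition \ref{prop surjection Sq Snr}), once the image is identified the restriction is automatically a homomorphism of $(\CS_{n,r},\CS_{n,r})$-bimodules, the action on the target being taken through $\Phi$; and because both $\CS_{n,r}(\geq\la)$ and $\Sc_{n,r}(\geq\la)$ are two-sided ideals, it is enough to match them on ideal generators. Throughout I use that, under $\vL_{n,r}\hookrightarrow P$, the order $\geq$ on $P$ governing $\CS_q(\geq\la)$ agrees with the dominance order on $r$-partitions for which $\{\vf_{ST}\}$ is cellular.

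First I would rewrite the source ideal using $r$-partition weights only. Normalizing with \eqref{Sq-3}--\eqref{Sq-6} gives $\CS_{n,r}^{\leq 0}\,1_\mu\,\CS_{n,r}^{\geq 0}=\CS_q^-\,1_\mu\,\CS_q^+$, so $\CS_{n,r}(\geq\la)=\sum_{\mu\geq\la}\CS_{n,r}^{\leq 0}1_\mu\CS_{n,r}^{\geq 0}$, the sum over all $\mu\in\vL_{n,r}$ with $\mu\geq\la$. If $\mu\geq\la$ is not an $r$-partition, the computation in the proof of Lemma \ref{lem vLnr r-partition} yields $1_\mu\in\CS_{n,r}(>\mu)$, i.e. $1_\mu=\sum_j x_j1_{\mu_j'}y_j$ with $\mu_j'>\mu\geq\la$; substituting and iterating (the poset $\{\nu\geq\la\}$ is finite and each step strictly raises the weight) removes every non-$r$-partition weight, leaving
\[
\CS_{n,r}(\geq\la)=\sum_{\mu\in\vL_{n,r}^+,\ \mu\geq\la}\CS_{n,r}^{\leq 0}\,1_\mu\,\CS_{n,r}^{\geq 0}.
\]

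Next I would apply $\Phi$ termwise. By Lemma \ref{lem restriciton Phi to Borel} the restrictions $\Phi|_{\CS_{n,r}^{\leq 0}}$ and $\Phi|_{\CS_{n,r}^{\geq 0}}$ surject onto $\Sc_{n,r}^{\leq 0}$ and $\Sc_{n,r}^{\geq 0}$, and $\Phi(1_\mu)=\vf_{\mu,\mu}^1$, whence $\Phi\big(\CS_{n,r}^{\leq 0}1_\mu\CS_{n,r}^{\geq 0}\big)=\Sc_{n,r}^{\leq 0}\,\vf_{\mu,\mu}^1\,\Sc_{n,r}^{\geq 0}$. It then remains to identify $\sum_{\mu\in\vL_{n,r}^+,\ \mu\geq\la}\Sc_{n,r}^{\leq 0}\vf_{\mu,\mu}^1\Sc_{n,r}^{\geq 0}$ with $\Sc_{n,r}(\geq\la)$. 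The inclusion $\subseteq$ follows from $\vf_{\mu,\mu}^1\in\Sc_{n,r}(\geq\mu)\subseteq\Sc_{n,r}(\geq\la)$ and the fact that $\Sc_{n,r}(\geq\la)$ is a two-sided ideal; the reverse inclusion is precisely the cellular factorization \eqref{cellula basis decom}, which writes each spanning element $\vf_{ST}$ ($S,T\in\CT(\mu)$, $\mu\geq\la$) as $\vf_{ST^\mu}\vf_{\mu,\mu}^1\vf_{T^\mu T}$ with $\vf_{ST^\mu}\in\Sc_{n,r}^{\leq 0}$ and $\vf_{T^\mu T}\in\Sc_{n,r}^{\geq 0}$. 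Chaining the three equalities gives $\Phi(\CS_{n,r}(\geq\la))=\Sc_{n,r}(\geq\la)$. The case $\CS_{n,r}(>\la)\to\Sc_{n,r}(>\la)$ is identical, replacing $\geq$ by $>$ everywhere (note $\mu_j'>\mu>\la$ keeps the iteration inside the open ideal).

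The step I expect to be most delicate is this passage to $r$-partition weights: the defining spanning set of $\CS_{n,r}(\geq\la)$ ranges over all $\mu\geq\la$, while the cellular description of $\Sc_{n,r}(\geq\la)$ only records $r$-partitions, so the two sides match only after the reduction furnished by Lemma \ref{lem vLnr r-partition}. A secondary point to monitor is the order compatibility noted above, since it is what makes the symbol $\mu\geq\la$ denote the same relation on the $\CS_{n,r}$-side and the $\Sc_{n,r}$-side.
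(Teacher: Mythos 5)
Your proposal is correct and follows essentially the same route as the paper's proof: both establish the containment $\Phi(\CS_{n,r}(\geq\la))\subseteq\Sc_{n,r}(\geq\la)$ via the ideal property, and obtain surjectivity from the decomposition $\CS_{n,r}(\geq\la)=\sum_{\mu\in\vL_{n,r}^+,\,\mu\geq\la}\CS_{n,r}^{\leq 0}\,1_\mu\,\CS_{n,r}^{\geq 0}$ combined with Lemma \ref{lem restriciton Phi to Borel} and the cellular factorization \eqref{cellula basis decom}. The only difference is that you spell out, via the iteration based on Lemma \ref{lem vLnr r-partition}, the reduction to $r$-partition weights that the paper dismisses with ``one sees easily,'' which is a welcome but not essentially different addition.
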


\begin{proof}
Note that $\Phi(1_\mu) = \vf_{\mu,\mu}^1$, 
and that $\vf_{\mu,\mu}^1 \in \Sc_{n,r}(\geq \la)$ if $\mu \geq \la$, 
we have 
$\Phi(\CS_{n,r}(\geq \la)) \subset \Sc_{n,r}(\geq \la)$ 
since $\Sc_{n,r}(\geq \la)$ is a two-sided ideal of $\Sc_{n,r}$. 

On the other hand, 
one sees easily that 
\[
\CS_{n,r}(\geq \la) = \sum_{\mu \in \vL_{n,r}^+ \atop \mu \geq \la} \CS_{n,r}^{\leq 0}\, 1_\mu \,\CS_{n,r}^{\geq 0}. 
\]
Combining with 
\eqref{cellula basis decom} and Lemma \ref{lem restriciton Phi to Borel},  
we have 
$\vf_{ST} \in \Phi(\CS_{n,r}(\geq \la)) $ 
for any $S, T \in \CT(\mu)$ ($\mu \in \vL_{n,r}^+$ such that $\mu \geq \la$). 
Thus, 
$\Phi|_{\CS_{n,r}(\geq \la)}$ is a surjection from $\CS_{n,r}(\geq \la)$ to $\Sc_{n,r}(\geq \la)$.
The case of $\CS_{n,r}(>\la)$ is similar. 
\end{proof}

The following theorem is our main result in this paper. 

\begin{thm}\
\label{main-thm} 

\begin{enumerate}
\item 
$\Phi : \CS_{n,r} \ra \Sc_{n,r}$ 
gives an isomorphism of algebras. 
Moreover, 
by restricting $\Phi$ to $_\CA \CS_{n,r}$, 
$\Phi|_{_\CA \CS_{n,r}} $ 
gives an isomorphism from $_\CA \CS_{n,r}$ to $_\CA \Sc_{n,r}$. 

\item
$\Sc_{n,r}$ is presented by  
generators $E_{(i,k)}, F_{(i,k)}$ $\big((i,k) \in \vG' \big)$ and 
$1_\la$ $(\la \in \vL_{n,r})$ with the defining relations 
\eqref{S-1}-\eqref{S-8}. 

\item 
$\Sc_{n,r}$ is also presented by 
generators $E_i, F_i$ $( 1 \leq i \leq m-1)$ and 
$K_i^{\pm}$ $(1 \leq i \leq m)$ with the defining relations 
\eqref{U-2}-\eqref{U-4}, \eqref{U-7}, \eqref{U-8} and 
\eqref{U-a-1}-\eqref{U-a-3}. 
\end{enumerate}
\end{thm}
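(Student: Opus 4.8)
The plan is to derive everything from part (i), and to prove (i) by showing that the surjection $\Phi:\CS_{n,r}\ra\Sc_{n,r}$ of Proposition \ref{prop surjection Sq Snr} is injective; since both algebras are finite dimensional over $\CK$, it suffices to prove $\dim_\CK\CS_{n,r}=\dim_\CK\Sc_{n,r}$. On the target side I would use that $\Sc_{n,r}$ is cellular with the Dipper--James--Mathas basis $\{\vf_{ST}\}$ of \cite{DJM98}, so that $\dim_\CK\Sc_{n,r}=\sum_{\la\in\vL_{n,r}^+}|\CT(\la)|^2$ and, for each $\la$, $\Sc_{n,r}(\geq\la)/\Sc_{n,r}(>\la)\cong W(\la)\otimes_\CK W^\sharp(\la)$. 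Moreover, since the parameters of $\CK=\QQ(q,\g_1,\dots,\g_r)$ are algebraically independent, the Ariki--Koike algebra and hence $\Sc_{n,r}$ is split semisimple over $\CK$, so each $W(\la)$ is simple. On the source side I would use the filtration \eqref{filtration Sq}, giving $\dim_\CK\CS_{n,r}=\sum_{\la\in\vL_{n,r}^+}\dim_\CK\big(\CS_{n,r}(\geq\la)/\CS_{n,r}(>\la)\big)$, together with the surjection $a_\la:\D(\la)\otimes_\CK\D^\sharp(\la)\ra\CS_{n,r}(\geq\la)/\CS_{n,r}(>\la)$ of Lemma \ref{lem surjection}.

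The main step is to compare the cell modules, namely to show that $\Phi$ induces isomorphisms $\pi_\la:\D(\la)\ra W(\la)$ and $\pi^\sharp_\la:\D^\sharp(\la)\ra W^\sharp(\la)$ for every $\la\in\vL_{n,r}^+$. Because $\D(\la)=\CS_{n,r}^-\,1_\la=\CS_{n,r}^{\leq0}\,1_\la$ modulo $\CS_{n,r}(>\la)$ and $W(\la)=\Sc_{n,r}^{\leq0}\,\vf_{\la,\la}^1$ modulo $\Sc_{n,r}(>\la)$, the Borel isomorphism $\Phi|_{\CS_{n,r}^{\leq0}}:\CS_{n,r}^{\leq0}\ra\Sc_{n,r}^{\leq0}$ of Lemma \ref{lem restriciton Phi to Borel} (which sends $1_\la$ to $\vf_{\la,\la}^1$) identifies $\CS_{n,r}^{\leq0}\,1_\la$ with $\Sc_{n,r}^{\leq0}\,\vf_{\la,\la}^1$, and Lemma \ref{surj Snr > la Csnr > la} gives $\Phi(\CS_{n,r}(>\la))\subseteq\Sc_{n,r}(>\la)$; hence $\pi_\la$ is a well-defined surjection, and similarly $\pi^\sharp_\la$. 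Granting that these are isomorphisms, the composite $\Phi\circ a_\la:\D(\la)\otimes\D^\sharp(\la)\ra W(\la)\otimes W^\sharp(\la)$ (after the cellular identification of the target) equals $\pi_\la\otimes\pi^\sharp_\la$, which is then an isomorphism; since $a_\la$ is surjective, this forces both $a_\la$ and the induced map $\CS_{n,r}(\geq\la)/\CS_{n,r}(>\la)\ra W(\la)\otimes W^\sharp(\la)$ to be isomorphisms. Summing over $\la$ yields $\dim_\CK\CS_{n,r}=\dim_\CK\Sc_{n,r}$ and hence (i). The $\CA$-form statement then follows because $\Phi|_{_\CA\CS_{n,r}}$ is surjective by Proposition \ref{prop surjection Sq Snr} and is a restriction of the now injective $\Phi$, hence bijective onto $_\CA\Sc_{n,r}$.

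The hard part will be proving that $\pi_\la$ is injective, i.e. that $\rad\D(\la)=0$, equivalently $\dim_\CK\D(\la)\leq\dim_\CK W(\la)=|\CT(\la)|$; this is precisely the assertion that $\CS_{n,r}$ is semisimple over $\CK$, and it cannot be read off from semisimplicity of $\Sc_{n,r}$ alone. Indeed, transporting the bilinear form of \S\ref{Sq} through $\Phi$ only gives $\rad\D(\la)\subseteq\ker\pi_\la$, whence $L(\la)\cong W(\la)$ and $\dim L(\la)=\dim W(\la)$ by Theorem \ref{thm standard simple Sq}, but not that $\D(\la)$ is already simple. Concretely the obstruction is the implication that, for $w\in\CS_{n,r}^{\leq0}\,1_\la$, the condition $\Phi(w)\in\Sc_{n,r}(>\la)$ forces $w\in\CS_{n,r}(>\la)$, i.e. that the Borel isomorphism matches $\CS_{n,r}(>\la)\cap\CS_{n,r}^{\leq0}\,1_\la$ with $\Sc_{n,r}(>\la)\cap\Sc_{n,r}^{\leq0}\,\vf_{\la,\la}^1$. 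I would attack this by producing, from the defining relations \eqref{S-1}--\eqref{S-8} (in particular the Serre relations \eqref{S-8} and the commutator relation \eqref{S-6} with its explicit $\eta_{(i,k)}^\la$), a straightening procedure exhibiting a spanning set of $\D(\la)$ of cardinality at most $|\CT(\la)|$, matched against the basis $\{\vf_{ST^\la}\mid S\in\CT(\la)\}$ of $W(\la)$ coming from the decomposition \eqref{cellula basis decom}; combined with the surjection $\pi_\la$ this forces $\dim_\CK\D(\la)=|\CT(\la)|$.

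Once (i) is established, part (ii) is immediate, since $\CS_{n,r}$ is by definition (\ref{def Snr}) the algebra presented by the generators $E_{(i,k)},F_{(i,k)},1_\la$ and relations \eqref{S-1}--\eqref{S-8}, and $\Phi$ transports this presentation to $\Sc_{n,r}$. Part (iii) follows by composing the isomorphism of (i) with the isomorphism $U_{n,r}\cong\CS_{n,r}$ of Proposition \ref{prop iso Unr Snr}: the algebra $U_{n,r}$ is presented (\ref{def Unr}) by the generators $E_i,F_i$ $(1\leq i\leq m-1)$ and $K_i^{\pm}$ $(1\leq i\leq m)$ with the relations \eqref{U-2}--\eqref{U-4}, \eqref{U-7}, \eqref{U-8} and \eqref{U-a-1}--\eqref{U-a-3}, so the resulting isomorphism $\Sc_{n,r}\cong U_{n,r}$ furnishes exactly the desired second presentation.
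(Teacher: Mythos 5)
Your reduction of the theorem to the statement that $\Phi$ induces isomorphisms $\pi_\la : \D(\la) \ra W(\la)$ and $\pi_\la^\sharp : \D^\sharp(\la) \ra W^\sharp(\la)$ for every $\la \in \vL_{n,r}^+$ is exactly the skeleton of the paper's proof, and your handling of parts (ii), (iii) and of the $\CA$-form statement is correct. But the proposal has a genuine gap precisely at the point you flag as "the hard part": you never prove that $\pi_\la$ is injective, and the straightening procedure you propose (extracting from the relations \eqref{S-1}--\eqref{S-8} a spanning set of $\D(\la)$ of cardinality $|\CT(\la)|$) is left entirely unexecuted. For $r>1$ such a direct attack would be very difficult, since the commutator relation \eqref{S-6} involves the elements $g_{(i,k)}^\la(F,E)$, and no monomial or codeterminant basis of $\CS_{n,r}$ is available a priori. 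As it stands, your argument establishes only $L(\la) \cong W(\la)$ and the inequality $\dim_\CK \CS_{n,r} \geq \dim_\CK \Sc_{n,r}$, not the theorem.

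The idea you are missing is that injectivity of $\pi_\la$ is not attacked directly and weight-by-weight, but by backward induction on the poset $\vL_{n,r}^+$, proving the three statements ($\D(\la) \cong W(\la)$, $\D^\sharp(\la) \cong W^\sharp(\la)$, and $\D(\la)\otimes_\CK \D^\sharp(\la) \cong \CS_{n,r}(\geq\la)/\CS_{n,r}(>\la)$) simultaneously. When $\la$ is maximal in $\vL_{n,r}^+$ one has $\CS_{n,r}(>\la)=\Sc_{n,r}(>\la)=0$, so the Borel isomorphism \eqref{rest Phi Snr 1la} of Lemma \ref{lem restriciton Phi to Borel} already identifies $\D(\la)=\CS_{n,r}^{\leq 0}\,1_\la$ with $W(\la)=\Sc_{n,r}^{\leq 0}\,\vf_{\la,\la}^1$, and simplicity of $W(\la)$ over the generic field $\CK$ then forces the bimodule surjection of Lemma \ref{lem surjection} to be an isomorphism. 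For non-maximal $\la$, the inductively established bimodule isomorphisms for all $\mu>\la$ yield $\dim_\CK \CS_{n,r}(>\la) = \dim_\CK \Sc_{n,r}(>\la)$, so the surjection $\Phi|_{\CS_{n,r}(>\la)}$ of Lemma \ref{surj Snr > la Csnr > la} is an isomorphism; combined with \eqref{surj Dla to Wla}, \eqref{Dla as vector space}, \eqref{Wla as vector space} and \eqref{rest Phi Snr 1la}, this upgrades the surjection $\D(\la) \ra W(\la)$ to an isomorphism, and the induction continues. In other words, the ideals $\CS_{n,r}(>\la)$ are controlled inductively from above, and it is this device that converts semisimplicity of $\Sc_{n,r}$ into the vanishing of $\rad \D(\la)$ --- the step you correctly observed cannot be read off from semisimplicity of $\Sc_{n,r}$ alone.
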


\begin{proof}
Through the surjection 
$\Phi : \CS_{n,r} \ra \Sc_{n,r}$, 
we can regard a simple $\Sc_{n,r}$-module 
$W(\la)$ ($\la \in \vL_{n,r}^+$) 
as a simple $\CS_{n,r}$-module, 
and $\{W(\la)\,|\, \la \in \vL_{n,r}^+\}$ 
gives a complete set of isomorphism classes of simple $\CS_{n,r}$-modules 
by Lemma \ref{lem vLnr r-partition}. 
As $\wt{U}_q$-modules, 
both of $\D(\la)$ and $W(\la)$ are highest weight modules with a highest weight $\la$. 
Thus, by investigating the action on highest weight vectors of $\D(\la)$ and $W(\la)$, 
we have a surjective homomorphism 
\begin{align} 
\label{surj Dla to Wla}
\D(\la) \ra W(\la) \,\, \text{ as } \CS_{n,r} \text{-modules}. 
\end{align}
We claim the followings. 
\\[-3mm]

\begin{description}
\item[(claim)] 
For any $\la \in \vL_{n,r}^+$, we have
\\[-1.5em]
\begin{align*}
&\D(\la) \cong W(\la) \text{ as left } \CS_{n,r}\text{-modules},  
	\qquad 
\D^\sharp (\la) \cong W^\sharp (\la) \text{ as right } \CS_{n,r}\text{-modules},\\
&\D(\la) \otimes_{\CK} \D^\sharp(\la) \cong \CS_{n,r}(\geq \la) / \CS_{n,r}(> \la) 
	\text{ as } (\CS_{n,r},\CS_{n,r}) \text{-bimodules}. 
\end{align*}
\end{description}

If we assume the claim, 
then we have 
\begin{align*}
\dim_{\CK} \CS_{n,r} 
&=
\sum_{\la \in \vL_{n,r}^+} (\dim_{\CK} \D(\la) )^2 
\\
&=
\sum_{\la \in \vL_{n,r}^+} (\dim_{\CK} W(\la))^2 
\\
&=
\dim_{\CK} \Sc_{n,r}.
\end{align*}
This implies that $\Phi$ gives an isomorphism from $\CS_{n,r}$ to $\Sc_{n,r}$. 
Thus, it is enough to show the claim. 

We recall that 
\begin{align}
&\D(\la) \cong  \CS_{n,r}^{\leq 0} \cdot 1_\la 
	\big/ \big( \CS_{n,r}^{\leq 0} \cdot 1_\la \cap \CS_{n,r}(>\la ),  
\label{Dla as vector space}
\\
&W(\la) \cong \Sc^{\leq 0} \cdot \vf_{\la,\la}^1 
	\big/ \big( \Sc^{\leq 0} \cdot \vf_{\la,\la}^1 \cap \Sc_{n,r}(>\la ) \big)
\label{Wla as vector space}
\end{align}
as $\CK$-vector spaces. 
Lemma \ref{lem restriciton Phi to Borel} implies the following isomorphism ; 
\begin{align} 
\label{rest Phi Snr 1la}
\Phi|_{\CS_{n,r}^{\leq 0}\, 1_\la} : \CS_{n,r}^{\leq 0}\, 1_\la \cong \Sc_{n,r}^{\leq 0}\, \vf_{\la,\la}^1 \,\,
\text{ as } \CK \text{-vector spaces}. 
\end{align}
We prove the claim by backword induction on 
the partial order of $\vL_{n,r}^+$. 

First, we suppose  that $\la$ is maximal in $\vL_{n,r}^+$. 
In this case, we have 
$\CS_{n,r}(>\la)=\{0\}$ and $\Sc_{n,r}(>\la)=\{0\}$. 
Thus, 
\eqref{surj Dla to Wla}, \eqref{Dla as vector space}, \eqref{Wla as vector space} 
and  \eqref{rest Phi Snr 1la} 
implies that 
$\D(\la) \cong W(\la)$ as left $\CS_{n,r}$-modules. 
Similarly, 
we have 
$\D^\sharp(\la) \cong W^\sharp (\la)$ as right $\CS_{n,r}$-modules. 
Since $\D(\la)$ (resp. $\D^\sharp(\la)$ ) is a simple left (resp. right) $\CS_{n,r}$-module, 
the surjective homomorphism of $\CS_{n,r}$-bimodules  
$\D(\la) \otimes_{\CK} \D^\sharp(\la) \ra \CS_{n,r}(\geq \la) / \CS_{n,r}(>\la)$ 
is an isomorphism.  

Next, we suppose that $\la$ is not maximal in $\vL_{n,r}^+$. 
The induction hypothesis 
implies that the surjection 
$\Phi|_{\CS_{n,r}(> \la)} : \CS_{n,r}(>\la) \ra \Sc_{n,r}(>\la)$ 
in Lemma \ref{surj Snr > la Csnr > la} 
is an isomorphism 
by comparing dimensions. 
Combined with \eqref{surj Dla to Wla}, \eqref{Dla as vector space}, \eqref{Wla as vector space} 
and  \eqref{rest Phi Snr 1la}, 
this implies that 
$\D(\la) \cong W(\la)$ as left $\CS_{n,r}$-modules. 
Similarly, 
we have 
$\D^\sharp(\la) \cong W^\sharp (\la)$ as right $\CS_{n,r}$-modules. 
This implies that 
$\D(\la) \otimes_{\CK} \D^\sharp(\la) \cong \CS_{n,r}(\geq \la) / \CS_{n,r}(>\la)$. 
Thus, we have the claim and (\roi) follows.  
The remaining assertions (\roii) and (\roiii) 
follows from \ref{def Snr} and Proposition \ref{prop iso Unr Snr}. 
\end{proof}

\remarks\ 

(\roi)
In the case where $r=1$, 
generators and defining relations of $\CS_{n,r}$ (resp. $U_{n,r}$)  
in \ref{def Snr} (resp. \ref{def Unr}) 
coincide with 
generators and defining relations of 
$q$-Schur algebras of type A in Theorem \ref{thm-DG-2} (resp. Theorem \ref{S1-presen})
given by Doty and Giaquinto. 
 
(\roii) 
In a similar reason as in the case where $r=1$ (see Remark \ref{remark condition r=1}), 
$\CS_{n,r}$ ($\cong \Sc_{n,r}$) 
satisfies the conditions (A-1), (A-2) and (C-1).

%%%%%%%%%%%%%%%%%%%%%%%%%%%%%%%%%%%%%%%%%%%%%%%%%%%%%%%%%%%%%%%%%%%%%%%%%%%%%%%%%%%%%%%%%%%%%%%%%%%%%%%%%%%%%%%%%
 
%%%%%%%%%%%%%%%%%%%%%%%%%%%%%%%%%%%%%%%%%%%%%%%%%%%%%%%%%%%%%%%%%%%%%%%%%%%%%%%%%%%%%%%%%%%%%%%%%%%%%%%%%%%%%%%%%

%%%%%%%%%%%%%%%%%%%%%%%%%%%%%%%%%%%%%%%%%%%%%%%%%%%%%%%%%%%%%%%%%%%%%%%%%%%%%%%%%%%%%%%%%%%%%%%%%%%%%%%%%%%%%%%%%%%%%

\section{An algorithm for computing  decomposition numbers}
\label{section algorithm}

In this section, 
we give an algorithm for computing the decomposition numbers of $ _F \CS_{n,r} \cong \, _F \Sc_{n,r}$ 
on an arbitrary field $F$ 
and parameters $q,Q_1,\cdots,Q_r \in F$.  
Throughout this section, 
we consider the objects over a fixed field $F$,  
and so  
we will omit the subscript $F$ (e.g. $_F \CS_{n,r}$, $_F \D(\la), \cdots$) 
unless it causes some confusions.

\para 
Since 
$\CS_{n,r}$ 
satisfies the condition (C-1), 
we can define a bilinear form 
$\lan \, , \, \ran_\iota : \D(\la) \times \D(\la) \ra F$ 
by 
\[ 
	\lan \ol{y 1_\la} , \ol{x 1_\la} \ran_\iota 1_\la \equiv \iota(y 1_\la) x 1_\la 
	\mod  \CS_{n,r}(>\la) \quad \text{ for } x,y \in  \CS_{n,r}^-.
\]
Note that $\lan\, , \,\ran_\iota$ is symmetric. 
Put 
$\rad_\iota  \D(\la) =\{ \ol{x} \in \D(\la) \,|\, \lan \ol{y} , \ol{x} \ran_\iota =0 
	\text{ for any } \ol{y} \in \D(\la) \}
$. 
One sees easily that 
$\lan \ol{y} , \ol{x} \ran_\iota = \lan \ol{\iota(y)} , \ol{x} \ran $ 
for $\ol{x},\ol{y} \in \D(\la)$, 
thus we have  
$\rad_\iota  \D(\la) = \rad \D(\la)$. 
Hence, 
from now on  
we denote $\lan \, , \, \ran_\iota $ (resp. $\rad_\iota \D(\la)$) 
simply 
by $\lan \, , \, \ran$ (resp. $\rad \D(\la)$).

\para 
For an $\CS_{n,r}$-module $M$, 
we have the weight space decomposition 
\[ M = \bigoplus_{\mu \in \vL_{n,r}} M_\mu, \]
where 
$M_\mu = 1_\mu \cdot M$. 
Since $\D(\la) = \CS_{n,r}^- \cdot \ol{1_\la}$, 
we see that 
$\la \geq \mu$ 
if 
$\D(\la)_\mu \not=0$. 
It is clear that 
$\D(\la)_\mu $  
is spanned by 
\[ 
\Xi (\la-\mu) = \big\{ F_{(i_1,k_1)}^{(c_1)} F_{(i_2,k_2)}^{(c_2)} \cdots F_{(i_l, k_l)}^{(c_l)} \cdot \ol{1_\la} 
\bigm| c_1 \a_{(i_1,k_1)} + c_2 \a_{(i_2,k_2)} + \cdots + c_l \a_{(i_l, k_l)} = \la - \mu 
\big\}.
\]
Note that $\Xi (\la-\mu)$ is a finite set. 
Then 
we can pick up a homogeneous basis of 
$\D(\la)_\mu$ 
from 
$\Xi (\la-\mu)$. 
We take a homogeneous basis $\CB(\la)_\mu$ of $\D(\la)_\mu$, 
and fix it. 

For $\la \in \vL^+_{n,r},\,  \mu \in \vL_{n,r}$, 
let 
\[ M(\la)_\mu = \Big( \lan\, \ol{b'}, \ol{b}\, \ran \Big)_{\ol{b},\ol{b'} \in \CB(\la)_\mu}\]
be a Gram matrix 
of the weight space $\D(\la)_\mu$. 
Put 
$\rad \D(\la)_\mu = \rad \D(\la) \cap \D(\la)_\mu$, 
then 
we have the following lemma. 

\begin{lem}
\label{lem dim weight sp}
We have 
\[ \dim_F \rad \D(\la)_\mu = \corank M(\la)_\mu. \]
\end{lem}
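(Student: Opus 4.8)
The plan is to reduce the statement to elementary linear algebra, the one substantive input being that the bilinear form $\lan\,,\,\ran$ respects the weight space decomposition of $\D(\la)$. So first I would record the orthogonality of distinct weight spaces. Take $\ol{x}\in \D(\la)_\mu$ and $\ol{y}\in \D(\la)_\nu$; by $\D(\la)=\CS_{n,r}^-\cdot\ol{1_\la}$ we may write $\ol{x}=\ol{x1_\la}$ and $\ol{y}=\ol{y1_\la}$ with $x,y\in\CS_{n,r}^-$ homogeneous of degrees $\mu-\la$ and $\nu-\la$ respectively. Since $\iota$ is an anti-automorphism reversing the $Q$-grading and fixing $1_\la$, we have $\iota(y1_\la)=1_\la\,\iota(y)$ with $\iota(y)\in\CS_{n,r}^+$ of degree $\la-\nu$, so that $\iota(y1_\la)\,x1_\la=1_\la\,\iota(y)\,x\,1_\la$ with $\deg\big(\iota(y)\,x\big)=\mu-\nu$. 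Using $1_\la\big(\CS_{n,r}^+\CS_{n,r}^-\big)_\a 1_\la=0$ for $\a\neq 0$ (the computation $1_\la z 1_\la=1_\la 1_{\la+\deg z}z$ that precedes \eqref{xy constant}), this product is $0$ whenever $\mu\neq\nu$. Hence $\lan\ol{y},\ol{x}\ran=0$ for $\mu\neq\nu$.

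Granting this, for fixed $\ol{x}\in\D(\la)_\mu$ and arbitrary $\ol{y}\in\D(\la)$ with weight decomposition $\ol{y}=\sum_{\nu}\ol{y}_\nu$, orthogonality gives $\lan\ol{y},\ol{x}\ran=\lan\ol{y}_\mu,\ol{x}\ran$. Therefore $\ol{x}$ lies in $\rad\D(\la)$ if and only if it is orthogonal to every element of $\D(\la)_\mu$; that is, $\rad\D(\la)_\mu=\rad\D(\la)\cap\D(\la)_\mu$ coincides with the radical of the restriction of $\lan\,,\,\ran$ to the finite-dimensional space $\D(\la)_\mu$. Passing to the fixed homogeneous basis $\CB(\la)_\mu$ and writing $\ol{x}=\sum_{\ol{b}\in\CB(\la)_\mu}c_{\ol{b}}\,\ol{b}$, the conditions $\lan\ol{b'},\ol{x}\ran=0$ for all $\ol{b'}\in\CB(\la)_\mu$ are precisely the homogeneous linear system governed by the Gram matrix $M(\la)_\mu$. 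The coordinate map thus identifies $\rad\D(\la)_\mu$ with the null space of $M(\la)_\mu$, and rank--nullity yields $\dim_F\rad\D(\la)_\mu=\corank M(\la)_\mu$. Since $\lan\,,\,\ran$ is symmetric, the left and right null spaces of $M(\la)_\mu$ agree, so there is no ambiguity in which kernel is meant.

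Everything after the orthogonality step is formal, so I expect the only point requiring care to be that first step: the degree bookkeeping showing $\iota(y)\,x$ has degree $\mu-\nu$ together with the vanishing $1_\la\big(\CS_{n,r}^+\CS_{n,r}^-\big)_\a 1_\la=0$ for $\a\neq0$. This is the same mechanism that produced \eqref{bilinear zero} for the untwisted pairing, now transported to the symmetric form $\lan\,,\,\ran_\iota$ over $F$, and I anticipate it poses no real difficulty.
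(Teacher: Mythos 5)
Your proof is correct and follows essentially the same route as the paper: the paper's proof likewise reduces the lemma to the orthogonality $\lan \D(\la)_\nu , \D(\la)_\mu \ran = 0$ for $\mu \neq \nu$ and then reads off the dimension from the Gram matrix. The only difference is cosmetic: where you re-derive the orthogonality by degree bookkeeping through $\iota$ and the vanishing $1_\la z 1_\la = 0$ for $\deg z \neq 0$, the paper simply cites \eqref{bilinear zero}, which encodes exactly that mechanism.
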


\begin{proof}
For $\ol{x} \in \D(\la)_\mu$, $\ol{y} \in \D(\la)_\nu$, 
we have 
$\lan \ol{y}, \ol{x} \ran =0$ unless $\mu = \nu$ 
by \eqref{bilinear zero}. 
Thus 
$\ol{x} \in \rad \D(\la)_\mu$ 
if and only if 
$\lan \ol{b'}, \ol{x} \ran=0$ for any $\ol{b'} \in \CB(\la)_\mu$.  
This implies the lemma. 
\end{proof}
\quad \\

\textbf{(Algorithm for computing  decomposition numbers of $\CS_{n,r}$)}
\\[2mm]
\textbf{(step 1)} 
Compute the value of 
$\lan \ol{b'}, \ol{b} \ran $ for all $\ol{b}, \ol{b'} \in \CB(\la)_\mu$ ($\la \in \vL^+_{n,r}, \mu \in \vL_{n,r}$).  
\\[-3mm]

Note that by \eqref{xy constant} and the definition of the bilinear form, 
we can compute $\lan \ol{b'}, \ol{b} \ran $ 
by using  the commutative relation 
\eqref{S-6} repeatedly. 
\\[2mm]
\textbf{(step 2)} 
Compute the corank of $ M(\la)_\mu$ for all $\la \in \vL^+_{n,r}, \mu \in \vL_{n,r}$. 
\\[-3mm] 

This is an elementally calculation of the linear algebra.  
\\[3mm]
\textbf{(step 3)}
Compute 
$\dim_F (L(\la)_\mu)$ 
for all $\la \in \vL^+_{n,r}$, $\mu \in \vL_{n,r}$.  
\\[-3mm] 

Since 
$L(\la)=\D(\la)/ \rad \D(\la)$, 
we have 
\[
\dim_F (L(\la)_\mu) = \dim_F (\D(\la)_\mu) - \dim_F ( \rad \D(\la)_\mu).
\] 
Thus, 
we can compute $\dim_F (L(\la)_\mu)$ by Lemma \ref{lem dim weight sp} and (step 2). 
\\[3mm] 
\textbf{(step 4)} 
Compute the decomposition numbers $d_{\la\mu}=[\D(\la) : L(\mu)]$ for $\la,\mu \in \vL^+_{n,r}$ 
by the following inductive process. 
\\[-3mm] 

By Theorem \ref{thm standard simple Sq R}, we have $d_{\la\la}=1$ for $\la \in \vL^+_{n,r}$. 
By induction, 
we may  assume that   
$d_{\la \mu}$ 
is known
for $\mu \in \vL^+_{n,r}$ such that $\la \geq \mu > \nu$, 
and  we compute 
the decomposition number 
$d_{\la \nu}$.

Note the following four facts:

$\bullet$ 
$\rad \D(\la)$ is the unique maximal $\CS_{n,r}$-submodule of $\D(\la)$,  

$\bullet$ 
$d_{\la \mu} \not=0 $ ($\la \not= \mu$) only if $\la > \mu$. 

$\bullet$ 
$L(\mu)_{\nu} \not=0 $ only if $\mu \geq \nu$. 

$\bullet$ 
$\dim_F L(\nu)_{\nu}=1$. 
\\
These four facts imply that 
\begin{align}
\label{eq rad D la nu}
\dim_F (\rad \D(\la)_{\nu}) 
&=
\sum_{ \mu \in \vL^+_{n,r} \setminus \{\la\}} d_{\la \mu} \cdot (\dim_F L(\mu)_{\nu}) 
\\
&= \sum_{\mu \in \vL^+_{n,r} \atop \la > \mu >\nu} d_{\la \mu} \cdot(\dim_F L(\mu)_{\nu}) + d_{\la \nu}. 
\notag
\end{align}
By Lemma \ref{lem dim weight sp} and (step 2), 
we know $\dim_F (\rad \D(\la)_{\nu})$. 
By the assumption of the induction together with (step 3), 
we know $\sum_{\mu \in \vL^+_{n,r} \atop \la > \mu >\nu} d_{\la \mu} \cdot(\dim_F L(\mu)_{\nu})$.  
Thus we can compute the decomposition number $d_{\la \nu}$ from the equation \eqref{eq rad D la nu}. 
\pagebreak

\remarks\

(\roi) 
In fact, 
in order to compute the decomposition numbers,  
it is enough to consider the Gram matrix 
$M(\la)_\mu$ 
only 
for $\la,\mu \in \vL_{n,r}^+$ 
since 
we have  
\[ 
\dim_F L(\mu)_\nu = \dim_F \D(\mu)_\nu - \sum_{\t \in \vL_{n,r}^+} d_{\mu \t} \dim_F L(\t)_\nu. 
\]
In this case, 
we should skip (step 3), 
and should add the following process of another induction on $\vL_{n,r}^+$ in (step 4) :   
\\[-2em]
\begin{align*} 
&\text{
$d_{\mu \t}$ 
is known 
for 
$\mu, \t \in \vL_{n,r}^+$ 
such that 
$\la > \mu$. 
}
\\
\Leftrightarrow 
&
\text{
$\dim_F L(\mu)_\nu$ 
is known 
for 
$\mu \in \vL_{n,r}^+$, $\nu \in \vL_{n,r}$ 
such that 
$\la > \mu$.
}  
\end{align*}

(\roii) 
Thanks to Theorem \ref{thm standard induced borel R} 
and \cite[Theorem 5.16 (f)]{DR} 
(or directly by comparing the highest weights as $\wt{U}_q$-modules),  
we have $_F \D(\la) \cong \,_F W(\la)$ for $\la \in \vL_{n,r}^+$. 
In particular, 
we have 
$_F \D(\la) = F \otimes_{\CA} \,_\CA \D(\la)$ 
since 
it is known that 
$_F W(\la) = F \otimes_{\CA} \,_\CA W(\la)$. 

(\roiii) 
Our algorithm can be applied for an arbitrary field 
which is not necessarily of characteristic $0$.  

%%%%%%%%%%%%%%%%%%%%%%%%%  comment out  %%%%%%%%%%%%%%%%%%%%%%%%%%%%%%%%%%

%(\roiii)
%In the fact, 
%our algorithm does not complete in the following sense.  
%
%Actually, 
%we can pick up a homogeneous basis $\CB(\la)_\mu$ of $\D(\la)_\mu$ from $\Xi(\la-\mu)$. 
%However 
%we do not give an algorithm to pick up a homogeneous basis $\CB(\la)_\mu$ from $\Xi(\la-\mu)$. 
% 
%Another one is that 
%we do not know a precise description of $\eta^{(\la)}_{(i,k)}$ ($(i,k) \in \vL'$, $\la \in \vL^+_{n,r}$) 
%in the relation \eqref{S-6}, 
%and not give an algorithm to compute them 
%although 
%we can compute them for given $\Sc_{n,r}$ (see Appendix \ref{example G(2,1,2)}). 

%%%%%%%%%%%%%%%%%%%%

(\roiv)
There exists a surjective homomorphism 
$_\CA \wt{U}_q^- \ra \,_\CA \CS_q^-$ 
as algebras, 
and we have 
$_\CA \wt{U}_q^- \cong\, _\CA U_q^-$. 
Thus, we have a surjective homomorphism of $_\CA U_q^-$-modules: 
\[ _\CA U_q^- \ra \,_\CA \D(\la)\,\, (= \,_\CA \CS_q^- \cdot \ol{1_\la}) \text{ such that } 1 \mapsto \ol{1_\la}.\]
It maybe useful that we take a homogeneous basis of $_\CA \D(\la)$ 
from the image of a certain homogeneous basis of $_\CA U_q^-$ 
(e.g. monomial basis, PBW basis, canonical basis, $\cdots$). 

(\rov) 
We can apply this algorithm to compute the decomposition numbers of 
$_F \CS_q$ under the general setting in \S \ref{specialization}. 
Moreover, 
we can also apply to  compute the decomposition numbers of 
$_F \CS_q$ associated to any Cartan matrix of finite type, 
which includes  the generalized $q$-Schur algebra constructed in \cite{Do}. 

%%%%%%%%%%%%%%%%%%%%%%%%%%%%%%%%%%%%%%%%%%%%%%%%%%%%%%%%%%%%%%%%%%%%%%%%%%%%%%%%%%%%%%%%%%%%%%%%%%%%%%%%%%%%%

%%%%%%%%%%%%%%%%%%%%%%%%%%%%%%%%%%%%%%%%%%%%%%%%%%%%%%%%%%%%%%%%%%%%%%%%%%%%%%%%%%%%%%%%%%%%%%%%%%%%%%%%%%%%%%%%%

%%%%%%%%%%%%%%%%%%%%%%%%%%%%%%%%%%%%%%%%%%%%%%%%%%%%%%%%%%%%%%%%%%%%%%%%%%%%%%%%%%%%%%%%%%%%%%%%%%%%%%%%%%%%%%%%%

\appendix 
\section{A proof of Proposition \ref{prop-rho}.}
\label{proof-prop-rho}
In this section, 
we give a proof of Proposition \ref{prop-rho}.  
The author thanks T. Shoji for communicating this fact.  

\para 
Let 
$V$ 
be a vector space over 
$\QQ(q)$ 
with a basis 
$\{v_1,\cdots,v_m\}$. 
Then, 
$U_q=U_q(\Fgl_m)$ 
acts on $V$ from left by 
\begin{align*}
&e_i \cdot v_j = 
	\begin{cases}
		v_{j-1} &\text{ if }j=i+1, \\
		0 &\text{otherwise},
	\end{cases}\\ 
&f_i \cdot v_j =
	\begin{cases}
		v_{j+1} &\text{ if } j=i, \\
		0 &\text{otherwise},
	\end{cases}\\
& K_i^{\pm} \cdot v_j =
	\begin{cases}
		q^{\pm1} v_j &\text{ if } j=i, \\
		v_j &\text{otherwise}.
	\end{cases}
\end{align*}
This action is called a vector representation 
of 
$U_q$.  
We extend this action to a tensor space 
$V^{\otimes n}$ 
by using a comultiplication 
$\D$ 
of 
$U_q$ defined by 
\begin{align*}
&\D(e_i)=e_i \otimes K_i K_{i+1}^- + 1 \otimes e_i, \\
&\D(f_i)=f_i \otimes 1 + K_i^- K_{i+1} \otimes f_i, \\
&\D(K_i^{\pm})=K_i^{\pm} \otimes K_i^{\pm}.
\end{align*}
We denote this action by 
$\rho' : U_q(\Fgl_m) \ra \End(V^{\otimes n})$. 

On the other hand, 
$\He_n$ 
acts on 
$V^{\otimes n}$ 
from right as follows. 
We define 
$\wt{T} \in \End(V \otimes V)^{\text{op}}$ 
by 
\begin{align*}
(v_i \otimes v_j)\cdot \wt{T}=
	\begin{cases}
		q\, v_i \otimes v_j &\text{ if }i=j, \\
		v_j \otimes v_i & \text{ if }i<j, \\
		v_j \otimes v_i + (q-q^{-1}) v_i \otimes v_j &\text{ if }i>j, 
	\end{cases}
\end{align*}
where $\End(V\otimes V)^{\text{op}}$ means an opposite algebra of $\End(V \otimes V)$. 
For $i=1,\cdots,n-1$, 
we define 
$\wt{T}_i \in \End(V^{\otimes n})^{\text{op}}$ by 
\begin{align*}
\wt{T}_i = \id_V^{\otimes (i-1)} \otimes \wt{T} \otimes \id_V^{\otimes (n-1-i)}.
\end{align*}
Then, 
we define 
an algebra homomorphism 
$\theta : \He_n \ra \End(V^{\otimes n})^{\text{op}}$ 
by 
$\theta(T_i)=\wt{T}_i$.  
By \cite{J}, 
it is known that 
the action of $U_q$ and 
the action of $\He_n$ 
on $V^{\otimes n}$ 
commute. 
Moreover, 
we have  
\begin{align*}
\rho'(U_q)=\End_{\He_n}(V^{\otimes n}).
\end{align*}

\para 
For 
$\mu=(\mu_1,\cdots,\mu_m) \in \vL_{n,1}$, 
let 
$V_\mu^{\otimes n}$ 
be 
a subspace of 
$V^{\otimes n}$ 
spanned by 
$\big\{ v_{i_1} \otimes v_{i_2} \otimes \cdots \otimes v_{i_n} 
	\bigm| \mu_j= \sharp \{k\,|\, i_k=j \} \text{ for } j=1,\cdots,m \big\} $. 
One sees easily that 
$V^{\otimes n}_\mu$ 
is a weight space 
of 
$V^{\otimes n}$ 
with a weight $\mu$ 
as a $U_q$-module,  
and  
we have a weight space decomposition 
\begin{align*}
V^{\otimes n}=\bigoplus_{\mu \in \vL_{n,1}} V_\mu^{\otimes n}.
\end{align*}
Since 
the action of $\He_n$ 
commutes with 
the action of 
$U_q$, 
$V^{\otimes n}_\mu$ 
is invariant  
under the action of $\He_n$.  
For $\mu \in \vL_{n,1}$, 
put 
\begin{align*}  
v_\mu = \underbrace{v_1 \otimes \cdots \otimes v_1}_{\mu_1 \text{ terms }} 
	\otimes 
	\underbrace{v_2 \otimes \cdots \otimes v_2}_{\mu_2 \text{ terms }} 
	\otimes \cdots \otimes 
	\underbrace{v_m \otimes \cdots \otimes v_m}_{\mu_m \text{ terms }}.
\end{align*}
Then, 
we have 
$
V^{\otimes n}_\mu = v_\mu \cdot \He_n. 
$
Moreover, 
one can check that 
there exists an isomorphism 
$V^{\otimes n}_\mu \ra M^\mu$ 
of 
$\He_{n}$-modules 
such that 
$v_\mu \mapsto x_\mu$. 
Thus, we have the following isomorphism of algebras. 
\begin{align*}
\rho'(U_q) &= \End_{\He_n}(V^{\otimes n}) \\
&=\End_{\He_n}\Big( \bigoplus_{\mu \in \vL_{n,1}} V_\mu^{\otimes n} \Big) \\
&\cong \End_{\He_n}\Big( \bigoplus_{\mu \in \vL_{n,1}} M^\mu \Big).
\end{align*} 
This isomorphism gives the surjection 
$\rho : U_q \ra \Sc_{n,1}$ 
in Theorem \ref{Uqgl-S1}.

\para 
For 
$\mu \in \vL_{n,1}$, 
put 
\begin{align*}
&A = \underbrace{v_1 \otimes \cdots \otimes v_1}_{\mu_1 \text{ terms }} 
	\otimes 
	\underbrace{v_2 \otimes \cdots \otimes v_2}_{\mu_2 \text{ terms }} 
	\otimes \cdots \otimes 
	\underbrace{v_i \otimes \cdots \otimes v_i}_{\mu_i \text{ terms }}, \\
&B = \underbrace{v_{i+2} \otimes \cdots \otimes v_{i+2}}_{\mu_{i+2} \text{ terms }} 
	\otimes 
	\underbrace{v_{i+3} \otimes \cdots \otimes v_{i+3}}_{\mu_{i+3} \text{ terms }} 
	\otimes \cdots \otimes 
	\underbrace{v_m \otimes \cdots \otimes v_m}_{\mu_m \text{ terms }}.
\end{align*}
Then, we have 
\begin{align*}
&v_\mu = A \otimes \underbrace{v_{i+1} \otimes \cdots \otimes v_{i+1}}_{\mu_{i+1} \text{ terms }} \otimes B, \\
&v_{\mu +\a_i} 
= A \otimes v_i \otimes 
	\underbrace{v_{i+1} \otimes \cdots \otimes v_{i+1}}_{\mu_{i+1} -1 \text{ terms }} \otimes B. 
\end{align*}
By the definitions, 
one can compute that 
\begin{align*}
\rho'(e_i)(v_\mu) 
&= \sum_{j=1}^{\mu_{i+1}} q^{-(\mu_{i+1}-j)}
	A \otimes \underbrace{v_{i+1} \otimes \cdots \otimes v_{i+1}}_{j-1 \text{ terms }} 
	\otimes v_i \otimes 
	\underbrace{v_{i+1} \otimes \cdots \otimes v_{i+1}}_{\mu_{i+1}-j \text{ terms }} \otimes B \\*
&=q^{-\mu_{i+1}+1} \sum_{ x \in X^\mu_{\mu+\a_i}} q^{\ell(x)} v_{(\mu+\a_i)}\cdot  T_x. 
\end{align*}
Under the isomorphism $V^{\otimes n}_\mu \cong M^\mu$, 
this implies that 
$\rho(e_i)(m_\mu)=q^{-\mu_{i+1}+1} \psi_{\mu+\a_{i} \,,\, \mu}^1 (m_\mu)$. 
Thus, 
we have (\roi) in Proposition \ref{prop-rho}. 
For (\roii), (\roiii) in Proposition \ref{prop-rho}, 
we can prove in a similar way. 

%%%%%%%%%%%%%%%%%%%%%%%%%%%%%%%%%%%%%%%%%%%%%%%%%%%%%%%%%%%%%%%%%%%%%%%%%%%%%%%%%%%%%%%%%%%%%%%%%%%%%%%%%%%%%%%%%%%

%%%%%%%%%%%%%%%%%%%%%%%%%%%%%%%%%%%%%%%%%%%%%%%%%%%%%%%%%%%%%%%%%%%%%%%%%%%%%%%%%%%%%%%%%%%%%%%%%%%%%%%%%%%%%%%%%%%

\section{Example : Cyclotomic $q$-Schur algebra of type $G(2,1,2)$} 
\label{example G(2,1,2)}

In this appendix, 
we consider a cyclotomic $q$-Schur algebra $\Sc_{2,2}$ of type $G(2,1,2)$, 
namely associated to the complex reflection group $\FS_2 \ltimes (\ZZ/2 \ZZ)^2$. 
In this case, 
we will describe elements $\eta_{(i,k)}^\la$ explicitly, 
and compute the Gram matrices $M(\la)_\mu$ and decomposition numbers 
of $_\CC \Sc_{2,2}$. 
Throughout this appendix, 
we replace $\g_i$ with $Q_i$ ($i=1,2$), 
thus 
$\Sc_{2,2}$ is an algebra over 
$\CK=(q,Q_1,Q_2)$, 
where $q,Q_1,Q_2$ are indeterminate elements.

\para  
The cyclotomic $q$-Schur algebra 
$\Sc_{2,2}$ of type $G(2,1,2)$ 
is generated by 
the generators 
$E_{(1,1)}, E_{(2,1)}, E_{(1,2)}, F_{(1,1)}, F_{(2,1)}, F_{(1,2)}, 1_\la (\la \in \vL)$, 
where  
\[ 
\vL=
\left\{ 
\begin{matrix}
\la_{\lan 0 \ran} =\big( (2,0),(0,0) \big), &
\la_{\lan 1 \ran} =\big( (1,1),(0,0) \big), &
\la_{\lan 2 \ran} =\big( (1,0),(1,0) \big), \\
\la_{\lan 3 \ran} =\big( (1,0),(0,1) \big), &
\la_{\lan 4 \ran} =\big( (0,2),(0,0) \big), &
\la_{\lan 5 \ran} =\big( (0,1),(1,0) \big), \\
\la_{\lan 6 \ran} =\big( (0,1),(0,1) \big), &
\la_{\lan 7 \ran} =\big( (0,0),(2,0) \big), &
\la_{\lan 8 \ran} =\big( (0,0),(1,1) \big), \\
\la_{\lan 9 \ran} =\big( (0,0),(0,2) \big)
\end{matrix}
\right\}, 
\]  
with the defining relations 
\eqref{S-1} - \eqref{S-8}. 
By Lemma \ref{lem vLnr r-partition}, 
we have 
\[\vL^+=\{\la_{\lan 0 \ran} ,\la_{\lan 1 \ran} ,\la_{\lan 2 \ran} ,\la_{\lan 7 \ran} , \la_{\lan 8 \ran} \}.\] 

By Lemma \ref{def g-mu-(i,k)} and \eqref{def g FE}, 
we have 
\begin{align*}
&
g_{(2,1)}^{\la_{\lan 1 \ran} } (F,E) = Q_1\big( (q-q^{-1}) F_{(1,1)} E_{(1,1)} + q^{-2} \big),   
\\
&
g_{(2,1)}^{\la_{\lan 4 \ran} } (F,E) = Q_1 (q^2 +1), 
\\
&
g_{(2,1)}^{\la_{\lan 5 \ran} } (F,E) = Q_1,
\\
&
g_{(2,1)}^{\la_{\lan 6 \ran} } (F,E) = Q_1,
\\
& 
g_{(1,2)}^{\la_{\lan 2 \ran} } (F,E) = F_{(2,1)} E_{(2,1)} + Q_2, 
\\
&
g_{(1,2)}^{\la_{\lan 5 \ran} } (F,E) = F_{(1,1)} F_{(2,1)} E_{(2,1)} E_{(1,1)} + Q_2,
\\
&
g_{(1,2)}^{\la_{\lan 7 \ran} } (F,E) = q F_{(2,1)} E_{(2,1)} + Q_2 (1+q^2),
\\
&
g_{(1,2)}^{\la_{\lan 8 \ran} } (F,E) = F_{(2,1)} E_{(2,1)} + Q_2,
\end{align*}
and 
$g_{(2,1)}^\la(F,E)$ (resp. $g_{(1,2)}^\la(F,E)$), 
which does not appear in the above list, 
is equal to $0$. 
\\
As an example, we compute only $g_{(2,1)}^{\la_{\lan 1 \ran}} (F,E)$. 
By the definitions, 
we have 
\begin{align*}
\s_{(2,1)}^{\la_{\lan 1 \ran}} (m_{\la_{\lan 1 \ran}}) 
&= m_{\la_{\lan 1 \ran}} L_2 
\\
&=
(L_1 - Q_2)(L_2-Q_2) T_1 L_1 T_1 
\\
&= 
T_1 (L_1 - Q_2)L_1(L_2 - Q_2) T_1  
	\qquad (\because \text{ Lemma } \ref{lem-L} \, (\text{\roi}), \, (\text{\roiv}))\\
&= Q_1 T_1 (L_1 - Q_2)(L_2 - Q_2) T_1 
\\
&=
Q_1 (L_1 - Q_2)(L_2 - Q_2) \big( (q-q^{-1}) T_1 + 1 \big) 
	\qquad (\because  T_1^2= (q-q^{-1}) T_1 +1 ) 
\\
&=
Q_1 \big( (q-q^{-1}) m_{\la_{\lan 1 \ran}} T_1 + m_{\la_{\lan 1 \ran}} \big), 
\end{align*}
where the fourth equality follows from $L_1=T_0$ and $T_0^2 = (Q_1 + Q_2) T_0 - Q_1 Q_2$.
On the other hand, 
we have 
\begin{align*}
\vf_{(1,1)}^- \vf_{(1,1)}^+ (m_{\la_{\lan 1 \ran}}) 
&= 
q^{-1} m_{\la_{\lan 1 \ran}} (1 + q T_1) 
\\
&=
m_{\la_{\lan 1 \ran}} T_1 + q^{-1} m_{\la_{\lan 1 \ran}}.
\end{align*}
Thus, we have 
$
\s_{(2,1)}^{\la_{\lan 1 \ran}} = Q_1\big( (q-q^{-1}) \vf^-_{(1,1)} \vf^+_{(1,1)} + q^{-2} \big) 
	\vf_{\la_{\lan 1 \ran}, \la_{\lan 1 \ran}}^1
$. 
This implies that 
\[g_{(2,1)}^{\la_{\lan 1 \ran}} (F,E) = Q_1\big( (q-q^{-1}) F_{(1,1)} E_{(1,1)} + q^{-2} \big).\]

Since 
$\eta_{(2,1)}^\la = \Big( - Q_2 [ \la_2^{(1)} - \la_1^{(2)}] + q^{\la_2^{(1)} - \la_1^{(2)}} 
	\big( q^{-1} g _{(2,1)}^\la (F,E) - q g_{(1,2)}^\la (F,E) \big) \Big) 1_\la$, 
we have 
\begin{align*}
&
\eta_{(2,1)}^{\la_{\lan 1 \ran}} 
	=  \Big( Q_1 (q-q^{-1}) F_{(1,1)} E_{(1,1)} + (Q_1 q^{-2} - Q_2) \Big) 1_{\la_{\lan 1 \ran}},
\\
&
\eta_{(2,1)}^{\la_{\lan 2 \ran}} =  - F_{(2,1)} E_{(2,1)} 1_{\la_{\lan 2 \ran}},
\\
&
\eta_{(2,1)}^{\la_{\lan 4 \ran}} = \Big( Q_1 (q^3 + q) - Q_2(q+q^{-1})\Big)  1_{\la_{\lan 4 \ran}},
\\
&
\eta_{(2,1)}^{\la_{\lan 5 \ran}} 
	=  \Big( -q F_{(1,1)} F_{(2,1)} E_{(2,1)} E_{(1,1)} + (Q_1 q^{-1} - Q_2 q) \Big)  1_{\la_{\lan 5 \ran}},
\\
&
\eta_{(2,1)}^{\la_{\lan 6 \ran}} =  (Q_1 - Q_2 ) 1_{\la_{\lan 6 \ran}},
\\
&
\eta_{(2,1)}^{\la_{\lan 7 \ran}} =  - F_{(2,1)} E_{(2,1)}  1_{\la_{\lan 7 \ran}},
\\
&
\eta_{(2,1)}^{\la_{\lan 8 \ran}} =  - F_{(2,1)} E_{(2,1)}  1_{\la_{\lan 8 \ran}},
\\
& 
\eta_{(2,1)}^{\la_{\lan 0 \ran}} = \eta_{(2,1)}^{\la_{\lan 3 \ran}} = \eta_{(2,1)}^{\la_{\lan 9 \ran}} = 0.
\end{align*}

\para 
We can take a homogeneous basis of $_\CA \D(\la)$ for $\la \in \vL^+$ as followings. 
\\[2mm]
{\scriptsize 
\begin{tabular}{|c|r|}
\hline 
\multicolumn{2}{|c|}{} \\[-3mm]
\multicolumn{2}{|c|}{basis of $_\CA \D(\la_{\lan 0 \ran}) $}
\\[2mm] \hline
weight & basis 
\\ \hline & \\[-3mm]
$\la_{\lan 0 \ran} $ & $\ol{  1_{\la_{\lan 0 \ran}}}$
\\ \hline & \\[-3mm]
$\la_{\lan 1 \ran} $ & $\ol{ F_{(1,1)} 1_{\la_{\lan 0 \ran}}}$
\\ \hline & \\[-3mm]
$\la_{\lan 2 \ran} $ & $\ol{  F_{(2,1)} F_{(1,1)} 1_{\la_{\lan 0 \ran}}}$
\\ \hline & \\[-3mm]
$\la_{\lan 3 \ran} $ & $\ol{  F_{(1,2)} F_{(2,1)} F_{(1,1)} 1_{\la_{\lan 0 \ran}}}$
\\ \hline & \\[-3mm]
$\la_{\lan 4 \ran} $ & $\ol{  F_{(1,1)}^{(2)} 1_{\la_{\lan 0 \ran}}}$
\\ \hline & \\[-3mm]
$\la_{\lan 5 \ran} $ & $\ol{  F_{(2,1)} F_{(1,1)}^{(2)} 1_{\la_{\lan 0 \ran}}}$
\\ \hline & \\[-3mm]
$\la_{\lan 6 \ran} $ & $\ol{  F_{(1,2)} F_{(2,1)} F_{(1,1)}^{(2)} 1_{\la_{\lan 0 \ran}}}$
\\ \hline & \\[-3mm]
$\la_{\lan 7 \ran} $ & $\ol{  F_{(2,1)}^{(2)} F_{(1,1)}^{(2)} 1_{\la_{\lan 0 \ran}}}$
\\ \hline & \\[-3mm]
$\la_{\lan 8 \ran} $ & $\ol{  F_{(1,2)} F_{(2,1)}^{(2)} F_{(1,1)}^{(2)} 1_{\la_{\lan 0 \ran}}}$
\\ \hline & \\[-3mm]
$\la_{\lan 9 \ran} $ & $\ol{  F_{(1,2)}^{(2)} F_{(2,1)}^{(2)} F_{(1,1)}^{(2)} 1_{\la_{\lan 0 \ran}}}$
\\ \hline 
\end{tabular}
}
\quad 
%%%%%%%%%%%%%%%%%%%%%%%%%%%%%%%%%%%%%%%%%
{\small 
\begin{tabular}{|c|r|}
\hline
\multicolumn{2}{|c|}{} \\[-3mm]
\multicolumn{2}{|c|}{basis of $_\CA \D(\la_{\lan 1 \ran}) $}
\\[2mm] \hline
weight & basis 
\\ \hline & \\[-3mm]
$\la_{\lan 1 \ran} $ & $\ol{  1_{\la_{\lan 1 \ran}}}$
\\ \hline & \\[-3mm]
$\la_{\lan 2 \ran} $ & $\ol{  F_{(2,1)}  1_{\la_{\lan 1 \ran}}}$
\\ \hline & \\[-3mm]
$\la_{\lan 3 \ran} $ & $\ol{  F_{(1,2)} F_{(2,1)}  1_{\la_{\lan 1 \ran}}}$
\\ \hline & \\[-3mm]
$\la_{\lan 5 \ran} $ & $\ol{  F_{(1,1)} F_{(2,1)} 1_{\la_{\lan 1 \ran}}}$
\\ \hline & \\[-3mm]
$\la_{\lan 6 \ran} $ & $\ol{  F_{(1,2)} F_{(1,1)} F_{(2,1)}  1_{\la_{\lan 1 \ran}}}$
\\ \hline & \\[-3mm]
$\la_{\lan 8 \ran} $ & $\ol{  F_{(2,1)} F_{(1,2)} F_{(1,1)} F_{(2,1)} 1_{\la_{\lan 1 \ran}}}$
\\ \hline 
\end{tabular}}

%%%%%%%%%%%%%%%%
{\small 
\begin{tabular}{|c|r|}
\hline 
\multicolumn{2}{|c|}{} \\[-3mm]
\multicolumn{2}{|c|}{basis of $_\CA \D(\la_{\lan 2 \ran}) $}
\\[2mm] \hline
weight & basis 
\\ \hline & \\[-3mm]
$\la_{\lan 2 \ran} $ & $\ol{  1_{\la_{\lan 2 \ran}}}$
\\ \hline & \\[-3mm]
$\la_{\lan 3 \ran} $ & $\ol{  F_{(1,2)}  1_{\la_{\lan 2 \ran}}}$
\\ \hline & \\[-3mm]
$\la_{\lan 5 \ran} $ & $\ol{  F_{(1,1)} 1_{\la_{\lan 2 \ran}}}$
\\ \hline & \\[-3mm]
$\la_{\lan 6 \ran} $ & $\ol{  F_{(1,2)} F_{(1,1)} 1_{\la_{\lan 2 \ran}}}$
\\ \hline & \\[-3mm]
$\la_{\lan 7 \ran} $ & $\ol{  F_{(2,1)} F_{(1,1)} 1_{\la_{\lan 2 \ran}}}$
\\ \hline & \\[-3mm]
$\la_{\lan 8 \ran} $ & $\ol{  F_{(2,1)} F_{(1,2)} F_{(1,1)} 1_{\la_{\lan 2 \ran}}}$, 
						$\ol{  F_{(1,2)} F_{(2,1)} F_{(1,1)} 1_{\la_{\lan 2 \ran}}}$
\\ \hline & \\[-3mm]
$\la_{\lan 9 \ran} $ & $\ol{  F_{(1,2)} F_{(2,1)} F_{(1,2)} F_{(1,1)} 1_{\la_{\lan 2 \ran}}}$
\\ \hline 
\end{tabular}
} 
\quad 
%%%%%%%%%%%%%%%%%%%%%%%%%%%%%%%%%%%%%%%%%
{\small 
\begin{tabular}{|c|r|}
\hline 
\multicolumn{2}{|c|}{} \\[-3mm]
\multicolumn{2}{|c|}{basis of $_\CA \D(\la_{\lan 7 \ran}) $}
\\[2mm] \hline
weight & basis 
\\ \hline & \\[-3mm]
$\la_{\lan 7 \ran} $ & $\ol{   1_{\la_{\lan 7 \ran}}}$
\\ \hline & \\[-3mm]
$\la_{\lan 8 \ran} $ & $\ol{  F_{(1,2)}  1_{\la_{\lan 7 \ran}}}$, 
\\ \hline & \\[-3mm]
$\la_{\lan 9 \ran} $ & $\ol{  F_{(1,2)}^{(2)} 1_{\la_{\lan 7 \ran}}}$
\\ \hline \multicolumn{2}{|c|}{} \\[-3mm] \hline
%%%%%%%%%%%%%%%%%%%%%%%%%%%%%%%%%%%%%%%
\multicolumn{2}{|c|}{} \\[-3mm]
\multicolumn{2}{|c|}{basis of $_\CA \D(\la_{\lan 8 \ran}) $}
\\[2mm] \hline
weight & basis 
\\ \hline & \\[-3mm]
$\la_{\lan 8 \ran} $ & $\ol{   1_{\la_{\lan 8 \ran}}}$
\\\hline
\end{tabular}
}

\para 
We can compute the Gram matrix of $_\CA \D(\la)_\mu$ $\la,\mu \in \vL^+$ 
with respect to the above basis. 
Here, as an example, 
we compute 
$M(\la_{\lan 0 \ran})_{\la_{\lan 2 \ran}}$. 
Note that 
$_\CA \D(\la_{\lan 0 \ran})_{ \lan 2 \ran}$ 
has a basis 
$\big\{ \ol{F_{(2,1)} F_{(1,1)} 1_{\la_{\lan 0 \ran}}} \big\}$. 
We have 
\begin{align*}
&1_{\la_{\lan 0 \ran}} E_{(1,1)} E_{(2,1)} F_{(2,1)} F_{(1,1)} 1_{\la_{\lan 0 \ran}} 
\\
&= 
E_{(1,1)}\Big( Q_1 (q-q^{-1}) F_{(1,1)} E_{(1,1)} + (Q_1 q^{-2} - Q_2)\Big) F_{(1,1)} 1_{\la_{\lan 0 \ran}} 
\\
&= 
\Big(Q_1 (q-q^{-1}) [2][2] + (Q_1 q^{-2} -Q_2) [2] \Big) 1_{\la_{\lan 0 \ran}} 
\qquad ( \because E_{(1,1)} F_{(1,1)} 1_{\la_{\lan 0 \ran}}  = [2] 1_{\la_{\lan 0 \ran}} )
\\
&= 
[2](Q_1 q^2 - Q_2) 1_{\la_{\lan 0 \ran}}.
\end{align*}
This implies that 
$\big\lan \ol{F_{(2,1)} F_{(1,1)} 1_{\la_{\lan 0 \ran}}}\, , \,
	\ol{F_{(2,1)} F_{(1,1)} 1_{\la_{\lan 0 \ran}}} \big\ran 
= [2](Q_1 q^2 - Q_2)$. 
Thus, we have 
$ M(\la_{\lan 0 \ran})_{\la_{\lan 2 \ran}} = \Big( [2](Q_1 q^2 - Q_2) \Big)$. 

In a similar way, 
we can compute the Gram matrix $M(\la)_\mu$ for $\la,\mu \in \vL_{n,r}^+$, 
and we have 

$\D(\la_{\lan 0 \ran})$ ; 
\\[-3em]
\begin{align*}
&M(\la_{\lan 0 \ran})_{\la_{\lan 1 \ran }} = \Big( [2] \Big)
\\
&M(\la_{\lan 0 \ran})_{\la_{\lan 2 \ran }} = \Big( [2] (q^2 Q_1 - Q_2 ) \Big)
\\
&M(\la_{\lan 0 \ran})_{\la_{\lan 7 \ran }} = \Big( (Q_1 - Q_2) (q^2 Q_1 - Q_2) \Big)
\\
&M(\la_{\lan 0 \ran})_{\la_{\lan 8 \ran }} = \Big( [2] (Q_1 - Q_2)(q^2 Q_1 -Q_2) \Big)
\end{align*}

$\D(\la_{\lan 1 \ran})$ ; 
\\[-3em] 
\begin{align*}
&M(\la_{\lan 1 \ran})_{\la_{\lan 2 \ran}} = \Big( (q^{-2} Q_1 - Q_2) \Big)
\\
&M(\la_{\lan 1 \ran})_{\la_{\lan 8 \ran}} = \Big( (Q_1 - Q_2)(q^{-2} Q_1 - Q_2) \Big)
\end{align*}
\\[3em] 

$\D(\la_{\lan 2 \ran})$ ; 
\\[-3em]
\begin{align*}
&M(\la_{\lan 2 \ran})_{\la_{\lan 7 \ran}} = \Big( q(q^{-2} Q_1 - Q_2) \Big)
\\
&M(\la_{\lan 2 \ran})_{\la_{\lan 8 \ran}} = \left(   \begin{matrix}  
													(Q_1 - Q_2) & q(q^{-2} Q_1 - Q_2) \\
													q(q^{-2} Q_1 - Q_2 ) & [2] q(q^{-2} Q_1 - Q_2) 				
													\end{matrix}\right)
\\
&\quad 
\Big( \det  M(\la_{\lan 2 \ran})_{\la_{\lan 8 \ran}} =(q^{-2} Q_1 - Q_2) (q^2 Q_1 - Q_2) \Big)
\end{align*}

$\D(\la_{\lan 7 \ran})$ ; 
\\[-3em]
\begin{align*}
M(\la_{\lan 7 \ran})_{\la_{\lan 8 \ran}} = \Big( [2] \Big) 
\end{align*}
\\[-3em]

\para 
Let 
$\CA \ra \CC$ 
be a ring homomorphism, 
and we express 
the image of $q, Q_1,Q_2$ 
in 
$\CC$ 
by the same symbol. 
We can compute the decomposition numbers of $_\CC \Sc_{2,2} = \CC \otimes_{\CA} \, _\CA \Sc_{2,2}$ 
by using the algorithm in \S \ref{section algorithm}, 
and we have the following decomposition matrix of $_\CC \Sc_{2,2}$.
\\

{\scriptsize
\begin{tabular}{c|ccccc}
\multicolumn{6}{l}{
($q^{2}\not= \pm 1, 0$,\,\, $Q_1=Q_2 \not=0$)
}
\\ \hline 
\\[-3mm] 
$_{\D(\la)} \backslash^{ L(\mu)}$ & $\la_8$ & $\la_7$ & $\la_2$ & $\la_1$ & $\la_0$
\\ \hline 
$\la_8$ & 1 &  &  &  &  
\\ 
$\la_7$ & 0 & 1 &  &  &  
\\
$\la_2$ & 0 & 0 & 1 &  &  
\\
$\la_1$ & 1 & 0 & 0 & 1 &  
\\
$\la_0$ & 0 & 1 & 0 & 0 & 1 
\end{tabular}
\quad 
\begin{tabular}{c|ccccc}
\multicolumn{6}{l}{
($q^{2}\not= \pm 1, 0$,\,\, $q^{-2} Q_1=Q_2 \not=0$)
}
\\ \hline 
\\[-3mm] 
$_{\D(\la)} \backslash^{ L(\mu)}$ & $\la_8$ & $\la_7$ & $\la_2$ & $\la_1$ & $\la_0$
\\ \hline 
$\la_8$ & 1 &  &  &  &  
\\ 
$\la_7$ & 0 & 1 &  &  &  
\\
$\la_2$ & 0 & 1 & 1 &  &  
\\
$\la_1$ & 0 & 0 & 1 & 1 &  
\\
$\la_0$ & 0 & 0 & 0 & 0 & 1 
\end{tabular}
\\[3mm]

\begin{tabular}{c|ccccc}
\multicolumn{6}{l}{
($q^{2}\not= \pm 1, 0$,\,\, $q^{2} Q_1=Q_2 \not=0$)
}
\\ \hline 
\\[-3mm] 
$_{\D(\la)} \backslash^{ L(\mu)}$ & $\la_8$ & $\la_7$ & $\la_2$ & $\la_1$ & $\la_0$
\\ \hline 
$\la_8$ & 1 &  &  &  &  
\\ 
$\la_7$ & 0 & 1 &  &  &  
\\
$\la_2$ & 1 & 0 & 1 &  &  
\\
$\la_1$ & 0 & 0 & 0 & 1 &  
\\
$\la_0$ & 0 & 0 & 1 & 0 & 1 
\end{tabular}
\quad
\begin{tabular}{c|ccccc}
\multicolumn{6}{l}{
($q^{2}= -1 $,\,\, $\pm Q_1 \not= Q_2 $)
}
\\ \hline 
\\[-3mm] 
$_{\D(\la)} \backslash^{ L(\mu)}$ & $\la_8$ & $\la_7$ & $\la_2$ & $\la_1$ & $\la_0$
\\ \hline 
$\la_8$ & 1 &  &  &  &  
\\ 
$\la_7$ & 1 & 1 &  &  &  
\\
$\la_2$ & 0 & 0 & 1 &  &  
\\
$\la_1$ & 0 & 0 & 0 & 1 &  
\\
$\la_0$ & 0 & 0 & 0 & 1 & 1 
\end{tabular}
\\[3mm]

\begin{tabular}{c|ccccc}
\multicolumn{6}{l}{
($q^{2}= -1 $,\,\, $ Q_1 = Q_2 \not=0 $)
}
\\ \hline 
\\[-3mm] 
$_{\D(\la)} \backslash^{ L(\mu)}$ & $\la_8$ & $\la_7$ & $\la_2$ & $\la_1$ & $\la_0$
\\ \hline 
$\la_8$ & 1 &  &  &  &  
\\ 
$\la_7$ & 1 & 1 &  &  &  
\\
$\la_2$ & 0 & 0 & 1 &  &  
\\
$\la_1$ & 1 & 0 & 0 & 1 &  
\\
$\la_0$ & 1 & 1 & 0 & 1 & 1 
\end{tabular}
\quad
\begin{tabular}{c|ccccc}
\multicolumn{6}{l}{
($q^{2}= -1 $,\,\, $ -Q_1 = Q_2 \not=0 $)
}
\\ \hline 
\\[-3mm] 
$_{\D(\la)} \backslash^{ L(\mu)}$ & $\la_8$ & $\la_7$ & $\la_2$ & $\la_1$ & $\la_0$
\\ \hline 
$\la_8$ & 1 &  &  &  &  
\\ 
$\la_7$ & 1 & 1 &  &  &  
\\
$\la_2$ & 0 & 1 & 1 &  &  
\\
$\la_1$ & 0 & 0 & 1 & 1 &  
\\
$\la_0$ & 0 & 1 & 1 & 1 & 1 
\end{tabular}
\\[3mm]

\begin{tabular}{c|ccccc}
\multicolumn{6}{l}{
($q^{2}= 1 $,\,\, $ Q_1 = Q_2 =0 $)
}
\\ \hline 
\\[-3mm] 
$_{\D(\la)} \backslash^{ L(\mu)}$ & $\la_8$ & $\la_7$ & $\la_2$ & $\la_1$ & $\la_0$
\\ \hline 
$\la_8$ & 1 &  &  &  &  
\\ 
$\la_7$ & 0 & 1 &  &  &  
\\
$\la_2$ & 1 & 1 & 1 &  &  
\\
$\la_1$ & 1 & 0 & 1 & 1 &  
\\
$\la_0$ & 0 & 1 & 1 & 0 & 1 
\end{tabular}
\quad
\begin{tabular}{c|ccccc}
\multicolumn{6}{l}{
($q^{2} \not= -1,0  $,\,\, $ Q_1 = Q_2 =0 $)
}
\\ \hline 
\\[-3mm] 
$_{\D(\la)} \backslash^{ L(\mu)}$ & $\la_8$ & $\la_7$ & $\la_2$ & $\la_1$ & $\la_0$
\\ \hline 
$\la_8$ & 1 &  &  &  &  
\\ 
$\la_7$ & 0 & 1 &  &  &  
\\
$\la_2$ & 1 & 1 & 1 &  &  
\\
$\la_1$ & 1 & 0 & 1 & 1 &  
\\
$\la_0$ & 0 & 1 & 1 & 0 & 1 
\end{tabular}
\\[3mm]

\begin{tabular}{c|ccccc}
\multicolumn{6}{l}{
($q^{2}= - 1 $,\,\, $ Q_1 = Q_2 =0 $)
}
\\ \hline 
\\[-3mm] 
$_{\D(\la)} \backslash^{ L(\mu)}$ & $\la_8$ & $\la_7$ & $\la_2$ & $\la_1$ & $\la_0$
\\ \hline 
$\la_8$ & 1 &  &  &  &  
\\ 
$\la_7$ & 1 & 1 &  &  &  
\\
$\la_2$ & 2 & 1 & 1 &  &  
\\
$\la_1$ & 1 & 0 & 1 & 1 &  
\\
$\la_0$ & 1 & 1 & 1 & 1 & 1 
\end{tabular}
}

%%%%%%%%%%%%%%%%%%%%%%%%%%%%%%%%%%%%%%%%%%%%%%%%%%%%%%%%%%%%%%%%%%%%%%%%%%%%%%%%%%%%%%%%%%%%%%%%%%%%%%%%%%%%%

%%%%%%%%%%%%%%%%%%%%%%%%%%%%%%%%%%%%%%%%%%%%%%%%%%%%%%%%%%%%%%%%%%%%%%%%%%%%%%%%%%%%%%%%%%%%%%%%%%%%%%%%%%%%%%%%%

%%%%%%%%%%%%%%%%%%%%%%%%%%%%%%%%%%%%%%%%%%%%%%%%%%%%%%%%%%%%%%%%%%%%%%%%%%%%%%%%%%%%%%%%%%%%%%%%%%%%%%%%%%%%%%%%%

\section{Example : The case of $\eta_i^\la=0$}
\label{example eta=0}
In this appendix, 
we give an extreme example of $\CS_q$ which is not a cyclotomic $q$-Schur algebra. 
\para 
We take 
$\CK=\QQ(q)$. 
Put 
$\vL=\{ \la=(\la_1, \cdots, \la_m) \in \ZZ^m_{\geq 0} \,|\, \la_1+\cdots + \la_m=n \}$, 
and 
$\eta_i^\la =0$ for any $i=1,\cdots,m-1$ and $\la \in \vL$. 
Then, 
$\CS_q= \CS_q^{\eta_{\vL}}$ is the algebra generated by 
$E_i, F_i$ ($1 \leq i \leq m-1$) and $1_\la $ ($\la \in \vL$) 
with the defining relations 
\eqref{Sq-1}-\eqref{Sq-6}, \eqref{Sq-8}, \eqref{Sq-9} 
together with the relation 
\begin{align*}
\tag{2.1.7'}
E_i F_j - F_j E_i=0.
\end{align*} 

In this case, 
one sees easily that 
$\vL=\vL^+$.  
We denote 
a monomial of $F_i$ (resp. $E_i$) for $i=1,\cdots,m-1$ by 
$X(F)$ (resp. $Y(E)$). 
Then, 
one sees that 
\[ X(F) 1_\la \not\in \CS_q(>\la), \qquad \text{\big(resp. } 1_\la Y(E) \not\in \CS_q(>\la) \text{\big)} \] 
if $\la + \deg(X(F))  \in \vL$ (resp. $  \la - \deg(Y(E))  \in \vL$). 
On the other hand, we have 
\begin{align*}
X(F) \, 1_\la \, Y(E)
&= X(F) Y(E) 1_{\la - \deg(Y(E))} 
\\
&= 
Y(E) X(F) 1_{\la - \deg(Y(E))} 
\\
&= 
Y(E) 1_{\la - \deg(Y(E)) + \deg(X(F))} X(F).
\end{align*}
Thus, 
we have 
$X(F) \, 1_\la \, Y(E) =0 $ 
if $ \la - \deg(Y(E)) + \deg(X(F))  \not\in \vL$. 
It happens that 
$\la + \deg(X(F)) \in \vL$, $\la - \deg(Y(E)) \in \vL$ 
and 
$\la - \deg(Y(E)) + \deg(X(F)) \not\in \vL$. 
This shows that 
the natural surjection 
$\D(\la) \otimes_{\CK} \D^\sharp(\la) \ra \CS_q( \geq \la) / \CS_q(>\la)$ 
is not an isomorphism in general. (Note that (C-2) $\Leftrightarrow $ (C'-2).)  

For $\la,\mu \in \vL^+ (=\vL)$, 
one sees that 
\[ M(\la)_\mu =0 \quad \text{ unless } \la=\mu,\]
where $0$ means the zero-matrix. 
This implies that 
$\dim_\CK L(\la)_\mu =0$ unless $\la =\mu$, 
and that 
\[ [\D(\la) : L(\mu)]= \dim_\CK \D(\la)_\mu.\] 
%%%%%%%%%%%%%%%%%%%%%%%%%%%%%%%%%%%%%%%%%%%%%%%%%%%%%%%%%%%%%%%%%%%%%%%%%%%%%%%%%%%%%%%%%%%%%%%%%%%%%%%%%%%%%%%%%%%

%%%%%%%%%%%%%%%%%%%%%%%%%%%%%%%%%%%%%%%%%%%%%%%%%%%%%%%%%%%%%%%%%%%%%%%%%%%%%%%%%%%%%%%%%%%%%%%%%%%%%%%%%%%%%%%%%%%

\end{document}